\renewcommand{\theequation}{\arabic{equation}}
\numberwithin{equation}{section}
\newtheorem{theorem}{Theorem}[section]
\newtheorem{lemma}{Lemma}[section]
\newtheorem{corollary}{Corollary}[section]
\newtheorem{conjecture}{Conjecture}[section]
\newenvironment{proof}[1][Proof]{\begin{trivlist}
\item[\hskip \labelsep {\bfseries #1}]}{\end{trivlist}}
\newenvironment{definition}[1][Definition]{\begin{trivlist}
\item[\hskip \labelsep {\bfseries #1}]}{\end{trivlist}}
\newenvironment{remark}[1][Remark]{\begin{trivlist}
\item[\hskip \labelsep {\bfseries #1}]}{\end{trivlist}}
\newcommand{\qed}{\nobreak \ifvmode \relax \else
      \ifdim\lastskip<1.5em \hskip-\lastskip
      \hskip1.5em plus0em minus0.5em \fi \nobreak
      \vrule height0.75em width0.5em depth0.25em\fi}
\renewcommand{\Re}{\operatorname{Re}}
\renewcommand{\Im}{\operatorname{Im}}
\begin{document}
\title{A Review of Conjectured Laws of Total Mass of Bacry-Muzy GMC Measures on the Interval and Circle and Their Applications}

\author{Dmitry Ostrovsky}

%%\date{March 18, 2018} 
\date{}

\maketitle
\noindent

\begin{abstract}
\noindent
Selberg and Morris integral probability distributions are long conjectured to be distributions of the total mass of the Bacry-Muzy Gaussian
Multiplicative Chaos measures with non-random logarithmic potentials on the unit interval and circle, respectively. The construction and properties of these distributions 
are reviewed from three perspectives: analytic based on several representations of the Mellin transform, asymptotic based on low intermittency expansions, and probabilistic based on the theory of Barnes beta probability distributions. In particular, positive and negative integer moments, infinite factorizations %%functional equations, 
and involution invariance of the Mellin transform, analytic and probabilistic proofs
of infinite divisibility of the logarithm, factorizations into products of Barnes beta distributions, and Stieltjes moment problems of these distributions are presented in detail. 
Applications are given in the form of conjectured mod-Gaussian limit theorems, laws of derivative martingales, distribution of extrema of $1/f$ noises, and calculations of inverse participation ratios in the Fyodorov-Bouchaud model. 

\end{abstract}

\tableofcontents

\section{Introduction}
\noindent In this review we study conjectured laws of the total mass of the Bacry-Muzy \cite{MRW}, \cite{BM1}, \cite{BM} Gaussian
Multiplicative Chaos (GMC) measures on the unit interval and circle with non-random logarithmic potentials. These measures are the most
tractable examples of GMC measures in dimension one and serve as a paradigm of all GMC measures due
to their very natural logarithmic covariance structures and connections with $1/f$ noises.  
While the positive integer moments of the total mass of all multiplicative chaos measures can be written in the form of multiple integrals, cf.  \cite{MeIntRen},
\cite{MeLMP},
the tractability of the Bacry-Muzy measures is inextricably tied to the fact that they are the only GMC measures that have explicitly known moments
given by the Selberg integral on the interval and the Morris integral on the circle. A key challenge of the problem of
computing the law of the total mass is that its moments diverge at any level of intermittency (inverse temperature) thereby rendering the associated
Stieltjes moment problem indeterminate. Moreover, while the negative integer moments are finite, their Stieltjes moment problem
can still be indeterminate as is the case of the Bacry-Muzy measure on the interval, for example. From this perspective, the simplest
measure is the Bacry-Muzy measure on the circle, whose negative integer moments do capture the distribution uniquely. Nonetheless,
the computation of the negative moments from first principles is very difficult and has only been achieved in the simplest
case of the Bacry-Muzy measure on the circle with the zero logarithmic potential corresponding to the positive moments being given by the special case of the Morris integral known as the Dyson integral, cf. the recent announcement in \cite{Remy}. Overall, the study of the law of the total mass is a highly non-classical moment problem
that requires novel mathematical techniques. In the rest of this section we will briefly explain the interest in GMC measures  and then review possible approaches to the problem of the total mass and summarize our key contributions. 

\subsection{GMC and Total Mass Problem} 
The theory of Gaussian Multiplicative Chaos (GMC) measures was conceived by Mandelbrot \cite{secondface}, 
who introduced the key ingredients of what is now known as GMC under the name of the limit lognormal measure, cf. also his review \cite{Lan}.
The mathematical foundation of the subject was laid down by Kahane \cite{K2}, who created a comprehensive, mathematically rigorous theory of (not necessarily gaussian) multiplicative chaos measures. The theory was advanced further
around 2000 with the introduction of the conical set construction by Barral and Mandelbrot \cite{Pulses} and Schmitt and Marsan \cite{Schmitt} and took its modern form with the theory of infinitely divisible multiplicative chaos measures of Bacry and Muzy \cite{BM1}, \cite{BM}. The theory of Bacry and Muzy was limited to multiplicative chaos on a finite interval. 
It has since been extended  to multiple dimensions by Robert and Vargas \cite{RVrevis}, who also relaxed Kahane's $\sigma-$positivity 
condition and proved universality of GMC, to other geometric shapes such as the circle by Fyodorov and Bouchaud \cite{FyoBou} and Astala \emph{et. al} \cite{Jones}, to complex GMC by Lacoin {\it et. al.} \cite{LRV}, 
as well as to critical GMC by Duplantier \emph{et. al} \cite{dupluntieratal} and Barral \emph{et. al.} \cite{barraletal}, and most recently to super-critical GMC by Madaule \emph{et. al.} \cite{Mad}. Most recently, Berestycki \cite{B}, Junnila and Saksman \cite{JS}, and Shamov \cite{Shamov} found new re-formulations and further extended the existing theory. 

The interest in GMC derives from its remarkable properties of multifractality and multiscaling, from inherent interest in Gaussian logarithmically 
correlated fields, cf. \cite{RVLog}, \cite{YO}, upon which GMC measures are built, from the complexity of
mathematical problems that their stochastic dependence poses, and from the many applications in mathematical
and theoretical physics and pure mathematics, in which GMC naturally appears. Without aiming for comprehension, we can mention applications to five areas: (1) conformal field theory and Liouville quantum gravity \cite{Jones}, \cite{BenSch}, \cite{DS}, \cite{RV3}, \cite{RV1},  (2) statistical mechanics of disordered energy landscapes \cite{Cao}, \cite{Cao2}, \cite{ClD}, 
\cite{Fyo10}, \cite{FyoBou},  \cite{YO}, \cite{FLDR},  \cite{FLDR2}, \cite{Me16}, 
(3) random matrix theory  \cite{FKS}, \cite{FyodSimm},  \cite{Hughes}, \cite{joint},  \cite{Webb}, (4)  statistical modeling of fully intermittent turbulence  \cite{CD95}, \cite{CGH}, \cite{Chainais}, \cite{Frisch}, \cite{Novikov}, \cite{RVrevis}, 
(5) conjectured \cite{FHK}, \cite{YK}, \cite{Menon} and some rigorous \cite{SW} applications to the behavior of the Riemann zeta function on the critical line. 

A fundamental open problem in the theory of GMC is to calculate the distribution of the total mass of the chaos measure and, more generally,
understand its stochastic dependence structure, \emph{i.e.} the joint distribution of the measure of several subsets of the set, on which it is defined. 
The significance of this problem is due to the fact that in most of the aforementioned 
areas the objects of interest can be reduced to questions about the total mass. We will illustrate it with two examples. 
As it was first discovered in \cite{FyoBou} and \cite{FLDR},
conjectured laws of the total mass of the Bacry-Muzy GMC measures on the circle and interval yield, under the hypothesis of freezing, precise asymptotic distributions of extremes of the underlying gaussian fields, which are restrictions of the 2D Gaussian Free Field to these geometries.  
It was subsequently proved rigorously in \cite{Madmax} that the distribution of the maximum of the logarithmically correlated gaussian field underlying the general GMC construction is determined by the law of the total mass of the corresponding critical GMC. As a second example,
fluctuations of mesoscopic counting statistics that converge to $\mathcal{H}^{1/2}$ gaussian fields can be rigorously quantified by means of the law of the total mass of the corresponding GMC measure, thereby connecting random matrix and GMC theories, cf. \cite{joint}.  We refer the interested reader to \cite{RV2} for a general review of GMC.

\subsection{Three Known Approaches}
\noindent There are three known approaches to the problem of the total mass. %%which we list in the historical order. 
The first approach, pioneered by Fyodorov and Bouchaud \cite{FyoBou} and Fyodorov \emph{et. al.} \cite{FLDR} and
presented in the general case by Fyodorov and Le Doussal \cite{FLD}, is to heuristically extend the known positive integer moments to the complex plane,
\emph{i.e.} construct a function of a complex variable whose restriction to the finite interval of positive integers, where the moments are finite, coincides with the moments, and thereby
guess the Mellin transform of the total mass. This is particularly simple for the Bacry-Muzy GMC on the circle with the zero potential as the extension of the Dyson integral from the integer to complex dimension is elementary. This method also works on the interval although the extension of the Selberg
integral from the integer to complex dimension is substantially more difficult. The primary theoretical limitation of this approach is that it operates on the moments
directly and the moment problem is not determinate. The secondary limitation is that it is not obvious that the so-constructed Mellin
transform is in fact the Mellin transform of a probability distribution. While it is easy to show that the analytic extension of the Dyson
integral obtained in \cite{FyoBou} is the Mellin transform of a Fr\'echet distribution, it is much more difficult to establish the probabilistic property of
the analytic extension of the Selberg integral found in \cite{FLDR} so that the authors of that work limited themselves to numerical evidence. 
Nonetheless, from the computational standpoint, this method is particularly efficient. 

The second approach, which we introduced in \cite{Me2} and developed in \cite{Me3}--\cite{Me16}, is based on the formalism of intermittency differentiation and renormalization.
The rule of intermittency differentiation expresses the intermittency derivative of a general class of functionals of the total mass in the form
of an exact, non-local equation (or infinite hierarchy of local equations). It allows to compute the full high-temperature (low intermittency) expansion of the Mellin transform of the total mass of a general GMC measure
in terms of the expansion of positive integer moments in intermittency and effectively reconstruct the Mellin transform by summing the intermittency expansion. In the special cases of the Bacry-Muzy measures on the interval and circle with a logarithmic potential we carried out these computations explicitly by means of
Hardy's moment constant method
and proved that the resulting expressions\footnote{Hardy's method produces expressions for the logarithm of
the Mellin transform in the integral form, which are more cumbersome than the expressions for the Mellin transform itself in terms
of Barnes double gamma factors that are produced by the method of Fyodorov \emph{et. al.} \cite{FLDR}. The integral expressions
however are easier to bring to the L\'evy-Khinchine form and thus prove the probabilistic property of the construction. 
This constitutes the analytical proof of existence. It is possible to give a purely probabilistic proof that is based on the Mellin transform directly,
which requires the machinery of Barnes beta distributions. We review both proofs in this  paper. }
are Mellin transforms of valid probability distributions, known as the Selberg and Morris integral probability distributions, respectively. These distributions have the properties that their positive integer moments are given by the Selberg and Morris integrals,
\emph{i.e.} match the moments of the total mass, and that the asymptotic expansions of their Mellin transforms in intermittency coincide with the intermittency expansion. The principal computational limitation of this approach is that it too requires the explicit knowledge of the moments.
It is unknown whether the intermittency expansion of the Mellin transform captures the distribution of the total mass uniquely. If true, this approach
would provide a solution to the moment problem of the total mass. The answer depends on detailed, non-perturbative analysis of intermittency differentiation equations and requires novel mathematical tools. We refer the interested reader to \cite{MeIntRen} for an in-depth discussion
of theoretical aspects of our approach. We also note that the intermittency differentiation approach is not limited to 1D or Bacry-Muzy measures or even GMC measures but in fact applies to all infinitely divisible multiplicative chaos measures, cf. \cite{Me17}. 

In summary, both the first and second approaches succeeded in constructing good candidates for the distribution 
of the total mass of the Bacry-Muzy measures in the sub-critical regime.  The formulas for the Mellin transform that are 
produced by both methods are known to be the same. 

The third and most recent approach introduced by Remy \cite{Remy} is based on the connection between GMC and 
Liouville conformal field theory that was established by David \emph{et. al.} \cite{David}. 
It interprets negative integer moments of the total mass of the Bacry-Muzy GMC on the circle 
in terms of one-point correlation functions of Liouville conformal field theory on the unit disk and thereby computes these moments from first principles. Unlike the first two approaches, the approach of Remy \cite{Remy} is mathematically rigorous. 
%%The scope of this approach is not yet clear. 
A priori, it appears that this approach is limited to low-dimensional GMC measures that are connected
with the Liouville theory and further have a determinate Stieltjes moment problem for the negative moments such as the Bacry-Muzy measure 
on the circle. In particular, in its current form it does not apply to the Bacry-Muzy measure on the interval as the Stieltjes moment problem
for the negative moments of its total mass is indeterminate. It should be stressed that the
result of Remy \cite{Remy} has not resolved our conjecture about the distribution of the total mass of the Bacry-Muzy GMC on the circle because
we allow for a non-random logarithmic potential. Remy \cite{Remy} sets it to zero, which greatly simplifies the 
total mass distribution as it reduces it to the Fr\'echet factor as predicted by  Fyodorov and Bouchaud \cite{FyoBou}, whereas the full distribution 
was conjectured in \cite{Me16} to have a completely non-trivial Barnes beta factor, cf. Sections \ref{CirAnalytical} and \ref{Probabilistic}. 
%%The addition of the logarithmic potential is also
%%quite important in applications, cf. Section \ref{SomeApplications}.

%% as the Stieltjes moment problem for 
%%the negative moments on the interval is indeterminate.  

\subsection{Summary of Results and Plan of the Paper}
The scope of this paper is the review of mathematically rigorous results on the existence and properties of the Selberg and Morris integral probability distributions. We review both the analytical and probabilistic constructions of these distributions in the sub-critical and critical regimes in detail. 
In particular, we review the theory of Barnes beta distributions that provide basic building blocks of the Selberg and Morris integral distributions.
We also review the analytic continuation of the complex Selberg integral (Dotsenko-Fateev integral). 
%%We also show that the method of constructing analytical continuations of the Selberg and Morris integrals, which is presented in this paper, applies also to the complex Selberg integral (Dotsenko-Fateev integral), provide its analytic continuation, and prove its involution invariance.
%%the review of mathematically rigorous results that have been obtained by the first two approaches. 
%%We review both analytical and probabilistic constructions of the Selberg and Morris integral probability distributions. 
The theory is illustrated with several conjectured applications. 

The analytical construction is based on the three known representations of the Mellin transform: a finite product of ratios of Barnes double gamma
factors, a regularized infinite product of ratios of Euler's gamma factors, and a L\'evy-Khinchine integral representation of the logarithm of the Mellin
transform. The Barnes double gamma and infinite product representations lead to the %%functional equations of the Mellin transform,
computation of the negative moments and furnish simple proofs that the Mellin transform is the analytic continuation of the Selberg/Morris integrals. The Barnes double gamma representation also provides the asymptotic expansion of the Mellin transform, which is shown to match the intermittency expansion that follows from the known formulas for the positive integer moments. 
%%The asymptotic/intermittency expansion of positive and negative integer moments is shown to be convergent for finite moment ranges. 
The integral representation of the logarithm of the Mellin
transform furnishes the analytical proof of the fact that the Mellin transform is in fact the Mellin  transform of a log-infinitely divisible probability distribution, whose gaussian and L\'evy components are computed explicitly. 

The method of deriving the analytic continuation of the Selberg integral in the approach of Fyodorov \emph{et. al.} \cite{FLDR}
is more computationally efficient than our method of summing the intermittency expansion, cf. \cite{Me4}. For this reason, we
use their approach in the construction of the analytic continuation, %%Selberg integral distribution, 
and also apply it in deriving the analytic
continuations of the Morris and complex Selberg integrals. Afterwards, to be consistent with our approach, we check that the high-temperature asymptotic expansions of the analytic continuations of the Selberg and Morris integrals coincide with the corresponding intermittency expansions.
The approach of Fyodorov \emph{et. al.} is based on the novel idea of using Barnes-like double gamma function that is popular in the physics
literature. As this function is not standard in mathematics, we chose, beginning with \cite{MeIMRN}, to use the standard double gamma
function and its cousins instead. We  believe that the wealth of its known properties makes  its use more advantageous.

The probabilistic construction is based on the theory of Barnes beta probability distributions, which we introduced in \cite{MeIMRN} and developed in  
\cite{Me13}, \cite{Me14}, and \cite{Me16}.
The Barnes beta distributions constitute a novel family of log-infinitely divisible probability distributions having the property that their Mellin transform is defined in the form of an intertwining product of ratios of multiple Barnes gamma factors. We review their remarkable properties and show
that in the special case of Barnes beta distributions corresponding to the double gamma function these distributions provide
the second, purely probabilistic proof of the existence of Selberg and Morris integral probability distributions. Moreover, we obtain their
explicit factorizations: the Selberg integral distribution is the product of independent lognormal, Fr\'echet, and three Barnes beta distributions,
the Morris integral distribution is the product of independent Fr\'echet and a single Barnes beta. 

The two constructions rely on special properties of the double gamma function such as functional equations, Barnes and Shintani infinite
factorizations, scaling invariance, Barnes multiplication, integral representations of its logarithm, and asymptotic expansions. We review these 
properties in some detail to make the paper accessible to a wider audience, especially as the double gamma function has multiple known normalizations (classical, Alexeiewsky-Barnes, Ruijsenaars) that are all useful in different contexts. Nonetheless, we restrict ourselves to giving the relevant
formulas without providing detailed proofs from the theory of  multiple gamma functions as that would take us too far afield. 

We illustrate our theoretical results on the Morris and Selberg integral distributions with three types of applications. Following
Fyodorov and Bouchaud \cite{FyoBou} and Fyodorov \emph{et. al.} \cite{FLDR}, who discovered the connection between
the asymptotic distribution of the maximum of the 2D  Gaussian Free Field restricted to the circle and interval and the laws
of the total mass of Bacry-Muzy measure for these geometries, we give a probabilistic re-formulation\footnote{In the circle case
we also extend the original conjecture in \cite{FyoBou}, which was restricted to the Dyson integral.}
 of their results in terms
of the conjectured laws of the corresponding derivative martingales. Our second application has to do with the computation
of the inverse participation ratios of the Fyodorov-Bouchaud model that were known previously only by means of a heuristic analytic continuation 
of the Morris integral to negative dimensions, cf. \cite{Fyo09}. Our result on the conjectured law of the Bacry-Muzy
measure on the circle with a logarithmic potential allows us to treat this continuation rigorously. In the third application we conjecture
several mod-Gaussian limit theorems. The concept of mod-Gaussian convergence as a means of precisely quantifying divergent sequences
of random variables was first introduced by Keating and Snaith \cite{KeaSna} and was formalized and developed into 
a powerful mathematical tool by Jacod \emph{et. al.} \cite{Jacod}, see also \cite{Feray} and \cite{Meliot}. 
The idea of associating such theorems with GMC was first introduced in \cite{Menon}
in the context of mesoscopic statistics of Riemann zeroes. We review some of those results and show how they
can be combined with the methods of Fyodorov \emph{et. al.} to conjecture a mod-Gaussian limit theorem
for the distribution of the maximum of the centered Gaussian Free Field on the circle and interval, also known as the Fractional Brownian 
motion with Hurst index $H=0,$ cf. \cite{FKS}.

This paper is largely a review of results that have already appeared elsewhere. The only new results are those on the analytic continuation
of the complex Selberg integral, 
the computation of the inverse participation ratios of the Fyodorov-Bouchaud model, and the conjectured 
mod-Gaussian theorems for the centered Gaussian Free Field. 

The plan of the rest of the paper is as follows. In Section 2 we briefly recall Bacry-Muzy measures and then state the problem of 
total mass for them. In Section 3 we review multiple gamma functions. In Sections 4 and 5 we state our analytical results on the Morris
and Selberg integral probability distributions, respectively, followed by the proofs in Section 6. In Section 7 we present the theory of Barnes beta probability distributions. In Section 8 we state our probabilistic results on the Morris and Selberg integral probability distributions, followed by
the proofs in Section 9. In Section 10 we give our results on the critical Morris and Selberg integral distributions. In Section 11 we give the 
analytic continuation of the complex Selberg integral. In Section 12 we give some applications of our results on conjectured laws of Bacry-Muzy measures. Conclusions are given in Section 13. The Appendix gives proofs of results on Barnes beta distributions. 

Our results are mathematically rigorous except in Section \ref{SomeApplications}.

%%MUST SAY that proofs of Barnes gamma are given elsewhere, proofs of Barnes beta dostributions are given here

\section{Bacry-Muzy GMC and Total Mass Problem}\label{Problem}
In this section we will informally recall the construction of Bacry-Muzy measures on the interval and circle and pose the specific version
of the total mass problem that we will be considering in the rest of the paper. 

Following \cite{MRW}, define a centered gaussian process with the covariance
\begin{align}
%%{\bf{E}} \left[V_{\varepsilon}(u)\right] & =  -\frac{\mu}{2} \,
%%\left(\kappa-\log\varepsilon\right), \label{meank} \\
{\bf{Cov}}\left[V_{\varepsilon}(u), \,V_{\varepsilon}
(v)\right]  = & 
\begin{cases}\label{covk}
 -
2 \, \log|u-v|, \, \varepsilon < |u-v|\leq 1,  \\
- 2\log\varepsilon,\, u=v,
\end{cases} 
\end{align}
%%The details of regularization near $|u-v|\leq \varepsilon$ are 
%% are relegated to the $O(\varepsilon)$ term. 
Let $0\leq \beta<1.$ 
The theorem of Bacry and Muzy states that the regularized exponential functional of this field converges weakly a.s. as $\varepsilon\rightarrow 0$ to
a non-degenerate limit random measure, called the Bacry-Muzy GMC measure on the interval,
\begin{gather}
e^{\beta^2\log\varepsilon}\int_a^b e^{\beta V_{\varepsilon}(u)} \, du\longrightarrow M_{\beta}(a, b), \label{chaosinterval} \\ 
{\bf{E}}[M_{\beta}(a, b)]=|b-a|.
\end{gather}
It is worth emphasizing that the choice of covariance regularization for $|u-v|\leq \varepsilon$ has no effect on the law of the total mass,
see the proof of universality in \cite{RVrevis}.
 %%(non-degenerate), $M_{\mu}(t,t+\tau) \overset{{\rm in \,law}}{=} M_{\mu}(0,\tau)$ (stationary)
The moments of the total mass of the limit measure with a logarithmic potential are given by the Selberg integral: let $n<1/\beta^2,$ %%$I\subset [0, \,1],$
\begin{equation}
{\bf{E}} \Bigl[\Bigl(\int_0^1 s^{\lambda_1}(1-s)^{\lambda_2} \,
M_\beta(ds)\Bigr)^n\Bigr] =  \int\limits_{[0,\,1]^n} \prod_{i=1}^n
s_i^{\lambda_1}(1-s_i)^{\lambda_2} \prod_{i<j}^n
|s_i-s_j|^{-2\beta^2} ds_1\cdots ds_n.
\end{equation}
Define the quantity 
\begin{equation}
\tau = \frac{1}{\beta^2} > 1.
\end{equation}
In statistical physics one thinks of $\beta$ as the inverse temperature. %% and $\tau$ as the temperature. 
Recall the classical Selberg integral,
\begin{equation}
\int\limits_{[0,\,1]^n} \prod_{i=1}^n s_i^{\lambda_1}(1-s_i)^{\lambda_2}\, \prod\limits_{i<j}^n |s_i-s_j|^{-2/\tau} ds_1\cdots ds_n =  \prod_{k=0}^{n-1}\frac{\Gamma(1-(k+1)/\tau)
\Gamma(1+\lambda_1-k/\tau)\Gamma(1+\lambda_2-k/\tau)}
{\Gamma(1-1/\tau)\Gamma(2+\lambda_1+\lambda_2-(n+k-1)/\tau)}, \label{Selberg}
\end{equation}
cf. \cite{ForresterBook} for a modern treatment. We will assume for simplicity that $\lambda_i \geq 0.$ The integral is convergent for $n<\tau.$
The Bacry-Muzy GMC on the circle is a periodized version of the Bacry-Muzy measure on the interval. It was first considered heuristically
in \cite{FyoBou} and formalized in \cite{Jones}. Let $V_\varepsilon(\psi)$ be a centered gaussian process with the covariance
\begin{align}
{\bf{Cov}}\left[V_{\varepsilon}(\psi), \,V_{\varepsilon}
(\xi)\right]  = &
\begin{cases}\label{covkc}
 -
2 \, \log|e^{2\pi i\psi}-e^{2\pi i\xi}|, \,  |\xi-\psi|> \varepsilon,  \\
-2\log\varepsilon, \psi=\xi,
\end{cases} %%\nonumber \\
%%& + O(\varepsilon).
\end{align}
%%The details of the regularization near $\psi=\xi$ are hidden in the $O(\varepsilon)$ term. 
Once again,
the regularized exponential functional of this field converges weakly a.s. as $\varepsilon\rightarrow 0$ to
a non-degenerate limit random measure, which we refer to as the Bacry-Muzy GMC measure on the circle.
Let $0\leq \beta<1.$ 
\begin{gather}
e^{\beta^2 \log\varepsilon}\int_\phi^\psi e^{\beta V_{\varepsilon}(\theta)} \, d\theta\longrightarrow M_{\beta}(\phi, \psi), \label{chaoscircle} \\
{\bf{E}}[M_{\beta}(\phi, \psi)]=|\psi-\phi|.
\end{gather}
 %%(non-degenerate), $M_{\mu}(t,t+\tau) \overset{{\rm in \,law}}{=} M_{\mu}(0,\tau)$ (stationary)
The moments of the total mass of the limit measure with a logarithmic potential are given by the Morris integral: let $n<1/\beta^2,$ %%$I\subset [0, \,1],$
\begin{align}
{\bf{E}} \Bigl[\Bigl( \int_{[-\frac{1}{2},\,\frac{1}{2}]} e^{2\pi i\psi \frac{\lambda_1-\lambda_2}{2}} \, |1+e^{2\pi i\psi}|^{\lambda_1+\lambda_2}\, M_{\beta}(d\psi)\Bigr)^n\Bigr] = & \int\limits_{[-\frac{1}{2},\,\frac{1}{2}]^n} \prod\limits_{l=1}^n  e^{2\pi i \theta_l\frac{\lambda_1-\lambda_2}{2}}  |1+e^{2\pi  i\theta_l}|^{\lambda_1+\lambda_2}\times \nonumber\\
& \times
\prod\limits_{k<l}^n |e^{2\pi  i \theta_k}-e^{2\pi i\theta_l}|^{-2\beta^2} \,d\theta.
\end{align}
Recall the Morris integral, 
see Chapter 4 of \cite{ForresterBook}.
\begin{gather}
\int\limits_{[-\frac{1}{2},\,\frac{1}{2}]^n} \prod\limits_{l=1}^n  e^{ \pi i \theta_l(\lambda_1-\lambda_2)}  |1+e^{2\pi  i\theta_l}|^{\lambda_1+\lambda_2} \,
\prod\limits_{k<l}^n |e^{2\pi i \theta_k}-e^{2\pi i\theta_l}|^{-2/\tau} \,d\theta, \nonumber \\  = \prod\limits_{j=0}^{n-1} \frac{\Gamma(1+\lambda_1+\lambda_2- \frac{j}{\tau})\,\Gamma(1-\frac{(j+1)}{\tau})}{\Gamma(1+\lambda_1- \frac{j}{\tau})\,\Gamma(1+\lambda_2- \frac{j}{\tau})\,\Gamma(1-\frac{1}{\tau})} \label{morris2}.
\end{gather}
We will restrict our attention to a special case of the total mass problem on the circle corresponding to
\begin{equation}
\lambda_1=\lambda_2=\lambda\geq 0.
\end{equation}
In this case the moments of the total mass are given by 
\begin{align}
{\bf{E}} \Bigl[\Bigl( \int_{[-\frac{1}{2},\,\frac{1}{2}]} |1+e^{2\pi i\psi}|^{2\lambda}\, M_{\beta}(d\psi)\Bigr)^n\Bigr] = & \int\limits_{[-\frac{1}{2},\,\frac{1}{2}]^n} \prod\limits_{l=1}^n  |1+e^{ 2\pi i\theta_l}|^{2\lambda} \,
\prod\limits_{k<l}^n |e^{2\pi  i \theta_k}-e^{2\pi  i\theta_l}|^{-2/\tau} \,d\theta, \nonumber \\
= & \prod\limits_{j=0}^{n-1} \frac{\Gamma(1+2\lambda- \frac{j}{\tau})\,\Gamma(1-\frac{(j+1)}{\tau})}{\Gamma(1+\lambda - \frac{j}{\tau})^2\,\Gamma(1-\frac{1}{\tau})}.
\label{momlambda}
\end{align}
In the special case of $\lambda=0$ this integral is known as the Dyson integral and has a much simpler evaluation corresponding to the moments of
the Fyodorov-Bouchaud model. 

In the rest of the paper we will construct probability distributions having the properties
that they match the positive integer moments
specified in Eqs. \eqref{Selberg} and \eqref{momlambda} and \emph{intermittency expansions} of the total mass distributions. To state this precisely, we need to 
briefly remind the reader of the key results of the intermittency renormalization formalism, cf. \cite{Me2}, \cite{Me3}, \cite{MeIMRN}, \cite{MeIntRen}.
Let us write the total mass in the form
\begin{equation}
M_\beta[\varphi](\mathcal{D}) = \int\limits_\mathcal{D} \varphi(x) \, M_\beta(dx)\, dx,
\end{equation}
so that
\begin{align}
\mathcal{D} & = [0,1],\; \varphi(x)  = x^{\lambda_1}(1-x)^{\lambda_2}\; (interval), \\
\mathcal{D} & = [-\frac{1}{2},\,\frac{1}{2}],\; \varphi(x)  = |1+e^{2\pi ix}|^{2\lambda}\; (circle),
\end{align}
and introduce the quantity 
\begin{equation}
\bar{\varphi} \triangleq \int\limits_\mathcal{D} \varphi(x) \,dx.
\end{equation}
Finally, we need to introduce the quantity called the intermittency by
\begin{equation}
\mu = 2\beta^2 = \frac{2}{\tau}.
\end{equation}
Then, the log-moments of the total mass have the representation in the form, 
\begin{equation}\label{cpl}
\log{\bf{E}} \Bigl[\Bigl( \int_\mathcal{D} \varphi(x)\, M_{\beta}(dx)\Bigr)^n\Bigr] = n\log \bar{\varphi}+ \sum\limits_{p=1}^\infty c_p(n) \mu^p,
\end{equation}
for some coefficients $c_p(n)$ that are known to be \emph{polynomial} in the moment order $n,$ cf. \cite{MeIntRen}.
One of the key results of the intermittency renormalization formalism is that the full intermittency expansion (formal power series expansion)
of the Mellin transform of the total mass of a general GMC measure can be calculated in closed form in terms of 
the expansion of the log-moments in intermittency by the following formula, 
\begin{equation}\label{intermittencyMellin}
{\bf E}\Bigl[\Bigl(\int_\mathcal{D} \varphi(x)
\,M_\beta(dx)\Bigr)^q\Bigr]=\bar{\varphi}^q \exp\Bigl(\sum_{p=1}^\infty
\mu^{p} \,c_p(q)\Bigr), \; \Re(q)<\tau. %%q\in\mathbb{C}.
\end{equation}
It is naturally interpreted as the asymptotic expansion in the limit of low intermittency (high temperature). 
The coefficients $c_p(n)$ are known explicitly for the Bacry-Muzy measures.
We have on the interval,
\begin{gather}
c_p(n) =
\frac{1}{p2^p} 
\Bigl[ \bigl(\zeta(p,
1+\lambda_1)+\zeta(p,
1+\lambda_2)\bigr)\Bigl(\frac{B_{p+1}(q)-B_{p+1}}{p+1}\Bigr)
-\zeta(p)n + \zeta(p)\times \nonumber \\ \times
\Bigl(\frac{B_{p+1}(n+1)-B_{p+1}}{p+1}\Bigr) - \zeta(p,
2+\lambda_1+\lambda_2)
\Bigl(\frac{B_{p+1}(2q-1)-B_{p+1}(q-1)}{p+1}\Bigr)\Bigr], \label{cpninterval}
\end{gather}
and on the circle,
\begin{gather}
c_p(n) = \frac{1}{p2^p} \Bigl[\bigl(\zeta(p,\,1+2\lambda)-2\zeta(p, 1+\lambda)\bigr)\frac{B_{p+1}(n)-B_{p+1}}{p+1}+
\zeta(p)\frac{B_{p+1}(n+1)-B_{p+1}}{p+1}-n\zeta(p)\Bigr]. \label{cpncircle}
\end{gather}
%%,\,q\in\mathbb{C}.
As usual, $B_n(s)$ denotes the $n$th Bernoulli polynomial, $\zeta(s,
a)$ the Hurwitz zeta function, $\zeta(s)$ the Riemann zeta function, and $\zeta(1, a)\triangleq -\psi(a)$
the digamma function. These formulas are elementary corollaries of  Eqs. \eqref{Selberg} and \eqref{momlambda} and
the following summation formulas,
\begin{align}
\log\Gamma(a+z) = & \log\Gamma(a)+\sum\limits_{p=1}^\infty \frac{(-z)^p}{p} \zeta(p,a), \\
\sum\limits_{j=x}^y j^p = &\frac{B_{p+1}(y+1)-B_{p+1}(x)}{p+1}.
\end{align}
%%and the convention $\zeta(1, a)=-\psi(a),$ $\zeta(p, 1)=\zeta(p).$ 

Given these preliminaries, we can now give a precise statement of the problem that is reviewed in the rest of the paper, which is our version
of the moment problem for the total mass. The problem is to construct and describe properties of positive probability distributions that have positive integer moments given by Eqs. \eqref{Selberg} and \eqref{momlambda} and whose asymptotic expansion of the Mellin transform in intermittency coincides with the expansion in Eq. \eqref{cpl} with the $c_p(n)$ coefficients specified in Eqs. \eqref{cpninterval} and \eqref{cpncircle}, respectively.

\section{A Review of Barnes Double Gamma Function}\label{BarnesReview}
In this section we review several formulations of the multiple gamma functions of Barnes with an emphasis
on the double gamma function. 

In general, let $a=(a_1,\cdots, a_M),$ $M\in \mathbb{N},$ $a_i>0 $ $\forall i=1\cdots M.$
The multiple gamma function of Barnes $\Gamma_{M}(z\,|\,a)$ is a meromorphic function of $z\in\mathbb{C}$ that satisfies $M$ functional equations,
\begin{equation}\label{feq}
\Gamma_{M}(z\,|\,a) = \Gamma_{M-1}(z\,|\,\hat{a}_i)\,\Gamma_M\bigl(z+a_i\,|\,a\bigr),\,i=1\cdots
M, 
\end{equation}
$\hat{a}_i = (a_1,\cdots, a_{i-1},\,a_{i+1},\cdots, a_{M}),$ and
\begin{align}
\Gamma_0(z) = & \frac{1}{z},  \\
\Gamma_1(z\,|\,\tau) = & \frac{\tau^{z/\tau-1/2}}{\sqrt{2\pi}} \,\Gamma\bigl(\frac{z}{\tau}\bigr), \label{gamma1}
\end{align}
where $\Gamma(z)$ is Euler's gamma function, cf. \cite{mBarnes}.
By iterating Eq. \eqref{feq} one sees that $\Gamma_M(z\,|\,a)$ is
meromorphic over $z\in\mathbb{C}$ having no zeroes and poles at
\begin{equation}\label{poles}
z=-(k_1 a_1+\cdots + k_M a_M),\; k_1\cdots k_M\in\mathbb{N},
\end{equation}
with multiplicity equal the number of $M-$tuples $(k_1, \cdots,
k_M)$ that satisfy Eq. \eqref{poles}.
The case of $M=2$ is referred to as the double gamma function. The fundamental functional equations then
take on the form, cf. \cite{Double},
\begin{gather}
\frac{\Gamma_2(z\,|\,a_1,a_2)}{\Gamma_2(z+a_1\,|\,a_1,a_2)}
= \frac{a_2^{z/a_2-1/2}}
{\sqrt{2\pi}}\Gamma\bigl(\frac{z}{a_2}\bigr), \\
\frac{\Gamma_2(z\,|\,a_1,a_2)}{\Gamma_2(z+a_2\,|\,a_1,a_2)}
= \frac{a_1^{z/a_1-1/2}}
{\sqrt{2\pi}}\Gamma\bigl(\frac{z}{a_1}\bigr),
\end{gather}
which have the following useful corollary that is used repeatedly below. Let $a=(1,\tau)$ and $k\in\mathbb{N}.$ Then,
\begin{align}
\frac{\Gamma_2(z+1-k\,|\,1,\tau)}{\Gamma_2(z+1\,|\,1,\tau)} = & \prod\limits_{j=0}^{k-1}
\Gamma_1\bigl(z-j\,|\,1,\tau\bigr), \nonumber \\
= & 
\bigl(\frac{1}{2\pi\tau}\bigr)^{k/2} \tau^{\sum\limits_{j=0}^{k-1} (z-j)/\tau}\;
\prod\limits_{j=0}^{k-1} \Gamma\bigl(\frac{z}{\tau}-\frac{j}{\tau}\bigr). \label{repeated}
\end{align}
The multiple gamma function is defined classically by
\begin{equation}\label{barnes}
\Gamma^{-1}_M(z\,|\,a) = e^{P(z\,|\,a)}\,w\prod\limits_{n_1,\cdots , n_M=0}^\infty
{}' \Bigl(1+\frac{z}{\Omega}\Bigr)\exp\Bigl(\sum_{k=1}^M \frac{(-1)^k}{k}\frac{z^k}{\Omega^k}\Bigr), 
\end{equation}
where $\Re(z)>0,$ $P(z\,|\,a)$ is a polynomial in $z$ of degree $M$ that depends on one's choice of
normalization, %%the parameters $a_j>0,$ $j=1\cdots M,$ 
\begin{equation}\label{Omega}
\Omega\triangleq \sum_{i=1}^M n_i \, a_i,
\end{equation}
and the prime indicates that the product is over all indices except $n_1=\cdots =n_M=0.$

The \emph{classical} normalization condition for the double gamma function that was used by Barnes is
\begin{equation}\label{classical}
\lim\limits_{z\rightarrow 0}\,
\Bigl[z\,\Gamma_2(z\,|\,a_1,a_2)\Bigr] = 1.
\end{equation}
In this normalization the double gamma function is closely related to the so-called Alexeiewsky-Barnes $G(z\,|\,\tau)$ function, which was historically
introduced first, cf. \cite{A} and \cite{Genesis}.
The function $G(z\,|\,\tau)$ is defined for
$z\in\mathbb{C}$ and $\tau\in\mathbb{C}$ such that
$|\arg(\tau)|<\pi$ %%and $\Omega\triangleq m\tau+n.$
and 
satisfies the following
normalization and functional equations.
\begin{align}
G(z=1\,|\,\tau)= & 1, \\
G(z+1\,|\,\tau) = & \Gamma\Big(\frac{z}{\tau}\Bigr)\,G(z\,|\,\tau), \label{Gfunct1} \\ 
G(z+\tau\,|\,\tau) = &
(2\pi)^{\frac{\tau-1}{2}}\,\tau^{-z+\frac{1}{2}}\,\Gamma(z)\,
G(z\,|\,\tau).
\end{align}
$G(z\,|\,\tau)$ is a meromorphic function of $z$ with no poles and
roots at $z=-(m\tau+n),$ $m,n=0, 1, 2,\cdots.$ 
The relationship between the double gamma function in the classical normalization and the Alexeiewsky-Barnes
function was established by Barnes in \cite{Double}. 
Using Barnes' notation, define the function 
\begin{equation}
{}_2 S_0 (z\,|\,a_1, a_2) \triangleq \frac{z^2-z(a_1+a_2)}
{2a_1 a_2}.
\end{equation}
Then,
\begin{equation}
\Gamma_2^{-1}(z\,|\,a_1,a_2) = (2\pi)^{-z/2a_1}
a_2^{1+{}_2 S_0 (z\,|\,a_1, a_2)}
G\Bigl(\frac{z}{a_1}\,\Big|\,\frac{a_2}{a_1}\Bigr).
\end{equation}
Equivalently, we can write
\begin{equation}\label{GfromG2}
G(z\,|\,\tau) = (2\pi)^{z/2} \tau^{-\bigl(1+{}_2
S_0(z,|\,1,\tau)\bigr)}\,\Gamma_2^{-1}(z\,|\,1,\tau),
\end{equation}
so that $G(z\,|\,\tau)$ is, up to normalization, the reciprocal of the double gamma function with parameters $(1,\tau).$
It is an elementary exercise to check that the normalization conditions and functional equations of the double gamma function are equivalent to
those of the $G(z\,|\,\tau)$ function. Moreover, $\Gamma_2(z\,|\,a_1,a_2)$ is symmetric in $(a_1, a_2).$

The function $G(z\,|\,\tau)$ has the following useful properties. The first is an immediate corollary of the functional equation
in Eq. \eqref{Gfunct1}.
\begin{equation}\label{Grepeated}
\frac{G(1+z\,|\,\tau)}{G(1+z-k\,|\,\tau)} = \prod\limits_{j=0}^{k-1} \Gamma\bigl(\frac{z-j}{\tau}\bigr).
\end{equation}
We note that the use of the physicists' equivalent of Eq. \eqref{Grepeated} in constructing analytic continuations was pioneered by
Fyodorov \emph{et. al.} \cite{FLDR} and constitutes the core of their method.  
The second property is the integral representation of the logarithm in the form of 
a Malmst$\acute{\text{e}}$n-type formula due to Lawrie and King \cite{LawKing}, see also \cite{MeIMRN} for an elementary derivation
based on \cite{Genesis}. Given $\Re(z),\,\Re(\tau)>0,$
\begin{equation}
\log G(z\,|\,\tau) = \int\limits_0^\infty \frac{dt}{t} \Bigl[
\frac{1-z}{e^{t\tau}-1}+(1-z)
e^{-t\tau}+(z^2-z)\frac{e^{-t\tau}}{2\tau}+\frac{1-e^{-t(z-1)}}{(e^t-1)(1-e^{-t\tau})}\Bigr]. \label{LK}
\end{equation}
The third is the asymptotic expansion due to Billingham and King
\cite{BillKing}. We cite it here in a simplified form, which is sufficient for our needs.
\begin{equation}
\log G(z\,|\,\tau) = \frac{z^2}{2\tau}\log(z) - \frac{z^2}{\tau}
\left(\frac{3}{4}+\frac{\log\tau}{2}\right) -
\frac{1}{2}\left(\frac{1}{\tau}+1\right)z\log z + O(z),\,\,
z\rightarrow\infty, \,\,|\arg(z/\tau)|<\pi. \label{BillKingAsymp}
\end{equation}
The fourth property is the integral representation and asymptotic expansion of the logarithm of the ratio of two $G(z\,|\,\tau)$ functions due to 
\cite{MeIMRN}.
Let $\Re(q)<1+a+\tau,$ $a>-\tau,$ and $\tau>0.$ Define the function
\begin{equation}
I(q\,|\,a, \tau) \triangleq \int\limits_0^\infty
\frac{dx}{x}\frac{e^{-ax}}{e^{x\tau}-1}
\Bigl[\frac{e^{xq}-1}{e^{x}-1}
 -q-\frac{(q^2-q)}{2}x\Bigr].\label{Iintegral}
\end{equation}
Then,  
\begin{equation}\label{IfuncG}
I(q\,|\,a, \tau) =
\log\frac{G(1+a+\tau\,|\,\tau)}{G(1-q+a+\tau\,|\,\tau)}
-q\log\Bigl[\Gamma\bigl(1+\frac{a}{\tau}\bigr)\Bigr]+
\frac{(q^2-q)}{2\tau}\psi\bigl(1+\frac{a}{\tau}\bigr),
\end{equation}
and $I(q\,|\, a\tau, \tau)$ has the asymptotic expansion 
\begin{equation}\label{IfuncGAsymptotic}
I(q\,|\,a\tau, \tau) \thicksim \sum\limits_{r=1}^\infty \frac{\zeta(r+1,
\,1+a)}{r+1}\Bigl(\frac{B_{r+2}(q)-B_{r+2}}{r+2}\Bigr)/\tau^{r+1}
\end{equation}
in the limit $\tau\rightarrow +\infty,$ which follows from a slight extension of Ramanujan's generalization
of Watson's lemma, confer Lemma 10.2 in Chap. 38 of \cite{Berndt}, see \cite{MeIMRN}  for details.
As a corollary of Eq. \eqref{IfuncG} and the classical identities, cf. \cite{Temme},
\begin{equation}\label{Malmsten} 
\log\Gamma(1+s) =\int\limits_0^\infty
\Bigl(\frac{e^{-ts}-1}{e^t-1}+se^{-t}\Bigr) \frac{dt}{t}
\,\,\,\,\text{(Malmst$\acute{\text{e}}$n)}, \,\,\Re(s)>-1,
\end{equation}
\begin{equation}\label{Frullani} 
\log(s) = \int\limits_0^\infty
\bigl(e^{-t}-e^{-ts}\bigr)\frac{dt}{t}
\,\,\,\,\text{(Frullani)},\,\,\Re(s)>0.
\end{equation}
we established in \cite{MeIMRN} the following result.
%%Given $b, c, d,$ and $\tau$ such that
%%$b>-1,$ $b+c>-1,$ $b+d>-1,$ $b+c+d>-1,$ and $\tau>0,$ then 
Given  $b, c, d>0$ and $\Re(q)<b,$ there holds the identity
\begin{align}
\exp\left(\int\limits_0^\infty
\frac{dx}{x}\frac{e^{-bx}(1-e^{-cx})(1-e^{-dx})}{(1-e^{-x})(1-e^{-x\tau})}
(e^{xq}-1)\right) = &
\frac{G(b\,|\,\tau)}{G(b-q\,|\,\tau)}\frac{G(b-q+c\,|\,\tau)}{G(b+c\,|\,\tau)}\times \nonumber\\
&\times \frac{G(b-q+d\,|\,\tau)}{G(b+d\,|\,\tau)}
\frac{G(b+c+d\,|\,\tau)}{G(b-q+c+d\,|\,\tau)}.\label{Gratioidentity}
\end{align}
Finally, the last property is the Shintani factorization, cf. \cite{Shi}.
Given $z\in\mathbb{C}$ and
$|\arg(\tau)|<\pi,$ then
\begin{equation}\label{ShintaniG}
G(z+\tau\,|\,\tau) = (2\pi)^{\frac{(\tau-1)}{2}} \tau^{-\frac{1}{2}}
e^{\gamma\frac{(z-z^2)}{2\tau}} \prod\limits_{m=1}^\infty
(m\tau)^{z-1} e^{\frac{(z^2-z)}{2m\tau}} \frac{\Gamma(1+m\tau)}
{\Gamma(z+m\tau)},
\end{equation}
or, equivalently, in terms of the double gamma function in its classical renormalization,
\begin{equation}\label{ShintaniGamma}
\Gamma_2(z\,|\,1,\tau) = (2\pi)^{\frac{z}{2}}
\tau^{\frac{(z-z^2)}{2\tau} - \frac{z}{2}}
e^{\gamma\frac{(z^2-z)}{2\tau}} \Gamma(z) \prod\limits_{m=1}^\infty
(m\tau)^{1-z} e^{\frac{(z-z^2)}{2m\tau}} \frac{\Gamma(z+m\tau)}
{\Gamma(1+m\tau)}.
\end{equation}
We refer the interested reader to \cite{KataOhts} for a further discussion of the double and multiple gamma functions
in the classical normalization. 

We note that the well-known Barnes $G(z)$ function, confer \cite{Genesis} and \cite{SriCho}, is a special case of
the Alexeiewsky-Barnes
function. 
\begin{equation}\label{Gdef}
G(z) \triangleq G(z\,|\,\tau=1).
\end{equation}
It satisfies the functional equation
\begin{equation}
G(z+1) = \Gamma(z)\,G(z).
\end{equation}
It is known that $G(z)$ is entire with zeroes at $z=-n, \,n\in\mathbb{N},$ of
multiplicity $n+1.$

The \emph{modern} normalization of the double and, more generally, multiple gamma functions is due to Ruijsenaars \cite{Ruij}.
Its advantages over the classical approach is that it simplifies some formulas, notably Barnes multiplication and scaling, and streamlines integral representations and asymptotic expansions.
The approach of Ruijsenaars is based on multiple zeta functions. Define the function
\begin{equation}\label{fdef}
f(t) = t^M \prod\limits_{j=1}^M (1-e^{-a_j t})^{-1}
\end{equation}
for some integer $M\geq 0$ and parameters $a_j>0,$ $j=1\cdots M.$
Slightly
modifying the definition in \cite{Ruij}, we define multiple
Bernoulli polynomials for $a=(a_1,\cdots, a_M)$ by
\begin{equation}\label{Bdefa}
B_{M, m}(x\,|\,a) \triangleq \frac{d^m}{dt^m}\Big|_{t=0} \bigl[f(t)
e^{-xt}\bigr].
\end{equation}
The generalized zeta function is defined by
\begin{equation}\label{zdef}
\zeta_M(s, \,w\,|\,a) \triangleq \frac{1}{\Gamma(s)} \int\limits_0^\infty
t^{s-1} e^{-wt}\,f(t) \,\frac{dt}{t^M}, \,\,\Re(s)>M,\,\Re(w)>0.
\end{equation}
It is shown in \cite{Ruij} that $\zeta_M(s, \,w)$ has the analytic
continuation to a meromorphic function in $s\in\mathbb{C}$
with simple poles at $s=1, 2, \cdots M.$ The generalized log-multiple gamma
function is then defined by
\begin{equation}\label{Ldef}
L_M(w\,|\,a) \triangleq \partial_s \zeta_M(s, \,w\,|\,a)|_{s=0}, \,\,\Re(w)>0.
\end{equation}
It can be analytically continued to a function that is holomorphic
over $\mathbb{C}-(-\infty, 0].$ The key result of \cite{Ruij} 
is the following  Malmst\'en-type formula for $L_M(w\,|\,a).$
Let $\Re(w)>0.$
\begin{equation}\label{key}
L_M(w\,|\,a) = \int\limits_0^\infty \frac{dt}{t^{M+1}} \Bigl(
e^{-wt}\,f(t) - \sum\limits_{k=0}^{M-1} \frac{t^k}{k!}\,B_{M,k}(w\,|\,a)
- \frac{t^M\,e^{-t}}{M!}\, B_{M,M}(w\,|\,a)\Bigr).
\end{equation}
$L_M(w\,|\,a)$ satisfies the asymptotic expansion,
\begin{gather}
L_M(w\,|\,a) = -\frac{1}{M!} B_{M, M}(w\,|\,a)\,\log(w) + \sum\limits_{k=0}^M
\frac{B_{M,k}(0\,|\,a) (-w)^{M-k}}{k!(M-k)!}\sum\limits_{l=1}^{M-k}
\frac{1}{l} + R_M(w\,|\,a), \label{asym}\\
R_M(w\,|\,a) = O(w^{-1}), \,|w|\rightarrow\infty, \, |\arg(w)|<\pi.
\label{asymremainder}
\end{gather}
Now, it is not difficult to show that Eq.
\eqref{zdef} implies
\begin{equation}\label{mzdef}
\zeta_M\bigl(s,\,w\,|\,a\bigr) =
\sum\limits_{k_1,\cdots,k_M=0}^\infty \bigl(w+k_1 a_1+\cdots+k_M
a_M\bigr)^{-s},\,\,\Re(s)>M,\,\Re(w)>0,
\end{equation}
which is the formula given originally by
Barnes \cite{mBarnes} for the multiple zeta function.
Following \cite{Ruij}, define the Barnes multiple gamma function by
\begin{equation}\label{mgamma}
\Gamma_M(w\,|\,a) \triangleq \exp\bigl(L_M(w\,|\,a)\bigr).
\end{equation}
It follows from Eqs. \eqref{mzdef} and \eqref{mgamma} that
$\Gamma_M(w\,|\,a)$ satisfies the fundamental functional equation
in Eq. \eqref{feq}.
The multiple gamma function in the modern normalization has the following properties, cf. \cite{Me14} for a detailed review.
The first property is scaling invariance. 
Let \(\Re(w)>0,\) \(\kappa>0\) and \((\kappa\,a)_i\triangleq\kappa\,a_i,\;i=1\cdots M.\)
\begin{equation}\label{scale}
\Gamma_M(\kappa w\,|\,\kappa a) = \kappa^{-B_{M,M}(w\,|\,a)/M!}\,\Gamma_M(w\,|\,a).
\end{equation}
In the case of classical normalization, this result appears to be due to \cite{KataOhts}.
It was re-discovered in \cite{Kuz} in the special case of $M=2.$
The second property is Barnes multiplication.
Let $\Re(w)>0$ and $k=1,2,3,\cdots.$
\begin{equation}\label{multiplic}
\Gamma_M(kw\,|\,a) = k^{-B_{M,
M}(kw\,|\,a)/M!}\,\prod\limits_{p_1,\cdots,p_M=0}^{k-1}\Gamma_M\Bigl(w+\frac{\sum_{j=1}^M
p_j a_j}{k}\,\Big|\,a\Bigr).
\end{equation}
In the classical case, this result is due to \cite{mBarnes}. We will be particularly interested in the special case of $M=2$
and record the formula for $B_{2,2}(w\,|\,a)$ for future convenience.
\begin{equation}\label{B22}
B_{2,2}(w\,|\,a) =
\frac{w^2}{a_1a_2}-\frac{w(a_1+a_2)}{a_1a_2}+\frac{a_1^2+3a_1a_2+a_2^2}{6a_1a_2}.
\end{equation}
Finally, we state the general Shintani factorization in a slightly simplified way that is sufficient for our needs.
Given arbitrary $x>0,$ there exist functions \(\Psi_{M+1}(w,y\,|\,a)\) and 
   $\phi_{M+1}\bigl(w,y\,|\,a, a_{M+1}\bigr)$ such that
\begin{align}
\Gamma_{M+1}\bigl(w\,|\,a,a_{M+1}\bigr) = & \prod\limits_{k=1}^\infty \frac{\Gamma_M(w+ka_{M+1}\,|\,a)}{\Gamma_M(x+ka_{M+1}\,|\,a)}e^{\Psi_{M+1}(x,\,ka_{M+1}\,|\,a)-\Psi_{M+1}(w,\,ka_{M+1}\,|\,a)} \times \nonumber \\
&\times \exp{\bigl(\phi_{M+1}(w,x\,|\,a, a_{M+1})\bigr)}\, \Gamma_{M}(w\,|\,a). \label{generalfactorization}
\end{align}
\(\Psi_{M+1}(w,y\,|\,a)\) and \(\phi_{M+1}(w,y\,|\,a, a_{M+1})\) are \emph{polynomials} in $w$ of degree $M+1.$
Explicit formulas for these functions are given in \cite{Me14}. In the classical normalization this type of factorization, cf. Eq. \eqref{ShintaniGamma} above, was discovered in \cite{Shi} for $M=1$ and 
extended to general $M$ in \cite{KataOhts}.  We gave new proofs for $M=1$  using the classical normalization in \cite{MeIMRN}  and of the general case in the modern normalization in \cite{Me14}.

For concreteness, in what follows we will write $\Gamma_M(z\,|\,a)$ to mean the multiple gamma function in the modern as opposed to classical normalization, unless specifically stated to the contrary. As we will see, the key formulas are invariant of the choice of normalization as the
fundamental functional equation Eq. \eqref{feq} is the same in both normalizations and our formulas involve \emph{ratios}. This is in particular true of  the identity in Eq. \eqref{repeated}.

We conclude this section with a brief mention of the multiple sine function, which occurs in the context of ratios of Barnes beta distributions
as well as in the analytic continuation of the complex Selberg integral.
The multiple sine function \cite{KurKoya} is defined by
\begin{equation}\label{msinedef}
S_M(w|a) \triangleq \frac{\Gamma_M(|a|-w|a)^{(-1)^M}}{\Gamma_M(w|a)},
\end{equation}
where $M=0,1,2\cdots,$ $|a|=\sum_{i=1}^M a_i,$ and $a=(a_1,\cdots, a_M)$ are fixed positive
constants. %%This function plays an important role in ...
It satisfies the same functional equation as the multiple gamma function,
\begin{equation}\label{feqsine}
S_{M}(w\,|\,a) = S_{M-1}(w\,|\,\hat{a}_i)\,S_M\bigl(w+a_i\,|\,a\bigr),\,i=1\cdots
M, 
\end{equation}
$\hat{a}_i = (a_1,\cdots, a_{i-1},\,a_{i+1},\cdots, a_{M}).$ In particular, when $M=1,$ we recover the classical sine function,
\begin{equation}
S_1(w\,|\,a) = 2\sin(\pi w/a).
\end{equation}
Let $a=(1, \, \tau).$ Given the functional equation, instead of Eq. \eqref{repeated}, we have the identity,
\begin{align}
\frac{S_2(w+1-k\,|\,1,\tau)}{S_2(w+1\,|\,1,\tau)} = & \prod\limits_{j=0}^{k-1}
S_1\bigl(w-j\,|\,1,\tau\bigr), \nonumber \\
= & 
\prod\limits_{j=0}^{k-1} 2 \sin\pi\bigl(\frac{w}{\tau}-\frac{j}{\tau}\bigr). \label{Srepeated}
\end{align}

In most of the applications of the double gamma function below its second argument is $a=(1,\,\tau).$ 
In all such cases we will write $\Gamma_2(\cdot\,|\,\tau)$ as an abbreviation for $\Gamma_2(\cdot\,|\,1, \tau).$

%MUST SAY tau means (1, \tau)

%%\section{Bacry-Muzy GMC on the Circle: Analytical Approach}\label{CirAnalytical}
\section{Morris Integral Distribution: Analytical Approach}\label{CirAnalytical}
\noindent
In this section we will construct a positive probability distribution having the properties that it matches the moments and intermittency
expansion of the total mass of the Bacry-Muzy GMC on the circle as specified in Eqs. \eqref{momlambda} and \eqref{cpncircle} in Section \ref{Problem}.
Our approach in this section is analytical and focused on the Mellin transform of what we call the Morris integral probability distribution.
Its moments match the full Morris integral in Eq. \eqref{morris2}. 
The fine probabilistic structure of this distribution is described in Section \ref{Probabilistic} below. The proofs of all results
in this section are given in Section \ref{proofsanalytical}.

Define the function 
\begin{align}
\mathfrak{M}(q\,|\tau,\,\lambda_1,\,\lambda_2)=&\frac{\tau^{\frac{q}{\tau}}}{\Gamma^q\bigl(1-\frac{1}{\tau}\bigr)}
\frac{\Gamma_2(\tau(\lambda_1+\lambda_2+1)+1-q\,|\,\tau)}{\Gamma_2(\tau(\lambda_1+\lambda_2+1)+1\,|\,\tau)}
\frac{\Gamma_2(-q+\tau\,|\,\tau)}{\Gamma_2(\tau\,|\,\tau)}\times \nonumber \\ & \times
\frac{\Gamma_2(\tau(1+\lambda_1)+1\,|\,\tau)}{\Gamma_2(\tau(1+\lambda_1)+1-q\,|\,\tau)}
\frac{\Gamma_2(\tau(1+\lambda_2)+1\,|\,\tau)}{\Gamma_2(\tau(1+\lambda_2)+1-q\,|\,\tau)}. \label{thefunctioncircle}
\end{align}
Equivalently, we have the identity in terms of the Alexeiewsky-Barnes $G-$function.
\begin{align}
\mathfrak{M}(q\,|\tau,\,\lambda_1,\,\lambda_2)=&\frac{1}{\Gamma^q\bigl(1-\frac{1}{\tau}\bigr)}
\frac{G(\tau(\lambda_1+\lambda_2+1)+1\,|\,\tau)}{G(\tau(\lambda_1+\lambda_2+1)+1-q\,|\,\tau)}
\frac{G(\tau\,|\,\tau)}{G(-q+\tau\,|\,\tau)}\times \nonumber \\ & \times
\frac{G(\tau(1+\lambda_1)+1-q\,|\,\tau)}{G(\tau(1+\lambda_1)+1\,|\,\tau)}
\frac{G(\tau(1+\lambda_2)+1-q\,|\,\tau)}{G(\tau(1+\lambda_2)+1\,|\,\tau)}. \label{MG}
\end{align}
Then, we have the following results. 
\begin{theorem}\label{InfinFacCir}
The function $\mathfrak{M}(q\,|\tau,\,\lambda_1,\,\lambda_2)$ has the infinite product factorization, 
\begin{align}
\mathfrak{M}(q\,|\tau,\,\lambda_1,\,\lambda_2) = & \frac{\Gamma(1-q/\tau)}{\Gamma^q(1-1/\tau)} \prod\limits_{m=1}^\infty \Bigl[
\frac{\Gamma(1-q+m\tau)}{\Gamma(1+m\tau)}  \frac{\Gamma(1+\tau\lambda_1+m\tau)}{\Gamma(1-q+\tau\lambda_1+m\tau)} 
\frac{\Gamma(1+\tau\lambda_2+m\tau)}{\Gamma(1-q+\tau\lambda_2+m\tau)} \nonumber \times \\  & \times \frac{\Gamma(1-q+\tau(\lambda_1+\lambda_2)+m\tau)}{\Gamma(1+\tau(\lambda_1+\lambda_2)+m\tau)}\Bigr].
\end{align}
%%and satisfies the functional equations ....
\end{theorem}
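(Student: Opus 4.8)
The plan is to prove the identity by substituting the Shintani infinite-product factorization \eqref{ShintaniGamma} of the double gamma function into each of the eight $\Gamma_2$-factors appearing in the definition \eqref{thefunctioncircle} of $\mathfrak{M}$ and then collecting terms. (Equivalently one may start from the $G$-function representation \eqref{MG} and use \eqref{ShintaniG}; the two routes are identical up to reciprocals, and since $\mathfrak{M}$ is built entirely from \emph{balanced} ratios---four arguments in the numerator and four in the denominator whose linear and quadratic parts will be seen to match---the choice of classical versus modern normalization is immaterial.) Writing the numerator arguments as $A=\tau(\lambda_1+\lambda_2+1)+1-q$, $C=\tau-q$, $E=\tau(1+\lambda_1)+1$, $H=\tau(1+\lambda_2)+1$ and the denominator arguments as $B=A+q$, $D=\tau$, $F=E-q$, $K=H-q$, the factorization expresses each $\Gamma_2(z\,|\,\tau)$ as an algebraic prefactor $(2\pi)^{z/2}\tau^{(z-z^2)/(2\tau)-z/2}e^{\gamma(z^2-z)/(2\tau)}\Gamma(z)$ times the regularized product $\prod_{m\geq1}(m\tau)^{1-z}e^{(z-z^2)/(2m\tau)}\Gamma(z+m\tau)/\Gamma(1+m\tau)$.

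First I would dispose of the algebraic prefactors. A direct check gives $A+C+E+H=B+D+F+K$, so the $(2\pi)^{z/2}$ and $\tau^{-z/2}$ contributions cancel identically, as do the $(m\tau)^{1-z}$ powers inside every $m$-factor. For the quadratic data one computes $A^2+C^2+E^2+H^2-(B^2+D^2+F^2+K^2)=2q$; consequently the $\tau^{(z-z^2)/(2\tau)}$ prefactors contract to $\tau^{-q/\tau}$, which cancels the explicit $\tau^{q/\tau}$ standing in front of \eqref{thefunctioncircle}, while the $e^{\gamma(z^2-z)/(2\tau)}$ prefactors leave behind a single factor $e^{\gamma q/\tau}$. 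The $\Gamma^q(1-1/\tau)$ in the definition simply passes through to the stated prefactor.

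Next I would combine the surviving gamma content. Because the explicit $\Gamma(z)$ is exactly the $m=0$ term of $\Gamma(z+m\tau)/\Gamma(1+m\tau)$, the numerator factors assemble into $\prod_{m\geq0}\Gamma(z+m\tau)$ and the $\Gamma(1+m\tau)$ normalizations cancel between the four numerator and four denominator factors. Using $A=1-q+\tau(\lambda_1+\lambda_2)+\tau$, $E=1+\tau\lambda_1+\tau$, $H=1+\tau\lambda_2+\tau$, $C=\tau-q$ (and likewise $B,F,K,D$), the shift $m\mapsto m+1$ induced by the ``$+\tau$'' in each argument turns three of the four resulting ratios into precisely the $\lambda_1+\lambda_2$, $\lambda_1$, and $\lambda_2$ ratios displayed in the theorem. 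The remaining ratio is $\prod_{m\geq1}\Gamma(m\tau-q)/\Gamma(m\tau)$, which is \emph{not} yet the target ratio $\Gamma(1-q+m\tau)/\Gamma(1+m\tau)$; the functional equation $\Gamma(1+x)=x\Gamma(x)$ converts one into the other at the cost of the extra divergent factor $\prod_{m\geq1}m\tau/(m\tau-q)$.

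The crux of the argument---and the step I expect to require the most care---is the regularization of this last product against the leftover convergence factors. Collecting the $e^{(z-z^2)/(2m\tau)}$ terms over all eight arguments gives, per $m$, the factor $e^{-q/(m\tau)}$, so the genuinely delicate object is $\prod_{m\geq1}\frac{m\tau}{m\tau-q}\,e^{-q/(m\tau)}=\prod_{m\geq1}\frac{e^{-(q/\tau)/m}}{1-(q/\tau)/m}$. Here the Weierstrass product for the gamma function identifies this limit with $e^{-\gamma q/\tau}\,\Gamma(1-q/\tau)$; the Euler--Mascheroni term $e^{-\gamma q/\tau}$ then cancels exactly the $e^{\gamma q/\tau}$ produced by the algebraic prefactors in the second step, leaving $\Gamma(1-q/\tau)$. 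Assembling everything yields $\mathfrak{M}$ equal to $\Gamma(1-q/\tau)\,\Gamma^{-q}(1-1/\tau)$ times the stated product, completing the proof. The only real subtlety is that neither the reindexed ratio nor the bare convergence factors converge separately, so one must keep them paired throughout; once combined, the Weierstrass formula closes the computation.
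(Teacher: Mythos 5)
Your proof is correct, and it rests on the same pillar as the paper's proof, namely the Shintani factorization, but the execution differs in one substantive step. The paper works from the $G$-function form \eqref{MG} and \emph{first} applies the functional equation \eqref{Gfunct1} to the ratio $G(\tau\,|\,\tau)/G(-q+\tau\,|\,\tau)$, which simultaneously extracts the factor $\Gamma(1-q/\tau)$ and replaces the arguments $\tau,\ \tau-q$ by $1+\tau,\ 1-q+\tau$. After this pre-normalization the eight arguments are perfectly balanced (equal linear \emph{and} quadratic sums), so when Shintani's identity \eqref{ShintaniG} is applied, every non-gamma term --- the algebraic prefactors, the powers $(m\tau)^{z-1}$, and all the per-$m$ convergence exponentials --- cancels identically, and the stated product appears with no regularization argument at all. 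You instead apply \eqref{ShintaniGamma} directly to the unbalanced form \eqref{thefunctioncircle}, so the entire burden of the quadratic mismatch ($2q$, as you correctly compute) is carried to the end: the leftover $\tau^{-q/\tau}$ and $e^{\gamma q/\tau}$, the per-$m$ factors $e^{-q/(m\tau)}$, and the conversion of $\prod_{m\geq 1}\Gamma(m\tau-q)/\Gamma(m\tau)$ into the target ratio must all be resummed through the Weierstrass product, which is exactly where your care about keeping the two divergent products paired is needed. Both routes are valid and your regrouping at the end is legitimate, since after pairing, each of the two regrouped products is of the form $\prod_m\bigl(1+O(m^{-2})\bigr)$ and hence converges absolutely; the paper's route is shorter and cancellation-free, while yours makes explicit how the regularized product reconstitutes $\Gamma(1-q/\tau)$.

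One imprecision should be repaired. Your parenthetical claim that the choice of classical versus modern normalization is immaterial because the numerator and denominator arguments have matching ``linear and quadratic parts'' contradicts your own computation two sentences later, where you find the quadratic sums differ by exactly $2q$. The correct justification is different: both normalizations satisfy the same functional equations \eqref{feq} with the same $\Gamma_1$, so their ratio is a translation-invariant entire function and hence a $z$-independent constant; since $\mathfrak{M}$ contains four $\Gamma_2$ factors in the numerator and four in the denominator, any such constant cancels. With that repair your argument is complete.
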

\begin{theorem}\label{CirMoments}
The function $\mathfrak{M}(q\,|\tau,\,\lambda_1,\,\lambda_2)$ reproduces the product in Eq. \eqref{morris2} when $q=n<\tau.$
The values of $\mathfrak{M}(q\,|\tau,\,\lambda_1,\,\lambda_2)$ at the negative integers, $q=-n,$ $n\in\mathbb{N},$ are
\begin{equation}\label{CirNegMoments}
\mathfrak{M}(-n\,|\tau,\,\lambda_1,\,\lambda_2)  =  \prod\limits_{j=0}^{n-1} \frac{\Gamma(1+\lambda_1+\frac{(j+1)}{\tau}) \,\Gamma(1+\lambda_2+\frac{(j+1)}{\tau})\Gamma(1-\frac{1}{\tau})}{\Gamma(1+\lambda_1+\lambda_2+\frac{(j+1)}{\tau})\,\Gamma(1+\frac{j}{\tau})}. 
\end{equation}
\end{theorem}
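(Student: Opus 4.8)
The plan is to work entirely from the Alexeiewsky--Barnes representation \eqref{MG} of $\mathfrak{M}$, since there the $\tau$-dependent prefactor has collapsed to $\Gamma^{-q}(1-1/\tau)$ and every remaining factor is a ratio of two $G$-functions whose arguments differ by exactly $q$. The single engine of the whole argument is the functional-equation corollary \eqref{Grepeated}, which converts such a ratio into a finite product of ordinary gamma factors whenever the shift is an integer. For $q=n>0$ I will read \eqref{Grepeated} in the forward direction; for $q=-n$ I will read it in reverse. Before applying it I would note that, because $\lambda_1,\lambda_2\geq 0$, $\tau>1$ and $n\geq 1$, all the $G$-arguments that appear (namely $\tau(\lambda_i+1)+1$, $\tau$, $\tau(\lambda_1+\lambda_2+1)+1$ and their $\pm n$ shifts) have positive real part and hence avoid the zero-lattice $z=-(m\tau+\ell)$ of $G(z\,|\,\tau)$; thus \eqref{Grepeated} may be applied as a genuine pointwise identity at each point.

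For the positive integers, set $q=n$ with $n<\tau$. Writing any factor as $G(A\,|\,\tau)/G(A-n\,|\,\tau)$ and applying \eqref{Grepeated} with $1+z=A$, $k=n$ gives $G(A\,|\,\tau)/G(A-n\,|\,\tau)=\prod_{j=0}^{n-1}\Gamma\bigl((A-1-j)/\tau\bigr)$. Applying this to the four ratios of \eqref{MG} with $A=\tau(\lambda_1+\lambda_2+1)+1$, $A=\tau$, and (inverted) $A=\tau(1+\lambda_1)+1$, $A=\tau(1+\lambda_2)+1$, and writing the prefactor as $\Gamma^{-n}(1-1/\tau)=\prod_{j=0}^{n-1}\Gamma^{-1}(1-1/\tau)$, each product runs over the same index $j=0,\dots,n-1$, and the four numerators/denominators assemble term-by-term into
\begin{equation}
\mathfrak{M}(n\,|\tau,\lambda_1,\lambda_2)=\prod_{j=0}^{n-1}\frac{\Gamma\bigl(1+\lambda_1+\lambda_2-\tfrac{j}{\tau}\bigr)\,\Gamma\bigl(1-\tfrac{j+1}{\tau}\bigr)}{\Gamma\bigl(1+\lambda_1-\tfrac{j}{\tau}\bigr)\,\Gamma\bigl(1+\lambda_2-\tfrac{j}{\tau}\bigr)\,\Gamma\bigl(1-\tfrac{1}{\tau}\bigr)},
\end{equation}
which is precisely the right-hand side of \eqref{morris2}. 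This settles the first assertion.

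For the negative integers, set $q=-n$, so that $1-q=1+n$ and every ratio of \eqref{MG} now relates arguments differing by $+n$. Here I would first record the reverse reading of \eqref{Grepeated}: setting $1+z-k=A$ and reindexing yields
\begin{equation}
\frac{G(A+n\,|\,\tau)}{G(A\,|\,\tau)}=\prod_{i=0}^{n-1}\Gamma\Bigl(\tfrac{A+i}{\tau}\Bigr).
\end{equation}
Applying this with $A=\tau(1+\lambda_1)+1$ and $A=\tau(1+\lambda_2)+1$ produces the numerator factors $\Gamma\bigl(1+\lambda_1+\tfrac{j+1}{\tau}\bigr)$ and $\Gamma\bigl(1+\lambda_2+\tfrac{j+1}{\tau}\bigr)$; applying it inverted with $A=\tau(\lambda_1+\lambda_2+1)+1$ and with $A=\tau$ produces the denominator factors $\Gamma\bigl(1+\lambda_1+\lambda_2+\tfrac{j+1}{\tau}\bigr)$ and $\Gamma\bigl(1+\tfrac{j}{\tau}\bigr)$; and the prefactor $\Gamma^{-q}(1-1/\tau)=\Gamma^{n}(1-1/\tau)$ supplies $\prod_{j=0}^{n-1}\Gamma(1-1/\tau)$. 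Collecting these gives exactly \eqref{CirNegMoments}.

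I do not expect a genuine obstacle here: the statement reduces to disciplined bookkeeping. The only two places demanding care are (i) the reindexing $i=n-1-j$ that turns the reverse form of \eqref{Grepeated} into the clean product $\prod_{i=0}^{n-1}\Gamma((A+i)/\tau)$, and (ii) matching the shift $A\mapsto A-1$ present in \eqref{Grepeated} against the explicit ``$+1$'' additive constants inside the $G$-arguments of \eqref{MG}, so that the residual $1/\tau$ in each gamma argument lands with the correct numerator $j$ versus $j+1$. As an independent check, one may verify the negative-integer values against the infinite-product factorization of Theorem~\ref{InfinFacCir} by substituting $q=-n$, whereupon each $\Gamma(1+n+\,\cdot\,+m\tau)/\Gamma(1+\,\cdot\,+m\tau)$ becomes a rising-factorial polynomial in $m\tau$ and the product over $m$ rearranges into the same finite gamma ratio; agreement of the two routes confirms the normalization.
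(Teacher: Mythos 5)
Your proof is correct and takes exactly the paper's route: the paper's own proof consists of the single remark that both formulas are immediate from Eq. \eqref{repeated}, or equivalently Eq. \eqref{Grepeated}, which is precisely the identity you apply (forward for $q=n$, reversed and reindexed for $q=-n$) to the four $G$-ratios of Eq. \eqref{MG}. Your write-up simply makes explicit the bookkeeping the paper leaves to the reader, and the resulting gamma products match Eqs. \eqref{morris2} and \eqref{CirNegMoments} exactly.
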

\begin{theorem}[Morris Integral Probability Distribution]\label{CircleExist}
The function $\mathfrak{M}(iq\,|\tau,\,\lambda_1,\,\lambda_2),$ $q\in\mathbb{R},$ $\tau>1,$
is the Fourier transform of an
infinitely divisible, absolutely continuous probability distribution on $\mathbb{R}$ %%the real line
with the L\'evy-Khinchine decomposition
\begin{align}
\log \mathfrak{M}(iq\,|\tau,\,\lambda_1,\,\lambda_2) = & \int\limits_0^\infty
\frac{dx}{x} (e^{ixq}-1) \frac{e^{-\tau x}\bigl(1-e^{-(1+\tau\lambda_1)x}\bigr)\bigl(1-e^{-(1+\tau\lambda_2)x}\bigr)}{(1-e^{-x})(1-e^{-x\tau})}
+ \nonumber \\ 
& + \int\limits_0^\infty
\frac{dx}{x} (e^{ixq}-1-ixq) \frac{e^{-x\bigl(1+\tau(\lambda_1+\lambda_2)\bigr)}}{e^{x\tau}-1} + iq \,const . \label{CirLKh}
\end{align}
\end{theorem}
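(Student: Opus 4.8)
The plan is to derive the Lévy–Khinchine representation \eqref{CirLKh} directly from the Alexeiewsky–Barnes form \eqref{MG} of $\mathfrak{M}$ and then read off infinite divisibility, unit mass, and absolute continuity from the resulting Lévy measure. Throughout I set the Mellin argument equal to $iq$ with $q\in\mathbb{R}$, so that $\Re(iq)=0<\tau$ and the hypotheses $b,c,d>0$, $\Re(\cdot)<b$ of the ratio identity \eqref{Gratioidentity} are met. First I would apply \eqref{Gratioidentity} with $b=\tau$, $c=1+\tau\lambda_1$, $d=1+\tau\lambda_2$ (positive since $\lambda_i\ge 0$, $\tau>1$). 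Its right-hand side reproduces three of the four $G$-ratios of \eqref{MG} verbatim, namely $G(\tau\,|\,\tau)/G(\tau-iq\,|\,\tau)$, $G(\tau(1+\lambda_1)+1-iq\,|\,\tau)/G(\tau(1+\lambda_1)+1\,|\,\tau)$, and the analogous $\lambda_2$ factor, while the fourth factor it produces, $G(\tau(1+\lambda_1+\lambda_2)+2\,|\,\tau)/G(\tau(1+\lambda_1+\lambda_2)+2-iq\,|\,\tau)$, is the target factor of \eqref{MG} shifted by one unit. A short bookkeeping check shows this shift is unavoidable: matching the four arguments of \eqref{MG} to the pattern $\{b,\,b+c,\,b+d,\,b+c+d\}$ forces $2b+c+d=\tau(2+\lambda_1+\lambda_2)+1$ from the numerators but $\tau(2+\lambda_1+\lambda_2)+2$ from the denominators, an over-determination by exactly one unit, so no choice of $(b,c,d)$ removes it. The left-hand side of \eqref{Gratioidentity} is precisely the first integral of \eqref{CirLKh}.

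Next I would correct the shifted factor. Writing $w=\tau(1+\lambda_1+\lambda_2)+1$ and using the functional equation \eqref{Gfunct1} in the form $G(w+1\,|\,\tau)=\Gamma(w/\tau)\,G(w\,|\,\tau)$ converts the shifted factor into the correct one at the cost of the elementary ratio $\Gamma\bigl((w-iq)/\tau\bigr)/\Gamma(w/\tau)$. I would then handle this residual $\Gamma$-ratio together with the prefactor $\Gamma^{-iq}(1-1/\tau)$. Setting $A=w/\tau=1+\lambda_1+\lambda_2+1/\tau$, the Malmstén formula \eqref{Malmsten} expresses $\log[\Gamma(A-iq/\tau)/\Gamma(A)]$ as an integral whose integrand, after the substitution $t=\tau x$, becomes $e^{-(1+\tau(\lambda_1+\lambda_2))x}\bigl(e^{ixq}-1\bigr)/(e^{\tau x}-1)$ against $dx/x$ minus a $\tfrac{iq}{\tau}e^{-\tau x}$ term. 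Splitting off a further $-ixq$ compensator converts the jump kernel into the form $e^{ixq}-1-ixq$ that appears in the second integral of \eqref{CirLKh}, while the leftover $iq\int_0^\infty\bigl[e^{-Bx}/(e^{\tau x}-1)-e^{-\tau x}/(\tau x)\bigr]dx$ (with $B=1+\tau(\lambda_1+\lambda_2)$, a convergent real integral by \eqref{Frullani}) together with the $-iq\log\Gamma(1-1/\tau)$ from the prefactor is absorbed into a single real term $iq\cdot\mathrm{const}$. Collecting the three pieces yields \eqref{CirLKh} exactly.

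With \eqref{CirLKh} established, the probabilistic conclusions follow from the Lévy–Khinchine theorem. The two integrands define a genuine Lévy exponent: both densities are manifestly nonnegative on $(0,\infty)$ for $\lambda_i\ge 0$, there is no Gaussian component, the first density has a $1/y$ singularity at the origin (finite-variation jumps, consistent with the bare $e^{ixq}-1$) while the second has a $1/y^2$ singularity (requiring the $e^{ixq}-1-ixq$ compensation), and $\int\min(1,y^2)\,\nu(dy)<\infty$. Since the drift constant is real, $\mathfrak{M}(iq\,|\tau,\lambda_1,\lambda_2)$ is a bona fide characteristic function; evaluating \eqref{CirLKh} at $q=0$ shows both integrals vanish, so $\mathfrak{M}(0)=1$ and the total mass is one. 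Absolute continuity then follows from the infiniteness of $\nu$: the $1/y^2$ behaviour of the second density gives $\int_{|y|\le r}y^2\,\nu(dy)\sim r/\tau$, so Orey's criterion (equivalently the smoothness theorem in Sato's theory of infinitely divisible laws) applies and in fact produces a smooth density.

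I expect the main obstacle to be the precise tracking of the linear-in-$q$ drift constants across the three algebraic steps. Because the unit shift is intrinsic, the two displayed integrals of \eqref{CirLKh} do not arise from a naive one-to-one reading of \eqref{MG}; one must follow the compensators carefully so as to land on exactly the kernels $e^{ixq}-1$ and $e^{ixq}-1-ixq$ rather than on variants differing by a drift, and to confirm that every residual contribution collapses into a single real multiple of $iq$. The absolute-continuity step is the other delicate point, since it relies on invoking the correct regularity criterion for the singularity of $\nu$ at the origin rather than on any explicit density computation.
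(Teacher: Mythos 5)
Your proposal is correct and follows essentially the same route as the paper's proof: both reduce \eqref{MG} to L\'evy--Khinchine form by combining the functional equation \eqref{Gfunct1} (to absorb the unit shift in the $\lambda_1+\lambda_2$ factor), the ratio identity \eqref{Gratioidentity} with $b=\tau,$ $c=1+\tau\lambda_1,$ $d=1+\tau\lambda_2,$ and Malmst\'en's formula \eqref{Malmsten} for the residual gamma ratio, with the linear-in-$q$ remainders collected into the drift. The only cosmetic differences are the order of the first two steps (the paper shifts first, you apply the ratio identity first and then repair the mismatched factor) and your appeal to Orey's criterion for absolute continuity where the paper cites Proposition 8.2 and Theorem 4.23 in Chapter 4 of \cite{SteVHar}.
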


\begin{corollary}\label{CirExistRV}
Denote the density of the probability distribution in Theorem \ref{CircleExist} by $f(x).$ The function
$q\rightarrow\mathfrak{M}(q\,|\,\tau,\lambda_1,\lambda_2)$ for
$\Re(q)<\tau$ and $\tau>1$ is the Mellin transform of the
probability density function $f(\log
y)/y,$ $y\in (0,\,\infty).$
%%having $(0,\,\infty)$ as its support
\end{corollary}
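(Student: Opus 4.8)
The plan is to deduce the corollary from Theorem \ref{CircleExist} by the exponential change of variables that turns a Fourier transform into a Mellin transform, the only genuine issue being the identification of the domain $\Re(q)<\tau$. Let $X$ denote the random variable whose law is furnished by Theorem \ref{CircleExist}, so that $X$ has density $f$ and characteristic function $\mathbf{E}[e^{iqX}]=\mathfrak{M}(iq\,|\,\tau,\lambda_1,\lambda_2)$ for $q\in\mathbb{R}$. Set $Y=e^{X}$; by the standard change of variables $y=e^{x}$ the positive random variable $Y$ has density $g(y)=f(\log y)/y$ on $(0,\infty)$, which is automatically a bona fide probability density since $f$ is (Theorem \ref{CircleExist} asserts absolute continuity). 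A direct substitution shows that the Mellin transform $\int_0^\infty y^{q}\,g(y)\,dy=\int_{-\infty}^\infty e^{qx}f(x)\,dx=\mathbf{E}[e^{qX}]$ equals the bilateral Laplace transform $L(q)\triangleq\mathbf{E}[e^{qX}]$ of $X$, so the claim reduces to the identity $L(q)=\mathfrak{M}(q\,|\,\tau,\lambda_1,\lambda_2)$ for $\Re(q)<\tau$.

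The two functions agree on the imaginary axis: for $q=it$ with $t\in\mathbb{R}$ one has $e^{qX}=e^{itX}$, so $L(it)=\mathbf{E}[e^{itX}]=\mathfrak{M}(it\,|\,\tau,\lambda_1,\lambda_2)$ by Theorem \ref{CircleExist}. Next I would record that $\mathfrak{M}(\cdot\,|\,\tau,\lambda_1,\lambda_2)$ is holomorphic on the half-plane $\Re(q)<\tau$: from the infinite-product representation of Theorem \ref{InfinFacCir} its only potential singularities there come from the prefactor $\Gamma(1-q/\tau)$, whose first pole lies at $q=\tau$, the individual factors in the product being analytic and non-vanishing in that region. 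It then suffices to show that $L(q)$ is finite and holomorphic on the \emph{same} half-plane, for then $L$ and $\mathfrak{M}$ are two holomorphic functions coinciding on $i\mathbb{R}$, and the identity theorem forces them to agree throughout $\Re(q)<\tau$, completing the proof.

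The substantive step, and the one I expect to be the main obstacle, is to verify that $X$ has exponential moments $\mathbf{E}[e^{qX}]<\infty$ for $\Re(q)<\tau$ and that $L$ is holomorphic there. I would read this off the L\'evy--Khinchine representation \eqref{CirLKh}: continuing $\log\mathfrak{M}(iq)$ by replacing $e^{ixq}$ with $e^{xq}$, the resulting integrals converge near $x=0$ because of the subtractions $(e^{xq}-1)$ and $(e^{xq}-1-xq)$, while their convergence as $x\to+\infty$ is dictated by the exponential decay rates of the two L\'evy densities. The density in the first integral decays like $e^{-\tau x}$, which forces the sharp constraint $\Re(q)<\tau$, whereas the density in the second decays like $e^{-(1+\tau+\tau(\lambda_1+\lambda_2))x}$ and imposes no further restriction. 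Hence the Laplace exponent $\log L(q)$ extends holomorphically to exactly $\Re(q)<\tau$, so $L(q)=\mathbf{E}[e^{qX}]$ is finite and holomorphic on that half-plane. The fact that the decay rate $\tau$ of the dominant L\'evy density coincides with the location $q=\tau$ of the first pole of $\mathfrak{M}$ is precisely the consistency that makes the identity theorem applicable on the full stated domain; once this exponential-integrability estimate is in hand, the remaining steps (change of variables and analytic continuation) are routine.
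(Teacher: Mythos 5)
Your proposal is correct, but at the decisive step it takes a different route from the paper. Both arguments begin identically: set $Y=e^X$, note that its density is $f(\log y)/y$, and reduce the claim to showing that ${\bf E}[e^{qX}]$ is finite and equals $\mathfrak{M}(q\,|\,\tau,\lambda_1,\lambda_2)$ on $\Re(q)<\tau$. The paper settles this in one stroke with the fundamental theorem of analytic characteristic functions (Theorem 7.1.1 of \cite{Lukacs}): the characteristic function of $X$ coincides with $\mathfrak{M}(iq\,|\,\tau,\lambda_1,\lambda_2)$, which is analytic in the strip $\Im(q)>-\tau$, and Lukacs' theorem then guarantees that the Fourier representation $\int_{\mathbb{R}}e^{iqx}f(x)\,dx$ remains valid throughout that strip --- the exponential integrability of $X$ is a conclusion of the theorem, not an input, and infinite divisibility plays no role. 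You instead obtain the exponential moments directly from the L\'evy--Khinchine formula \eqref{CirLKh}, noting that the dominant L\'evy density decays like $e^{-\tau x}/x$, and then finish with the identity theorem. This works, and it has the virtue of explaining why the abscissa is exactly $\tau$ (the decay rate of the leading L\'evy density, matching the first pole of $\Gamma(1-q/\tau)$), but one step you present as automatic does need a citation: the implication that convergence of the analytically continued L\'evy--Khinchine integral forces ${\bf E}[e^{qX}]<\infty$ and equality with its exponential is precisely the theorem on exponential moments of infinitely divisible distributions (Sato's Theorem 25.17, or the chapter on infinitely divisible analytic characteristic functions in \cite{Lukacs}); it is standard but not a triviality. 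With that reference supplied your argument is complete; it trades the paper's single classical theorem for a more hands-on verification that exposes the probabilistic origin of the domain $\Re(q)<\tau$, at the cost of consuming the infinitely divisible structure that the paper's proof never needs.
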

Denote the probability distribution corresponding to $f(\log y)/y$ by $M_{(\tau, \lambda_1,
\lambda_2)}.$ We call it the Morris integral probability distribution. Thus, we have established the identity
\begin{equation}
{\bf E} [M_{(\tau, \lambda_1,
\lambda_2)}^q] = \mathfrak{M}(q\,|\,\tau,\lambda_1,\lambda_2), \; \Re(q)<\tau.
\end{equation}
\begin{theorem}\label{CirAsymp}
The function $\log\mathfrak{M}(q\,|\,\tau,\lambda_1,\lambda_2)$ has
the asymptotic expansion as $\tau\rightarrow +\infty,$
\begin{gather}
\log\mathfrak{M}(q\,|\tau,\,\lambda_1,\,\lambda_2)\thicksim q\Bigl(\log\Gamma(1+\lambda_1+\lambda_2)-\log\Gamma(1+\lambda_1)-\log\Gamma(1+\lambda_2)\Bigr) + 
\nonumber \\ + \sum\limits_{p=1}^\infty 
\frac{1}{p\tau^p} \Bigl[\bigl(\zeta(p,\,1+\lambda_1+\lambda_2)-\zeta(p, 1+\lambda_1)-\zeta(p, 1+\lambda_2)\bigr)\frac{B_{p+1}(q)-B_{p+1}}{p+1}+\nonumber \\ +
\zeta(p)\frac{B_{p+1}(q+1)-B_{p+1}}{p+1}-q\zeta(p)\Bigr].
\end{gather}
\end{theorem}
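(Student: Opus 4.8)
The plan is to read the asymptotics directly off the Barnes $G$-function representation \eqref{MG}. Writing $\log\mathfrak{M}$ as the sum of the five logarithmic $G$-ratios appearing there together with the prefactor $-q\log\Gamma(1-1/\tau)$, I would treat each $G$-ratio by means of the integral function $I(q\,|\,a,\tau)$ of \eqref{Iintegral}: its closed form \eqref{IfuncG} identifies $\log\bigl(G(1+a+\tau\,|\,\tau)/G(1-q+a+\tau\,|\,\tau)\bigr)$ with $I(q\,|\,a,\tau)+q\log\Gamma(1+a/\tau)-\frac{q^2-q}{2\tau}\psi(1+a/\tau)$, and its $\tau\to+\infty$ expansion \eqref{IfuncGAsymptotic} supplies the $\zeta(p,\cdot)$-series. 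The whole argument is then a matter of matching each factor to the right value of $a$, adding the finitely many resulting asymptotic series, and collecting powers of $1/\tau$.

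First I would dispatch the three factors whose argument scales with $\tau$: the factor built on $\tau(\lambda_1+\lambda_2+1)+1$ corresponds to $a=\tau(\lambda_1+\lambda_2)$, while the two factors built on $\tau(1+\lambda_i)+1$ (which enter inverted) correspond to $a=\tau\lambda_i$. For these $a/\tau$ is constant, so \eqref{IfuncGAsymptotic} applies verbatim and, after reindexing $p=r+1$, yields $\sum_{p\ge2}\frac{\zeta(p,1+\cdot)}{p\tau^p}\frac{B_{p+1}(q)-B_{p+1}}{p+1}$. The terms $\pm q\log\Gamma(1+a/\tau)$ assemble into the leading order-$q$ term $q\bigl(\log\Gamma(1+\lambda_1+\lambda_2)-\log\Gamma(1+\lambda_1)-\log\Gamma(1+\lambda_2)\bigr)$, and, using the convention $\zeta(1,\cdot)=-\psi(\cdot)$ together with $\tfrac12(B_2(q)-B_2)=\tfrac12(q^2-q)$, the corrections $\mp\frac{q^2-q}{2\tau}\psi(1+\cdot)$ supply exactly the missing $p=1$ term of the same series. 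These three factors thus already reproduce the first line and the $\zeta(p,1+\cdot)$ part of the claimed expansion.

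The delicate factor is $G(\tau\,|\,\tau)/G(-q+\tau\,|\,\tau)$, whose argument $a=-1$ is $O(1)$, so \eqref{IfuncGAsymptotic} does not apply to it directly. I would first use the functional equation \eqref{Gfunct1} to peel off a gamma factor, $G(\tau\,|\,\tau)/G(-q+\tau\,|\,\tau)=\Gamma(1-q/\tau)\,G(1+\tau\,|\,\tau)/G(1-q+\tau\,|\,\tau)$, reducing the $G$-ratio to the case $a=0$. Then \eqref{IfuncG} and \eqref{IfuncGAsymptotic} with $a=0$ give $\sum_{p\ge2}\frac{\zeta(p)}{p\tau^p}\frac{B_{p+1}(q)-B_{p+1}}{p+1}$ plus a residual $\frac{(q^2-q)\gamma}{2\tau}$ coming from $\psi(1)=-\gamma$, which fills in the $p=1$ term (here $\zeta(1)=\gamma$). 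Expanding both $\log\Gamma(1-q/\tau)$ and the prefactor $-q\log\Gamma(1-1/\tau)$ by the series $\log\Gamma(a+z)=\log\Gamma(a)+\sum_p\frac{(-z)^p}{p}\zeta(p,a)$ of Section \ref{Problem}, the combined $\zeta(p)$ contribution becomes $\sum_{p\ge1}\frac{\zeta(p)}{p\tau^p}\bigl[(q^p-q)+\frac{B_{p+1}(q)-B_{p+1}}{p+1}\bigr]$.

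The final step is to invoke the Bernoulli difference identity $B_{p+1}(q+1)-B_{p+1}(q)=(p+1)q^p$ (the single-term case of the second summation formula of Section \ref{Problem}), giving $q^p+\frac{B_{p+1}(q)-B_{p+1}}{p+1}=\frac{B_{p+1}(q+1)-B_{p+1}}{p+1}$, which turns the preceding sum into $\sum_{p\ge1}\frac{\zeta(p)}{p\tau^p}\bigl[\frac{B_{p+1}(q+1)-B_{p+1}}{p+1}-q\bigr]$---precisely the $\zeta(p)$ part of the asserted expansion. I expect the main obstacle to be exactly this factor $G(\tau\,|\,\tau)/G(-q+\tau\,|\,\tau)$: one must peel off $\Gamma(1-q/\tau)$ to bring it into the scope of \eqref{IfuncGAsymptotic}, keep careful track of the shift between the $B_{p+1}(q)$- and $B_{p+1}(q+1)$-normalisations, and lean on the convention $\zeta(1)=\gamma$ to keep every $p=1$ term finite; the remaining factors are routine applications of \eqref{IfuncG}--\eqref{IfuncGAsymptotic}.
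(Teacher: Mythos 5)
Your proposal is correct and follows essentially the same route as the paper's proof: the paper also obtains Theorem \ref{CirAsymp} by applying Eqs. \eqref{IfuncG}--\eqref{IfuncGAsymptotic} to the four $G$-ratios in Eq. \eqref{MG} and collecting terms, with the factor $G(\tau\,|\,\tau)/G(-q+\tau\,|\,\tau)$ handled via the functional equation exactly as you do (cf. the proof of Theorem \ref{InfinFacCir} and the analogous $\log\Gamma(1-1/\tau)$ identity used in the proof of Theorem \ref{IntAsymp}). Your write-up simply makes explicit the bookkeeping (the $p=1$ terms via $\zeta(1,a)=-\psi(a)$, and the Bernoulli shift $B_{p+1}(q+1)-B_{p+1}(q)=(p+1)q^p$) that the paper's terse "collect the terms" leaves to the reader.
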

\begin{theorem}\label{CicInvolution}
The Mellin transform is involution invariant under
\begin{equation}\label{invtranscircle}
\tau\rightarrow \frac{1}{\tau},\; q\rightarrow \frac{q}{\tau}, \; \lambda_i\rightarrow \tau\lambda_i.
\end{equation}
\begin{equation}
\mathfrak{M}\bigl(\frac{q}{\tau}\,\Big|\,\frac{1}{\tau},\tau\lambda_1,\tau\lambda_2\bigr) \,\Gamma^{\frac{q}{\tau}}(1-\tau) \Gamma(1-\frac{q}{\tau}) = 
\mathfrak{M}(q\,|\,\tau,\lambda_1,\lambda_2) \Gamma^{q}(1-\frac{1}{\tau}) \Gamma(1-q). \label{invcircle}
\end{equation}
\end{theorem}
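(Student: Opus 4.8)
The plan is to recognize the stated identity as the assertion that the auxiliary function $\tilde{\mathfrak{M}}(q\,|\,\tau,\lambda_1,\lambda_2) \triangleq \mathfrak{M}(q\,|\,\tau,\lambda_1,\lambda_2)\,\Gamma^{q}(1-1/\tau)\,\Gamma(1-q)$ is invariant under the involution $(\tau,q,\lambda_i)\mapsto(1/\tau, q/\tau, \tau\lambda_i)$. Indeed, tracking the elementary prefactor $\tau^{q/\tau}/\Gamma^{q}(1-1/\tau)$ of Eq. \eqref{thefunctioncircle} through the substitution shows that Eq. \eqref{invcircle} is precisely $\tilde{\mathfrak{M}}(q/\tau\,|\,1/\tau,\tau\lambda_1,\tau\lambda_2) = \tilde{\mathfrak{M}}(q\,|\,\tau,\lambda_1,\lambda_2)$, and that $\tilde{\mathfrak{M}}(q\,|\,\tau,\lambda_1,\lambda_2) = \tau^{q/\tau}\,\Gamma(1-q)\,R(q\,|\,\tau,\lambda_1,\lambda_2)$, where $R$ denotes the product of the four ratios of double gamma factors $\Gamma_2(\cdot\,|\,1,\tau)$ in Eq. \eqref{thefunctioncircle}. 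Thus it suffices to understand how $R$ transforms.

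The engine of the proof is the pair of structural properties of $\Gamma_2$: symmetry in its parameters, $\Gamma_2(z\,|\,1,1/\tau) = \Gamma_2(z\,|\,1/\tau,1)$, and the scaling invariance Eq. \eqref{scale} with $M=2$ and $\kappa=\tau$, which together give the conversion rule $\Gamma_2(w\,|\,1,1/\tau) = \tau^{B_{2,2}(w\,|\,1/\tau,1)/2}\,\Gamma_2(\tau w\,|\,1,\tau)$. Applying this to each of the four ratios of $R(q/\tau\,|\,1/\tau,\tau\lambda_1,\tau\lambda_2)$, I would first check that the arguments scale correctly: for the three ratios built from $\tau(\lambda_1+\lambda_2+1)+1$, $\tau(1+\lambda_1)+1$, and $\tau(1+\lambda_2)+1$, multiplying the involuted argument by $\tau$ reproduces \emph{exactly} the corresponding argument of $R(q\,|\,\tau,\lambda_1,\lambda_2)$, so each of these three ratios returns to its original form up to an explicit power of $\tau$.

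The one anomalous ratio is $\Gamma_2(-q+\tau\,|\,\tau)/\Gamma_2(\tau\,|\,\tau)$: after the involution and scaling its $\Gamma_2(\cdot\,|\,1,\tau)$ part becomes $\Gamma_2(1-q\,|\,1,\tau)/\Gamma_2(1\,|\,1,\tau)$ rather than the desired $\Gamma_2(\tau-q\,|\,1,\tau)/\Gamma_2(\tau\,|\,1,\tau)$. To repair this I would apply the two functional equations of $\Gamma_2(\cdot\,|\,1,\tau)$ (the shifts by $a_2=\tau$ and by $a_1=1$): the shift by $\tau$ converts $\Gamma_2(1-q\,|\,1,\tau)/\Gamma_2(1\,|\,1,\tau)$ into $\Gamma(1-q)\,\Gamma_2(1-q+\tau\,|\,1,\tau)/\Gamma_2(1+\tau\,|\,1,\tau)$, producing exactly the factor $\Gamma(1-q)$ on the right of Eq. \eqref{invcircle}, and the shift by $1$ then rewrites the remaining ratio as $\tau^{q/\tau}\,\Gamma(1-q/\tau)^{-1}$ times $\Gamma_2(\tau-q\,|\,1,\tau)/\Gamma_2(\tau\,|\,1,\tau)$, supplying the $\Gamma(1-q/\tau)$ factor on the left.

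Collecting everything, $R(q/\tau\,|\,1/\tau,\tau\lambda_1,\tau\lambda_2)$ equals $R(q\,|\,\tau,\lambda_1,\lambda_2)$ times $\Gamma(1-q)\,\tau^{q/\tau}\,\Gamma(1-q/\tau)^{-1}$ and times an overall power $\tau^{S/2}$, where $S$ is the sum over the four ratios of the differences of the Bernoulli polynomials $B_{2,2}(\cdot\,|\,1/\tau,1)$ coming from the scaling rule. The main (and only genuinely computational) obstacle is to show that this bookkeeping of powers of $\tau$ closes, namely that $S=2q$. Using the explicit formula Eq. \eqref{B22}, the constant term drops out of every difference, and after substituting the four argument pairs the $\lambda$-dependent terms cancel, the $q/\tau$ and $q^2/\tau$ terms cancel in aggregate, and only the net constant $2q$ survives. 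Feeding $S=2q$ together with the two $\Gamma$-factors back into the reduction of the first paragraph gives $\tilde{\mathfrak{M}}(q/\tau\,|\,1/\tau,\tau\lambda_1,\tau\lambda_2) = \tilde{\mathfrak{M}}(q\,|\,\tau,\lambda_1,\lambda_2)$, which is the claim. I expect the $\tau$-power bookkeeping $S=2q$ to be the crux; everything else is a direct application of symmetry, scaling invariance, and the two functional equations.
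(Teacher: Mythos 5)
Your proposal is correct and takes essentially the same route as the paper's own proof: both rest on the parameter symmetry and scaling invariance Eq.~\eqref{scale} of $\Gamma_2$ (you scale with $\kappa=\tau$, the paper with $\kappa=1/\tau$, which is equivalent), the explicit bookkeeping of $\tau$-powers via Eq.~\eqref{B22}, and the repair of the one anomalous ratio $\Gamma_2(\tau-q\,|\,\tau)/\Gamma_2(\tau\,|\,\tau)$ through the two functional equations, your repair step being exactly the paper's identity Eq.~\eqref{mydoublegammaidentity}. I verified your crux claim $S=2q$: with $B_{2,2}(w\,|\,1/\tau,1)=\tau w^2-(\tau+1)w+\mathrm{const}$ the $q^2$ and $\lambda$-dependent terms cancel across the four ratios and the net power is indeed $\tau^{q}$, so the argument closes as you describe.
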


It is worth pointing out that in the special case of $\lambda_1=\lambda_2=0,$ we have
\begin{equation}
\mathfrak{M}(q\,|\tau, 0, 0)=\frac{\Gamma\bigl(1-\frac{q}{\tau}\bigr)}{\Gamma^q\bigl(1-\frac{1}{\tau}\bigr)}, 
\end{equation}

\begin{conjecture}[Law of Total Mass]\label{ourmainconjcircle}
Let $M_{(\tau,\lambda_1,\lambda_2)}$ be as constructed in Theorem \ref{CircleExist}. Let 
\begin{equation}
\lambda_1=\lambda_2=\lambda,\footnote{This restriction is necessary as $M_{(\tau,\lambda_1,\lambda_2)}$ is real-valued whereas  
$\int_{-1/2}^{1/2} e^{2\pi i\psi \frac{\lambda_1-\lambda_2}{2}} \, |1+e^{2\pi i\psi}|^{\lambda_1+\lambda_2}\, M_{\beta}(d\psi)$ is not in general, unless $\lambda_1=\lambda_2.$  The problem of determining the law of $\int_{-1/2}^{1/2} e^{2\pi i\psi \frac{\lambda_1-\lambda_2}{2}} \, |1+e^{2\pi i\psi}|^{\lambda_1+\lambda_2}\, M_{\beta}(d\psi)$ for $\lambda_1\neq\lambda_2$ is left to future research.}
\end{equation}
%%I conjecture
\begin{equation}
M_{(\tau,\lambda,\lambda)} \overset{{\rm in \,law}}{=}   \int_{-1/2}^{1/2} |1+e^{2\pi i\psi}|^{2\lambda}\, M_{\beta}(d\psi),\; \tau=1/\beta^2>1.
\end{equation}
\end{conjecture}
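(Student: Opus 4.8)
The plan is to identify the two positive random variables through their \emph{negative} integer moments, exploiting the fact, emphasized in the Introduction, that for the Bacry--Muzy measure on the circle the negative moments determine the law uniquely. Write $M\triangleq\int_{-1/2}^{1/2}|1+e^{2\pi i\psi}|^{2\lambda}\,M_\beta(d\psi)$ for the total mass, and recall from Theorem~\ref{CircleExist} and Corollary~\ref{CirExistRV} that $M_{(\tau,\lambda,\lambda)}$ is a genuine positive distribution with Mellin transform $\mathfrak{M}(q\,|\,\tau,\lambda,\lambda)$ for $\Re(q)<\tau$, and in particular with negative integer moments given in closed form by Eq.~\eqref{CirNegMoments}. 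Since both $M$ and $M_{(\tau,\lambda,\lambda)}$ are almost surely positive, it suffices to prove that $1/M$ and $1/M_{(\tau,\lambda,\lambda)}$ have the same law, which I would do in three steps: (i) compute $\mathbf{E}[M^{-n}]$ from first principles; (ii) check that it equals $\mathfrak{M}(-n\,|\,\tau,\lambda,\lambda)$; and (iii) verify that this moment sequence determines the law of $1/M$ uniquely.

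I would dispose of the determinacy step (iii) first, as it is the routine part. The subcritical GMC total mass on a compact set is known to possess negative moments of all orders, so $1/M$ has all positive integer moments finite. Setting $m_n\triangleq\mathfrak{M}(-n\,|\,\tau,\lambda,\lambda)$, a Stirling analysis of Eq.~\eqref{CirNegMoments} shows that its $j$-th factor behaves like $(j/\tau)^{1/\tau}\,\Gamma(1-1/\tau)$ as $j\to\infty$, whence $m_n$ grows like $C^n\,(n!)^{1/\tau}$. Consequently $m_n^{-1/(2n)}\asymp n^{-1/(2\tau)}$, and since $\tau>1$ the series $\sum_n m_n^{-1/(2n)}$ diverges; Carleman's condition then guarantees that the Stieltjes moment problem for $1/M$ is determinate. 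This is precisely the uniqueness invoked in the Introduction. Note that the \emph{positive} integer moments cannot serve this purpose: only those with $n<\tau$ are finite, and they already agree with $\mathfrak{M}$ by Theorem~\ref{CirMoments}, but carry no determinacy.

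The substantive content, and the main obstacle, is step (i): computing $\mathbf{E}[M^{-n}]$ from first principles. The only rigorous route known proceeds through the connection between GMC and Liouville conformal field theory of \cite{David}, following Remy \cite{Remy}. One writes $\mathbf{E}[M^{-n}]=\Gamma(n)^{-1}\int_0^\infty t^{n-1}\,\mathbf{E}[e^{-tM}]\,dt$ and interprets the resulting quantity as a boundary correlation function on the unit disk, whose exact evaluation yields a product of Euler gamma factors. For the zero-potential case $\lambda=0$ Remy carried this out and recovered the Fr\'echet law of Fyodorov and Bouchaud \cite{FyoBou}, matching $\mathfrak{M}(-n\,|\,\tau,0,0)$. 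For $\lambda\neq0$ the logarithmic potential $|1+e^{2\pi i\psi}|^{2\lambda}=|e^{2\pi i\psi}-(-1)|^{2\lambda}$ amounts to inserting an additional vertex operator of charge proportional to $\lambda$ at the boundary point $e^{2\pi i\psi}=-1$, so that the relevant object becomes a boundary correlation function with an extra insertion rather than the pure one-point function. The hard part is therefore to establish the boundary-Liouville (DOZZ-type) structure of this augmented correlation function and to show that its integer-insertion values reproduce exactly the gamma-factor product in Eq.~\eqref{CirNegMoments}. This is the analytic heart of the problem and the reason the identity is presently only a conjecture.

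Once step (i) is in hand, step (ii) is an immediate comparison with Eq.~\eqref{CirNegMoments}, and combining it with the determinacy of step (iii) yields $1/M\overset{{\rm in\,law}}{=}1/M_{(\tau,\lambda,\lambda)}$, hence the claimed identity in law. As a consistency check independent of the conformal field theory input, one may verify that the high-temperature expansion of $\mathbf{E}[M^q]$ read off from Eqs.~\eqref{cpl} and \eqref{cpncircle} agrees to all orders with the asymptotic expansion of $\mathfrak{M}$ in Theorem~\ref{CirAsymp}; this does not prove the conjecture, since it is unknown whether the intermittency expansion determines the law, but it furnishes strong corroborating evidence.
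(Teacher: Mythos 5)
The statement you are addressing is a \emph{conjecture} in the paper, not a theorem: the paper offers no proof of it anywhere. After stating it, the paper only records attributions --- the $\lambda=0$ case is due to \cite{FyoBou} and was verified rigorously in \cite{Remy} via the Liouville CFT connection, while the general-$\lambda$ statement originates in \cite{Me16} and remains open. So there is no ``paper's own proof'' to compare yours against, and your proposal should be judged as a proof strategy rather than matched to an existing argument.

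Viewed that way, your reduction is sound and, to your credit, honest about where it breaks. Steps (ii) and (iii) are fine: the moment identification at negative integers is exactly Eq.~\eqref{CirNegMoments} of Theorem~\ref{CirMoments}, and your Carleman estimate is correct --- each factor of $\mathfrak{M}(-n\,|\,\tau,\lambda,\lambda)$ indeed grows like $(j/\tau)^{1/\tau}\Gamma(1-1/\tau)$, giving $m_n\asymp C^n(n!)^{1/\tau}$ and hence $\sum_n m_n^{-1/(2n)}=\infty$ for $\tau>1$. Note that this is a genuinely different route to determinacy than the one the paper takes: in Theorem~\ref{theoremcircle} determinacy of the moment problem for $M^{-1}_{(\tau,\lambda_1,\lambda_2)}$ is deduced probabilistically from the factorization into a compactly supported $\beta_{2,2}$ factor and a Fr\'echet $\beta_{1,0}$ factor, whereas your argument works directly on the explicit moment sequence and therefore applies to \emph{any} random variable realizing those moments --- which is what step (iii) actually requires. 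The genuine gap is step (i), and it is not a technical lacuna but the open problem itself: extending Remy's computation of $\mathbf{E}[M^{-n}]$ from the zero-potential case to the case with an extra boundary insertion of charge proportional to $\lambda$ requires a boundary DOZZ-type formula for the augmented correlation function, which is not established. Since you state this explicitly, your proposal is best read as a correct reduction of Conjecture~\ref{ourmainconjcircle} to that Liouville-theoretic computation, plus a self-contained proof of the uniqueness half --- but it is not, and does not claim to be, a proof of the conjecture.
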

This conjecture for $\lambda=0$ is due to \cite{FyoBou}. It was recently verified in \cite{Remy}. The general case is due to
\cite{Me16}.

%%\section{Bacry-Muzy GMC on the Interval: Analytical Approach}\label{IntAnalytical}
\section{Selberg Integral Distribution: Analytical Approach}\label{IntAnalytical}
\noindent
In this section we will construct a positive probability distribution having the properties that it matches the moments and intermittency
expansion of the total mass of the Bacry-Muzy GMC on the interval as specified in Eqs. \eqref{Selberg} and \eqref{cpninterval} in Section \ref{Problem}.
Our approach in this section is analytical and focused on the Mellin transform of what we call the Selberg integral probability distribution.
The fine probabilistic structure of this distribution is described in Section \ref{Probabilistic} below. The proofs of all results
in this section are given in Section \ref{proofsanalytical}.

Define the function
\begin{align}
\mathfrak{M}(q\,|\,\tau,\lambda_1,\lambda_2)  \triangleq &
\Bigl(\frac{2\pi\,\tau^{\frac{1}{\tau}}}{\Gamma\bigl(1-1/\tau\bigr)}\Bigr)^q\;
%\tau^{\frac{q}{\tau}} (2\pi)^{q}\,\Gamma^{-q}\bigl(1-1/\tau\bigr)
\frac{\Gamma_2(1-q+\tau(1+\lambda_1)\,|\,\tau)}{\Gamma_2(1+\tau(1+\lambda_1)\,|\,\tau)}
\frac{\Gamma_2(1-q+\tau(1+\lambda_2)\,|\,\tau)}{\Gamma_2(1+\tau(1+\lambda_2)\,|\,\tau)}\times
\nonumber \\ & \times
\frac{\Gamma_2(-q+\tau\,|\,\tau)}{\Gamma_2(\tau\,|\,\tau)}
\frac{\Gamma_2(2-q+\tau(2+\lambda_1+\lambda_2)\,|\,\tau)}{\Gamma_2(2-2q+\tau(2+\lambda_1+\lambda_2)\,|\,\tau)}
\label{thefunctioninterval}
\end{align}
for $\Re(q)<\tau.$
Equivalently, we have the identity in terms of the Alexeiewsky-Barnes $G-$function.
\begin{gather}
\mathfrak{M}(q\,|\,\tau,\lambda_1,\lambda_2) =
\Gamma^{-q}\bigl(1-1/\tau\bigr)
%%\tau^{-\frac{1}{2}} (2\pi)^{\frac{\tau-1}{2}}
\frac{G(1+\tau(1+\lambda_1)\,|\,\tau)}{G(1-q+\tau(1+\lambda_1)\,|\,\tau)}
\frac{G(1+\tau(1+\lambda_2)\,|\,\tau)}{G(1-q+\tau(1+\lambda_2)\,|\,\tau)}\times
\nonumber \\ \times \frac{G(1+\tau\,|\,\tau)}{G(-q+\tau\,|\,\tau)}
\frac{G(2-2q+\tau(2+\lambda_1+\lambda_2)\,|\,\tau)}{G(2-q+\tau(2+\lambda_1+\lambda_2)\,|\,\tau)}. \label{MGI}
\end{gather}
\begin{theorem}\label{InfinFacInt}
Given $\Re(q)<\tau,$ 
\begin{gather}
\mathfrak{M}(q\,|\,\tau,\lambda_1,\lambda_2) =
\frac{\tau^{q}\Gamma\bigl(1-q/\tau\bigr)\Gamma\bigl(2-2q+\tau(1+\lambda_1+\lambda_2)\bigr)}{
\Gamma^{q}\bigl(1-1/\tau\bigr)\Gamma\bigl(2-q+\tau(1+\lambda_1+\lambda_2)\bigr)}
\prod\limits_{m=1}^\infty \left(m\tau\right)^{2q}
\frac{\Gamma\bigl(1-q+m\tau\bigr)}{\Gamma\bigl(1+m\tau\bigr)}
 \times \nonumber \\
\times
\frac{\Gamma\bigl(1-q+\tau\lambda_1+m\tau\bigr)}{\Gamma\bigl(1+\tau\lambda_1+m\tau\bigr)}
\frac{\Gamma\bigl(1-q+\tau\lambda_2+m\tau\bigr)}{\Gamma\bigl(1+\tau\lambda_2+m\tau\bigr)}
\frac{\Gamma\bigl(2-q+\tau(\lambda_1+\lambda_2)+m\tau\bigr)}{\Gamma\bigl(2-2q+\tau(\lambda_1+\lambda_2)+m\tau\bigr)}.\label{InfiniteSelberg}
\end{gather}
\end{theorem}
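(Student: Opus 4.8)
The plan is to reduce \eqref{InfiniteSelberg} to the Shintani factorization of the Alexeiewsky--Barnes function, by substituting \eqref{ShintaniG} into the $G$-function form \eqref{MGI}, exactly along the lines by which I would prove the simpler circle identity of Theorem~\ref{InfinFacCir}. First I would write each of the eight arguments appearing in \eqref{MGI} in the form $z+\tau$ and apply \eqref{ShintaniG} to it, recording for each factor its sign $\sigma_i=\pm1$ (according to whether the $G$ sits in the numerator or the denominator of $\mathfrak{M}$) together with the shift $z_i$; thus $z=1+\tau\lambda_1$ for $G(1+\tau(1+\lambda_1)\,|\,\tau)$, $z=-q$ for $G(-q+\tau\,|\,\tau)$, $z=2-2q+\tau(1+\lambda_1+\lambda_2)$ for the last numerator factor, and so on.

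Next I would collect the three kinds of output of \eqref{ShintaniG}. The $m$-independent constant $(2\pi)^{(\tau-1)/2}\tau^{-1/2}$ carries total exponent $\sum_i\sigma_i=0$ (four $G$'s upstairs, four downstairs) and cancels, and for the same reason the factors $\Gamma(1+m\tau)$ inside the infinite product cancel. What remains is $e^{-\gamma c/(2\tau)}\prod_{m\ge1}(m\tau)^{P_1}e^{c/(2m\tau)}\,\Pi^{\Gamma}_m$, where $P_1=\sum_i\sigma_i(z_i-1)$, $c=\sum_i\sigma_i(z_i^2-z_i)$, and $\Pi^{\Gamma}_m$ is the ratio of the eight surviving gamma factors $\Gamma(z_i+m\tau)$. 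The only genuine algebra is the pair of polynomial identities $P_1=1+2q$ and $c=-2q(1+\tau)$, which I would check by the difference-of-squares bookkeeping of the four numerator/denominator pairs.

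Then I would match $\Pi^{\Gamma}_m$ against the product in \eqref{InfiniteSelberg}. The two $\lambda$-ratios coincide verbatim; the self-energy factor differs only through $\Gamma(1-q+m\tau)=(m\tau-q)\,\Gamma(-q+m\tau)$; and, writing $T^{(4)}_m=\Gamma(2-q+\tau(\lambda_1+\lambda_2)+m\tau)/\Gamma(2-2q+\tau(\lambda_1+\lambda_2)+m\tau)$ for the last ratio, the identity $\tau(1+\lambda_1+\lambda_2)=\tau(\lambda_1+\lambda_2)+\tau$ shows that the Shintani version of this factor is $T^{(4)}_{m+1}$, which telescopes. Combining these with the residual power $(m\tau)^{2q-P_1}=(m\tau)^{-1}$ (which together with $m\tau-q$ yields $1-q/(m\tau)$), each partial product $\prod_{m=1}^{M}$ reduces to $\prod_{m=1}^{M}(1-q/(m\tau))\,e^{-c/(2m\tau)}$ times the telescoping head--tail ratio $T^{(4)}_1/T^{(4)}_{M+1}$.

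Finally I would let $M\to\infty$, using the Gauss product $\prod_{m=1}^{M}(1-q/(m\tau))=\Gamma(M+1-q/\tau)/(\Gamma(1-q/\tau)\Gamma(M+1))\sim M^{-q/\tau}/\Gamma(1-q/\tau)$, the harmonic estimate $\sum_{m=1}^{M}1/m=\log M+\gamma+o(1)$ (so that $\prod e^{-c/(2m\tau)}\sim M^{-c/(2\tau)}e^{-\gamma c/(2\tau)}$), and $T^{(4)}_{M+1}\sim(M\tau)^{q}$. The expected main obstacle is precisely this passage to the limit: taken separately, $\prod(1-q/(m\tau))$, the regularizer, and the telescoping tail each diverge or vanish, and the product converges only because the total power of $M$, namely $-q/\tau-c/(2\tau)-q$, is zero --- which holds exactly when $c=-2q(1+\tau)$, the value found in the second paragraph. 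Once that cancellation is in force, the surviving data are $\Gamma(1-q/\tau)$, the telescoped head $T^{(4)}_1=\Gamma(2-q+\tau(1+\lambda_1+\lambda_2))/\Gamma(2-2q+\tau(1+\lambda_1+\lambda_2))$, the relevant power of $\tau$, and $e^{-\gamma c/(2\tau)}$, and these reassemble into exactly the prefactor of \eqref{InfiniteSelberg}, the factor $\Gamma^{-q}(1-1/\tau)$ being carried through untouched. The same scheme proves Theorem~\ref{InfinFacCir}: there the last $G$-ratio is not doubled, so neither the telescoping step nor the power $(m\tau)^{2q}$ appears and the limit is immediate.
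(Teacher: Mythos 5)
Your proof is correct: I verified the bookkeeping ($P_1=1+2q$, $c=-2q(1+\tau)$, the per-factor corrections, the telescoping of the fourth ratio, and the vanishing of the net power $-q/\tau-c/(2\tau)-q$ of $M$), and your passage to the limit is legitimate because you work with an exact finite-$M$ identity between partial products before letting $M\to\infty$. However, this is not the paper's route, and the difference is worth noting. The paper never applies the Shintani identity to \eqref{MGI} as written. It first invokes the functional equations of $G(\cdot\,|\,\tau)$: by Eq. \eqref{Gfunct1}, $G(-q+\tau\,|\,\tau)=G(1-q+\tau\,|\,\tau)/\Gamma(1-q/\tau)$, and the $\tau$-shift equation strips one period off the doubled factor, replacing $G(2-2q+\tau(2+\lambda_1+\lambda_2)\,|\,\tau)/G(2-q+\tau(2+\lambda_1+\lambda_2)\,|\,\tau)$ by $\tau^{q}\,\Gamma\bigl(2-2q+\tau(1+\lambda_1+\lambda_2)\bigr)\Gamma\bigl(2-q+\tau(1+\lambda_1+\lambda_2)\bigr)^{-1}$ times the same ratio with $2+\lambda_1+\lambda_2$ replaced by $1+\lambda_1+\lambda_2$. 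This preliminary reduction produces the entire prefactor of \eqref{InfiniteSelberg} before any infinite product is written down, and it renormalizes the eight arguments so that, in your notation, $P_1=2q$ and $c=0$. With $c=0$ the Euler--Mascheroni factor and the regularizers $e^{(z_i^2-z_i)/2m\tau}$ cancel identically, and the eight Shintani gamma ratios match the target factors of \eqref{InfiniteSelberg} one-for-one; the proof then terminates with no asymptotic argument whatsoever, which is why the paper can dispose of it by saying that the terms quadratic in $q$ cancel. Your version pays for skipping this reduction with exactly the three discrepancies you identify (the residual power of $m\tau$, the shift in the self-energy ratio, the telescoping tail) and with the Stirling/harmonic-sum analysis needed to regroup conditionally convergent pieces; what it buys is a transparent explanation of where the prefactor comes from (as a compensating limit) and of why everything hinges on the value $c=-2q(1+\tau)$. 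One caveat: your closing claim that under the same scheme Theorem \ref{InfinFacCir} has an "immediate" limit is not quite accurate. Applied directly to \eqref{MG}, the pair $G(\tau\,|\,\tau)/G(-q+\tau\,|\,\tau)$ gives $z=0$ and $z=-q$, so one still finds $c=-2q\neq0$ and the mismatch $\Gamma(-q+m\tau)/\Gamma(m\tau)$ versus $\Gamma(1-q+m\tau)/\Gamma(1+m\tau)$; a (simpler) version of your limiting argument, or the paper's preliminary use of \eqref{Gfunct1}, is still required there to extract the factor $\Gamma(1-q/\tau)$.
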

\begin{theorem}\label{IntMoments}
The function $\mathfrak{M}(q\,|\,\tau,\lambda_1,\lambda_2)$ reproduces the product in Eq. \eqref{Selberg} for $q=n<\tau.$ 
The values of $\mathfrak{M}(q\,|\,\tau,\lambda_1,\lambda_2)$ at the negative integers, $q=-n,$ $n\in\mathbb{N},$ are
\begin{equation}
%%\mathfrak{M}(l\,|\,\tau,\lambda_1,\lambda_2) = & \prod_{k=0}^{l-1}
%%\frac{\Gamma(1-(k+1)/\tau)\Gamma(1+\lambda_1-k/\tau)\Gamma(1+\lambda_2-k/\tau)}{\Gamma(1-1/\tau)\Gamma\bigl(2+\lambda_1+\lambda_2-(l+k-1)/\tau\bigr)},\; l<\tau,
%%\\
\mathfrak{M}(-n\,|\,\tau,\lambda_1,\lambda_2) = \prod_{k=0}^{n-1}
\frac{\Gamma\bigl(2+\lambda_1+\lambda_2+(n+2+k)/\tau\bigr)
\Gamma\bigl(1-1/\tau\bigr) }{
\Gamma\bigl(1+\lambda_1+(k+1)/\tau\bigr)\Gamma\bigl(1+\lambda_2+(k+1)/\tau\bigr)
\Gamma\bigl(1+k/\tau\bigr) }. \label{IntNegMoments}
\end{equation}
\end{theorem}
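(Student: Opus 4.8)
The plan is to evaluate $\mathfrak{M}$ at integer $q$ directly from the Alexeiewsky--Barnes representation \eqref{MGI}, whose prefactor $\Gamma^{-q}(1-1/\tau)$ is the simplest of the available forms. The single tool needed is the telescoping identity \eqref{Grepeated}, which collapses any ratio $G(1+z\,|\,\tau)/G(1+z-k\,|\,\tau)$ with $k\in\mathbb{N}$ into the finite product $\prod_{j=0}^{k-1}\Gamma((z-j)/\tau)$. At $q=n$ and at $q=-n$ every one of the four $G$-ratios in \eqref{MGI} has its two arguments differing by an integer, so \eqref{Grepeated} (or its reciprocal) applies to each factor separately, and the proof reduces to bookkeeping of index ranges.

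For the positive moments I set $q=n<\tau$. The two $\lambda_i$-ratios have arguments differing by $n$ and yield $\prod_{j=0}^{n-1}\Gamma(1+\lambda_i-j/\tau)$. The ratio $G(1+\tau\,|\,\tau)/G(-n+\tau\,|\,\tau)$ has arguments differing by $n+1$, so it produces $\prod_{j=0}^{n}\Gamma(1-j/\tau)=\prod_{j=1}^{n}\Gamma(1-j/\tau)$, the $j=0$ term being $\Gamma(1)=1$. The final ratio is inverted, since its numerator is the smaller argument, and with $k=n$ it gives $\prod_{k=0}^{n-1}\Gamma(2+\lambda_1+\lambda_2-(n+k-1)/\tau)^{-1}$. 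Multiplying these against the prefactor $\Gamma^{-n}(1-1/\tau)$ and reindexing $j\mapsto k$ reproduces the Selberg product \eqref{Selberg} factor by factor.

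For the negative moments I set $q=-n$, so the same mechanism runs with the shifts reversed. Now the two $\lambda_i$-ratios are inverted and give $\prod_{k=0}^{n-1}\Gamma(1+\lambda_i+(k+1)/\tau)^{-1}$; the ratio $G(1+\tau\,|\,\tau)/G(n+\tau\,|\,\tau)$ has arguments differing by $n-1$ and contributes $\prod_{k=0}^{n-1}\Gamma(1+k/\tau)^{-1}$ (again the $k=0$ term is $\Gamma(1)=1$); and $G(2+2n+\tau(2+\lambda_1+\lambda_2)\,|\,\tau)/G(2+n+\tau(2+\lambda_1+\lambda_2)\,|\,\tau)$, whose numerator is now the larger argument, gives $\prod_{k=0}^{n-1}\Gamma(2+\lambda_1+\lambda_2+(n+2+k)/\tau)$. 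Together with the prefactor $\Gamma^{n}(1-1/\tau)$ this is exactly \eqref{IntNegMoments}.

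The one point deserving care, rather than the routine reindexing, is the legitimacy of applying \eqref{Grepeated}, i.e. that none of the $G$-arguments lands on the zero/pole lattice $z=-(m\tau+l)$, $m,l\ge 0$, of $G(\cdot\,|\,\tau)$. For $q=n<\tau$ and $\lambda_i\ge 0$ every argument in \eqref{MGI} is strictly positive (for instance $1-n+\tau(1+\lambda_1)>0$ and $-n+\tau>0$ because $n<\tau$, and $2-2n+\tau(2+\lambda_1+\lambda_2)>0$), and for $q=-n$ the arguments are a fortiori positive, so all evaluations are finite and nonzero. I therefore expect no genuine analytic obstacle: once this positivity check is recorded, the result is a pure telescoping computation. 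A fully equivalent route would be to specialize the infinite product of Theorem \ref{InfinFacInt} at $q=\pm n$, but that requires recognizing the resulting convergent infinite product over $m$ as a ratio of Gamma factors and is less transparent than the $G$-function argument.
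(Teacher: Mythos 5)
Your proof is correct and follows exactly the paper's own route: the paper proves Theorem \ref{IntMoments} by declaring both formulas immediate from Eq. \eqref{repeated} (equivalently Eq. \eqref{Grepeated}) applied to Eq. \eqref{MGI}, and also notes the alternative via Eq. \eqref{InfiniteSelberg}, just as you mention at the end. Your write-up simply supplies the telescoping bookkeeping and the positivity check on the $G$-arguments that the paper leaves implicit.
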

\begin{theorem}[Selberg integral probability distribution]\label{IntExist}
The function
$q\rightarrow\mathfrak{M}(iq\,|\,\tau,\lambda_1,\lambda_2),$
$q\in\mathbb{R},$ $\tau>1,$ is the Fourier transform of an
infinitely divisible probability distribution on $\mathbb{R}$ %%the real line
with the L$\acute{\text{e}}$vy-Khinchine decomposition
\begin{equation}
\log\mathfrak{M}(iq\,|\,\tau,\lambda_1,\lambda_2) =
iq\,\mathfrak{m}(\tau) - \frac{1}{2} q^2
\sigma^2(\tau)+\int_{\mathbb{R}\setminus \{0\}} \bigl(e^{iq u}-1-iq
u/(1+u^2)\bigr) d\mathcal{M}_{(\tau, \lambda_1, \lambda_2)}(u)
\end{equation} 
for
some $\mathfrak{m}(\tau)\in\mathbb{R}$ and the following gaussian
component and spectral function
\begin{gather}
\sigma^2(\tau) = \frac{4\,\log 2}{\tau}, \\
\mathcal{M}_{(\tau, \lambda_1,
\lambda_2)}(u)\!\!=\!\!-\!\!\int\limits_u^\infty \!\!
\Bigl[\frac{\bigl(e^x+e^{-x\tau\lambda_1}+e^{-x\tau\lambda_2}+e^{-x(1+\tau(1+\lambda_1+\lambda_2))}\bigr)}{\bigl(e^x-1\bigr)(e^{x\tau}-1)}
\!-\!\frac{e^{-x(1+\tau(1+\lambda_1+\lambda_2))/2}}{\bigl(e^{x/2}-1\bigr)(e^{x\tau/2}-1)}\Bigr]\!\!
\frac{dx}{x} \label{IntLKh}
\end{gather}
for $u>0,$ and $\mathcal{M}_{(\tau, \lambda_1, \lambda_2)}(u)=0$ for
$u<0.$ $d\mathcal{M}_{(\tau, \lambda_1, \lambda_2)}(u)/du > 0$ for
$u>0.$ 
\end{theorem}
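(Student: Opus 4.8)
The plan is to start from the $G$-function representation \eqref{MGI} and turn $\log\mathfrak{M}$ into a single integral over $(0,\infty)$ that can be read off as a Lévy–Khinchine exponent, using the exact identity \eqref{IfuncG} for the $I$-function as the main engine. First I would simplify the factor $G(1+\tau\,|\,\tau)/G(-q+\tau\,|\,\tau)$: the functional equation \eqref{Gfunct1} gives $G(1+\tau\,|\,\tau)=\Gamma(1)\,G(\tau\,|\,\tau)=G(\tau\,|\,\tau)$, so this factor equals $G(\tau\,|\,\tau)/G(\tau-q\,|\,\tau)$, which is of the form treated by \eqref{IfuncG} with $a=-1$ (legitimate since $\tau>1$). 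This is the step that produces the $e^{x}$ term in the numerator of the spectral density, and the accompanying correction $q\log\Gamma(1-1/\tau)$ cancels the prefactor $\Gamma^{-q}(1-1/\tau)$. I would then apply \eqref{IfuncG} to the two $\lambda_i$-factors (with $a=\tau\lambda_1$ and $a=\tau\lambda_2$) and to the Selberg-specific doubled factor, writing $G(A_0-2q\,|\,\tau)/G(A_0-q\,|\,\tau)$ with $A_0=2+\tau(2+\lambda_1+\lambda_2)$ as a difference of two applications of \eqref{IfuncG}, namely $I(q\,|\,\kappa,\tau)-I(2q\,|\,\kappa,\tau)$ plus explicit $\log\Gamma$ and $\psi$ corrections, where $\kappa=1+\tau(1+\lambda_1+\lambda_2)$.

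The decisive manipulation is in the doubled factor. In $I(2q\,|\,\kappa,\tau)$ I would substitute $x\mapsto x/2$ to replace $e^{2xq}$ by $e^{xq}$; this turns the kernel $1/((e^{x}-1)(e^{x\tau}-1))$ into $1/((e^{x/2}-1)(e^{x\tau/2}-1))$ and is exactly what manufactures the subtracted term $e^{-x(1+\tau(1+\lambda_1+\lambda_2))/2}/((e^{x/2}-1)(e^{x\tau/2}-1))$ of \eqref{IntLKh}. Collecting the four full-kernel contributions (exponents $1,-\tau\lambda_1,-\tau\lambda_2,-\kappa$ from the $\tau$-, $\lambda_1$-, $\lambda_2$- and doubled factors) against the single half-kernel contribution reproduces $d\mathcal{M}_{(\tau,\lambda_1,\lambda_2)}(u)/du$ verbatim. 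The linear-in-$q$ terms are absorbed into the real drift $\mathfrak{m}(\tau)$, whose precise value the statement does not require, and the quadratic-in-$q$ terms, collected from the $\psi$-corrections together with the mismatch between the internal subtraction of the $I$-function and the canonical truncation $iqu/(1+u^{2})$, yield the Gaussian variance after $q\mapsto iq$. I expect the cleanest way to pin the value $\sigma^{2}=4\log 2/\tau$ is through the Barnes duplication anomaly: expanding $G(A_0-2q\,|\,\tau)$ by the $k=2$ multiplication formula \eqref{multiplic} produces a prefactor $2^{-B_{2,2}(A_0-2q\,|\,1,\tau)/2}$ whose $q^{2}$-coefficient, via \eqref{B22}, carries the factor $\log 2$ that is the fingerprint of the halving; I would cross-check this against the direct collection of $\psi$-terms.

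With the exponent in canonical form, it remains to verify that $(\mathfrak{m},\sigma^{2},\mathcal{M})$ is a genuine Lévy–Khinchine triple. The Gaussian part $\sigma^{2}=4\log2/\tau>0$ is admissible. For integrability I would expand both kernels near $u=0$: the leading singularities are each $4/(\tau u^{2})$ and cancel, leaving $d\mathcal{M}/du\sim 1/(\tau u^{2})$, so that $\int_{0}\min(1,u^{2})\,d\mathcal{M}<\infty$ while $\int_{0}u\,d\mathcal{M}=\infty$, which is precisely why the canonical truncation, rather than the plain $iqu$ used in the circle case, is needed here; exponential decay handles $u\to\infty$.

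The heart of the theorem is the positivity $d\mathcal{M}/du>0$, and this is the step I would treat most carefully. Clearing the positive factor $(e^{u}-1)(e^{u\tau}-1)$ and using $(e^{u}-1)=(e^{u/2}-1)(e^{u/2}+1)$, positivity reduces to showing, for $u>0$, that
\[
P(u):=e^{u}+e^{-\tau\lambda_1 u}+e^{-\tau\lambda_2 u}+e^{-\kappa u}-e^{-\kappa u/2}(e^{u/2}+1)(e^{u\tau/2}+1)>0.
\]
A naive term-by-term AM–GM fails, so the plan is a structured decomposition. Writing $P(u)=(e^{u/2}-e^{-\kappa u/2})^{2}+B(u)$ isolates a manifest square from the remainder $B(u)$; for $B(u)$ one uses AM–GM in the form $e^{-\tau\lambda_1 u}+e^{-\tau\lambda_2 u}\ge 2e^{-\tau(\lambda_1+\lambda_2)u/2}$ and then factors the leftover four exponentials as $e^{-\tau(\lambda_1+\lambda_2)u/2}(1-e^{-u/2})(1+e^{-\tau u/2})$, which is strictly positive for $u>0$. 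Hence $P(u)>0$, giving the positivity of the Lévy density. With positivity and integrability established, the Lévy–Khinchine theorem identifies $\mathfrak{M}(iq\,|\,\tau,\lambda_1,\lambda_2)$ as the characteristic function of an infinitely divisible law with the stated triple, and the nonzero Gaussian component forces absolute continuity automatically.
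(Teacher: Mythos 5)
Your proposal is correct, and its skeleton coincides with the paper's analytic proof: both pass from the $G$-function form \eqref{MGI} to a Malmst\'en-type integral for $\log\mathfrak{M}$, both create the half-kernel in \eqref{IntLKh} by the substitution $x\mapsto x/2$ applied to the doubled ($2q$) factor, and both finish with positivity, integrability, and the general theory of infinitely divisible characteristic functions. The genuine differences are in the two key steps, and both of your variants check out. (i) \emph{Bookkeeping and the Gaussian component}: the paper expands each $G$-factor with the raw Lawrie--King formula \eqref{LK}, where the $q^2$ terms cancel identically, and then obtains the variance as the Frullani integral of $\mathfrak{g}(t)=t^2/((e^{t/2}-1)(e^{t\tau/2}-1))-(2t)^2/((e^t-1)(e^{t\tau}-1))$, giving $4\log 2/\tau$ in two lines from \eqref{Frullani}. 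You instead invoke the packaged identity \eqref{IfuncG} (your parameter choices $a=-1,\,\tau\lambda_1,\,\tau\lambda_2,\,\kappa$ are all admissible for $\tau>1$, and the cancellation of the prefactor $\Gamma^{-q}(1-1/\tau)$ is exactly right), so you must carry the $\psi$-corrections explicitly; your plan to pin $\sigma^2$ via the duplication anomaly of \eqref{multiplic} and \eqref{B22} is sound --- it is literally how the lognormal factor $L$ arises in the paper's probabilistic proof of Theorem \ref{general} --- but to be complete you must add that the post-duplication factors (balanced $\Gamma_2$-ratios and the Fr\'echet factor $\Gamma(1-q/\tau)$) contribute no $q^2$ term, which follows from their pure-jump L\'evy--Khinchine representations, e.g.\ Eq. \eqref{LKH}. (ii) \emph{Positivity}: the paper sets $a=e^{-u\tau\lambda_1/2}$, $b=e^{-u\tau\lambda_2/2}$, $c=e^{-u/2}$, $d=e^{-u\tau/2}$ and uses the identity $P=(a-b)^2+ab(1-c)(1-d)+(abcd-1)^2+(c^{-2}-1)\geq e^{u}-1>0$. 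Your decomposition is different and, since your AM--GM surplus is itself the square $(e^{-\tau\lambda_1 u/2}-e^{-\tau\lambda_2 u/2})^2$, it amounts to the exact identity
\begin{equation*}
P(u)=\bigl(e^{u/2}-e^{-\kappa u/2}\bigr)^2+\bigl(e^{-\tau\lambda_1 u/2}-e^{-\tau\lambda_2 u/2}\bigr)^2+e^{-\tau(\lambda_1+\lambda_2)u/2}\bigl(1-e^{-u/2}\bigr)\bigl(1+e^{-\tau u/2}\bigr),
\end{equation*}
with $\kappa=1+\tau(1+\lambda_1+\lambda_2)$, which I have verified reproduces $P$ term by term; strict positivity then comes from your last factor rather than from $e^u-1$. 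In sum, your route buys a cleaner positivity identity and a structural explanation of where the constant $4\log 2/\tau$ comes from (the Barnes duplication anomaly), at the price of heavier polynomial bookkeeping and an appeal to the Barnes beta machinery of Section \ref{BarnesBeta}; the paper's route stays entirely inside Section \ref{proofsanalytical} and evaluates $\sigma^2$ with no extra input.
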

\begin{corollary}\label{IntExistRV}
The probability distribution in Theorem \ref{IntExist} has a bounded,
continuous, zero-free density function $f(x),$ $x\in\mathbb{R}.$ The function
$q\rightarrow\mathfrak{M}(q\,|\,\tau,\lambda_1,\lambda_2)$ for
$\Re(q)<\tau$ and $\tau>1$ is the Mellin transform of the
probability density function $f(\log
y)/y,$ $y\in\mathbb{R}_+.$
%%having $(0,\,\infty)$ as its support
\end{corollary}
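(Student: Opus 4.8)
The plan is to read off both assertions from the L\'evy-Khinchine data supplied by Theorem \ref{IntExist}. Let $Y$ denote a random variable with characteristic function $\mathfrak{M}(iq\,|\,\tau,\lambda_1,\lambda_2)$. I would first show that $Y$ has a bounded, continuous, strictly positive density $f$, and then that the positive variable $M\triangleq e^{Y}$ has density $f(\log y)/y$ on $\mathbb{R}_+$ whose Mellin transform $\mathbf{E}[M^{q}]=\mathbf{E}[e^{qY}]$ equals $\mathfrak{M}(q\,|\,\tau,\lambda_1,\lambda_2)$ throughout $\Re(q)<\tau$.

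The density is produced by the gaussian component. Since $\sigma^{2}(\tau)=4\log 2/\tau>0$, I would split the decomposition of Theorem \ref{IntExist} as $Y=G+Z$, where $G$ is an independent centered gaussian of variance $\sigma^{2}(\tau)$ and $Z$ carries the drift and the pure-jump part. The law of $Y$ is then the convolution of the law $\nu_{Z}$ of $Z$ with the gaussian density $g$, so $f(x)=\int_{\mathbb{R}}g(x-t)\,\nu_{Z}(dt)$. Because $g$ is bounded, continuous and strictly positive on all of $\mathbb{R}$, the estimate $f\le\|g\|_{\infty}$, continuity by dominated convergence, and strict positivity (a strictly positive function integrated against a probability measure) are immediate, giving the bounded, continuous, zero-free density. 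The substitution $y=e^{x}$ turns $f$ into the density $f(\log y)/y$ of $M=e^{Y}$.

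For the Mellin transform I would argue by analytic continuation of the L\'evy-Khinchine exponent. Replacing $iq$ by $q$ in Theorem \ref{IntExist} gives, wherever the integral converges,
\[
\log\mathbf{E}[e^{qY}]=q\,\mathfrak{m}(\tau)+\frac{1}{2}q^{2}\sigma^{2}(\tau)+\int_{0}^{\infty}\bigl(e^{qu}-1-qu/(1+u^{2})\bigr)\,d\mathcal{M}_{(\tau,\lambda_1,\lambda_2)}(u),
\]
the standard formula for exponential moments of an infinitely divisible law. Both this expression and $\mathfrak{M}(q\,|\,\tau,\lambda_1,\lambda_2)$ --- holomorphic for $\Re(q)<\tau$, the apparent poles of the gamma factors in \eqref{InfiniteSelberg} cancelling throughout this range --- agree with the characteristic function on the imaginary axis; hence, once I verify that the integral converges on the whole half-plane $\Re(q)<\tau$, the identity theorem forces $\mathbf{E}[e^{qY}]=\mathfrak{M}(q\,|\,\tau,\lambda_1,\lambda_2)$ there, and rewriting the left side as $\mathbf{E}[M^{q}]$ finishes the proof.

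The main obstacle is the convergence of that integral, which amounts to locating the abscissa $\Re(q)=\tau$ exactly. Near $u=0$ the integrand is tamed by the regularizing subtraction built into \eqref{IntLKh}: the two fractions cancel at order $u^{-2}$, restoring the $\min(1,u^{2})$ integrability of a genuine L\'evy measure, so the small-$u$ part converges for every $q$. The constraint therefore comes entirely from $u\to\infty$. Differentiating \eqref{IntLKh}, the L\'evy density equals the bracketed expression at $x=u$ divided by $u$; for large $u$ its leading term is $e^{u}/\bigl((e^{u}-1)(e^{u\tau}-1)\bigr)\sim e^{-\tau u}$, so $d\mathcal{M}_{(\tau,\lambda_1,\lambda_2)}(u)$ decays like $e^{-\tau u}u^{-1}\,du$ and $\int_{u>1}e^{qu}\,d\mathcal{M}_{(\tau,\lambda_1,\lambda_2)}(u)$ converges precisely for $\Re(q)<\tau$ and diverges at $\tau$. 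By the standard equivalence between exponential moments of an infinitely divisible law and those of its L\'evy measure, this pins the domain of $\mathbf{E}[e^{qY}]$ to $\{\Re(q)<\tau\}$. The delicate part is to carry out these tail and near-zero estimates uniformly enough to justify interchanging differentiation and integration and the appeal to the identity theorem, after which the corollary follows.
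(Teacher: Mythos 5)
Your proposal is correct, and its key step runs along a genuinely different route from the paper's. For the density, the two arguments coincide in substance: the paper simply cites Theorem 8.4 and Corollary 8.8 in Chapter 4 of \cite{SteVHar} (a non-zero gaussian component forces an absolutely continuous law with a bounded, continuous, zero-free density), while you re-derive this by writing the law as the convolution of the gaussian density with the law of the independent drift-plus-jump part; that convolution argument is precisely how the cited results are proved, so nothing is lost or gained there. Where you diverge is the Mellin-transform identity. The paper invokes the fundamental theorem of analytic characteristic functions (Theorem 7.1.1 in Chapter 7 of \cite{Lukacs}): since $\mathfrak{M}(iq\,|\,\tau,\lambda_1,\lambda_2)$ is by construction the characteristic function of $f$ and is analytic in the strip $\Im(q)>-\tau$ directly from the double gamma representation, the Fourier-integral representation persists automatically throughout the strip of regularity, and no information about the L\'evy measure is required. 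You instead locate the abscissa of convergence from the L\'evy measure itself --- the $e^{-\tau u}u^{-1}\,du$ tail of $d\mathcal{M}_{(\tau,\lambda_1,\lambda_2)}$, which you compute correctly, the subtracted term decaying strictly faster since $\lambda_1,\lambda_2\geq 0$ --- then use the standard Sato-type equivalence between exponential moments of an infinitely divisible law and exponential moments of its L\'evy measure, and finally glue the extended L\'evy-Khinchine exponent to $\log\mathfrak{M}$ along the imaginary axis via the identity theorem. The paper's route is shorter and shifts the entire analytic burden onto the already-established double gamma formula; your route is more self-contained within classical infinite-divisibility theory and has the side benefit of explaining \emph{why} the domain terminates exactly at $\Re(q)=\tau$, namely the exponential decay rate of the L\'evy density, which is also the source of the power-law right tail of the Selberg integral distribution. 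The details you defer (integrability near $u=0$ from the $O(u^{2})$ regularization against the $O(u^{-2})$ density, and local uniform convergence giving holomorphy of the extended exponent) are routine and conceal no gap.
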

%%Denote the positive random variable corresponding to $f(\log y)/y$ by $M_{(\tau, \lambda_1,
%%\lambda_2)}.$
Denote the probability distribution corresponding to $f(\log y)/y$ by $M_{(\tau, \lambda_1,
\lambda_2)}.$ We call it the Selberg integral probability distribution.
Thus, we have established the identity
\begin{equation}
{\bf E} [M_{(\tau, \lambda_1,
\lambda_2)}^q] = \mathfrak{M}(q\,|\,\tau,\lambda_1,\lambda_2), \; \Re(q)<\tau.
\end{equation}
\begin{theorem}\label{IntAsymp}
The function $\log\mathfrak{M}(q\,|\,\tau,\lambda_1,\lambda_2)$ has
the asymptotic expansion as $\tau\rightarrow +\infty,$
\begin{gather}
\log\mathfrak{M}(q\,|\,\tau,\lambda_1,\lambda_2) \thicksim
q\log\Bigl(\frac{\Gamma(1+\lambda_1)\Gamma(1+\lambda_2)}{\Gamma(2+\lambda_1+\lambda_2)}\Bigr)+
\sum\limits_{p=1}^\infty \Bigl(\frac{1}{\tau}\Bigr)^{p}
\frac{1}{p}\Bigl[-\zeta(p)q+\nonumber
\\+\bigl(\zeta(p, 1+\lambda_1)+\zeta(p,
1+\lambda_2)\bigr)\Bigl(\frac{B_{p+1}(q)-B_{p+1}}{p+1}\Bigr)
 + \zeta(p) \times \nonumber \\
\times\Bigl(\frac{B_{p+1}(q+1)-B_{p+1}}{p+1}\Bigr) - \zeta(p,
2+\lambda_1+\lambda_2)
\Bigl(\frac{B_{p+1}(2q-1)-B_{p+1}(q-1)}{p+1}\Bigr)\Bigr].\label{logMellinAsympI}
\end{gather}
\end{theorem}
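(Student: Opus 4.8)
The plan is to expand $\log\mathfrak{M}$ factor-by-factor using the integral function $I(q\,|\,a,\tau)$ of Eq.~\eqref{IfuncG} together with its asymptotic expansion Eq.~\eqref{IfuncGAsymptotic}, and to handle the remaining elementary $\Gamma$-factors through the summation formula $\log\Gamma(a+z)=\log\Gamma(a)+\sum_{p\ge1}\frac{(-z)^p}{p}\zeta(p,a)$ recorded in Section~\ref{Problem}. Starting from the $G$-function representation Eq.~\eqref{MGI}, I would write
\[
\log\mathfrak{M}(q\,|\,\tau,\lambda_1,\lambda_2)=-q\log\Gamma(1-1/\tau)+(a)+(b)+(c)+(d),
\]
where $(a),(b)$ are the log-ratios attached to $\lambda_1,\lambda_2$, $(c)=\log\frac{G(1+\tau\,|\,\tau)}{G(-q+\tau\,|\,\tau)}$, and $(d)=\log\frac{G(2-2q+\tau(2+\lambda_1+\lambda_2)\,|\,\tau)}{G(2-q+\tau(2+\lambda_1+\lambda_2)\,|\,\tau)}$. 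For $(a)$ and $(b)$ the arguments already match the pattern $1+a+\tau$, $1-q+a+\tau$ with $a=\tau\lambda_1$ resp. $a=\tau\lambda_2$, so Eq.~\eqref{IfuncG} gives them as $I(q\,|\,\lambda_i\tau,\tau)+q\log\Gamma(1+\lambda_i)-\frac{q^2-q}{2\tau}\psi(1+\lambda_i)$. For $(c)$ I would first apply the functional equation Eq.~\eqref{Gfunct1} to rewrite $G(-q+\tau\,|\,\tau)=G(1-q+\tau\,|\,\tau)/\Gamma(1-q/\tau)$, reducing $(c)$ to $I(q\,|\,0,\tau)+\frac{q^2-q}{2\tau}\gamma+\log\Gamma(1-q/\tau)$ via $\psi(1)=-\gamma$. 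The $\tau$-independent parts $q\log\Gamma(1+\lambda_1)+q\log\Gamma(1+\lambda_2)$, together with the $-q\log\Gamma(2+\lambda_1+\lambda_2)$ that will emerge from $(d)$, assemble into the leading term $q\log\frac{\Gamma(1+\lambda_1)\Gamma(1+\lambda_2)}{\Gamma(2+\lambda_1+\lambda_2)}=q\log\bar{\varphi}$.

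The delicate factor is $(d)$, which carries the $2q$-doubling responsible for the $B_{p+1}(2q-1)-B_{p+1}(q-1)$ structure of the target. Here I would use Eq.~\eqref{Gfunct1} twice to lower both arguments by one, peeling off the elementary ratio $\log\frac{\Gamma(c+(1-2q)/\tau)}{\Gamma(c+(1-q)/\tau)}$ with $c=2+\lambda_1+\lambda_2$, and then write the remaining $G$-ratio (with $A=\tau(2+\lambda_1+\lambda_2)$) as $\log\frac{G(1+A\,|\,\tau)}{G(1+A-q\,|\,\tau)}-\log\frac{G(1+A\,|\,\tau)}{G(1+A-2q\,|\,\tau)}$; each piece is Eq.~\eqref{IfuncG} with $a=\tau(1+\lambda_1+\lambda_2)$, evaluated at $q$ and at $2q$, so $(d)$ becomes $I(q\,|\,\tilde a\tau,\tau)-I(2q\,|\,\tilde a\tau,\tau)$ plus explicit $\Gamma$- and $\psi$-corrections. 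Inserting Eq.~\eqref{IfuncGAsymptotic}, reindexed by $p=r+1$ so that $I(q\,|\,a\tau,\tau)\sim\sum_{p\ge2}\frac{\zeta(p,1+a)}{p}\frac{B_{p+1}(q)-B_{p+1}}{p+1}\tau^{-p}$, into every $I$, and expanding each elementary $\Gamma$-factor via the Section~\ref{Problem} summation formula, I would collect the coefficient of each $\zeta(p,\cdot)$ at each order $\tau^{-p}$. The $\zeta(p,1+\lambda_i)$ coefficients match verbatim, and the $\zeta(p)$ coefficients match after the single Bernoulli identity $B_{p+1}(x+1)-B_{p+1}(x)=(p+1)x^p$, which converts the $B_{p+1}(q)$ plus the $\pm q^p$ produced by $\log\Gamma(1-q/\tau)$ and $-q\log\Gamma(1-1/\tau)$ into the required $B_{p+1}(q+1)$.

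The main obstacle is matching the $\zeta(p,2+\lambda_1+\lambda_2)$ coefficient for $p\ge2$: the $I$-difference contributes $\frac{B_{p+1}(q)-B_{p+1}(2q)}{p+1}$ while the auxiliary $\Gamma$-ratio contributes $(2q-1)^p-(q-1)^p$, and these must sum to $-\frac{B_{p+1}(2q-1)-B_{p+1}(q-1)}{p+1}$. This follows from applying $B_{p+1}(x)-B_{p+1}(x-1)=(p+1)(x-1)^p$ at $x=2q$ and at $x=q$, i.e. from $B_{p+1}(2q-1)=B_{p+1}(2q)-(p+1)(2q-1)^p$ and $B_{p+1}(q-1)=B_{p+1}(q)-(p+1)(q-1)^p$, which reconcile the three expressions exactly. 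A secondary point of care is the $p=1$ term, which the $I$-asymptotic does not produce (it begins at $p=2$): it arises entirely from the explicit $\frac{q^2-q}{2\tau}\psi$-corrections of the four factors and from the first-order parts of $\log\Gamma(1-1/\tau)$, $\log\Gamma(1-q/\tau)$ and the auxiliary $\Gamma$-ratio, and it reproduces the $p=1$ bracket of the claimed expansion once $\zeta(1,a)$ is read as $-\psi(a)$ and $\zeta(1)$ as $\zeta(1,1)=\gamma$, per the conventions of Section~\ref{Problem}. Assembling these pieces yields the stated expansion, whose tail $\sum_p\frac{1}{p\tau^p}[\cdots]$ coincides with $\sum_p c_p(q)\mu^p$ from Eq.~\eqref{cpninterval} and whose leading term is $q\log\bar{\varphi}$, as required.
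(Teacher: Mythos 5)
Your proposal is correct and takes essentially the same route as the paper's own proof: apply Eq. \eqref{IfuncG} and its asymptotic expansion Eq. \eqref{IfuncGAsymptotic} to each of the four $G$-ratios in Eq. \eqref{MGI}, expand the leftover elementary $\Gamma$-factors (equivalently, the paper's identity $\sum_{r\geq 1}\frac{\zeta(r+1)}{(r+1)\tau^{r+1}}=\log\Gamma(1-1/\tau)+\frac{\psi(1)}{\tau}$), and collect terms using Bernoulli-polynomial identities. The paper leaves the ``straightforward algebraic reduction'' implicit, and your write-up is simply that reduction carried out in full, including the correct reconciliation of the $\zeta(p,2+\lambda_1+\lambda_2)$ coefficient via $B_{p+1}(x)-B_{p+1}(x-1)=(p+1)(x-1)^p$ and the $p=1$ term via the $\psi$-corrections.
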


\begin{theorem}\label{Mtransforminvol}
The Mellin transform is involution invariant under
\begin{equation}
\tau\rightarrow \frac{1}{\tau},\; q\rightarrow \frac{q}{\tau}, \; \lambda_i\rightarrow \tau\lambda_i.
\end{equation}
\begin{align}
\mathfrak{M}\bigl(\frac{q}{\tau}\,\Big|\,\frac{1}{\tau},\tau\lambda_1,\tau\lambda_2\bigr) (2\pi)^{-\frac{q}{\tau}}\,\Gamma^{\frac{q}{\tau}}(1-\tau) \Gamma(1-\frac{q}{\tau}) = &
\mathfrak{M}(q\,|\,\tau,\lambda_1,\lambda_2) (2\pi)^{-q} \times \nonumber \\ & \times \Gamma^{q}(1-\frac{1}{\tau}) \Gamma(1-q).\label{involutionint}
\end{align}
\end{theorem}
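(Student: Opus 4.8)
The plan is to substitute the involution $\tau\to 1/\tau$, $q\to q/\tau$, $\lambda_i\to\tau\lambda_i$ directly into the definition \eqref{thefunctioninterval} of $\mathfrak{M}$ and then transport every double gamma factor from second argument $1/\tau$ back to second argument $\tau$, using the symmetry of $\Gamma_2$ in its two parameters together with the scaling relation \eqref{scale}. Concretely, for any $w$ one has, by symmetry and by \eqref{scale} applied with $\kappa=\tau$ and $a=(1/\tau,1)$ (so that $\kappa a=(1,\tau)$), the transport rule $\Gamma_2(w\,|\,1/\tau)=\Gamma_2(w\,|\,1,1/\tau)=\Gamma_2(w\,|\,1/\tau,1)=\tau^{B_{2,2}(w\,|\,1/\tau,1)/2}\,\Gamma_2(\tau w\,|\,\tau)$, where $B_{2,2}(w\,|\,1/\tau,1)$ is read off from \eqref{B22}. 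Applying this to each of the four gamma ratios in $\mathfrak{M}(q/\tau\,|\,1/\tau,\tau\lambda_1,\tau\lambda_2)$ rescales every argument by $\tau$ and emits an explicit power of $\tau$.

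First I would record that after this rescaling the arguments of the $\lambda_1$, $\lambda_2$, and $(2+\lambda_1+\lambda_2)$ ratios land \emph{exactly} on the corresponding arguments of $\mathfrak{M}(q\,|\,\tau,\lambda_1,\lambda_2)$; for instance the $\lambda_1$ numerator $1-q/\tau+1/\tau+\lambda_1$ scales to $\tau-q+1+\tau\lambda_1=1-q+\tau(1+\lambda_1)$, and similarly for the rest. The only factor whose rescaled argument does not match is the pure $\tau$-factor: it becomes $\Gamma_2(1-q\,|\,\tau)/\Gamma_2(1\,|\,\tau)$ rather than the required $\Gamma_2(\tau-q\,|\,\tau)/\Gamma_2(\tau\,|\,\tau)$. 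To bridge the gap I would apply the two fundamental functional equations contained in \eqref{feq} (shift by $1$ and shift by $\tau$), which yield $\Gamma_2(1-q\,|\,\tau)=\Gamma(1-q)\,\Gamma(1-q/\tau)^{-1}\,\tau^{\,q/\tau-1/2}\,\Gamma_2(\tau-q\,|\,\tau)$ and, at $q=0$, $\Gamma_2(1\,|\,\tau)=\tau^{-1/2}\Gamma_2(\tau\,|\,\tau)$; dividing gives the clean conversion $\Gamma_2(1-q\,|\,\tau)/\Gamma_2(1\,|\,\tau)=\Gamma(1-q)\,\Gamma(1-q/\tau)^{-1}\,\tau^{\,q/\tau}\,\Gamma_2(\tau-q\,|\,\tau)/\Gamma_2(\tau\,|\,\tau)$. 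This single conversion is the source of the $\Gamma(1-q)$ and $\Gamma(1-q/\tau)$ factors in \eqref{involutionint}, while the transformed prefactor $\bigl(2\pi\,\tau^{-\tau}/\Gamma(1-\tau)\bigr)^{q/\tau}$ supplies the $(2\pi)$ and the $\Gamma(1-\tau)$, $\Gamma(1-1/\tau)$ contributions.

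The final step is bookkeeping: collect the power of $\tau$ coming from the four scaling exponents $e_i=\tfrac12\bigl[B_{2,2}(w_{i,\mathrm{num}}\,|\,1/\tau,1)-B_{2,2}(w_{i,\mathrm{den}}\,|\,1/\tau,1)\bigr]$, the $\tau^{\,q/\tau}$ from the functional-equation conversion, and the $\tau$ powers in the two prefactors, and check that they all cancel. Since $B_{2,2}$ is quadratic in $w$, each $e_i$ reduces to the difference of squares $w_{\mathrm{num}}^2-w_{\mathrm{den}}^2$ plus the linear difference $w_{\mathrm{num}}-w_{\mathrm{den}}$; the $\lambda$-dependence cancels between the $\lambda_1,\lambda_2$ factors and the $(2+\lambda_1+\lambda_2)$ factor, and a short computation gives $e_1+e_2+e_3+e_4=q$. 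I expect this last cancellation — verifying that the accumulated $\tau$-power is exactly cancelled by the net $\tau^{-q}$ left over from the prefactor ratio and the functional-equation conversion — to be the main obstacle, purely as a matter of keeping the numerous $\tau$, $\tau^{1/2}$, and $2\pi$ factors straight. Once this holds, the surviving factors are precisely $(2\pi)^{q/\tau-q}\,\Gamma^{q}(1-1/\tau)\,\Gamma^{-q/\tau}(1-\tau)\,\Gamma(1-q)\,\Gamma^{-1}(1-q/\tau)$, which is exactly the rearrangement of \eqref{involutionint}. The circle identity \eqref{invcircle} follows from the same argument applied to \eqref{thefunctioncircle}, and is in fact simpler because its prefactor carries no $2\pi$.
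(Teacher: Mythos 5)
Your proof is correct, and every step checks out: the transport rule is a valid application of Eq.~\eqref{scale} combined with the parameter symmetry of $\Gamma_2$; the rescaled arguments of the $\lambda_1$, $\lambda_2$, and $(2+\lambda_1+\lambda_2)$ ratios do land exactly on those of $\mathfrak{M}(q\,|\,\tau,\lambda_1,\lambda_2)$; your conversion identity $\Gamma_2(1-q\,|\,\tau)/\Gamma_2(1\,|\,\tau)=\tau^{q/\tau}\,\Gamma(1-q)\,\Gamma(1-q/\tau)^{-1}\,\Gamma_2(\tau-q\,|\,\tau)/\Gamma_2(\tau\,|\,\tau)$ is precisely the paper's Eq.~\eqref{mydoublegammaidentity}; and the exponent bookkeeping does close, since with $B_{2,2}(w\,|\,1/\tau,1)=\tau w^2-(\tau+1)w+\mathrm{const}$ the four exponents sum to $e_1+e_2+e_3+e_4=q\bigl[A_4-(A_1+A_2+A_3)+1+1/\tau\bigr]=q$, the $\lambda$-dependence cancelling exactly as you say. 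However, this is not the route the paper takes for this theorem: the paper explicitly defers the analytic argument (noting it is ``similar to the proofs of Theorem~\ref{CicInvolution} \ldots and Corollary~\ref{MComplextransforminvol}'') and instead proves Theorem~\ref{Mtransforminvol} probabilistically in Section~\ref{ProbProofs}, via Corollary~\ref{mycorollary}. There, the factorization of Theorem~\ref{BSM}, $M_{(\tau,\lambda_1,\lambda_2)}\overset{\rm in\,law}{=}\mathrm{const}\cdot L\,X_1X_2X_3\,Y$, is combined with Theorem~\ref{scalinvar} (involution invariance of $L$ and the $X_i$, itself a consequence of the Barnes beta scaling invariance, Theorem~\ref{barnesbetascaling}), so that the entire identity reduces to the single observation that the Fr\'echet factor $Y$ is the one non-invariant piece, transforming as ${\bf E}[Y^{q/\tau}(1/\tau)]=\frac{\Gamma(1-q)}{\Gamma(1-q/\tau)}{\bf E}[Y^q(\tau)]$. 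What your analytic proof buys is self-containedness: it needs only the double gamma scaling and functional equations, with no appeal to the Barnes beta machinery, and it is literally the interval analogue of the paper's circle-case proof of Theorem~\ref{CicInvolution}. What the paper's probabilistic proof buys is structural insight: it isolates exactly where the asymmetric factors $\Gamma(1-q)$ and $\Gamma(1-q/\tau)$ in \eqref{involutionint} come from (solely the $Y$ factor), while explaining the invariance of everything else at the level of laws rather than through cancellation of $\tau$-powers.
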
 
This result can be proved by both analytic and probabilistic means. We prefer the latter, cf. Corollary \ref{mycorollary}, so that the proof of Theorem \ref{Mtransforminvol} is given in Section \ref{ProbProofs} below. The analytic proof is similar to the proofs of Theorem \ref{CicInvolution} in Section \ref{proofsanalytical} and Corollary \ref{MComplextransforminvol} in Section \ref{AnalyticalComplexSelberg}.

%%as part of the proof of Corollary \ref{MComplextransforminvol} in Section \ref{complexandderivproofs}.
%%The Mellin transform of the Selberg integral probability distribution is involution-invariant similar to the Mellin transform of the
%%Morris integral probability distribution. This result is stated in Theorem \ref{Mtransforminvol} below as we prefer to give it a probabilistic proof. 

\begin{conjecture}[Law of Total Mass]\label{ourmainconjinterval}
Let $M_{(\tau,\lambda_1,\lambda_2)}$ be as constructed in Theorem \ref{IntExist}. Then, 
\begin{equation}
M_{(\tau,\lambda_1,\lambda_2)} \overset{{\rm in \,law}}{=}   \int_0^1 \,s^{\lambda_1}(1-s)^{\lambda_2} \,
M_\beta(ds),\; \tau=1/\beta^2>1.
\end{equation}
\end{conjecture}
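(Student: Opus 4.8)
The statement identifies the rigorously constructed distribution $M_{(\tau,\lambda_1,\lambda_2)}$ of Theorem \ref{IntExist} with the total mass $X\triangleq\int_0^1 s^{\lambda_1}(1-s)^{\lambda_2}\,M_\beta(ds)$. Since both are positive random variables, it is enough to match their Mellin transforms on the strip $\Re(q)<\tau$; by Corollary \ref{IntExistRV} the Mellin transform of $M_{(\tau,\lambda_1,\lambda_2)}$ is $\mathfrak{M}(q\,|\,\tau,\lambda_1,\lambda_2)$, so the goal is to prove $\mathbf{E}[X^q]=\mathfrak{M}(q\,|\,\tau,\lambda_1,\lambda_2)$ for $\Re(q)<\tau$. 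The naive route of matching moments is unavailable: as emphasized in the introduction, the Stieltjes moment problem for $X$ is indeterminate even after adjoining the finite negative integer moments, so agreement at integer $q$ (already guaranteed by Theorem \ref{IntMoments} together with \eqref{Selberg}) cannot by itself force the two laws to coincide. The plan is instead to characterize $q\mapsto\mathbf{E}[X^q]$ by an exact functional equation and then to check that $\mathfrak{M}$ solves it.

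First I would set up the analytic framework on the chaos side: finiteness of $\mathbf{E}[X^q]$ and its holomorphy in $q$ throughout $\{\Re(q)<\tau\}$. Negative moments of subcritical GMC are finite by standard estimates, the positive moments up to order $\tau$ follow from \eqref{Selberg}, and positivity of $X$ yields holomorphy in the strip by dominated convergence. This produces two holomorphic functions on a common domain agreeing at $q=0,1,\dots,\lfloor\tau\rfloor$, which is still far from enough.

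The heart of the argument is to derive, directly from the GMC, a shift relation for $\mathbf{E}[X^q]$ of boundary BPZ/KPZ type, in the spirit of the Liouville-theoretic derivation that established the circle case for $\lambda=0$. Concretely I would insert an extra marked point into the field by a Cameron--Martin shift and apply Gaussian integration by parts using the exact logarithmic covariance \eqref{covk}: the shift reweights $X^q$ by a power of the field at the marked point, and after removing the regularization $\varepsilon\to 0$ and integrating the marked point over $[0,1]$ one obtains a closed relation stepping $q$ by one, together with a second relation stepping by the incommensurate amount $\tau$ coming from the other covariance scale. These are exactly the two recursions obeyed by ratios of double gamma factors through the functional equations \eqref{feq} and \eqref{Gfunct1} of Section \ref{BarnesReview}, so one then verifies by a direct computation that $\mathfrak{M}$ satisfies both, with the boundary constants supplied by the Selberg evaluation.

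The main obstacle, and the reason the statement is a conjecture rather than a theorem, is twofold. Analytically, one must show that the pair of shift equations together with holomorphy and a Phragm\'en--Lindel\"of-type bound on the growth of $\mathbf{E}[X^q]$ as $\Im(q)\to\infty$ determines the Mellin transform uniquely: a single $1$-shift leaves a $1$-periodic multiplier undetermined, a single $\tau$-shift leaves a $\tau$-periodic one, and only the simultaneous use of both incommensurate shifts collapses the ambiguity to a constant, which is then fixed by $\mathbf{E}[X^0]=1$. Probabilistically, the genuinely hard step is to justify the shift equation itself: controlling the $\varepsilon\to 0$ limit of the singular reweighted insertion, and in particular the boundary contributions at $s=0$ and $s=1$ produced by the potential $s^{\lambda_1}(1-s)^{\lambda_2}$, which is precisely where the interval case is more delicate than the circle and where the general $\lambda_1,\lambda_2$ conjecture has so far resisted a complete proof.
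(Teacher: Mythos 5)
The statement you were asked to prove is Conjecture \ref{ourmainconjinterval}, and the paper itself offers no proof of it: the only support the paper provides is that the rigorously constructed distribution $M_{(\tau,\lambda_1,\lambda_2)}$ of Theorem \ref{IntExist} reproduces the Selberg integral \eqref{Selberg} at positive integers (Theorem \ref{IntMoments}) and matches the intermittency expansion (Theorem \ref{IntAsymp}) — evidence which, as the introduction stresses, cannot determine the law because the Stieltjes moment problem for the total mass is indeterminate. So there is no paper proof to compare your argument against, and any purported complete proof of this statement would have to be viewed with suspicion.

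Your proposal correctly identifies that obstruction and does not pretend that moment matching suffices; instead you sketch a shift-equation route: a Cameron--Martin insertion, Gaussian integration by parts against the covariance \eqref{covk}, two recursions in the incommensurate steps $1$ and $\tau$ mirroring the functional equations \eqref{feq} and \eqref{Gfunct1} of the double gamma function, and a uniqueness argument combining the two shifts with a Phragm\'en--Lindel\"of growth bound, normalized by ${\bf E}[X^0]=1$. This is a sound strategy, and it is essentially the BPZ/Liouville route by which \cite{Remy} settled the circle case at $\lambda=0$, the only case the paper records as verified. But as written it is not a proof: you never actually derive the shift relations for the interval GMC (the $\varepsilon\to 0$ limit of the singular insertion and the boundary contributions at $s=0$ and $s=1$ are precisely the hard analytic content), you do not establish the growth bound on ${\bf E}[X^q]$ as $\Im(q)\to\infty$, and you do not carry out the uniqueness argument or the verification that $\mathfrak{M}(q\,|\,\tau,\lambda_1,\lambda_2)$ satisfies the derived relations with the correct boundary constants. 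You acknowledge all of this in your closing paragraph, so your proposal is best read as a research program consistent with the statement's status as an open conjecture, not as a proof of it; the genuine gaps are exactly the ones you name, and they are where the entire difficulty lies.
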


The expression for the negative moments in Eq. \eqref{IntNegMoments} with $\lambda_1=\lambda_2=0$  first appeared in \cite{Me3}.
Theorem \ref{IntMoments} first appeared in \cite{FLDR}, who gave an equivalent expression for the right-hand
side of Eq. \eqref{MGI} and verified that it matches Eq. \eqref{Selberg} without proving analytically that their formula corresponds to the Mellin transform of a probability distribution. The special case of $\lambda_1=\lambda_2=0$ of Theorems  \ref{InfinFacInt}, \ref{IntExist}, and
\ref{IntAsymp} first appeared in \cite{Me4} and the general case in \cite{MeIMRN}. 
The involution invariance of the Mellin transform in the equivalent form of self-duality, see Eq. \eqref{selfdual} below,  was first discovered in the special
case of $\lambda_1=\lambda_2=0$ in \cite{FLDR}. We extended it to the general case in the form of Eq. \eqref{involutionint} in \cite{Me14},
followed by the general form of self-duality in \cite{FLD}. 
The interested reader can find additional information about the Selberg integral
distribution such as  functional equations, an analytical approach to the moment problem based on Eq. \eqref{BillKingAsymp}, and the tails  
%%such as negative moments, asymptotic expansion, moment determinacy questions, functional equations, and infinite factorizations
in \cite{Me4} and \cite{MeIMRN}.

It is worth pointing out that in our original derivation of the Selberg integral probability distribution in \cite{Me4} in the
special case of $\lambda_1=\lambda_2=0$ and in \cite{MeIMRN} in general 
we first established Eq.  \eqref{MGI} by
summing the asymptotic series in Theorem \ref{IntAsymp} using Hardy's moment
constant method, cf. Sections 4.12 and 4.13 in \cite{Hardy}, 
 and Ramanujan's generalization of Watson's lemma, cf. Lemma 10.2 in Chap. 38 of \cite{Berndt}, 
\emph{i.e.} we summed
the intermittency expansions in closed form as explained in Section \ref{Problem}. This asymptotic
series is divergent in general, however it is convergent for finite ranges of positive and negative 
integer $q,$ see \cite{Me4}.

%%summation of ensuing intermittency expansions in closed form. 

\section{Proofs of Analytical Results}\label{proofsanalytical}
\noindent In this section we will give proofs of all
results that were stated in Sections \ref{CirAnalytical} and \ref{IntAnalytical}. All the proofs rely on various properties
of the Alexeiewsky-Barnes $G$-function and the double gamma function, which were reviewed in
Section \ref{BarnesReview}. Recall that $\tau=2/\mu$ as defined in Section \ref{Problem}.

\subsection{The Circle} 
\noindent 
We start by noting that the equivalence between the formulas for $\mathfrak{M}(q\,|\,\tau,\lambda_1,\lambda_2)$
in Eqs. \eqref{thefunctioncircle} and \eqref{MG} follows from Eq. \eqref{GfromG2}.
\begin{proof}[Proof of Theorem \ref{InfinFacCir}.]
Using the functional equation of the $G(z\,|\,\tau)$ function in Eq. \eqref{Gfunct1}, we can re-write the product in Eq. \eqref{MG} in the
form
\begin{align}
\mathfrak{M}(q\,|\tau,\,\lambda_1,\,\lambda_2)=&\frac{\Gamma(1-q/\tau)}{\Gamma^q\bigl(1-\frac{1}{\tau}\bigr)}
\frac{G(\tau(\lambda_1+\lambda_2+1)+1\,|\,\tau)}{G(\tau(\lambda_1+\lambda_2+1)+1-q\,|\,\tau)}
\frac{G(1+\tau\,|\,\tau)}{G(1-q+\tau\,|\,\tau)}\times \nonumber \\ & \times
\frac{G(\tau(1+\lambda_1)+1-q\,|\,\tau)}{G(\tau(1+\lambda_1)+1\,|\,\tau)}
\frac{G(\tau(1+\lambda_2)+1-q\,|\,\tau)}{G(\tau(1+\lambda_2)+1\,|\,\tau)}. 
\end{align}
It now remains to apply the Shintani identity in Eq. \eqref{ShintaniG} to each of the $G-$factors and notice the cancellations of all
terms except for the $\Gamma-$factors. \qed
\end{proof}

\begin{proof}[Proof of Theorem \ref{CirMoments}.]
These formulas are immediate from Eq.  \eqref{repeated} or equivalently Eq. \eqref{Grepeated}. \qed
\end{proof}
\begin{proof}[Proof of Theorem \ref{CircleExist}.] 
The proof is based on the identities in Eqs. \eqref{Malmsten}  and \eqref{Gratioidentity}. We start by re-writing Eq. \eqref{MG} by means of
Eq. \eqref{Gfunct1} in the form
\begin{align}
\mathfrak{M}(q\,|\tau,\,\lambda_1,\,\lambda_2)=&\frac{1}{\Gamma^q\bigl(1-\frac{1}{\tau}\bigr)}
\frac{\Gamma(\frac{ \tau(\lambda_1+\lambda_2+1)+1-q}{\tau})}{\Gamma(\frac{ \tau(\lambda_1+\lambda_2+1)+1}{\tau})}
\frac{G(\tau(\lambda_1+\lambda_2+1)+2\,|\,\tau)}{G(\tau(\lambda_1+\lambda_2+1)+2-q\,|\,\tau)}
\frac{G(\tau\,|\,\tau)}{G(-q+\tau\,|\,\tau)}\times \nonumber \\ & \times
\frac{G(\tau(1+\lambda_1)+1-q\,|\,\tau)}{G(\tau(1+\lambda_1)+1\,|\,\tau)}
\frac{G(\tau(1+\lambda_2)+1-q\,|\,\tau)}{G(\tau(1+\lambda_2)+1\,|\,\tau)}. 
\end{align}
We now make two observations. The four ratios of $G-$factors are in the same functional form as in Eq. \eqref{Gratioidentity} with
$b=\tau,$ $c=1+\tau\lambda_1,$ and $d=1+\tau\lambda_2.$ The ratio of the $\Gamma-$ factors has the functional form
$\Gamma(1+(z-q)/\tau)/\Gamma(1+z/\tau)$ with $z=1+\tau(\lambda_1+\lambda_2)$ and can be represented by means of Eq. \eqref{Malmsten}.
We have the identity
\begin{equation}
\log\Gamma\bigl(1+\frac{z-q}{\tau}\bigr) - \log\Gamma\bigl(1+\frac{z}{\tau}\bigr) = \int\limits_{0}^\infty \frac{dt}{t}
\Bigl[ e^{-tz}\bigl(\frac{e^{tq}-1}{e^{t\tau}-1}\bigr) - \frac{q}{\tau} e^{-t\tau}\Bigr].
\end{equation}
The formula in Eq. \eqref{CirLKh} follows. Thus, we have reduced $\log \mathfrak{M}(iq\,|\tau,\,\lambda_1,\,\lambda_2)$ to
the L\'evy-Khinchine form. It follows from the general theory of infinitely divisible probability distributions on the real line
that $\mathfrak{M}(iq\,|\tau,\,\lambda_1,\,\lambda_2)$ is the Fourier transform of an infinitely divisible distribution that is supported
on the real line, cf. Proposition 8.2 in Chapter 4 of \cite{SteVHar}, and is absolutely continuous,  cf. Theorem 4.23 in Chapter 4 of \cite{SteVHar}. 
\qed
\end{proof}
\begin{proof}[Proof of Corollary \ref{CirExistRV}.]
Denote the random variable corresponding
to the L\'evy-Khinchine decomposition 
by $X$ and its probability density by $f(x),$ $x\in\mathbb{R}.$ The Fourier transform of
$f(x)$ is
$\mathfrak{M}(iq\,|\,\mu,\lambda_1,\lambda_2)$ by construction,
which is analytic as a function of $q$ in the strip $\Im(q)>-\tau.$
Then, by the fundamental theorem of analytic characteristic
functions, confer Theorem 7.1.1 in Chap. 7 of \cite{Lukacs}, we have
for all $q$ in the strip of analyticity $\Im(q)>-\tau$
%%states that for all $q$ in the strip of analyticity there holds the equality
\begin{equation}
\mathfrak{M}(iq\,|\,\mu,\lambda_1,\lambda_2) =
\int\limits_\mathbb{R} e^{iqx}\,f(x)\,dx.
\end{equation}
On the other hand, the random variable
$M_{(\mu,\lambda_1,\lambda_2)}\triangleq \exp(X)$ has the density $f_{(\mu, \lambda_1,
\lambda_2)}(\log y)/y,$ $y\in (0, \infty)$ so that the right-hand
side is precisely its Mellin transform for
$\Re(q)<\tau$
\begin{equation}
\mathfrak{M}(q\,|\,\mu,\lambda_1,\lambda_2) = \int\limits_0^\infty
y^q\,f_{(\mu, \lambda_1, \lambda_2)}(\log y)\,\frac{dy}{y} = {\bf
E}\bigl[M_{(\mu,\lambda_1,\lambda_2)}^q\bigr]
\end{equation}
as seen upon relabeling $iq\rightarrow q$ and changing variables $y
= e^x.$
\qed
\end{proof}
\begin{proof}[Proof of Theorem \ref{CirAsymp}.]
This is an immediate corollary of Eqs. \eqref{IfuncG} and \eqref{IfuncGAsymptotic}.
It remains to apply these equations to each of the four ratios of the $G-$factors in Eq. \eqref{MG}
and collect the terms. \qed
\end{proof}
\begin{proof}[Proof of Theorem \ref{CicInvolution}.]
To prove the involution invariance in Eq. \eqref{invcircle}, we need to recall the scaling property of the multiple gamma function, see Eq. \eqref{scale}.
The transformation in Eq. \eqref{invtranscircle} corresponds to $\kappa=1/\tau.$ We note first that
\begin{equation}
\frac{1}{\tau}(1,\,\tau)=(1, \,\frac{1}{\tau})
\end{equation}
in the sense of the parameters of the double gamma function. We apply Eq. \eqref{scale} to each of the double gamma factors in Eq. \eqref{thefunctioncircle}
under the transformation in Eq. \eqref{invtranscircle}. For example, 
\begin{align}
\Gamma_2\bigl(\frac{1}{\tau}(\tau\lambda_1+\tau\lambda_2+1)+1-\frac{q}{\tau}\,\Big|\,\frac{1}{\tau}\bigr) = & \Gamma_2\Bigl(\frac{1}{\tau}\bigl(\tau(\lambda_1+\lambda_2+1)+1-q\bigr)\,\Big|\,\frac{1}{\tau}\Bigr), \nonumber \\
= & \bigl(\frac{1}{\tau}\bigr)^{-B_{2,2}(\tau(\lambda_1+\lambda_2+1)+1-q)/2} \times\nonumber \\ & \times \Gamma_2\Bigl(\tau(\lambda_1+\lambda_2+1)+1-q\,\Big|\,\tau\Bigr).
\end{align}
Using the formula for $B_{2,2}(x\,|\,a)$ in Eq. \eqref{B22}
with $(a_1=1,\,a_2=\tau),$ we collect all terms and simplify to obtain
\begin{equation}
\mathfrak{M}\bigl(\frac{q}{\tau}\,|\,\frac{1}{\tau},\tau\lambda_1,\tau\lambda_2\bigr) %%(2\pi)^{-\frac{q}{\tau}}
\,\Gamma^{\frac{q}{\tau}}(1-\tau) = 
\mathfrak{M}(q\,|\,\tau,\lambda_1,\lambda_2)\Bigl(\frac{%%2\pi
\tau^{\frac{1}{\tau}}}{\Gamma(1-\frac{1}{\tau})}\Bigr)^{-q} %%\times \nonumber \\ & \times
\frac{\Gamma_2(1-q\,|\tau)\Gamma_2(\tau\,|\tau)}{\Gamma_2(1\,|\tau)\Gamma_2(\tau-q\,|\tau)}.
\end{equation}
It remains to observe that the functional equation of the double gamma function implies the identity
\begin{equation}\label{mydoublegammaidentity}
\tau^{-\frac{q}{\tau}} \frac{\Gamma_2(1-q\,|\tau)\Gamma_2(\tau\,|\tau)}{\Gamma_2(1\,|\tau)\Gamma_2(\tau-q\,|\tau)} = 
\frac{\Gamma(1-q)}{\Gamma(1-\frac{q}{\tau})},
\end{equation}
which gives the result. \qed
\end{proof}

\subsection{The Interval}
\noindent 
We start by noting that the equivalence between the formulas for $\mathfrak{M}(q\,|\,\tau,\lambda_1,\lambda_2)$
in Eqs. \eqref{thefunctioninterval} and \eqref{MGI} follows from Eq. \eqref{GfromG2} and the fact
\begin{equation}
G(\tau\,|\,\tau) = G(1+\tau\,|\,\tau).
\end{equation}

\begin{proof}[Proof of Theorem \ref{InfinFacInt}.]
The starting point is the Shintani identity, cf. Eq. \eqref{ShintaniG}.
Using the functional equations of the Alexeiewsky-Barnes
$G-$function, we first reduce some of the $G-$factors in Eq. \eqref{MGI} as
follows.
\begin{align}
\frac{1}{G(-q+\tau\,|\,\tau)}
\frac{G(2-2q+\tau(2+\lambda_1+\lambda_2)\,|\,\tau)}{G(2-q+\tau(2+\lambda_1+\lambda_2)\,|\,\tau)} 
= & \tau^q 
\frac{\Gamma\bigl(2-2q+\tau(1+\lambda_1+\lambda_2)\bigr)}{\Gamma\bigl(2-q+\tau(1+\lambda_1+\lambda_2)\bigr)}
\times \nonumber \\ & \times
\frac{\Gamma\bigl(1-q/\tau\bigr) }{G(1-q+\tau\,|\,\tau)}
\frac{G(2-2q+\tau(1+\lambda_1+\lambda_2)\,|\,\tau)}{G(2-q+\tau(1+\lambda_1+\lambda_2)\,|\,\tau)}.
\end{align}
We now apply the Shintani identity to each of the resulting
$G-$factors. It is easy to see that the terms that are
quadratic in $q$ all cancel out resulting in Eq. \eqref{InfiniteSelberg}. \qed
\end{proof}
\begin{proof}[Proof of Theorem \ref{IntMoments}.]
These formulas are immediate from Eq.  \eqref{repeated} or equivalently Eq. \eqref{Grepeated}. 
We note that these formulas also follow directly from Eq. \eqref{InfiniteSelberg}, see \cite{MeIMRN}  for details. \qed
\end{proof}

\begin{proof}[Proof of Theorem \ref{IntExist}.]
The proof is based on the
Malmst\'en-type formula for $\log G(z\,|\,\tau)$ in Eq. \eqref{LK}.
Let $\Re(q)<\tau,$ $\tau>1,$ and $\lambda_1,
\lambda_2>-1/\tau.$ Applying Eq. \eqref{LK} to each $G-$factor in Eq. \eqref{MGI}, we can write
\begin{gather}
\log\mathfrak{M}(q\,|\,\mu,\lambda_1,\lambda_2) =
\int\limits_0^\infty \frac{dt}{t}
\Bigl[\frac{1}{(e^t-1)(e^{t\tau}-1)}\Bigl(
e^{t(q-\lambda_1\tau)}+e^{t(q-\lambda_2\tau)}+e^{t(q+1)}+ \nonumber
\\ +
e^{t(q-1-\tau(1+\lambda_1+\lambda_2))}-e^{t(2q-1-\tau(1+\lambda_1+\lambda_2))}\Bigr)
 + A(t)+B(t)\,q+C(t)\,q^2\Bigr]
\end{gather}
for some functions $A(t),$ $B(t),$ and $C(t)$ depending also on $\tau,$ $\lambda_1,$ $\lambda_2.$ %%It is easy to see
Note that $C(t)=0$ as it is proportional to $2^2-2-1-1.$ %%It is clear that
Then, we can rewrite this expression in the form
\begin{gather}
\log\mathfrak{M}(q\,|\,\mu,\lambda_1,\lambda_2) =
\int\limits_0^\infty \frac{dt}{t} \Bigl[
\frac{\bigl(e^{tq}-1-tq\bigr)}{(e^t-1)(e^{t\tau}-1)}\bigl(e^{-t\lambda_1\tau}+e^{-t\lambda_2\tau}+e^{t}+e^{-t(1+\tau(1+\lambda_1+\lambda_2))}\bigr)
- \nonumber \\ -\frac{1}{2}(2tq)^2
\frac{e^{-t(1+\tau(1+\lambda_1+\lambda_2))}}{{(e^t-1)(e^{t\tau}-1)}}+A(t)+B(t)\,q\Bigr]
- \int\limits_0^\infty
\frac{dt}{t}\Bigl[\frac{\bigl(e^{2tq}-1-2tq-(2tq)^2/2\bigr)}{(e^t-1)(e^{t\tau}-1)}
e^{-t(1+\tau(1+\lambda_1+\lambda_2))}\Bigr]
\end{gather}
for some appropriately modified $A(t)$ and $B(t).$ It follows that
we have
\begin{equation}
\log\mathfrak{M}(q=0\,|\,\mu,\lambda_1,\lambda_2)=\int\limits_0^\infty
\frac{dt}{t} \, A(t).
\end{equation}
On the other hand, $\mathfrak{M}(q=0\,|\,\mu,\lambda_1,\lambda_2)=1$
by construction. Hence this term vanishes. We now change variables
$t'=2t$ in the second
integral and then bring the two integrals back under the same integral sign. %%There results
\begin{gather}
\log\mathfrak{M}(q\,|\,\mu,\lambda_1,\lambda_2) =
\int\limits_0^\infty \frac{dt}{t} \Bigl[
\bigl(e^{tq}-1-tq\bigr)\Bigl(\frac{\bigl(e^{-t\lambda_1\tau}+e^{-t\lambda_2\tau}+e^{t}+e^{-t(1+\tau(1+\lambda_1+\lambda_2))}\bigr)}
{(e^t-1)(e^{t\tau}-1)} - \nonumber \\
-\frac{e^{-t/2(1+\tau(1+\lambda_1+\lambda_2))}}{(e^{t/2}-1)(e^{t\tau/2}-1)}\Bigr)
+
\frac{q^2t^2}{2}\Bigl(\frac{e^{-t/2(1+\tau(1+\lambda_1+\lambda_2))}}{(e^{t/2}-1)(e^{t\tau/2}-1)}
-
\frac{4\,e^{-t(1+\tau(1+\lambda_1+\lambda_2))}}{{(e^t-1)(e^{t\tau}-1)}}\Bigr)
+ B(t)\,q\Bigr].
\end{gather}
Denote
\begin{gather}
\mathfrak{f}(t)\triangleq \frac{\bigl(e^{-t\lambda_1\tau}+e^{-t\lambda_2\tau}+e^{t}+e^{-t(1+\tau(1+\lambda_1+\lambda_2))}\bigr)}{(e^t-1)(e^{t\tau}-1)} - \frac{e^{-t/2(1+\tau(1+\lambda_1+\lambda_2))}}{(e^{t/2}-1)(e^{t\tau/2}-1)}, \\
\mathfrak{g}(t)\triangleq \frac{t^2}{(e^{t/2}-1)(e^{t\tau/2}-1)} -
\frac{(2t)^2}{{(e^t-1)(e^{t\tau}-1)}}.
\end{gather}
The functions $\mathfrak{f}$ and $\mathfrak{g}$ have the property
$\mathfrak{f}=O\bigl(t^{-1}\bigr),$ $\mathfrak{g}=O(t)$ as
$t\rightarrow 0$ and $\mathfrak{f}, \,\mathfrak{g}$ are
exponentially small as $t\rightarrow +\infty.$ Noticing the
individual existence and equality of the integrals
\begin{equation}
\int\limits_0^\infty \frac{dt}{t}
\Bigl[\frac{t^2\,\bigl(e^{-t/2(1+\tau(1+\lambda_1+\lambda_2))}-1\bigr)}{(e^{t/2}-1)(e^{t\tau/2}-1)}\Bigr]
= \int\limits_0^\infty \frac{dt}{t}
\Bigl[\frac{(2t)^2\,\bigl(e^{-t(1+\tau(1+\lambda_1+\lambda_2))}-1\bigr)}{{(e^t-1)(e^{t\tau}-1)}}\Bigr],
\end{equation}
we can write
%%$\mathfrak{f}=O\bigl(e^{-t\tau}\bigr),$ $\mathfrak{g}=O\bigl(t^2 e^{-t(1+\tau)}\bigr)$
\begin{equation}
\log\mathfrak{M}(q\,|\,\mu,\lambda_1,\lambda_2) =
q\int\limits_0^\infty \frac{dt}{t} \,B(t) +\int\limits_0^\infty
\frac{dt}{t} \Bigl[\bigl(e^{tq}-1-tq\bigr)\,\mathfrak{f}(t)\Bigr]
+\frac{q^2}{2}\int\limits_0^\infty \frac{dt}{t} \,\mathfrak{g}(t).
\end{equation}
Denote
\begin{equation}
\sigma^2(\tau)\triangleq \int\limits_0^\infty \frac{dt}{t}
\,\mathfrak{g}(t).
\end{equation}
Define the function $\mathcal{M}_{(\tau, \lambda_1,
\lambda_2)}(u)\triangleq -\int_u^\infty \mathfrak{f}(t)\,dt/t$ for
$u>0$ and $\mathcal{M}_{(\tau, \lambda_1, \lambda_2)}(u)\triangleq 0$
for $u<0.$ We have thus established the decomposition for
$\Re(q)<\tau,$
\begin{equation}
\log\mathfrak{M}(q\,|\,\tau,\lambda_1,\lambda_2) =
q\int\limits_0^\infty \frac{dt}{t} B(t)+\frac{1}{2}q^2\sigma^2(\mu)+
\int\limits_{\mathbb{R}\setminus\{0\}} \bigl(e^{uq}-1-uq\bigr)
\,d\mathcal{M}_{(\tau, \lambda_1, \lambda_2)}(u). \label{Levydecom}
\end{equation}
Finally, since $\tau$ satisfies $\tau>1,$ we have the
inequalities
\begin{gather}
\mathfrak{f}(t)>0 \,\, \text{for} \,\,t>0,   \label{fposit} \\
\sigma^2(\tau)>0.
\end{gather}
Their validity can be established as follows. Let
$a=e^{-t\lambda_1\tau/2},$ $b=e^{-t\lambda_2\tau/2},$ $c=e^{-t/2},$
and $d=e^{-t\tau/2}.$ Note that $a, b\geq 0$ and $0<c, d<1.$ Then,
Eq. \eqref{fposit} is equivalent to
\begin{equation}
a^2+b^2+c^{-2}+a^2b^2c^2d^2-ab(1+c)(1+d)>0. %%ab-abc-abd-abcd>0.
\end{equation}
The latter inequality follows from
\begin{align}
a^2+b^2+c^{-2}+a^2b^2c^2d^2-ab(1+c)(1+d) & = (a-b)^2+c^{-2} +
ab(1-c)(1-d) + \nonumber
\\ & +(abcd-1)^2-1\geq c^{-2}-1>0.
\end{align}
The integral for $\sigma^2(\tau)$ can be computed explicitly by means of Eq. \eqref{Frullani}.
\begin{align}
\int\limits_0^\infty \frac{dt}{t} \,\mathfrak{g}(t) & =
\int\limits_0^\infty \frac{dt}{t}
\,\Bigl[\frac{t^2}{(e^{t/2}-1)(e^{t\tau/2}-1)} -
\frac{4}{\tau}e^{-t}\Bigr] - \int\limits_0^\infty \frac{dt}{t}
\,\Bigl[\frac{(2t)^2}{{(e^t-1)(e^{t\tau}-1)}}-\frac{4}{\tau}e^{-t}\Bigr],
\nonumber
\\ & = \frac{4}{\tau}\,\int\limits_0^\infty \frac{dt}{t}
\,\Bigl[e^{-t}-e^{-2t}\Bigr] = \frac{4}{\tau}\log 2.
\end{align}
Hence $\sigma^2(\tau)>0$ and $\mathcal{M}_{(\tau, \lambda_1,
\lambda_2)}(u)$ is continuous and non-decreasing on $(-\infty,\,0)$
and $(0,\,\infty)$ and satisfies the integrability and limit
conditions $\int_{[-1,\,1]\setminus\{0\}} u^2\,d\mathcal{M}_{(\tau,
\lambda_1, \lambda_2)}(u)<\infty,$ $\lim\limits_{u\rightarrow
\pm\infty} \mathcal{M}_{(\tau, \lambda_1,
\lambda_2)}(u) =0$ so that %% $\sigma^2(\tau)>0$ and
$\mathcal{M}_{(\tau, \lambda_1, \lambda_2)}(u)$ is a valid spectral
function.\footnote{Note that it is only the mean $\int_0^\infty B(t)
\,dt/t$ that necessitates $\tau>1.$ The gaussian component
$\sigma^2(\tau)$ and spectral function $\mathcal{M}_{(\mu, \lambda_1,
\lambda_2)}(u)$ satisfy the required properties for all $\tau>0.$ It is this property that allows us to define
the critical Selberg integral distribution, cf. Section \ref{DerivM}.}
The decomposition in Eq. \eqref{Levydecom} assumes the canonical form, confer
Theorem 4.4 in Chap. 4 of \cite{SteVHar}, in the case of purely
imaginary $q$ by a trivial change in the linear term.\qed
\end{proof}

\begin{proof}[Proof of Corollary \ref{IntExistRV}.] 
This follows from Theorem \ref{IntExist}
by some general properties of analytic and infinitely divisible
characteristic functions. Denote the random variable corresponding
to the L\'evy-Khinchine decomposition in Theorem  \ref{IntExist}
by $X.$ As it has a nonzero
gaussian component, it is absolutely continuous with a bounded,
continuous, and zero-free density by Theorem 8.4 and Corollary 8.8
in Chap. 4 of \cite{SteVHar}. Denote the probability density of
$X$ by $f(x),$ $x\in\mathbb{R}.$ The Fourier transform of
$f(x)$ is
$\mathfrak{M}(iq\,|\,\mu,\lambda_1,\lambda_2)$ by construction,
which is analytic as a function of $q$ in the strip $\Im(q)>-\tau.$
Then, by the fundamental theorem of analytic characteristic
functions, confer Theorem 7.1.1 in Chap. 7 of \cite{Lukacs}, we have
for all $q$ in the strip of analyticity $\Im(q)>-\tau$
%%states that for all $q$ in the strip of analyticity there holds the equality
\begin{equation}
\mathfrak{M}(iq\,|\,\mu,\lambda_1,\lambda_2) =
\int\limits_\mathbb{R} e^{iqx}\,f(x)\,dx.
\end{equation}
On the other hand, the random variable
$M_{(\mu,\lambda_1,\lambda_2)}\triangleq \exp(X)$ has the density $f(\log y)/y,$ $y\in (0, \,\infty),$ so that the right-hand
side of this equation is precisely the Mellin transform of $M_{(\mu,\lambda_1,\lambda_2)}$ for
$\Re(q)<\tau$,
\begin{equation}
\mathfrak{M}(q\,|\,\mu,\lambda_1,\lambda_2) = \int\limits_0^\infty
y^q\,f_{(\mu, \lambda_1, \lambda_2)}(\log y)\,\frac{dy}{y} = {\bf
E}\bigl[M_{(\mu,\lambda_1,\lambda_2)}^q\bigr]
\end{equation}
as seen upon relabeling $iq\rightarrow q$ and changing variables $y
= e^x.$
\end{proof}

\begin{proof}[Proof of Theorem \ref{IntAsymp}.]
The key element in the proof is the integral in Eq. \eqref{Iintegral} and its connection with the ratio of $G(z\,|\,\tau)$ functions in
Eq. \eqref{IfuncG} and its asymptotic expansion in Eq. \eqref{IfuncGAsymptotic}. As in the proof of Theorem \ref{CirAsymp}
 it remains to apply these equations 
to each of the ratios of the $G-$factors in Eq. \eqref{MGI} and collect the terms. 
The only other term in Eq. \eqref{logMellinAsympI}, which has a structure that is
different from the series in Eq. \eqref{IfuncGAsymptotic}, can be treated using the
elementary identity
\begin{equation}
\sum\limits_{r=1}^\infty \frac{\zeta(r+1)}{r+1}/\tau^{r+1} =
%%\int\limits_0^\infty \frac{dx}{x(e^{x\tau}-1)} \bigl[e^{x}-1-x\bigr] =
\log\Gamma\bigl(1-1/\tau\bigr)+\frac{\psi(1)}{\tau}, \, |\tau|>1.
\end{equation}
The result now follows by a straightforward algebraic reduction.
\qed
\end{proof}

%%\begin{remark}
%%The relationship between the $G-$function and the asymptotic series
%%in Eq. (79) can be also derived directly from the asymptotic
%%expansion of the double gamma function given in \cite{Matsumoto}.
%%\end{remark}

\section{Barnes Beta Distributions}\label{BarnesBeta}
\noindent 
We now proceed to review the theory of Barnes beta probability distributions following \cite{Me13}, \cite{Me14},  and \cite{Me16}. 
For the immediate needs of this paper we only need these distributions of types $M=N=2$ and $M=1, N=0.$ 
Nonetheless, we review the general case $M\leq N+1$ as it requires the same amount of effort as the special cases of interest.
The cases of $M<N,$ $M=N, $ and $M=N+1$ are all constructed in the same way but have somewhat different properties.
For this reason the first two cases $M<N,$ $M=N, $ and the third case $M=N+1$ are treated separately. As we will see below,
Barnes beta distributions have the property that their moments are expressed as products of ratios of multiple
gamma functions, whereas their ratios can have moments in the form of products of ratios of multiple sine functions.  The proofs of all results
in this section are given in the Appendix.

\subsection{$M\leq N$}
Define the action of the combinatorial operator $\mathcal{S}_N$ on a function \(h(x)\) by
\begin{definition}\label{Soperator}
\begin{equation}\label{S}
(\mathcal{S}_Nh)(q\,|\,b) \triangleq \sum\limits_{p=0}^N (-1)^p
\sum\limits_{k_1<\cdots<k_p=1}^N
h\bigl(q+b_0+b_{k_1}+\cdots+b_{k_p}\bigr).
\end{equation}
\end{definition}
\noindent In other words, in Eq. \eqref{S} the action of $\mathcal{S}_N$
is defined as an alternating sum over all combinations of $p$
elements for every $p=0\cdots N.$ 
\begin{definition}\label{bdef}
Given $q\in\mathbb{C}-(-\infty, -b_0],$ \(a=(a_1\cdots a_{M})\), \(b=(b_0,b_1\cdots b_{N}),\) let\footnote{
We will abbreviate $\bigl(\mathcal{S}_N \log\Gamma_M\bigr)(q\,|a,\,b)$ to mean the action of $\mathcal{S}_N$ on
$\log\Gamma_M(x|a),$ \emph{i.e.} $\bigl(\mathcal{S}_N \log\Gamma_M(x|a)\bigr)(q\,|\,b).$}
\begin{equation}\label{eta}
\eta_{M,N}(q\,|a,\,b) \triangleq \exp\Bigl(\bigl(\mathcal{S}_N
\log\Gamma_M\bigr)(q\,|a,\,b) - \bigl(\mathcal{S}_N \log\Gamma_M\bigr)(0\,|a,\,b)\Bigr).
\end{equation}
\end{definition}
The function $\eta_{M,N}(q\,|a,\,b)$ is holomorphic over
$q\in\mathbb{C}-(-\infty, -b_0]$ and equals a product of ratios of
multiple gamma functions by construction. Specifically,
\begin{align}
\eta_{M,N}(q\,|a, b) 
= & \frac{\Gamma_M(q+b_0|a)}{\Gamma_M(b_0|a)}\prod\limits_{j_1=1}^N \frac{\Gamma_M(b_0+b_{j_1}|a)}{\Gamma_M(q+b_0+b_{j_1}|a)}
\prod\limits_{j_1<j_2}^N \frac{\Gamma_M(q+b_0+b_{j_1}+b_{j_2}|a)}{\Gamma_M(b_0+b_{j_1}+b_{j_2}|a)}\times \nonumber \\
&\times\prod\limits_{j_1<j_2<j_3}^N \frac{\Gamma_M(b_0+b_{j_1}+b_{j_2}+b_{j_3}|a)}{\Gamma_M(q+b_0+b_{j_1}+b_{j_2}+b_{j_3}|a)} \cdots,
\end{align}
until all the $N$ indices are exhausted. The function $\log\eta_{M,N}(q\,|a,\,b)$ has an important integral representation that follows from 
that of $\log\Gamma_M(w|a)$ in Eq. \eqref{key}.
\begin{theorem}[Existence and Structure]\label{main}
Given $M, N\in\mathbb{N}$ such that $M\leq N,$ the function
$\eta_{M,N}(q\,|a,\,b)$ is the Mellin transform of a probability
distribution on $(0, 1].$ Denote it by $\beta_{M, N}(a,b).$ Then,
\begin{equation}
{\bf E}\bigl[\beta_{M, N}(a,b)^q\bigr] = \eta_{M, N}(q\,|a,\,b),\;
\Re(q)>-b_0.
\end{equation}
The distribution $-\log\beta_{M, N}(a,b)$ is infinitely divisible on
$[0, \infty)$ and has the L\'evy-Khinchine decomposition for $\Re(q)>-b_0,$
\begin{equation}\label{LKH}
{\bf E}\Bigl[\exp\bigl(q\log\beta_{M, N}(a,b)\bigr)\Bigr] =
\exp\Bigl(\int\limits_0^\infty (e^{-tq}-1) e^{-b_0
t} \frac{
\prod\limits_{j=1}^N (1-e^{-b_j t})}{\prod\limits_{i=1}^M (1-e^{-a_i t})}
\frac{dt}{t} \Bigr).
\end{equation}
%%\begin{corollary}[Structure]\label{Structure}
$\log\beta_{M,N}(a,b)$ is absolutely continuous if and only if $M=N.$ If $M<N,$
$-\log\beta_{M,N}(a,b)$ is compound Poisson and
\begin{subequations}
\begin{align}
{\bf P}\bigl[\beta_{M,N}(a,b)=1\bigr] & =
\exp\Bigl(-\int\limits_0^\infty e^{-b_0 t}\frac{
\prod\limits_{j=1}^N (1-e^{-b_j t})}{\prod\limits_{i=1}^M (1-e^{-a_i t})}
\frac{dt}{t}\Bigr), \label{Pof1} \\
& = \exp\bigl(-(\mathcal{S}_N \log\Gamma_M)(0\,|a,\,b)\bigr). \label{Pof12}
\end{align}
\end{subequations}
\end{theorem}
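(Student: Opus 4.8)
The plan is to reduce everything to the Malmst\'en-type integral representation of $\log\Gamma_M$ in Eq. \eqref{key} and then recognize the outcome as the L\'evy--Khinchine exponent of a nonnegative infinitely divisible law. The starting observation is that the operator $\mathcal{S}_N$ of Eq. \eqref{S} is exactly the finite-difference operator $\prod_{j=1}^N(1-T_{b_j})$ evaluated at $x=q+b_0$, where $T_b h(x)=h(x+b)$: expanding the product over subsets $S\subseteq\{1,\dots,N\}$ reproduces the alternating sum $\sum_S(-1)^{|S|}h(q+b_0+\sum_{j\in S}b_j)$. I would first record its two relevant actions. Applied to an exponential it factorizes,
\begin{equation}
\sum_{S\subseteq\{1,\dots,N\}}(-1)^{|S|}e^{-(q+b_0+\sum_{j\in S}b_j)t}=e^{-(q+b_0)t}\prod_{j=1}^N\bigl(1-e^{-b_jt}\bigr),
\end{equation}
while applied to a polynomial of degree $<N$ it returns $0$, and to a polynomial of degree exactly $N$ it returns a constant independent of $q$.

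Next I would apply Eq. \eqref{key} at each of the finitely many shifted arguments $q+b_0+\sum_{j\in S}b_j$ (legitimate since all have positive real part when $\Re(q)>-b_0$ and $b_j>0$) and assemble the alternating sum into a single integral. The regularizing terms $B_{M,k}(\cdot\,|\,a)$ have degree $k\le M$; since $M\le N$, those with $k\le M-1$ (degree $<N$) are annihilated and drop out, while the $e^{-wt}f(t)$ part collapses by the exponential identity above. Using $f(t)/t^{M+1}=\bigl(t\prod_{i=1}^M(1-e^{-a_it})\bigr)^{-1}$ from Eq. \eqref{fdef} and subtracting the value at $q=0$, I expect to obtain
\begin{equation}
\log\eta_{M,N}(q\,|a,b)=\int_0^\infty\bigl(e^{-qt}-1\bigr)\,\frac{e^{-b_0t}\prod_{j=1}^N(1-e^{-b_jt})}{t\prod_{i=1}^M(1-e^{-a_it})}\,dt,
\end{equation}
which is precisely Eq. \eqref{LKH}. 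The care needed is that the combined integrand be absolutely integrable: near $t=0$ the factor $\prod_j(1-e^{-b_jt})\sim t^N$ against the $t^{-M-1}$ singularity and $(e^{-qt}-1)\sim-qt$ leaves $t^{N-M}$, integrable for $M\le N$; near $t=\infty$ the factor $e^{-b_0t}$ with $b_0>0$ forces the strip $\Re(q)>-b_0$. This is also what justifies the termwise combination of the integrals.

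With the exponent in hand, the remaining claims follow from standard theory of infinitely divisible laws on the half-line, for which I would cite \cite{SteVHar}. Since $a_i,b_j,b_0>0$, the measure $d\nu(t)=e^{-b_0t}\prod_j(1-e^{-b_jt})\bigl(t\prod_i(1-e^{-a_it})\bigr)^{-1}dt$ is a nonnegative L\'evy measure with $\int_0^\infty\min(1,t)\,d\nu(t)<\infty$, so $\exp(\int_0^\infty(e^{-qt}-1)\,d\nu)$ is the Laplace transform of a nonnegative infinitely divisible $Y$; setting $\beta_{M,N}(a,b)=e^{-Y}\in(0,1]$ gives ${\bf E}[\beta_{M,N}^{\,q}]=\eta_{M,N}(q\,|a,b)$ for $\Re(q)>-b_0$, which is the distributional and Mellin-transform statement, and infinite divisibility of $-\log\beta_{M,N}$ is immediate. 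The absolute-continuity dichotomy is governed by the total mass $\nu(0,\infty)$, whose only possible divergence is at $t=0$ where the integrand behaves like $t^{N-M-1}$: it diverges iff $M=N$ and converges iff $M<N$. When $M=N$ the L\'evy measure is infinite and the law is absolutely continuous by the criterion for absolute continuity of infinitely divisible laws of infinite activity in \cite{SteVHar}; when $M<N$ the mass is finite, $Y$ is compound Poisson, and ${\bf P}[\beta_{M,N}=1]={\bf P}[Y=0]=e^{-\nu(0,\infty)}$, which is Eq. \eqref{Pof1}. Reading off $(\mathcal{S}_N\log\Gamma_M)(0\,|a,b)=\int_0^\infty e^{-b_0t}\prod_j(1-e^{-b_jt})\bigl(t\prod_i(1-e^{-a_it})\bigr)^{-1}dt=\nu(0,\infty)$ from the same reduction (a convergent quantity exactly when $M<N$) then gives Eq. \eqref{Pof12}.

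The main obstacle I anticipate is the bookkeeping in the boundary case $M=N$, where the degree-$N$ term $B_{N,N}$ is not annihilated: one must verify that within $(\mathcal{S}_N\log\Gamma_M)(q)$ its $t^{-1}$ contribution combines with that of the exponential part into a convergent integral, and that its $q$-independent residue cancels upon subtracting the value at $q=0$, leaving the clean integrand displayed above. Everything else is a matter of identifying the finite-difference structure of $\mathcal{S}_N$ and invoking the cited infinite-divisibility criteria.
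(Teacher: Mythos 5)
Your proposal is correct and takes essentially the same route as the paper's own proof: your finite-difference identity for $\mathcal{S}_N$ (vanishing on polynomials of degree $<N$, constancy in $q$ on degree exactly $N$) is precisely the content of Lemma \ref{mylemma} and Corollary \ref{mainauxID}, the collapse of Ruijsenaars' formula \eqref{key} to the integrand of Eq. \eqref{LKH} after subtracting the value at $q=0$ is the paper's computation, and the existence, compound Poisson structure for $M<N$, and Eqs. \eqref{Pof1}--\eqref{Pof12} are drawn from the same criteria in \cite{SteVHar}. One caution: for $M=N$, absolute continuity does not follow from infinite activity alone (infinite discrete L\'evy measures can produce singular laws); it follows because the infinite L\'evy measure here has a density, which is what the paper's citation (Proposition 4.13 in Chapter 3 of \cite{SteVHar}) actually requires.
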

\begin{corollary}\label{momentproblembarnes}
The Stieltjes moment problem for the positive integer moments of $\beta_{M,N}(a,b)$ is determinate.
\end{corollary}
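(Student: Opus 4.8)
The plan is to deduce determinacy directly from the bounded support of $\beta_{M,N}(a,b)$ that was established in Theorem \ref{main}. Since that theorem identifies $\eta_{M,N}(q\,|a,\,b)$ as the Mellin transform of a probability distribution supported on $(0,1]$, the underlying random variable satisfies $0<\beta_{M,N}(a,b)\le 1$ almost surely. Consequently its positive integer moments, which by construction are the values of the Mellin transform at the positive integers, $m_n\triangleq\eta_{M,N}(n\,|a,\,b)={\bf E}\bigl[\beta_{M,N}(a,b)^n\bigr]$, obey the uniform bound $0<m_n\le 1$ for every $n\in\mathbb{N}$.

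The key step is then to verify Carleman's condition for the Stieltjes moment problem, namely $\sum_{n=1}^\infty m_n^{-1/(2n)}=+\infty$. Because $m_n\le 1$, each summand satisfies $m_n^{-1/(2n)}\ge 1$, so the series diverges term by term and the condition holds trivially. Carleman's criterion (equivalently, the classical fact that a measure of compact support is uniquely determined by its moments via the Weierstrass approximation theorem) then guarantees that no measure on $[0,\infty)$ other than the law of $\beta_{M,N}(a,b)$ shares the sequence $\{m_n\}$, which is precisely the assertion that the Stieltjes moment problem is determinate.

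There is no genuine obstacle here: the entire argument rests on the single structural fact that the support is contained in the bounded interval $(0,1]$, which forces the moments to remain bounded and hence to satisfy any of the classical determinacy criteria. The only point requiring mild care is to invoke the Stieltjes form of Carleman's condition rather than the Hamburger form, and to observe that determinacy on $[0,\infty)$ follows \emph{a fortiori} from the even stronger Hamburger determinacy that the same bound $m_n\le 1$ already implies.
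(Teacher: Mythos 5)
Your proof is correct and rests on exactly the same key fact as the paper's one-line argument: Theorem \ref{main} places $\beta_{M,N}(a,b)$ on the bounded interval $(0,1]$, and compact support forces determinacy. The only cosmetic difference is that you verify Carleman's Stieltjes condition explicitly from the bound $m_n\le 1$, whereas the paper simply cites the textbook fact (Section 2.2 of \cite{Char}) that compactly supported distributions have a determinate moment problem.
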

It is worth emphasizing that the integral representation of $\log\eta_{M,N}(q\,|a,\,b)$ in Eq. \eqref{LKH} is the main result as it 
automatically implies that $\beta_{M, N}(a,b)$ is a valid probability distribution having
infinitely divisible logarithm, see Chapter 3 of \cite{SteVHar} for background material
on infinitely divisible distributions on $[0, \infty).$ 
\begin{theorem}[Asymptotics]\label{Asymptotics}
If $M<N$ and $|\arg(q)|<\pi,$
\begin{equation}\label{ourasym}
\lim\limits_{q\rightarrow\infty} \eta_{M, N}(q\,|\,a, b) =
\exp\bigl(-(\mathcal{S}_N \log\Gamma_M)(0\,|\,a, b)\bigr).
\end{equation}
If $M=N$ and $|\arg(q)|<\pi,$
\begin{equation}\label{ourasymN}
\eta_{N, N}(q\,|\,a, b) = \exp\bigl(-(b_1\cdots b_N/a_1\cdots a_M) \log(q) +
O(1)\bigr), \;q\rightarrow\infty.
\end{equation}
\end{theorem}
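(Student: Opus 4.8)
The plan is to read off the asymptotics directly from the defining representation (\ref{eta}), namely $\log\eta_{M,N}(q\,|\,a,b) = (\mathcal{S}_N\log\Gamma_M)(q\,|\,a,b) - (\mathcal{S}_N\log\Gamma_M)(0\,|\,a,b)$, after substituting $\log\Gamma_M(w\,|\,a)=L_M(w\,|\,a)$ from (\ref{mgamma}) together with its asymptotic expansion (\ref{asym})--(\ref{asymremainder}). The one structural fact that organizes the whole argument is that the operator $\mathcal{S}_N$ of Definition \ref{Soperator} is an $N$-fold finite difference: with $\Delta_b f(x):=f(x)-f(x+b)$ one has $(\mathcal{S}_N h)(q\,|\,b)=(\Delta_{b_1}\cdots\Delta_{b_N}h)(q+b_0)$. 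Consequently $\mathcal{S}_N$ annihilates every polynomial in $w$ of degree $<N$, and on a holomorphic function its size is controlled by the $N$-th derivative through the Hermite--Genocchi representation of divided differences, giving the bound $|(\mathcal{S}_N h)(q)|\le b_1\cdots b_N\,\sup|h^{(N)}|$ over the convex hull of the shifted arguments.

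I would first note that as $q\to\infty$ with $|\arg q|<\pi$, every shifted argument $w=q+b_0+\sum_{k\in S}b_k$ tends to infinity with $\arg w\to\arg q$, so within any sector $|\arg q|\le\pi-\delta$ the expansion (\ref{asym}) applies to each summand and, by Cauchy's estimates in a slightly narrower sector, may be differentiated. Writing $L_M(w\,|\,a)=-\tfrac1{M!}B_{M,M}(w\,|\,a)\log w + P(w) + R_M(w\,|\,a)$, where $P$ is the degree-$\le M$ polynomial in (\ref{asym}) and $R_M=O(w^{-1})$, I treat the three pieces separately. The remainder always contributes $(\mathcal{S}_N R_M)(q)=O(q^{-1-N})\to0$ by the divided-difference bound. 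For $M<N$ the polynomial $P$ is killed outright, and the logarithmic piece also vanishes in the limit: its $N$-th $w$-derivative is $\tfrac{d^N}{dw^N}[B_{M,M}(w)\log w]=O(w^{M-N})$ since $\deg B_{M,M}=M<N$, so $(\mathcal{S}_N[B_{M,M}\log\,\cdot])(q)\to0$. Hence $(\mathcal{S}_N L_M)(q)\to0$ and $\log\eta_{M,N}(q)\to -(\mathcal{S}_N\log\Gamma_M)(0\,|\,a,b)$, which is (\ref{ourasym}); this is consistent with the probabilistic reading $\eta_{M,N}(q)={\bf E}[\beta_{M,N}^q]\to{\bf P}[\beta_{M,N}=1]$ via the atom (\ref{Pof1}) of Theorem \ref{main}.

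For $M=N$ the degree-$N$ data survives and I would extract the $\log q$ coefficient by splitting $\log w=\log q+\log(w/q)$. The term $\log q\cdot\mathcal{S}_N[B_{N,N}]$ is computed from the leading coefficient $(-1)^N/(a_1\cdots a_N)$ of $B_{N,N}(w\,|\,a)$ (read off from (\ref{fdef})--(\ref{Bdefa}) and consistent with (\ref{B22}) at $N=2$) together with the elementary value $\mathcal{S}_N[w^N]=(-1)^N N!\,b_1\cdots b_N$, yielding $\mathcal{S}_N[B_{N,N}]=N!\,b_1\cdots b_N/(a_1\cdots a_N)$; multiplying by the prefactor $-1/N!$ produces exactly the coefficient $-b_1\cdots b_N/(a_1\cdots a_N)$ of $\log q$. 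The remaining contributions --- the $\log(w/q)$ cross terms, the degree-$N$ part of $P$, the lower-order part of $B_{N,N}$, and $(\mathcal{S}_N R_N)(q)$ --- are all $O(1)$: in particular $\tfrac{d^N}{dw^N}[B_{N,N}(w)\log(w/q)]=\tfrac{(-1)^N N!}{a_1\cdots a_N}\log(w/q)+O(1)$ stays bounded as $\log(w/q)\to0$, so the divided-difference bound gives $O(1)$ after applying $\mathcal{S}_N$. Subtracting the $q$-independent constant $(\mathcal{S}_N\log\Gamma_M)(0)$ leaves the $O(1)$ intact and gives (\ref{ourasymN}).

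The main obstacle is the $M=N$ bookkeeping: showing that the product $B_{N,N}(w)\log w$ generates no hidden $\log q$ beyond the single term computed, and that every cross term survives the $N$-th difference only at order $O(1)$, \emph{uniformly} in the sector $|\arg q|<\pi$. The clean device for this is the Hermite--Genocchi integral form of the divided difference, which rewrites each application of $\mathcal{S}_N$ as an average of an $N$-th derivative over the standard simplex and makes the orders of growth transparent; the one point demanding care is checking that the simplex of intermediate arguments remains inside a sector where the differentiated form of (\ref{asym}) is legitimate, which holds because all shifts $b_0,b_j>0$ keep the arguments to the right of the origin.
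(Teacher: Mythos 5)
Your proof is correct, and its skeleton is the one the paper uses: substitute the Ruijsenaars expansion \eqref{asym} into \eqref{eta}, isolate the $\log q$ contribution carried by the degree-$N$ part of $B_{M,M}$, and show that everything else is either annihilated by $\mathcal{S}_N$ or negligible. The technical engine, however, is genuinely different. The paper never touches finite-difference calculus: it expands $\log w$ in powers of $1/q$ and disposes of every resulting term by the combinatorial Main Lemma (Lemma \ref{mylemma}) and Corollary \ref{mainauxID}, i.e.\ by the exact identities \eqref{mainID}--\eqref{auxIDN}; the remainder is handled trivially, because $\mathcal{S}_N$ applied to an $O(q^{-1})$ function is a finite signed sum and hence $O(q^{-1})$, so the expansion \eqref{asym} is never differentiated. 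You instead recognize $\mathcal{S}_N$ as the iterated difference $\Delta_{b_1}\cdots\Delta_{b_N}$ shifted by $b_0$ and control it through the Hermite--Genocchi bound $|(\mathcal{S}_N h)(q)|\le b_1\cdots b_N\sup|h^{(N)}|$, which forces you to differentiate the asymptotic expansion --- a step you correctly justify by Cauchy estimates in proper subsectors, and which the paper does not need (nor address). Your degree-$N$ computation, combining the leading coefficient $(-1)^N/(a_1\cdots a_N)$ of $B_{N,N}$ with $\mathcal{S}_N[w^N]=(-1)^N N!\,b_1\cdots b_N$, reproduces exactly the content of \eqref{mainIDN} and \eqref{auxIDN}, so the two computations of the $\log q$ coefficient agree. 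What each route buys: the paper's argument recycles Lemma \ref{mylemma}, which it needs anyway for Theorem \ref{main} and the moment formulas, and stays entirely algebraic; yours is self-contained, makes the growth orders transparent and uniform on subsectors, and yields the sharper (though unneeded) remainder estimate $O(q^{-1-N})$ in place of $O(q^{-1})$.
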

The Mellin transform of Barnes beta distributions satisfies a function equation 
and two remarkable factorizations 
that are inherited from those of the multiple gamma function.
\begin{theorem}[Functional equation]\label{FunctEquat}
$1\leq M\leq N,$ $q\in\mathbb{C}-(-\infty, -b_0],$ $i=1\cdots M,$
\begin{equation}
\eta_{M, N}(q+a_i\,|\,a,\,b) =
%%\eta_{M, N}(q\,|\,a,\,b) \frac{\eta_{M, N}(a_i\,|\,a,\,b)}{\eta_{M-1, N}(q\,|\,\hat{a}_i,\,b)}, \\
\eta_{M, N}(q\,|\,a,\,b)\,\exp\bigl(-(\mathcal{S}_N
\log\Gamma_{M-1})(q\,|\,\hat{a}_i, b)\bigr). \label{fe1}
\end{equation}
\end{theorem}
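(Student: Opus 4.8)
The plan is to reduce the functional equation \eqref{fe1} directly to the fundamental functional equation \eqref{feq} of the multiple gamma function, exploiting two features of the combinatorial operator $\mathcal{S}_N$ defined in \eqref{S}: it acts only by shifting the argument of $\log\Gamma_M$, and it is manifestly linear. First I would set $F(q) \triangleq (\mathcal{S}_N \log\Gamma_M)(q\,|\,a, b)$, so that by the definition \eqref{eta} one has $\log\eta_{M,N}(q\,|\,a,b) = F(q) - F(0)$. Since \eqref{fe1} is equivalent to the statement $\log\eta_{M,N}(q+a_i\,|\,a,b) - \log\eta_{M,N}(q\,|\,a,b) = -(\mathcal{S}_N\log\Gamma_{M-1})(q\,|\,\hat{a}_i, b)$, and the normalization constant $F(0)$ cancels in this difference, it suffices to establish $F(q+a_i) - F(q) = -(\mathcal{S}_N\log\Gamma_{M-1})(q\,|\,\hat{a}_i, b)$.

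The core step is to expand $F(q+a_i)$ using \eqref{S} and substitute the functional equation into each summand. Every term of $F(q+a_i)$ has the form $\log\Gamma_M(q + a_i + b_0 + b_{k_1} + \cdots + b_{k_p}\,|\,a)$; applying \eqref{feq} with $z = q + b_0 + b_{k_1} + \cdots + b_{k_p}$ rewrites this as $\log\Gamma_M(z\,|\,a) - \log\Gamma_{M-1}(z\,|\,\hat{a}_i)$. Because $\mathcal{S}_N$ is a fixed alternating sum over the same index set regardless of its integrand, I can distribute it term-by-term across this subtraction, obtaining $F(q+a_i) = (\mathcal{S}_N\log\Gamma_M)(q\,|\,a,b) - (\mathcal{S}_N\log\Gamma_{M-1})(q\,|\,\hat{a}_i, b) = F(q) - (\mathcal{S}_N\log\Gamma_{M-1})(q\,|\,\hat{a}_i, b)$. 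This is exactly the required difference, and exponentiating yields \eqref{fe1}.

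I do not expect a serious analytic obstacle, since the entire argument is an algebraic identity among holomorphic functions that holds termwise; the functional equation is simply inherited from \eqref{feq} under the linear map $\mathcal{S}_N$. The only points needing care are bookkeeping rather than substance. I would verify that $q + a_i$ remains in the domain $\mathbb{C} - (-\infty, -b_0]$, which is automatic since $a_i > 0$ shifts the argument rightward; that the boundary case $M = 1$ is consistent, where $\hat{a}_i$ is the empty tuple and $\log\Gamma_0(z) = -\log z$ follows from the defining value $\Gamma_0(z) = 1/z$, so that \eqref{feq} still supplies the needed one-step recursion and the sign of the $\Gamma_{M-1}$ term comes out correctly; and that the holomorphy of both sides over $\mathbb{C} - (-\infty, -b_0]$, recorded in the holomorphy statement following Definition \ref{bdef}, lets me treat the identity globally on that cut plane rather than only on the half-plane where \eqref{LKH} converges. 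The substantive input is contained entirely in \eqref{feq}, and \eqref{fe1} is its image under $\mathcal{S}_N$.
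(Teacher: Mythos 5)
Your proposal is correct and follows essentially the same route as the paper's proof: both arguments substitute the logarithmic form of the fundamental functional equation \eqref{feq}, namely $\log\Gamma_M(w+a_i\,|\,a)=\log\Gamma_M(w\,|\,a)-\log\Gamma_{M-1}(w\,|\,\hat{a}_i)$, into the definition \eqref{eta} and exploit the linearity of $\mathcal{S}_N$ to distribute it termwise, after which the normalization constant cancels and exponentiation gives \eqref{fe1}. Your additional bookkeeping (domain shift under $a_i>0$, the $M=1$ case with $\Gamma_0(z)=1/z$, and holomorphy on the cut plane) is sound and merely makes explicit what the paper leaves implicit.
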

\begin{corollary}[Symmetries]\label{FunctSymmetry}
$1\leq M\leq N,$ $q\in\mathbb{C}-(-\infty, -b_0],$ $i=1\cdots M,$
$j=1\cdots N,$
\begin{align}
\eta_{M, N}(q\,|\,a,\,b_0+x)\,\eta_{M, N}(x\,|\,a,\,b) & = \eta_{M, N}(q+x\,|\,a,\,b), \label{fe2}\\
\eta_{M, N}(q\,|\,a,\,b)\,\eta_{M, N-1}(q\,|\,a,\,b_0+b_j,
\hat{b}_j) & =
%%\eta_{M, N-1}(q\,|\,a,\,\hat{b}_j)\frac{\eta_{M, N-1}(b_j\,|\,a,\,\hat{b}_j)}{\eta_{M, N-1}(q+b_j\,|\,a,\,\hat{b}_j)}, \; j=1\cdots N, \\
\eta_{M, N-1}(q\,|\,a,\,\hat{b}_j), \label{fe3}\\
\eta_{M, N}(q+a_i\,|\,a,\,b)\,\eta_{M-1, N}(q\,|\,\hat{a}_i,\,b) & =
\eta_{M, N}(q\,|\,a,\,b)\,\eta_{M, N}(a_i\,|\,a,\,b), \label{fe1eq}\\
\eta_{M, N}(q\,|\,a,\,b_j+a_i) \,\eta_{M-1,
N-1}(b_j\,|\,\hat{a}_i,\,\hat{b}_j) & = \eta_{M, N}(q\,|\,a,\,b)\,
%%\eta_{M-1, N-1}(q\,|\,\hat{a}_i,\,b_0+b_j, \hat{b}_j). \label{fe4}
\eta_{M-1, N-1}(q+b_j\,|\,\hat{a}_i,\,\hat{b}_j), \label{fe4} \\
\eta_{M, N}(q+a_i\,|\,a,\,b) \, \eta_{M-1,
N-1}(q\,|\,\hat{a}_i,\,\hat{b}_j) & = \eta_{M, N}(q\,|\,a,\,b) \,
\eta_{M-1, N-1}(q+b_j\,|\,\hat{a}_i,\,\hat{b}_j).
\label{funceqsymmetry}
\end{align}
\end{corollary}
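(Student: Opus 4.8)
The plan is to carry out the entire argument at the level of logarithms, where $\log\eta_{M,N}(q\,|\,a,b) = (\mathcal{S}_N\log\Gamma_M)(q\,|\,a,b) - (\mathcal{S}_N\log\Gamma_M)(0\,|\,a,b)$ is an additive object, so that each multiplicative identity in the corollary becomes an additive one and the normalization $\eta_{M,N}(0\,|\,a,b)=1$ holds automatically. Everything then reduces to two elementary ``moves'' on the operator $\mathcal{S}_N$ of Definition \ref{Soperator}, both immediate from its defining alternating sum. First, since $q$ and $b_0$ enter only through the combination $q+b_0$, replacing the base $b_0$ by $b_0+x$ is the same as shifting the argument $q\to q+x$; I will call this the base-shift move. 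Second, splitting the sum over subsets of $\{1,\dots,N\}$ according to whether a fixed index $j$ is included gives the index-splitting move $(\mathcal{S}_N h)(q\,|\,b) = (\mathcal{S}_{N-1}h)(q\,|\,\hat b_j) - (\mathcal{S}_{N-1}h)(q\,|\,b_0+b_j,\hat b_j)$, valid for any $h$.

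The first two symmetries follow from these moves alone, with no appeal to the gamma functional equation. For Eq. \eqref{fe2}, the base-shift move gives $(\mathcal{S}_N\log\Gamma_M)(q\,|\,a,b_0+x) = (\mathcal{S}_N\log\Gamma_M)(q+x\,|\,a,b)$, so that $\log\eta_{M,N}(q\,|\,a,b_0+x)$ telescopes against $\log\eta_{M,N}(x\,|\,a,b)$ to produce $\log\eta_{M,N}(q+x\,|\,a,b)$; this is just the cocycle property of any function of the form $G(q)-G(0)$. For Eq. \eqref{fe3}, I apply the index-splitting move to $h=\log\Gamma_M$, subtract the value at $q=0$ to pass to $\eta$, and observe that the two resulting $(N-1)$-level terms are exactly $\log\eta_{M,N-1}(q\,|\,a,\hat b_j)$ and $\log\eta_{M,N-1}(q\,|\,a,b_0+b_j,\hat b_j)$.

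The remaining three identities bring in the functional equation of the multiple gamma function, Eq. \eqref{feq}, in the equivalent form of Theorem \ref{FunctEquat}. Setting $q=0$ in Eq. \eqref{fe1} identifies the constant $(\mathcal{S}_N\log\Gamma_{M-1})(0\,|\,\hat a_i,b)$ with $-\log\eta_{M,N}(a_i\,|\,a,b)$; feeding this back into Eq. \eqref{fe1} converts the functional equation into Eq. \eqref{fe1eq} after writing $(\mathcal{S}_N\log\Gamma_{M-1})(q\,|\,\hat a_i,b)$ in terms of $\log\eta_{M-1,N}(q\,|\,\hat a_i,b)$. For Eq. \eqref{funceqsymmetry}, I instead rewrite the same factor $(\mathcal{S}_N\log\Gamma_{M-1})(q\,|\,\hat a_i,b)$ by the index-splitting and base-shift moves, which expresses it as $\log\eta_{M-1,N-1}(q\,|\,\hat a_i,\hat b_j) - \log\eta_{M-1,N-1}(q+b_j\,|\,\hat a_i,\hat b_j)$; substituting into Theorem \ref{FunctEquat} gives the claim.

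The last identity, Eq. \eqref{fe4}, is where the bookkeeping is heaviest and is the step I expect to be the main obstacle, since here a shift of $b_j$ by $a_i$ and a shift of $q$ by $b_j$ interact. The plan is to compute $(\mathcal{S}_N\log\Gamma_M)(q\,|\,a,b_j+a_i) - (\mathcal{S}_N\log\Gamma_M)(q\,|\,a,b)$ by first isolating index $j$ with the index-splitting move, noting that only the base-shifted $(N-1)$-term is affected since $\hat b_j$ does not contain $b_j$; then using the base-shift move to turn the extra $a_i$ into an argument shift; and finally applying Eq. \eqref{feq} termwise to replace $\log\Gamma_M(\cdot\,|\,a)-\log\Gamma_M(\cdot+a_i\,|\,a)$ by $\log\Gamma_{M-1}(\cdot\,|\,\hat a_i)$. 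The outcome is precisely $(\mathcal{S}_{N-1}\log\Gamma_{M-1})(q+b_j\,|\,\hat a_i,\hat b_j)$; subtracting the $q=0$ value to pass to $\eta$ and recognizing the result as $\log\eta_{M-1,N-1}(q+b_j\,|\,\hat a_i,\hat b_j) - \log\eta_{M-1,N-1}(b_j\,|\,\hat a_i,\hat b_j)$ yields Eq. \eqref{fe4}. Throughout, the only real care needed is tracking which base the various $(N-1)$-level operators carry; once the two moves are recorded as lemmas, each identity reduces to a short additive computation.
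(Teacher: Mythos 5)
Your proposal is correct, and for the first three identities it coincides with the paper's proof: your ``base-shift move'' is exactly the identity $(\mathcal{S}_Nf)(q\,|\,b_0+x) = (\mathcal{S}_Nf)(q+x\,|\,b)$ the paper uses for Eq. \eqref{fe2}, your ``index-splitting move'' is what the paper summarizes as ``Eq. \eqref{fe3} is immediate from Eq. \eqref{eta},'' and your derivation of Eq. \eqref{fe1eq} from the $q=0$ case of Eq. \eqref{fe1} is the paper's argument verbatim. Where you diverge is in the last two identities. The paper proves Eq. \eqref{fe4} by chaining the already-established relations \eqref{fe3}, \eqref{fe2}, \eqref{fe1eq} multiplicatively at the level of the $\eta$'s, and proves Eq. \eqref{funceqsymmetry} by a similar chain together with the auxiliary identity $\eta_{M,N}(a_i\,|\,a,b) = \eta_{M-1,N-1}(b_j\,|\,\hat{a}_i,\hat{b}_j)$; you instead recompute both from scratch at the logarithmic level, isolating index $j$, base-shifting, and then applying the functional equation \eqref{feq} termwise in the form $\log\Gamma_M(z\,|\,a)-\log\Gamma_M(z+a_i\,|\,a)=\log\Gamma_{M-1}(z\,|\,\hat{a}_i)$ inside the operator $\mathcal{S}_{N-1}$. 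The two routes unwind to the same algebra, so neither is more general, but the trade-off is real: the paper's bootstrapping is shorter once \eqref{fe2}--\eqref{fe1eq} are in hand, while your direct computation is self-contained, makes the cancellation of the $q=0$ normalization constants explicit, and produces Eq. \eqref{funceqsymmetry} without needing the auxiliary identity at all (it falls out of substituting your expression for $(\mathcal{S}_N\log\Gamma_{M-1})(q\,|\,\hat{a}_i,b)$ directly into Theorem \ref{FunctEquat}). Your worry that Eq. \eqref{fe4} is the hard step is, in the end, unfounded: the interaction between the shift of $b_j$ by $a_i$ and the shift of $q$ by $b_j$ is handled exactly by the base-shift move, as your own outline shows.
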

\begin{corollary}[Factorizations]\label{FactorizBarnes}
Let $\Omega\triangleq \sum_{i=1}^M n_i \, a_i.$
\begin{align}
\eta_{M,N}(q\,|\,a, b)  = & \prod\limits_{k=0}^\infty \frac{\eta_{M-1,N}(q+k
a_i\,|\,\hat{a}_i, b)}{\eta_{M-1,N}(k
a_i\,|\,\hat{a}_i, b)}, \label{infinprod2} \\
\eta_{M,N}(q\,|\,a, b) =  & \prod\limits_{n_1,\cdots ,n_M=0}^\infty \Bigl[
\frac{b_0+\Omega}{q+b_0+\Omega}\prod\limits_{j_1=1}^N \frac{q+b_0+b_{j_1}+\Omega}
{b_0+b_{j_1}+\Omega} \prod\limits_{j_1<j_2}^N \frac{b_0+b_{j_1}+b_{j_2}+\Omega}
{q+b_0+b_{j_1}+b_{j_2}+\Omega} \times \nonumber \\
 & \times
\prod\limits_{j_1<j_2<j_3}^N \frac{q+b_0+b_{j_1}+b_{j_2}+b_{j_3}+\Omega}{b_0+b_{j_1}+b_{j_2}+b_{j_3}+\Omega}\cdots\Bigr].
\label{infbarnesfac}
\end{align}
Probabilistically, these factorizations are equivalent to, respectively, 
\begin{align}
\beta_{M, N}(a,\,b) \overset{{\rm in \,law}}{=}&
\prod\limits_{k=0}^\infty \beta_{M-1, N}(\hat{a}_i,\,b_0+ka_i, \, \,b_1,\cdots, b_N), \label{probshinfac} \\
\beta_{M, N}(a,\,b) \overset{{\rm in \,law}}{=}&
\prod\limits_{n_1,\cdots ,n_M=0}^\infty \beta_{0, N}(b_0+\Omega,\,\,b_1,\cdots, b_N). \label{probbarnesfac}
\end{align}
\end{corollary}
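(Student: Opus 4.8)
The plan is to derive both factorizations from the Lévy--Khinchine representation of the Mellin transform established in Theorem~\ref{main}, Eq.~\eqref{LKH}, so that everything reduces to a single algebraic decomposition of the Lévy density. Write $\nu(t)\triangleq e^{-b_0 t}\prod_{j=1}^N(1-e^{-b_j t})/\prod_{i=1}^M(1-e^{-a_i t})$, so that $\log\eta_{M,N}(q\,|\,a,b)=\int_0^\infty(e^{-tq}-1)\nu(t)\,dt/t$ is the Lévy exponent of $-\log\beta_{M,N}(a,b)$. The one identity driving both factorizations is the geometric expansion $1/(1-e^{-a_i t})=\sum_{k=0}^\infty e^{-k a_i t}$. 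First I would expand only the $i$-th factor of the denominator, obtaining $\nu(t)=\sum_{k=0}^\infty \nu_k(t)$ with $\nu_k(t)=e^{-(b_0+k a_i)t}\prod_{j=1}^N(1-e^{-b_j t})/\prod_{l\neq i}(1-e^{-a_l t})$, which is exactly the Lévy density of $\beta_{M-1,N}(\hat a_i,\,b_0+k a_i,\,b_1,\dots,b_N)$. Expanding instead all $M$ denominator factors at once gives $\nu(t)=\sum_{n_1,\dots,n_M=0}^\infty e^{-(b_0+\Omega)t}\prod_{j=1}^N(1-e^{-b_j t})$, whose $\vec n$-th summand is the Lévy density of $\beta_{0,N}(b_0+\Omega,\,b_1,\dots,b_N)$.

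Given these two decompositions, the factorizations follow from additivity of the Lévy exponent. For $q>0$ the integrand $(e^{-tq}-1)\nu_k(t)/t$ has constant (negative) sign, so Tonelli's theorem permits interchanging the sum over $k$ (respectively over $\vec n$) with the integral, yielding $\log\eta_{M,N}(q\,|\,a,b)=\sum_{k\ge0}\log\eta_{M-1,N}(q\,|\,\hat a_i,\,b_0+k a_i,\,b_1,\dots,b_N)$ and likewise $=\sum_{\vec n}\log\eta_{0,N}(q\,|\,b_0+\Omega,\,b_1,\dots,b_N)$. Both identities then extend from $q>0$ to $\Re(q)>-b_0$ by analytic continuation, each side being holomorphic there. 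This gives the Mellin-transform forms after rewriting the factors: Eq.~\eqref{infbarnesfac} is the explicit product form of $\eta_{0,N}$ together with $\Gamma_0(z)=1/z$, which turns every ratio $\Gamma_0(q+b_0+\Omega+\cdots)/\Gamma_0(b_0+\Omega+\cdots)$ into $(b_0+\Omega+\cdots)/(q+b_0+\Omega+\cdots)$, and Eq.~\eqref{infinprod2} follows from the translation symmetry~\eqref{fe2} applied with $M\to M-1$, $a\to\hat a_i$, $x=k a_i$, which identifies $\eta_{M-1,N}(q\,|\,\hat a_i,\,b_0+k a_i,\,b_1,\dots,b_N)$ with $\eta_{M-1,N}(q+k a_i\,|\,\hat a_i,b)/\eta_{M-1,N}(k a_i\,|\,\hat a_i,b)$.

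For the probabilistic statements~\eqref{probshinfac} and~\eqref{probbarnesfac} I would then show that the infinite products of independent factors converge almost surely to a genuine random variable on $(0,1]$. Each factor lies in $(0,1]$, so its negative logarithm is nonnegative, and the expected total satisfies $\sum_{k}\mathbf{E}[-\log\beta_{M-1,N}(\dots)]=\int_0^\infty\nu(t)\,dt=\mathbf{E}[-\log\beta_{M,N}(a,b)]$ (and similarly for the Barnes sum), which is finite because $\nu(t)=O(t^{N-M})$ with $N\ge M$ as $t\to0^+$ and decays exponentially as $t\to\infty$. By Tonelli the sum of these nonnegative logarithms converges almost surely, hence the products converge; since the partial products are bounded by $1$, I can pass to the limit in the Mellin transform by dominated convergence for $q\ge0$ and monotone convergence for $-b_0<q<0$, matching the analytic identities above and pinning down the laws uniquely.

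The step I expect to be the main obstacle is the convergence of the infinite product in the borderline case $M=N$. There each factor has $M-1<N$ and is compound Poisson, yet the total Lévy mass $\int_0^\infty\nu(t)\,dt/t$ diverges, so almost surely infinitely many factors differ from $1$ and one cannot reduce to finitely many nontrivial terms. The resolution is precisely the first-moment estimate above: the expected sum of the $-\log$ of the factors is controlled by $\int_0^\infty\nu(t)\,dt$ rather than by the Lévy mass $\int_0^\infty\nu(t)\,dt/t$, and the former stays finite even when the latter does not, so almost-sure convergence of the product survives the passage through $M=N$.
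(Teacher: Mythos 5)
Your proof is correct, but it takes a genuinely different route from the paper's. The paper's proof stays at the level of the multiple gamma function: it applies the operator $\mathcal{S}_N$ to the Shintani factorization \eqref{generalfactorization} and to the classical Barnes product \eqref{barnes}, and uses Lemma \ref{mylemma} (in the form of Eq. \eqref{basicid}) to show that the polynomial corrections $\Psi_{M}$ and $\phi_{M}$, being of degree $M\leq N$ in $w$, become $q$-independent under $\mathcal{S}_N$ and hence cancel, leaving exactly the products \eqref{infinprod2} and \eqref{infbarnesfac}; the probabilistic statements \eqref{probshinfac} and \eqref{probbarnesfac} are then read off as equivalent restatements. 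You instead work entirely from the L\'evy--Khinchine representation \eqref{LKH} of Theorem \ref{main}: the geometric expansion $(1-e^{-a_i t})^{-1}=\sum_{k\geq 0}e^{-ka_i t}$ splits the L\'evy density into the densities of the lower-order factors, Tonelli handles the interchange for $q>0$, analytic continuation extends to $\Re(q)>-b_0$, and the symmetry \eqref{fe2} converts the resulting factors into the form appearing in \eqref{infinprod2}. What your route buys: it is self-contained given Theorem \ref{main}---it does not invoke the Shintani and Barnes factorizations of $\Gamma_M$, which the paper imports from the literature, and in effect re-derives their $\mathcal{S}_N$-images---and it supplies something the paper leaves implicit, namely almost-sure convergence of the infinite products of independent Barnes betas, including the delicate case $M=N$ where the total L\'evy mass diverges and infinitely many factors differ from $1$; your observation that convergence is governed by $\int_0^\infty \nu(t)\,dt$ rather than $\int_0^\infty \nu(t)\,dt/t$ is exactly the right point. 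What the paper's route buys is brevity and structural transparency: it exhibits the two products as literal images of the classical factorizations of the multiple gamma function, which is the conceptual content of the remark following the corollary. The only soft spot in your write-up is the assertion that the right-hand sides are holomorphic on $\Re(q)>-b_0$, which requires a uniform-on-compacts bound on the tail of the sum of logarithms (an elementary estimate, since the $k$-th term is $O(k^{-(N-M+2)})$ with $N-M+2\geq 2$, plus exponentially small contributions from $t\geq 1$); you should state this bound rather than leave it implicit.
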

We note that the factorizations in Eqs. \eqref{infinprod2} and \eqref{infbarnesfac} correspond to the Shintani and Barnes factorizations of the multiple gamma function, see Eqs. \eqref{generalfactorization} and \eqref{barnes}, respectively.  
%%\section{}
The functional equation in Eq. \eqref{fe1} gives us the moments.
\begin{corollary}[Moments]\label{moments}
Assume $a_i=1$ for some $i.$ 
Let $k\in\mathbb{N}.$
\begin{align}
{\bf E}\bigl[\beta_{M, N}(a, b)^{k }\bigr] &  =
\exp\Bigl(-\sum\limits_{l=0}^{k-1} \bigl(\mathcal{S}_N
\log\Gamma_{M-1}\bigr)(l \,|\,\hat{a}_i, b)\Bigr), \label{posmom} \\
{\bf E}\bigl[\beta_{M, N}(a, b)^{-k }\bigr] & =
\exp\Bigl(\sum\limits_{l=0}^{k-1} \bigl(\mathcal{S}_N
\log\Gamma_{M-1}\bigr)(-(l+1) \,|\,\hat{a}_i, b)\Bigr), \; k<b_0. \label{negmom}
\end{align}
\end{corollary}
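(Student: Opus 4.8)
The plan is to read off both moment formulas by iterating the functional equation of Theorem \ref{FunctEquat}, anchored at the normalization $\eta_{M,N}(0\,|\,a,b)=1$ that is built into the definition \eqref{eta}. By Theorem \ref{main}, the positive and negative integer moments of $\beta_{M,N}(a,b)$ are the values of its Mellin transform $\eta_{M,N}(q\,|\,a,b)$ at $q=k$ and $q=-k,$ the identification being valid whenever $\Re(q)>-b_0;$ so the task reduces to evaluating $\eta_{M,N}$ at these integers. The first step is to specialize Eq. \eqref{fe1} to the index $i$ for which $a_i=1,$ which turns the shift $q\mapsto q+a_i$ into the unit shift:
\begin{equation*}
\eta_{M,N}(q+1\,|\,a,b)=\eta_{M,N}(q\,|\,a,b)\,\exp\bigl(-(\mathcal{S}_N\log\Gamma_{M-1})(q\,|\,\hat{a}_i,b)\bigr).
\end{equation*}

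For the positive moments I would apply this recursion at $q=0,1,\dots,k-1$ and telescope; since $\eta_{M,N}(0\,|\,a,b)=1,$ the product collapses directly to Eq. \eqref{posmom}, and because $b_0>0$ every argument encountered lies in the half-plane $\Re(q)>-b_0$ so each step is legitimate. For the negative moments I would run the same recursion in reverse: solving the unit-shift relation for $\eta_{M,N}(q-1\,|\,a,b)$ gives
\begin{equation*}
\eta_{M,N}(q-1\,|\,a,b)=\eta_{M,N}(q\,|\,a,b)\,\exp\bigl((\mathcal{S}_N\log\Gamma_{M-1})(q-1\,|\,\hat{a}_i,b)\bigr),
\end{equation*}
and applying it at $q=0,-1,\dots,-(k-1)$ telescopes to $\exp\bigl(\sum_{l=1}^{k}(\mathcal{S}_N\log\Gamma_{M-1})(-l\,|\,\hat{a}_i,b)\bigr),$ which is Eq. \eqref{negmom} after the reindexing $l\mapsto l+1.$

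The only substantive point, and hence the main obstacle, is the domain restriction in the negative case. The telescoping produces the analytically continued value $\eta_{M,N}(-k\,|\,a,b),$ but this equals the moment ${\bf E}[\beta_{M,N}(a,b)^{-k}]$ only when the evaluation point remains inside the half-plane of convergence $\Re(q)>-b_0$ where $\eta_{M,N}$ is genuinely the Mellin transform of $\beta_{M,N}(a,b).$ This is exactly what the hypothesis $k<b_0$ guarantees: it keeps $-k,$ and with it all the intermediate arguments $0,-1,\dots,-(k-1),$ in that half-plane, so the identification with ${\bf E}[\beta_{M,N}(a,b)^{-k}]$ holds. No analogous constraint is needed for the positive moments, whose arguments automatically satisfy $\Re(q)>-b_0.$
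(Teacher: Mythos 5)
Your proposal is correct and follows essentially the same route as the paper: both iterate the functional equation of Theorem \ref{FunctEquat} (specialized to the index with $a_i=1$), anchor the telescoping at the normalization $\eta_{M,N}(0\,|\,a,b)=1,$ and read off \eqref{posmom} and \eqref{negmom} by evaluating at $q=k$ and $q=-k;$ your backward recursion for the negative moments is just the paper's forward iteration evaluated at $q=-ka_i.$ Your explicit remark that $k<b_0$ is what keeps the evaluation points in the half-plane $\Re(q)>-b_0$ where $\eta_{M,N}$ coincides with the Mellin transform is a correct articulation of a point the paper leaves implicit.
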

The scaling property in Eq. \eqref{scale} gives us the scaling invariance.
\begin{theorem}[Scaling invariance]\label{barnesbetascaling}
Let $\kappa>0.$ Then,
\begin{equation}
\beta^{\kappa}_{M, N}(\kappa\,a, \kappa\,b) \overset{{\rm in \,law}}{=}\beta_{M,
N}(a,\,b).
\end{equation}
\end{theorem}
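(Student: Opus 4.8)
The plan is to prove equality in law by showing the two sides share the same Mellin transform and then invoking uniqueness. Since $\beta_{M,N}(\kappa a,\kappa b)$ is supported on $(0,1]$ and $\kappa>0$, the variable $\beta^{\kappa}_{M,N}(\kappa a,\kappa b)$ is again supported on $(0,1]$, so $-\log$ of either side is a non-negative random variable whose law is pinned down by its Laplace transform. By the defining property of the Mellin transform in Theorem \ref{main},
\begin{equation}
{\bf E}\bigl[\bigl(\beta^{\kappa}_{M,N}(\kappa a,\kappa b)\bigr)^q\bigr] = {\bf E}\bigl[\beta_{M,N}(\kappa a,\kappa b)^{\kappa q}\bigr] = \eta_{M,N}(\kappa q\,|\,\kappa a,\kappa b),
\end{equation}
valid for $\Re(\kappa q)>-\kappa b_0$, i.e. $\Re(q)>-b_0$, which is exactly the domain on which $\eta_{M,N}(q\,|\,a,b)$ is defined. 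Hence the whole theorem reduces to the single identity $\eta_{M,N}(\kappa q\,|\,\kappa a,\kappa b)=\eta_{M,N}(q\,|\,a,b)$ for $\Re(q)>-b_0$.

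The cleanest route to this identity is through the L\'evy--Khinchine representation in Eq. \eqref{LKH}. Written with the scaled parameters $(\kappa a,\kappa b)$ and moment order $\kappa q$, the exponent is the integral of $(e^{-t\kappa q}-1)$ against the density $e^{-\kappa b_0 t}\prod_{j}(1-e^{-\kappa b_j t})/\prod_{i}(1-e^{-\kappa a_i t})$ with respect to $dt/t$. The substitution $s=\kappa t$ leaves the measure $dt/t=ds/s$ invariant and converts each $\kappa(\cdot)t$ into $(\cdot)s$ and $t\kappa q$ into $sq$, so the exponent becomes verbatim the L\'evy--Khinchine integral of $\eta_{M,N}(q\,|\,a,b)$. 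This yields $\eta_{M,N}(\kappa q\,|\,\kappa a,\kappa b)=\eta_{M,N}(q\,|\,a,b)$ at once, and equality in law then follows from uniqueness of the Laplace transform of the non-negative variables $-\log\beta$.

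The route hinted at in the text, via the scaling property of the multiple gamma function in Eq. \eqref{scale}, is more instructive and isolates the only nontrivial point. From $\log\Gamma_M(\kappa w\,|\,\kappa a)=-\tfrac{\log\kappa}{M!}B_{M,M}(w\,|\,a)+\log\Gamma_M(w\,|\,a)$ together with Definition \ref{bdef}, applying the combinatorial operator $\mathcal{S}_N$ of Definition \ref{Soperator} gives
\begin{equation}
\log\eta_{M,N}(\kappa q\,|\,\kappa a,\kappa b)=\log\eta_{M,N}(q\,|\,a,b)-\frac{\log\kappa}{M!}\Bigl[(\mathcal{S}_N B_{M,M})(q\,|\,b)-(\mathcal{S}_N B_{M,M})(0\,|\,b)\Bigr],
\end{equation}
where $\mathcal{S}_N$ now acts on the polynomial $w\mapsto B_{M,M}(w\,|\,a)$. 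The main obstacle is to show this bracketed correction vanishes. The key observation is that $\mathcal{S}_N$ equals the $N$-fold finite-difference operator $\bigl(\prod_{j=1}^N(1-E_{b_j})\bigr)$ evaluated at $q+b_0$, where $E_c h(\cdot)=h(\cdot+c)$; each factor lowers the degree of a polynomial by one, so $\mathcal{S}_N$ lowers it by $N$. As $B_{M,M}(\cdot\,|\,a)$ has degree $M$ and we are in the regime $M\leq N$, the function $(\mathcal{S}_N B_{M,M})(q\,|\,b)$ is identically zero when $M<N$ and a $q$-independent constant when $M=N$; in both cases its values at $q$ and at $0$ coincide and the correction cancels, with the explicit $B_{2,2}$ from Eq. \eqref{B22} making the $M=N=2$ case of interest transparent. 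Note that it is precisely the hypothesis $M\leq N$ that forces this cancellation, consistent with the theorem being confined to that range.
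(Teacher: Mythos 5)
Your proposal is correct, and your primary argument takes a genuinely different route from the paper's. You reduce the claim to the Mellin-transform identity $\eta_{M,N}(\kappa q\,|\,\kappa a,\kappa b)=\eta_{M,N}(q\,|\,a,b)$ and verify it by the substitution $s=\kappa t$ in the L\'evy--Khinchine integral of Eq. \eqref{LKH}, exploiting the scale invariance of the measure $dt/t$; equality in law then follows from uniqueness of the Laplace transform of the non-negative variable $-\log\beta_{M,N}$. The paper instead works at the level of the multiple gamma function: it applies $\mathcal{S}_N$ to the scaling identity in Eq. \eqref{scale} --- yielding exactly your displayed equation with the $B_{M,M}$ correction --- and then kills the correction by Lemma \ref{mylemma} (Corollary \ref{mainauxID}), which is the same degree-lowering fact you re-derive by writing $\mathcal{S}_N$ as the $N$-fold difference operator $\prod_{j=1}^N(1-E_{b_j})$ evaluated at $q+b_0$; so your secondary route is the paper's proof, supplied with an independent justification of the key cancellation. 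As for what each approach buys: your LKH route avoids all polynomial bookkeeping and makes the invariance transparent, but it leans on the full existence theorem (Theorem \ref{main}); the gamma-function route is the one that extends to the case $N=M-1$ (Theorem \ref{mainsine}), where the same correction term does not cancel --- by Eq. \eqref{id3} it is linear in $q$ --- and instead produces the non-trivial prefactor $\kappa^{\prod_j b_j/\prod_i a_i}$ in Eq. \eqref{scalinvgen}. Your closing observation that the hypothesis $M\leq N$ is exactly what forces the cancellation is correct for precisely this reason.
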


%%We will conclude this section with the special case of Eq. \eqref{infbarnesfac} that corresponds to $a_i=1$ for all $i=1\cdots M.$
\begin{corollary}[Barnes factorization for $a_i=1$]\label{BarnesFactorSpecial} Let $0\leq M\leq N$ and $a_i=1$ for all $i=1\cdots M.$ Then, 
\begin{align}
\eta_{M,N}(q\,|\,1, b) & = \prod\limits_{k=0}^\infty \Bigl[
\frac{b_0+k}{q+b_0+k}\prod\limits_{j_1=1}^N \frac{q+b_0+b_{j_1}+k}
{b_0+b_{j_1}+k} \prod\limits_{j_1<j_2}^N \frac{b_0+b_{j_1}+b_{j_2}+k}
{q+b_0+b_{j_1}+b_{j_2}+k} \times \nonumber \\
& \times \prod\limits_{j_1<j_2<j_3}^N \frac{q+b_0+b_{j_1}+b_{j_2}+b_{j_3}+k}{b_0+b_{j_1}+b_{j_2}+b_{j_3}+k}\cdots\Bigr]^{(k\,|\,M)},
\label{barnesfactorspecial} \\
(k\,|\,M) & \triangleq \sum\limits_{m=1}^M \binom{k-1}{m-1}\binom{M}{m}.
\end{align}
\end{corollary}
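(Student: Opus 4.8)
The plan is to specialize the general Barnes factorization in Eq.~\eqref{infbarnesfac} to the case $a_i=1$ for all $i=1\cdots M$ and then regroup the resulting lattice product according to the value of $\Omega$. First I observe that each bracketed factor in Eq.~\eqref{infbarnesfac} depends on the index tuple $(n_1,\cdots,n_M)$ only through the quantity $\Omega=\sum_{i=1}^M n_i a_i$ defined in Eq.~\eqref{Omega}. When $a_i=1$ for all $i$, this reduces to $\Omega=n_1+\cdots+n_M$, a nonnegative integer, and the bracket becomes precisely the bracket appearing on the right-hand side of Eq.~\eqref{barnesfactorspecial} with $\Omega$ replaced by the integer $k$. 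Hence the product over the lattice $\mathbb{N}^M$ collapses to a product over the single integer parameter $k=\Omega\geq 0$, and the claim reduces to showing that the multiplicity with which the level-$k$ bracket occurs equals $(k\,|\,M)$.

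Next I would count, for each fixed $k\geq 1$, the number of tuples $(n_1,\cdots,n_M)\in\mathbb{N}^M$ satisfying $n_1+\cdots+n_M=k$, i.e.\ the number of lattice points on the $k$-th simplex slice. I would organize this count by the number $m$ of nonzero coordinates: there are $\binom{M}{m}$ ways to select which coordinates are nonzero, and the number of ways to write $k$ as an ordered sum of $m$ strictly positive integers is the number of compositions $\binom{k-1}{m-1}$ (stars and bars). Summing over $m=1,\cdots,M$ yields exactly
\begin{equation*}
\sum_{m=1}^M \binom{k-1}{m-1}\binom{M}{m} = (k\,|\,M),
\end{equation*}
which is the asserted multiplicity. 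The edge case $k=0$ is handled directly, since the only tuple is $(0,\cdots,0)$, giving multiplicity one; this is consistent with $(0\,|\,M)=\sum_{m=1}^M(-1)^{m-1}\binom{M}{m}=1$ under the generalized-binomial convention $\binom{-1}{m-1}=(-1)^{m-1}$.

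The only point requiring care is the legitimacy of the regrouping: the lattice product in Eq.~\eqref{infbarnesfac} is rearranged by collecting all index tuples sharing a common value of $\Omega$. Since each simplex slice $\{n\in\mathbb{N}^M:\,\sum_i n_i=k\}$ is finite, the level-$k$ bracket is obtained by combining only finitely many \emph{identical} factors into the power $(k\,|\,M)$, and the resulting product over $k$ is merely a term-grouping of the convergent Barnes product of Corollary~\ref{FactorizBarnes}. I expect the main (albeit minor) obstacle to be precisely the bookkeeping of this rearrangement together with the combinatorial identity above; both are routine once the $\Omega$-dependence of the bracket is recognized, so no genuinely hard step is anticipated.
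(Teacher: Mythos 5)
Your proposal is correct and follows essentially the same route as the paper: the paper's proof likewise specializes Eq.~\eqref{infbarnesfac} to $a_i=1$ and identifies the exponent $(k\,|\,M)$ as the number of non-negative integer solutions of $n_1+\cdots+n_M=k$ via the composition-counting formula. Your added details (the split over the number $m$ of nonzero coordinates, the $k=0$ convention, and the remark on regrouping the absolutely convergent lattice product) merely flesh out what the paper leaves implicit.
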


We will next present the special case of $M=N=2$ in order to illustrate the general
theory with a concrete yet quite non-trivial example. In addition,
this case is also of a particular interest in the probabilistic
theory of the Selberg and Morris integral probability distributions that we will review in Section \ref{Probabilistic}. Let
$a_1=1$ and $a_2=\tau>0$ and write $\beta_{2, 2}(\tau, b),$
$\eta_{2,2}(q\,|\,\tau, b),$ and
$\Gamma_2\bigl(w\,|\,(1,\tau)\bigr)=\Gamma_2(w\,|\,\tau)$ for
brevity. From Eq. \eqref{eta} and Theorem \ref{main} we have
${\bf E}\bigl[\beta_{2, 2}(\tau, b)^q\bigr] = \eta_{2,2}(q\,|\,\tau,
b)$ for $\Re(q)>-b_0$ and
\begin{equation}
\eta_{2,2}(q\,|\,\tau, b) =
\frac{\Gamma_2(q+b_0\,|\,\tau)}{\Gamma_2(b_0\,|\,\tau)}
\frac{\Gamma_2(b_0+b_1\,|\,\tau)}{\Gamma_2(q+b_0+b_1\,|\,\tau)}
\frac{\Gamma_2(b_0+b_2\,|\,\tau)}{\Gamma_2(q+b_0+b_2\,|\,\tau)}
\frac{\Gamma_2(q+b_0+b_1+b_2\,|\,\tau)}{\Gamma_2(b_0+b_1+b_2\,|\,\tau)}.
\end{equation}
The asymptotic behavior of $\eta_{2,2}(q\,|\,\tau, b)$ %%in the limit
follows from Theorem \ref{Asymptotics}.
\begin{equation}
\eta_{2, 2}(q\,|\,\tau, b) = \exp\Bigl(-\frac{b_1 b_2}{\tau}\log(q)
+ O(1)\Bigr), \; q\rightarrow\infty,\;|\arg(q)|<\pi.
\end{equation}
Using Eq. \eqref{gamma1}, the functional equation in Theorem
\ref{FunctEquat} takes the form
\begin{align}
\eta_{2, 2}(q+1\,|\,\tau, b) & = \eta_{2, 2}(q\,|\,\tau, b)\,
\frac{\Gamma\bigl((q+b_0+b_1)/\tau\bigr)\Gamma\bigl((q+b_0+b_2)/\tau\bigr)}
{\Gamma\bigl((q+b_0)/\tau\bigr)\Gamma\bigl((q+b_0+b_1+b_2)/\tau\bigr)},
\\
\eta_{2, 2}(q+\tau\,|\,\tau, b) & = \eta_{2, 2}(q\,|\,\tau, b)\,
\frac{\Gamma(q+b_0+b_1)\Gamma(q+b_0+b_2)}
{\Gamma(q+b_0)\Gamma\bigl(q+b_0+b_1+b_2)}.
\end{align}
The positive moments in Corollary \ref{moments} for $k\in\mathbb{N}$
are
\begin{align}
{\bf E}\bigl[\beta_{2, 2}(\tau, b)^k\bigr] & =
\prod\limits_{l=0}^{k-1}
\Bigl[\frac{\Gamma\bigl((l+b_0+b_1)/\tau\bigr)\,\Gamma\bigl((l+b_0+b_2)/\tau\bigr)}{\Gamma\bigl((l
+b_0)/\tau\bigr)\,\Gamma\bigl((l+b_0+b_1+b_2)/\tau\bigr)}\Bigr].
\end{align}
The negative moments are
\begin{align}
{\bf E}\bigl[\beta_{2, 2}(\tau, b)^{-k}\bigr] & =
\prod\limits_{l=0}^{k-1} \Bigl[\frac{\Gamma\bigl((-(l+1)
+b_0)/\tau\bigr)\,\Gamma\bigl((-(l+1)+b_0+b_1+b_2)/\tau\bigr)}{\Gamma\bigl((-(l+1)+b_0+b_1)/\tau\bigr)\,
\Gamma\bigl((-(l+1)+b_0+b_2)/\tau\bigr)}\Bigr], \; k<b_0.
\end{align}
The factorization equations in Corollary \ref{FactorizBarnes}
are
\begin{align} %%
\eta_{2,2}(q\,|\,\tau, b) &  =
 \prod\limits_{k=0}^\infty\Bigl[
\frac{\Gamma((q+k+b_0)/\tau) }{\Gamma((k+b_0)/\tau)}
\frac{\Gamma((k+b_0+b_1)/\tau)}{\Gamma((q+k+b_0+b_1)/\tau)}
\frac{\Gamma((k+b_0+b_2)/\tau)}{\Gamma((q+k+b_0+b_2)/\tau)} \times
\nonumber \\ & \times
\frac{\Gamma((q+k+b_0+b_1+b_2)/\tau)}{\Gamma((k+b_0+b_1+b_2)/\tau)}\Bigr],
\\
\eta_{2,2}(q\,|\,\tau, b) & = \prod\limits_{k=0}^\infty\Bigl[
\frac{\Gamma(q+k\tau+b_0)}{\Gamma(k\tau+b_0)}
\frac{\Gamma(k\tau+b_0+b_1)}{\Gamma(q+k\tau+b_0+b_1)}
\frac{\Gamma(k\tau+b_0+b_2)}{\Gamma(q+k\tau+b_0+b_2)} \times
\nonumber \\ & \times
\frac{\Gamma(q+k\tau+b_0+b_1+b_2)}{\Gamma(k\tau+b_0+b_1+b_2)}\Bigr], \\
\eta_{2,2}(q\,|\,\tau, b) & = 
\prod\limits_{n_1, \,n_2=0}^\infty \Bigl[
\frac{b_0+n_1+n_2\tau}{q+b_0+n_1+n_2\tau} \frac{q+b_0+b_1+n_1+n_2\tau}
{b_0+b_1+n_1+n_2\tau} 
 \frac{q+b_0+b_2+n_1+n_2\tau}{b_0+b_2+n_1+n_2\tau}\times
\nonumber \\ & \times 
\frac{b_0+b_1+b_2+n_1+n_2\tau}{q+b_0+b_1+b_2+n_1+n_2\tau} \Bigr]
.
\end{align}
Finally, in the special case of $\tau=1$ we get from Corollary \ref{BarnesFactorSpecial},
\begin{equation}
\eta_{2,2}(q\,|\,1, b)  = 
\prod\limits_{k=0}^\infty \Bigl[
\frac{b_0+k}{q+b_0+k} \frac{q+b_0+b_1+k}
{b_0+b_1+k} 
 \frac{q+b_0+b_2+k}{b_0+b_2+k}
\frac{b_0+b_1+b_2+k}{q+b_0+b_1+b_2+k} \Bigr]^{k+1}
.\label{specialfactorization}
\end{equation}

\begin{remark} The analytic structure of $\eta_{M,N}(a,b)$ is not fully understood, even in the case of $M=0.$ The latter with $b_i=i,$ $i=1\cdots N,$ is of particular interest as it occurs in the context of the Riemann xi function, cf. Section 7 in \cite{Me14}.
%% For example, the density of the continuous part of $\beta_{0, N}(b)$
%% is determined by that of  $\eta_{0,N}(b)$ by Theorem \ref{BarnesFactorization} and
%% which occurs in the context of the Riemann xi function, confer Section 7  below, and involves integer partitions. 
%% which involves integer partitions.
\end{remark}

%The density of $\beta_{2, 2}(\tau, b)$ can be computed by Laplace
%transform inversion. This computation requires a separate study
%similar to \cite{HubKuz} as the structure of the residues of
%$\eta_{2, 2}(q\,|\,\tau, b)$ depends on the rationality of $\tau,$
%$b_1,$ and $b_2.$ 

\subsection{$N=M-1$}

Let $M\in\mathbb{N},$ $a=(a_1,\cdots, a_{M}),$ and $b=(b_0, b_1,\cdots,b_{M-1}),$ all assumed to be positive.
In other words, $N=M-1$ in the sense of the previous subsection. 
Define
\begin{equation}
\eta_{M, M-1}(q|a, b) \triangleq \exp\Bigl( \bigl(\mathcal{S}_{M-1}
\log\Gamma_M\bigr)(q\,|a,\,b) - \bigl(\mathcal{S}_{M-1} \log\Gamma_M\bigr)(0\,|a,\,b) \Bigr). \label{etaL}
%%= & \eta_{M, M-1}(q|a, b).
\end{equation}
For example, in the case of $M=2$ we have
\begin{equation}
\eta_{2, 1}(q|a, b) =
\frac{\Gamma_2(q+b_0\,|\,a)}{\Gamma_2(b_0\,|\,a)}
\frac{\Gamma_2(b_0+b_1\,|\,a)}{\Gamma_2(q+b_0+b_1\,|\,a)}.
\end{equation}
The case of $M=2$ was first treated in \cite{Kuz} and then studied in depth in \cite{LetSim} in the context of
the Mellin transform of certain functionals of the stable L\'evy process. We extended the theory to general $M$ in
\cite{Me16}.
%%Such products with $M=2$ were first discovered in \cite{Kuz} and then studied in depth in \cite{LetSim} in the context of
%%the Mellin transform of certain functionals of the stable L\'evy process. %%We will know show how to 
\begin{theorem}[Existence and Structure]\label{mainsine}
Assume
\begin{equation}
%%|a|-|b|>0.
\Re(q)>-b_0.
\end{equation}
$\eta_{M, M-1}(q\,|a,\,b)$ is the Mellin transform of a probability
distribution $\beta_{M, M-1}(a,b)$  on $(0, \infty).$
\begin{equation}
{\bf E}\bigl[\beta_{M, M-1}(a,b)^q\bigr] = \eta_{M, M-1}(q\,|a,\,b). 
%|a|-|b|>\Re(q)>-b_0.
\end{equation}
The distribution $\log\beta_{M, M-1}(a,b)$ is infinitely divisible and absolutely continuous on
$\mathbb{R}$ and has the L\'evy-Khinchine decomposition %%for $|a|-|b|>\Re(q)>-b_0,$
\begin{align}
{\bf E}\Bigl[\exp\bigl(q\log\beta_{M, M-1}(a,b)\bigr)\Bigr] = 
\exp&\Bigl(
\int\limits_0^\infty (e^{-tq}-1+qt) e^{-b_0
t} \frac{
\prod\limits_{j=1}^{M-1} (1-e^{-b_j t})}{\prod\limits_{i=1}^M (1-e^{-a_i t})}
\frac{dt}{t} +\nonumber \\&+ q
\int\limits_0^\infty \Bigl[\frac{e^{-t}}{t} \frac{\prod\limits_{j=1}^{M-1} b_j}{\prod\limits_{i=1}^{M} a_i}
-e^{-b_0
t}\frac{
\prod\limits_{j=1}^{M-1} (1-e^{-b_j t})}{\prod\limits_{i=1}^M (1-e^{-a_i t})}\Bigr]
dt
\Bigr).\label{LKHsine}
\end{align} 
$\eta_{M, M-1}(q\,|a,\,b)$ satisfies the functional equation in Eq. \eqref{fe1} and moment formulas in Eqs. \eqref{posmom}
and \eqref{negmom}. Given $\kappa>0,$ the scaling invariance property of $\beta_{M, M-1}(a,b)$ is
\begin{equation}\label{scalinvgen}
\beta^{\kappa}_{M, M-1}(\kappa\,a, \kappa\,b) \overset{{\rm in \,law}}{=}\kappa^{\prod\limits_{j=1}^{M-1} b_j/\prod\limits_{i=1}^M a_i} \;\beta_{M, M-1}(a,\,b).
\end{equation}
\end{theorem}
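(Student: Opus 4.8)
The plan is to read off the L\'evy--Khinchine representation \eqref{LKHsine} directly from the Malmst\'en-type integral formula \eqref{key} for $\log\Gamma_M(w\,|\,a)$, following the same route as in the case $M\le N$ but paying attention to the boundary divergence that is special to $N=M-1$. The engine is the combinatorial identity that drives everything: applying $\mathcal{S}_{M-1}$ (as a function of $w$) to $e^{-wt}$ gives
\[
\bigl(\mathcal{S}_{M-1}\,e^{-wt}\bigr)(q\,|\,b)=e^{-(q+b_0)t}\prod_{j=1}^{M-1}\bigl(1-e^{-b_j t}\bigr),
\]
whose right-hand side is $O(t^{M-1})$ as $t\to0$. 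It follows that $\mathcal{S}_{M-1}$ annihilates every polynomial in $w$ of degree $\le M-2$, sends a degree-$(M-1)$ polynomial to a $q$-independent constant, and sends a degree-$M$ polynomial to a function that is affine in $q$. I would then apply $\mathcal{S}_{M-1}$ term by term to \eqref{key}: the principal term $e^{-wt}f(t)$ produces $f(t)\,e^{-(q+b_0)t}\prod_{j=1}^{M-1}(1-e^{-b_jt})$; among the polynomial corrections $\sum_{k=0}^{M-1}\tfrac{t^k}{k!}B_{M,k}(w\,|\,a)$ all those with $k\le M-2$ vanish, the $k=M-1$ term contributes only a $q$-independent constant (which cancels once the value at $q=0$ is subtracted in \eqref{etaL}), and the $k=M$ term $\tfrac{t^Me^{-t}}{M!}B_{M,M}(w\,|\,a)$ contributes a term affine in $q$. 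Dividing by $t^{M+1}$ and using $f(t)=t^M\prod_i(1-e^{-a_it})^{-1}$, the principal term yields the candidate L\'evy density $g(t)\triangleq e^{-b_0t}\prod_{j=1}^{M-1}(1-e^{-b_jt})/\prod_{i=1}^M(1-e^{-a_it})$, which behaves like $(\prod_j b_j/\prod_i a_i)\,t^{-1}$ as $t\to0$.

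The crux, and the step I expect to be the main obstacle, is that unlike the case $M\le N$ where the integrand was already integrable at the origin, here $g(t)/t\sim t^{-2}$ and the naive exponent $\int_0^\infty(e^{-tq}-1)g(t)\,dt/t$ diverges. I would handle this exactly as \eqref{LKHsine} dictates: insert the truncation $+qt$, which makes $(e^{-tq}-1+qt)\,g(t)/t=O(1)$ near $0$ since $e^{-tq}-1+qt=O(t^2)$, and pay for it with the compensating linear-in-$q$ drift, the second integral of \eqref{LKHsine}, whose integrand $\tfrac{e^{-t}}{t}\tfrac{\prod_j b_j}{\prod_i a_i}-g(t)$ is bounded at the origin (both terms carrying the same $t^{-1}$ singularity) and decays at infinity. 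The book-keeping is to check that the affine-in-$q$ contribution of the $k=M$ correction term reorganizes precisely into this drift; this amounts to computing the leading and subleading small-$t$ coefficients of $g$ and of $B_{M,M}$, verifying that the slope matches $\prod_j b_j/\prod_i a_i$ and that the constant exponent vanishes, which it must since $\eta_{M,M-1}(0\,|\,a,b)=1$ by construction. All interchanges of summation and integration would be justified by the absolute convergence secured through these small-$t$ estimates together with the exponential decay of every integrand as $t\to+\infty$.

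With \eqref{LKHsine} established, existence as a Mellin transform and absolute continuity follow quickly. The measure $g(t)\,dt/t$ is positive and satisfies $\int_0^\infty\min(1,t^2)\,g(t)\,dt/t<\infty$ but has infinite total mass (because $g(t)/t\sim t^{-2}$ at the origin); hence $\log\beta_{M,M-1}(a,b)$ is a genuine infinitely divisible law on $\mathbb{R}$, and infiniteness of the L\'evy measure yields absolute continuity by the same criterion used in the case $M=N$ of Theorem \ref{main}, cf. \cite{SteVHar}. The function $\eta_{M,M-1}(q\,|\,a,b)$ is then its Mellin transform on $\Re(q)>-b_0$, which is where the integral representation was derived.

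Finally, the remaining assertions require no new analysis. The functional equation \eqref{fe1} and the moment formulas \eqref{posmom}, \eqref{negmom} are purely algebraic consequences of the definition \eqref{etaL} and the functional equation \eqref{feq} of $\Gamma_M$, so their proofs for $M\le N$ transcribe verbatim, the relation $M$ versus $N$ playing no role. For the scaling invariance \eqref{scalinvgen} I would apply \eqref{scale} to each $\Gamma_M$-factor in \eqref{etaL}; the accumulated power of $\kappa$ equals $-\bigl[(\mathcal{S}_{M-1}B_{M,M})(q\,|\,a,b)-(\mathcal{S}_{M-1}B_{M,M})(0\,|\,a,b)\bigr]/M!$, and by the affine structure established above this equals $(\prod_j b_j/\prod_i a_i)\,q$, giving $\eta_{M,M-1}(\kappa q\,|\,\kappa a,\kappa b)=\kappa^{\,q\prod_j b_j/\prod_i a_i}\,\eta_{M,M-1}(q\,|\,a,b)$. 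Translating this identity of Mellin transforms back to random variables yields \eqref{scalinvgen}.
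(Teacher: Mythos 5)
Your proposal is correct and follows essentially the same route as the paper's own proof: both apply $\mathcal{S}_{M-1}$ to Ruijsenaars' formula \eqref{key}, use the annihilation of Bernoulli polynomials of degree $\le M-1$ together with the affine-in-$q$ action on $B_{M,M}$ (the paper's Lemma \ref{MyLemma}, Eq. \eqref{id3}) to produce the compensated L\'evy--Khinchine form \eqref{LKHsine}, deduce absolute continuity from the infiniteness of the L\'evy measure via \cite{SteVHar}, and obtain the scaling law \eqref{scalinvgen} from Eq. \eqref{scale} combined with that same affine identity. The only cosmetic difference is that the paper first records the intermediate identity with the $q e^{-t}\prod_j b_j/\prod_i a_i$ drift term and then rearranges it by adding and subtracting $qt$, whereas you target the truncated form directly.
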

\begin{theorem}[Asymptotics]\label{newetaasympt}
Given  $|\arg(q)|<\pi,$
\begin{equation}\label{ourasymN}
\eta_{M, M-1}(q\,|\,b) = \exp\Bigl(\bigl(\prod\limits_{j=1}^{M-1} b_j/\prod\limits_{i=1}^M a_i\bigr) q\log(q) +
O(q)\Bigr), \;q\rightarrow\infty.
\end{equation}
\end{theorem}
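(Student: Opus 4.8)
The plan is to read the asymptotics straight off the Ruijsenaars/Billingham--King expansion of $\log\Gamma_M$ in Eqs. \eqref{asym}--\eqref{asymremainder}, after reinterpreting the operator $\mathcal{S}_{M-1}$ of Definition \ref{Soperator} as a finite-difference operator. Writing $E_{b}h(q)=h(q+b)$ for the shift, the alternating sum in Eq. \eqref{S} is exactly $(\mathcal{S}_{M-1}h)(q\,|\,b)=\prod_{j=1}^{M-1}(1-E_{b_j})\,h(q+b_0)$, an order-$(M-1)$ difference that lowers the degree of any polynomial by $M-1$ and annihilates polynomials of degree $<M-1$. Since by definition \eqref{etaL} one has $\log\eta_{M,M-1}(q\,|\,a,b)=(\mathcal{S}_{M-1}\log\Gamma_M)(q\,|\,a,b)-(\mathcal{S}_{M-1}\log\Gamma_M)(0\,|\,a,b)$, it suffices to pin down the large-$q$ behaviour of $(\mathcal{S}_{M-1}\log\Gamma_M)(q\,|\,a,b)$ up to $O(q)$, the subtracted constant being harmless.

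First I would insert \eqref{asym} into each gamma factor, splitting $\log\Gamma_M(w\,|\,a)$ into the logarithmic term $-\tfrac{1}{M!}B_{M,M}(w\,|\,a)\log w$, the degree-$M$ polynomial term, and a remainder $R_M(w\,|\,a)=O(w^{-1})$. Applying $\mathcal{S}_{M-1}$ term by term, the degree-$M$ polynomial becomes degree $1$, hence $O(q)$, while the finitely many shifts of $R_M$ are each $O(q^{-1})$; the only source of a $q\log q$ term is the logarithmic part. To extract it cleanly I would expand each shifted value $g(q+c_S)$ of $g(w)=w^M\log w$, with $c_S=b_0+\sum_{j\in S}b_j$, by Taylor's theorem with remainder and use the annihilation identities $\sum_S(-1)^{|S|}c_S^{\ell}=0$ for $\ell\le M-2$ and $\sum_S(-1)^{|S|}c_S^{M-1}=(-1)^{M-1}(M-1)!\prod_{j=1}^{M-1}b_j$, which follow from the difference operator killing low-degree monomials. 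This isolates the leading contribution $(-1)^{M-1}\bigl(\prod_{j}b_j\bigr)g^{(M-1)}(q)$; combined with $\tfrac{d^{M-1}}{dw^{M-1}}[w^M\log w]=M!\,w\log w+O(w)$ and the fact, visible from Eq. \eqref{B22} for $M=2$ and in general from the expansion of $f(t)e^{-wt}$ at $t=0$, that the leading coefficient of $B_{M,M}(w\,|\,a)$ is $(-1)^M/(a_1\cdots a_M)$, the three signs $(-1)^{M+1}$, $(-1)^{M-1}$ combine to give the coefficient $\prod_{j=1}^{M-1}b_j/\prod_{i=1}^M a_i$ of $q\log q$, exactly the asserted constant, with everything else $O(q)$. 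A reassuring check is that the identical bookkeeping applied to $\mathcal{S}_M$ reproduces the $-\bigl(b_1\cdots b_N/a_1\cdots a_M\bigr)\log q$ coefficient of the $M=N$ case in Theorem \ref{Asymptotics}, since there $\tfrac{d^M}{dw^M}[w^M\log w]=M!\log w+O(1/w)$.

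The main obstacle is turning the formal manipulation into honest asymptotics uniform on the sector $|\arg(q)|<\pi$, since $g(w)=w^M\log w$ is not a polynomial and the operator series does not terminate. I would control this by retaining $\mathcal{S}_{M-1}$ in its finite summed form and applying the exact expansion \eqref{asym} at each argument $q+c_S$, all of which lie in the sector once $|q|$ is large because the $c_S$ are bounded and real. The annihilation of the $\ell\le M-2$ derivatives and the bound $g^{(M)}(q)=O(\log q)$ then confine the non-leading terms to $O(q)$, and the shifted remainders $R_M(q+c_S\,|\,a)$ are dominated by $O(q^{-1})$ uniformly on $|\arg(q)|\le\pi-\varepsilon$. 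Exponentiating the resulting estimate $\log\eta_{M,M-1}(q\,|\,a,b)=\bigl(\prod_{j}b_j/\prod_i a_i\bigr)q\log q+O(q)$ yields the claim.
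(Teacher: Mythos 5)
Your proof is correct and shares its skeleton with the paper's: both insert the Ruijsenaars expansion \eqref{asym} into Eq. \eqref{etaL}, both use the annihilation of polynomials of degree at most $M-2$ by $\mathcal{S}_{M-1}$ to dispose of the polynomial part of \eqref{asym} (contributing $O(q)$) and of the finitely many shifted remainders (contributing $O(q^{-1})$), and both trace the $q\log q$ term to $-\frac{1}{M!}B_{M,M}(w\,|\,a)\log w$. The genuine difference lies in the extraction device for that last term. The paper keeps the full Bernoulli polynomial, expands $\log(q+c_S)$ in powers of $1/q$, and runs the generating-function calculus of Lemmas \ref{mylemma} and \ref{MyLemma} (via the function in Eq. \eqref{gfunctionasym} and degree counting in powers of $q$) to reach the intermediate identity $(\mathcal{S}_{M-1}B_{M,M}(x\,|\,a)\log x)(q\,|\,b)=\log(q)\,(\mathcal{S}_{M-1}B_{M,M}(x\,|\,a))(q\,|\,b)+O(q)$, and then finishes with Eq. \eqref{id3}, which supplies the slope $-M!\prod_j b_j/\prod_i a_i$. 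You instead keep only the leading monomial $(-1)^M w^M/(a_1\cdots a_M)$ of $B_{M,M}$ (the lower monomials give $O(\log q)$ by the same mechanism), Taylor-expand $w^M\log w$ in the bounded real shifts $c_S$ about $q$, and invoke only the elementary monomial identities \eqref{auxID}--\eqref{auxIDN} to isolate $(-1)^{M-1}\prod_j b_j\,g^{(M-1)}(q)$ with $g^{(M-1)}(q)=M!\,q\log q+O(q)$; your sign count is right, and your cross-check that the same bookkeeping one order higher recovers the $-\bigl(b_1\cdots b_N/(a_1\cdots a_M)\bigr)\log q$ coefficient in the $M=N$ case of Theorem \ref{Asymptotics} is valid. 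What your variant buys is that it is more elementary and self-contained: it bypasses Lemma \ref{MyLemma} entirely for the logarithmic term and makes uniformity on sub-sectors of $|\arg q|<\pi$ explicit through the Taylor remainder bound $g^{(M)}(\xi_S)=O(\log q)$. What the paper's version buys is economy: it recycles the machinery already built for Theorem \ref{mainsine}, so its proof is a near-immediate corollary of Lemma \ref{MyLemma} and its generalization.
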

It is expected that the Stieltjes moment problem for $\beta_{M, M-1}(a,b)$ is determinate (unique solution) iff 
\begin{equation}\label{condition}
\prod\limits_{j=1}^{M-1} b_j \leq 2\prod\limits_{i=1}^M a_i  .
\end{equation}
This is a classical result for $M=1.$ In general, the asymptotic behavior in Eq. \eqref{ourasymN} coincides with 
the asymptotic behavior of generalized gamma distributions and we expect that Eq. \eqref{condition} should follow from
the known solution to the Stieltjes moment problem for these distributions, see \cite{Stoyanov}.

It is also interesting to look at the ratio of two independent Barnes beta distributions of type $(M, M-1)$ as this ratio
has remarkable factorization properties. Let 
\begin{equation}\label{bdefine}
\bar{b} \triangleq \bigl(\bar{b}_0, b_1, \cdots b_{M-1}\bigr)
\end{equation}
for some fixed $\bar{b}_0>0,$
define
\begin{equation}
\beta_{M, M-1}(a,b, \bar{b})\triangleq \beta_{M, M-1}(a,b)\,\beta^{-1}_{M, M-1}(a,\bar{b}),
\end{equation}
and denote its Mellin transform by $\eta_{M, M-1}(q\,|\,a, b, \bar{b}).$
Then, the Mellin transform of %%multiple sine distributions 
$\beta_{M, M-1}(a,b, \bar{b})$ 
satisfies %%a function equation and 
two factorizations and scaling invariance that are similar to those of Barnes beta distributions for $M\leq N.$
\begin{theorem}[Properties]\label{FunctEquatSine}
Let $M\in\mathbb{N},$ $\bar{b}_0>\Re(q)>-b_0,$ $i=1\cdots M,$  $\Omega\triangleq \sum_{i=1}^M n_i \, a_i.$
\begin{align}
\eta_{M, M-1}(q\,|\,a, b, \bar{b})  = & \prod\limits_{k=0}^\infty \frac{\eta_{M-1,M-1}(q+k
a_i\,|\,\hat{a}_i, b)}{\eta_{M-1,M-1}(k
a_i\,|\,\hat{a}_i, b)} \,\frac{\eta_{M-1,M-1}(-q+k
a_i\,|\,\hat{a}_i, \bar{b})}{\eta_{M-1,M-1}(k
a_i\,|\,\hat{a}_i, \bar{b})}, \label{infinprod2L} \\
\eta_{M, M-1}(q\,|\,a, b, \bar{b}) = & \prod\limits_{n_1,\cdots ,n_M=0}^\infty \Bigl[
\frac{b_0+\Omega}{q+b_0+\Omega}\,\frac{\bar{b}_0+\Omega}{-q+\bar{b}_0+\Omega}
\times \nonumber \\
& \times
\prod\limits_{j_1=1}^{M-1} \frac{q+b_0+b_{j_1}+\Omega}
{b_0+b_{j_1}+\Omega} \,\frac{-q+\bar{b}_0+b_{j_1}+\Omega}
{\bar{b}_0+b_{j_1}+\Omega}
\times \nonumber \\
& \times\prod\limits_{j_1<j_2}^{M-1} \frac{b_0+b_{j_1}+b_{j_2}+\Omega}
{q+b_0+b_{j_1}+b_{j_2}+\Omega} \, \frac{\bar{b}_0+b_{j_1}+b_{j_2}+\Omega}
{-q+\bar{b}_0+b_{j_1}+b_{j_2}+\Omega} \cdots\Bigr].\label{infinprod1L}
%%\prod\limits_{j_1<j_2<j_3}^N \frac{q+b_0+b_{j_1}+b_{j_2}+b_{j_3}+\Omega}{b_0+b_{j_1}+b_{j_2}+b_{j_3}+\Omega}\cdots\Bigr].
\end{align}
Probabilistically, these factorizations are equivalent to, respectively, 
\begin{align}
\beta_{M, M-1}(a,\,b, \bar{b}) \overset{{\rm in \,law}}{=}&
\prod\limits_{k=0}^\infty \beta_{M-1, M-1}(\hat{a}_i,\,b_0+ka_i,\,\,b_1,\cdots, b_{M-1})\times \nonumber \\ & \times \beta_{M-1, M-1}^{-1}(\hat{a}_i,\,\bar{b}_0+ka_i,\,b_1,\cdots, b_{M-1}), \label{probshin}\\
\beta_{M, M-1}(a,\,b, \bar{b}) \overset{{\rm in \,law}}{=}&
\prod\limits_{n_1,\cdots ,n_M=0}^\infty \beta_{0, M-1}(b_0+\Omega,\,b_1,\cdots, b_{M-1})\times \nonumber \\ & \times\beta_{0, M-1}^{-1}(\bar{b}_0+\Omega, \,\,b_1,\cdots, b_{M-1}). \label{probbarnes}
\end{align}
Let $\kappa>0.$ Then,
\begin{equation}\label{scalinvL}
\beta^{\kappa}_{M, M-1}(\kappa\,a, \kappa\,b, \kappa\,\bar{b}) \overset{{\rm in \,law}}{=}\beta_{M, M-1}(a,\,b, \bar{b}).
\end{equation}
\end{theorem}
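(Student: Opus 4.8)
The plan is to reduce everything to the Mellin transform and to exploit the fact that $\beta_{M,M-1}(a,b,\bar b)$ is, by definition, the independent product $\beta_{M,M-1}(a,b)\,\beta^{-1}_{M,M-1}(a,\bar b)$, so that
\[
\eta_{M,M-1}(q\,|\,a,b,\bar b) = \eta_{M,M-1}(q\,|\,a,b)\,\eta_{M,M-1}(-q\,|\,a,\bar b), \qquad \bar b_0 > \Re(q) > -b_0 .
\]
All three assertions then follow from the corresponding properties of the single distribution $\beta_{M,M-1}$ together with the observation that $b=(b_0,b_1,\dots,b_{M-1})$ and $\bar b=(\bar b_0,b_1,\dots,b_{M-1})$ differ \emph{only} in their zeroth coordinate. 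The scaling invariance \eqref{scalinvL} is the quickest: the single-variable scaling law \eqref{scalinvgen} gives $\eta_{M,M-1}(\kappa q\,|\,\kappa a,\kappa b)=\kappa^{cq}\,\eta_{M,M-1}(q\,|\,a,b)$ with $c=\prod_{j=1}^{M-1}b_j/\prod_{i=1}^{M} a_i$, and the identical exponent $c$ governs $\bar b$ since it shares $b_1,\dots,b_{M-1}$. Hence in the product the factors $\kappa^{cq}$ and $\kappa^{-cq}$ cancel, yielding $\eta_{M,M-1}(\kappa q\,|\,\kappa a,\kappa b,\kappa\bar b)=\eta_{M,M-1}(q\,|\,a,b,\bar b)$, which is the Mellin form of \eqref{scalinvL}.

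For the Shintani factorization \eqref{infinprod2L} I would insert the general Shintani factorization \eqref{generalfactorization} of $\Gamma_M$ (factoring in the direction $a_i$) into the defining expression $\eta_{M,M-1}=\exp\bigl(\mathcal S_{M-1}\log\Gamma_M(q\,|\,a)-\mathcal S_{M-1}\log\Gamma_M(0\,|\,a)\bigr)$ of \eqref{etaL}. The factors $\log\Gamma_{M-1}(w+ka_i\,|\,\hat a_i)$, after the action of $\mathcal S_{M-1}$ and the normalizing subtraction at $q=0$, assemble precisely into the product $\prod_{k\ge 0}\eta_{M-1,M-1}(q+ka_i\,|\,\hat a_i,b)/\eta_{M-1,M-1}(ka_i\,|\,\hat a_i,b)$, the lone $k=0$ term arising from the residual factor $\Gamma_{M-1}(w\,|\,\hat a_i)$. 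The correction terms $\Psi_M,\phi_M$ in \eqref{generalfactorization} are polynomials of degree $M$ in $w=q+(\text{shift})$. Since $\mathcal S_{M-1}$ can be written as an $(M-1)$-fold difference $\prod_{j=1}^{M-1}(1-T_{b_j})$ in the variables $b_1,\dots,b_{M-1}$ (with $T_c$ the shift $q\mapsto q+c$) composed with the shift $q\mapsto q+b_0$, it lowers the degree by $M-1$, collapsing each correction to an \emph{affine} function of $q$ whose linear coefficient depends only on $a,a_i,b_1,\dots,b_{M-1}$ and whose constant term is annihilated by the $q=0$ normalization. Thus $\log\eta_{M,M-1}(q\,|\,a,b)$ equals the clean product plus a single term $\alpha\,q$ with $\alpha$ independent of $b_0$; the same holds for $\bar b$ with the identical $\alpha$, so in $\log\eta_{M,M-1}(q\,|\,a,b)+\log\eta_{M,M-1}(-q\,|\,a,\bar b)$ the contributions $\alpha q$ and $\alpha(-q)$ cancel, leaving exactly \eqref{infinprod2L}. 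The probabilistic statement \eqref{probshin} then follows by uniqueness of the Mellin transform.

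For the Barnes factorization \eqref{infinprod1L} I would avoid repeating the polynomial bookkeeping and instead feed the Shintani output into the already-established $M=N$ theory: each factor $\beta_{M-1,M-1}(\hat a_i,b_0+ka_i,b_1,\dots,b_{M-1})$ is of type $(M-1,M-1)$ and hence admits the Barnes factorization \eqref{probbarnesfac}, expressing it as $\prod_{\Omega'}\beta_{0,M-1}(b_0+ka_i+\Omega',b_1,\dots,b_{M-1})$ with $\Omega'=\sum_{l\neq i}n_l a_l$. Substituting this identity, and its $\bar b$ counterpart, into \eqref{infinprod2L} and recombining the double product over $(k,\Omega')$ into a single product over $\Omega=ka_i+\Omega'=\sum_l n_l a_l$ yields \eqref{infinprod1L}; the probabilistic form \eqref{probbarnes} follows once more by Mellin uniqueness.

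The main obstacle is concentrated in the Shintani step and is twofold. First, one must justify rigorously that the surviving affine correction has its linear coefficient independent of $b_0$: this is precisely what makes the $b/\bar b$ ratio cancel and is the structural reason the ratio $\beta_{M,M-1}(a,b,\bar b)$ enjoys clean factorizations while the single distribution $\beta_{M,M-1}(a,b)$ — with its $q\log q$ growth, cf.\ Theorem \ref{newetaasympt} — does not. Second, one must control convergence: the infinite products in \eqref{infinprod2L} and especially in \eqref{infinprod1L} do not converge factorwise for the individual $\eta_{M,M-1}$, and it is exactly the pairing of a $q$-factor against a $(-q)$-factor across $b$ and $\bar b$ that restores convergence. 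I would handle both points by working throughout with the integral (Lévy–Khinchine) representation \eqref{LKHsine}, where the $q\mapsto-q$ symmetry and the cancellation of the compensating linear terms are manifest, and only then exponentiate back to the product forms.
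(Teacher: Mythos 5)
Your proposal is correct, but it is a hybrid of two routes, and it is worth separating them against the paper's own proof. Your headline argument --- insert the Shintani factorization \eqref{generalfactorization} into the definition \eqref{etaL} and collapse the degree-$M$ corrections $\Psi_M,\phi_M$ by degree counting under $\mathcal{S}_{M-1}$ --- is the route the paper only gestures at (``corollaries of Lemma \ref{mylemma} and Shintani and Barnes factorizations'') but never executes; the paper's direct proof instead works entirely at the level of the L\'evy--Khinchine integral, which is exactly where your final paragraph lands. Concretely, the paper multiplies the two copies of Eq. \eqref{keyid}, notes that the compensator terms $\pm q\,e^{-t}\prod_j b_j/\prod_i a_i$ cancel because $b$ and $\bar{b}$ share $b_1,\dots,b_{M-1}$ (your ``$b_0$-independence'' observation in integral form, giving Eq. \eqref{keyidb}), expands $1/(1-e^{-a_M t})=\sum_{k\geq 0}e^{-k a_M t}$ inside the resulting single integral, and recognizes each summand as the representation \eqref{LKH} of $\eta_{M-1,M-1}$ with $b_0\mapsto b_0+ka_M$ via \eqref{fe2}; convergence is dominated convergence, and \eqref{infinprod2L}/\eqref{probshin} drop out in a few lines. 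Your Barnes step (apply \eqref{probbarnesfac} to each $(M-1,M-1)$ factor and merge the double product over $(k,\Omega')$ into the product over $\Omega$) and your scaling step (cancellation of the $\kappa^{cq}$ prefactors coming from \eqref{scalinvgen}, with the same exponent for $b$ and $\bar{b}$) are verbatim the paper's. Two points strengthen your Shintani route beyond what you claim: the degree-reduction fact you flag as the main obstacle is already available in the paper as Lemma \ref{MyLemma}, Eq. \eqref{id3}, whose right-hand side $-qf(0)\,M!\prod_{j=1}^{M-1}b_j$ is linear in $q$ with coefficient manifestly free of $b_0$; and since the cancellation between the $b$-factor at $q$ and the $\bar{b}$-factor at $-q$ occurs separately for each $k$, no summability of the correction coefficients is needed --- the only genuine convergence question concerns the paired product itself, which your fallback to \eqref{LKHsine} (i.e., precisely the paper's direct proof) settles. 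What your route adds is the structural explanation of why the ratio $\beta_{M,M-1}(a,b,\bar{b})$ factorizes cleanly while the single $\beta_{M,M-1}(a,b)$, with its $q\log q$ asymptotics (Theorem \ref{newetaasympt}), cannot; what the paper's route buys is brevity and built-in convergence control.
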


We will illustrate the general theory with the special case of $\beta_{1,0}.$ %%and $\beta_{2,1}.$ Let $M=1.$
\begin{align}
{\bf E}[\beta_{1,0}(a, b)^q] = & \frac{\Gamma_1(q+b_0\,|\,a)}{\Gamma_1(b_0\,|\,a)}, \\
= & a^{\frac{q}{a}} \frac{\Gamma(\frac{q+b_0}{a})}{\Gamma(\frac{b_0}{a})}, %%\frac{\Gamma_1(-q+\bar{b}_0)}{\Gamma_1(\bar{b}_0)}
\end{align}
by Eq. \eqref{gamma1} so that $\beta_{1,0}$ is a Fr\'echet distribution. This distribution plays an important role
in the structure of both Selberg and Morris integral probability distributions. For example, the $Y$ distribution in Eq. \eqref{Ydist} is
\begin{equation}
Y = \tau^{1/\tau}\,\beta^{-1}_{1,0}(a=\tau, b_0=\tau). 
\end{equation}
%%Now, let $M=2,$ $a=(a_1, a_2)$ and $b=(b_0, b_1).$
%%\begin{equation}
%%{\bf E}[\beta_{2,1}(a, b)^q] = \frac{\Gamma_2(q+b_0\,|\,a)}{\Gamma_2(b_0\,|\,a)}\frac{\Gamma_2(b_0+b_1\,|\,a)}{\Gamma_2(q+b_0+b_1\,|\,a)}.
%%\end{equation}

\subsection{Ratios and Multiple Sine Functions}
\noindent 
Recall the definition of the multiple sine function in Eq. \eqref{msinedef}.
Let 
\begin{equation}\label{b0spec}
\bar{b}_0 = |a|-\sum_{j=0}^{N} b_j>0.
\end{equation}
We then have the identity
\begin{equation}
(\mathcal{S}_N \log S_M)(q\,|\,a, b) = (-1)^{N+M} (\mathcal{S}_N \log\Gamma_M)(-q\,|\,a, \bar{b}) - (\mathcal{S}_N \log\Gamma_M)(q\,|\,a, b),
\end{equation} 
where $\bar{b}$ is as in Eq. \eqref{bdefine}.
It follows that we obtain the interesting identity,
\begin{align}
\exp\Bigl(\bigl(\mathcal{S}_{N}
\log S_M\bigr)(0\,|a,\,b) - \bigl(\mathcal{S}_{N} \log S_M\bigr)(q\,|a,\,b)\Bigr) = &  \frac{S_M(b_0|a)}{S_M(q+b_0|a)}\prod\limits_{j_1=1}^{N} \frac{S_M(q+b_0+b_{j_1}|a)}{S_M(b_0+b_{j_1}|a)} \times\nonumber \\
&\times
\prod\limits_{j_1<j_2}^{N} \frac{S_M(b_0+b_{j_1}+b_{j_2}|a)}{S_M(q+b_0+b_{j_1}+b_{j_2}|a)} \times\nonumber \\
&\times\prod\limits_{j_1<j_2<j_3}^{N} \frac{S_M(q+b_0+b_{j_1}+b_{j_2}+b_{j_3}|a)}{S_M(b_0+b_{j_1}+b_{j_2}+b_{j_3}|a)} \cdots, 
\nonumber \\
= & \eta_{M,N}(q\,|\,a, b) \eta_{M,N}(-q\,|\,a, \bar{b})^{(-1)^{M+N+1}}.
\end{align}
In particular, when $N=M-1,$ this identity becomes
\begin{align}
\exp\Bigl(\bigl(\mathcal{S}_{M-1}
\log S_M\bigr)(0\,|a,\,b) - \bigl(\mathcal{S}_{M-1} \log S_M\bigr)(q\,|a,\,b)\Bigr)  = & \eta_{M,M-1}(q\,|\,a, b) \eta_{M,M-1}(-q\,|\,a, \bar{b}), 
\nonumber \\
= & \eta_{M, M-1}(q|a, b, \bar{b}),
\end{align}
and so is the Mellin transform of the ratio of two independent Barnes beta random variables of type $(M, M-1).$ 

We will illustrate this result with the special cases of $\beta_{1,0}$ and $\beta_{2,1}.$ Let $M=1.$
Let $\bar{b}_0=a-b_0$ as in Eq. \eqref{b0spec},
\begin{equation}
{\bf E}[\beta_{1,0}(a, b, \bar{b})^q] = \frac{\sin(\frac{\pi b_0}{a})}{\sin(\frac{\pi (q+b_0)}{a})}.
\end{equation}
Now, let $M=2,$ $a=(a_1, a_2),$  $b=(b_0, b_1),$ $\bar{b}=(a_1+a_2-b_0-b_1, b_1).$
\begin{equation}
{\bf E}[\beta_{2,1}(a, b, \bar{b})^q] =  \frac{S_2(b_0\,|\,a)}{S_2(q+b_0\,|\,a)}\frac{S_2(q+b_0+b_1\,|\,a)}{S_2(b_0+b_1\,|\,a)}.
\end{equation}
%%For reader's convenience, the proofs of Theorems \ref{mainsine} -- \ref{FunctEquatSine} are deferred to Section 6.

%%\section{Bacry-Muzy GMC on the Circle and Interval: Probabilistic Approach}\label{Probabilistic}
\section{Morris and Selberg Integral Distributions: Probabilistic Approach}\label{Probabilistic}
\noindent 
In this section we will re-consider the problem of finding positive probability distributions having the property
that their positive integer moments are given by the Morris and Selberg integrals, respectively. Throughout this section we let $\tau>1$
and restrict $\lambda_1, \lambda_2\geq 0$ for simplicity.  As we did in Sections \ref{CirAnalytical} and \ref{IntAnalytical}, we write $\tau$ as an abbreviation of $a=(1,\tau)$ in the list of parameters of the double gamma function and $\beta_{2,2}(a, b).$ The proofs of all results
in this section are given in Section \ref{ProbProofs}.

\subsection{Morris Integral Probability Distribution}

\begin{theorem}[Existence and Properties]\label{theoremcircle}
The function 
\begin{align}\label{thefunctioncopy}
\mathfrak{M}(q\,|\tau,\,\lambda_1,\,\lambda_2)=&\frac{(\tau^{\frac{1}{\tau}})^q}{\Gamma^q\bigl(1-\frac{1}{\tau}\bigr)}
\frac{\Gamma_2(\tau(\lambda_1+\lambda_2+1)+1-q\,|\,\tau)}{\Gamma_2(\tau(\lambda_1+\lambda_2+1)+1\,|\,\tau)}
\frac{\Gamma_2(-q+\tau\,|\,\tau)}{\Gamma_2(\tau\,|\,\tau)}\times \nonumber \\ & \times
\frac{\Gamma_2(\tau(1+\lambda_1)+1\,|\,\tau)}{\Gamma_2(\tau(1+\lambda_1)+1-q\,|\,\tau)}
\frac{\Gamma_2(\tau(1+\lambda_2)+1\,|\,\tau)}{\Gamma_2(\tau(1+\lambda_2)+1-q\,|\,\tau)}
\end{align}
%%reproduces the product in Eq. \eqref{morris2} when $q=n<\tau$ and 
is the Mellin transform of the distribution
\begin{align}
M_{(\tau, \lambda_1, \lambda_2)} = & \frac{ \tau^{1/\tau}}{\Gamma(1-1/\tau)} \,\beta^{-1}_{22}(\tau, b_0=\tau,\,b_1=1+\tau \lambda_1, \,b_2=1+\tau \lambda_2) \times \nonumber \\ & \times
\beta_{1,0}^{-1}(\tau, b_0=\tau(\lambda_1+\lambda_2+1)+1), \label{thedecompcircle}
%%Y(\tau, a, b),
\end{align}
where $\beta^{-1}_{22}(a, b)$ is the inverse Barnes beta of type $(2,2)$ and $\beta^{-1}_{1,0}(a, b)$ is the independent inverse Barnes beta of type $(1,0).$ In particular, $\log M_{(\tau, \lambda_1, \lambda_2)}$ is infinitely divisible and the Stieltjes moment problem for $M^{-1}_{(\tau, \lambda_1, \lambda_2)}$ is determinate (unique solution).
In the special case of $\lambda_1=\lambda_2=0,$ we have
\begin{gather}
\mathfrak{M}(q\,|\tau, 0, 0)=\frac{1}{\Gamma^q\bigl(1-\frac{1}{\tau}\bigr)}\Gamma\bigl(1-\frac{q}{\tau}\bigr), \\
M_{(\tau, 0, 0)} = \frac{ \tau^{1/\tau}}{\Gamma(1-1/\tau)}\beta_{1,0}^{-1}(\tau, b_0=\tau).
\end{gather}
\end{theorem}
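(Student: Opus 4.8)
The plan is to verify the asserted factorization by computing the Mellin transform of the random variable on the right-hand side of Eq.~\eqref{thedecompcircle} and checking that it equals $\mathfrak{M}(q\,|\tau,\lambda_1,\lambda_2)$ from Eq.~\eqref{thefunctioncopy}. Since the two Barnes beta factors are independent and are multiplied by a deterministic constant, the Mellin transform factorizes as
\begin{equation}
{\bf E}\bigl[M_{(\tau,\lambda_1,\lambda_2)}^q\bigr] = \Bigl(\frac{\tau^{1/\tau}}{\Gamma(1-1/\tau)}\Bigr)^q\; \eta_{2,2}(-q\,|\,\tau, b)\;{\bf E}\bigl[\beta_{1,0}(\tau, b_0')^{-q}\bigr],
\end{equation}
where $b=(\tau,\,1+\tau\lambda_1,\,1+\tau\lambda_2)$ and $b_0'=\tau(\lambda_1+\lambda_2+1)+1$, and I have used that the Mellin transform of an inverse variable at $q$ is the Mellin transform of the variable at $-q$. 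The constant prefactor already reproduces $\tau^{q/\tau}/\Gamma^q(1-1/\tau)$, so it remains to match the product of the two Barnes beta contributions against the four $\Gamma_2$ ratios in $\mathfrak{M}$.

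To do so I would substitute the explicit form of $\eta_{2,2}$ recorded in Section~\ref{BarnesBeta} at argument $-q$, noting that with this choice of $b$ one has $b_0+b_1=\tau(1+\lambda_1)+1$, $b_0+b_2=\tau(1+\lambda_2)+1$, and $b_0+b_1+b_2=\tau(\lambda_1+\lambda_2+1)+2$. Three of the four resulting ratios then coincide \emph{verbatim} with the last three $\Gamma_2$ ratios of $\mathfrak{M}$, namely $\Gamma_2(-q+\tau)/\Gamma_2(\tau)$ and the two factors carrying $\tau(1+\lambda_i)+1$. For the Fr\'echet factor I would write ${\bf E}[\beta_{1,0}(\tau,b_0')^{-q}]=\Gamma_1(b_0'-q\,|\,\tau)/\Gamma_1(b_0'\,|\,\tau)$ and convert $\Gamma_1$ into a ratio of double gamma functions through the fundamental functional equation Eq.~\eqref{feq} with $M=2,\,i=1$, i.e.\ $\Gamma_1(z\,|\,\tau)=\Gamma_2(z\,|\,\tau)/\Gamma_2(z+1\,|\,\tau)$, so that everything is expressed uniformly through $\Gamma_2$.

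The one genuinely nontrivial step, and the place where the specific parameter choice earns its keep, is the cancellation that combines the remaining fourth ratio of the $(2,2)$ factor with the $(1,0)$ factor. Setting $A=\tau(\lambda_1+\lambda_2+1)$, so that $b_0'=A+1$ and $b_0+b_1+b_2=A+2$, the product telescopes:
\begin{equation}
\frac{\Gamma_2(A+2-q\,|\,\tau)}{\Gamma_2(A+2\,|\,\tau)}\cdot\frac{\Gamma_2(A+1-q\,|\,\tau)}{\Gamma_2(A+2-q\,|\,\tau)}\cdot\frac{\Gamma_2(A+2\,|\,\tau)}{\Gamma_2(A+1\,|\,\tau)}=\frac{\Gamma_2(A+1-q\,|\,\tau)}{\Gamma_2(A+1\,|\,\tau)},
\end{equation}
which is exactly the first $\Gamma_2$ ratio of $\mathfrak{M}$. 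Thus the role of the independent $\beta_{1,0}^{-1}$ factor is precisely to shift the argument $A+2$ down to $A+1$, generating the Morris prefactor in the correct form; this establishes ${\bf E}[M_{(\tau,\lambda_1,\lambda_2)}^q]=\mathfrak{M}(q\,|\tau,\lambda_1,\lambda_2)$ for $\Re(q)<\tau$. I expect the bookkeeping of this telescoping to be the main obstacle, since one must carefully track which $\Gamma_2$ arguments differ by an integer.

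For the two structural assertions, infinite divisibility of $\log M_{(\tau,\lambda_1,\lambda_2)}$ is immediate from the factorization: $\log\beta_{2,2}(\tau,b)$ and $\log\beta_{1,0}(\tau,b_0')$ are infinitely divisible by Theorems~\ref{main} and~\ref{mainsine}, and the logarithm of the product is a constant shift of a sum of two independent infinitely divisible variables. For determinacy of the Stieltjes moment problem for $M_{(\tau,\lambda_1,\lambda_2)}^{-1}=c^{-1}\beta_{2,2}(\tau,b)\,\beta_{1,0}(\tau,b_0')$, I would exploit that $\beta_{2,2}$ is supported on $(0,1]$ and hence bounded, while the tail is governed by the generalized-gamma factor $\beta_{1,0}$, whose determinacy condition Eq.~\eqref{condition} here reads $1\le 2\tau$ and holds since $\tau>1$. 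Quantitatively this is made rigorous by Carleman's criterion applied to the negative moments $\mathfrak{M}(-n)$ of Theorem~\ref{CirMoments}: the bounded factor contributes at most $1$ to ${\bf E}[M^{-n}]$, so $\mathfrak{M}(-n)^{1/(2n)}=O(n^{1/(2\tau)})$, whence $\sum_n \mathfrak{M}(-n)^{-1/(2n)}=\infty$ for $\tau>1/2$ and the moment problem is determinate.
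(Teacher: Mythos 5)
Your proposal is correct, and for the core identity it is essentially the paper's own proof: both verify that the Mellin transform of the right-hand side of Eq.~\eqref{thedecompcircle} reproduces Eq.~\eqref{thefunctioncopy}, and both hinge on the same application of the functional equation $\Gamma_2(z\,|\,\tau)=\Gamma_1(z\,|\,\tau)\,\Gamma_2(z+1\,|\,\tau)$ at the argument $A+1=\tau(\lambda_1+\lambda_2+1)+1.$ The paper runs this identity in one direction, rewriting the target ratio $\Gamma_2(A+1-q\,|\,\tau)/\Gamma_2(A+1\,|\,\tau)$ as the $(2,2)$-factor's ratio at argument $A+2$ times a Gamma ratio that it then recognizes, via Eq.~\eqref{gamma1}, as ${\bf E}[\beta_{1,0}^{-q}];$ you run it in the opposite direction, expressing the Fr\'echet factor through $\Gamma_2$ ratios and telescoping. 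That difference is purely cosmetic. Where you genuinely depart from the paper is the determinacy claim: the paper argues factor-wise ($\beta_{2,2}$ is compactly supported, $\beta_{1,0}$ is Fr\'echet, citing \cite{Char}), which tacitly relies on determinacy passing to products of independent positive variables, whereas your direct application of Carleman's criterion to the negative moments of the product itself, bounding the $\beta_{2,2}$ contribution by $1$ and using the Gamma-type growth of the Fr\'echet moments so that $\mathfrak{M}(-n\,|\,\tau,\lambda_1,\lambda_2)^{1/(2n)}=O\bigl(n^{1/(2\tau)+o(1)}\bigr),$ is self-contained and sidesteps that subtlety; this is a modest but real gain in rigor. (The $\lambda_1=\lambda_2=0$ reduction stated in the theorem follows at once from the Mellin identity in either treatment, so its omission from your write-up, as from the paper's proof, costs nothing.)
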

The special case of $\lambda_1=\lambda_2=0$ was first treated in \cite{FyoBou} and corresponds to
the Dyson integral. The general case first appeared in \cite{Me16}.
%%The involution invariance in this case, in its equivalent self-duality form, see \eqref{selfdual} below, was first discovered
%%in \cite{FLDR}. 

\subsection{Selberg Integral Probability Distribution}

We begin with a ``master'' theorem (Theorem \ref{general}), which as we will see below is a corollary of Barnes multiplication in Eq. \eqref{multiplic}, 
followed by the main result in Theorem \ref{BSM} and a corollary pertaining to the Stieltjes moment problem for the Selberg integral distribution.
%%of the involution invariance of the Mellin transform in Theorem \ref{Mtransforminvol}. 

\begin{theorem}\label{general}
Let $a\triangleq(a_1,\,a_2),$ $a_i>0,$ and $x\triangleq
(x_1,\,x_2),$ $x_1,\,x_2>0,$ then %%for $\Re(q)<...$
\begin{equation}\label{eqgeneral}
{\bf E}\big[M_{(a, x)}^q\bigr] \triangleq
\frac{\Gamma_2(x_1-q\,|\,a)}{\Gamma_2(x_1\,|\,a)}
\frac{\Gamma_2(x_2-q\,|\,a)}{\Gamma_2(x_2\,|\,a)}
\frac{\Gamma_2(a_1+a_2-q\,|\,a)}{\Gamma_2(a_1+a_2\,|\,a)}
\frac{\Gamma_2(x_1+x_2-q\,|\,a)}{\Gamma_2(x_1+x_2-2q\,|\,a)}
\end{equation}
is the Mellin transform of a probability distribution $M_{(a, x)}$
on $(0,\,\infty).$ Let $L$ be lognormal
\begin{equation}\label{Ldefin}
L \triangleq \exp\bigl(\mathcal{N}(0,\,4\log 2/a_1a_2)\bigr),
\end{equation}
and let $X_1,\,X_2,\,X_3$ have the $\beta^{-1}_{2, 2}(a, b)$
distribution with the parameters
\begin{align}
X_1 &\triangleq \beta_{2,2}^{-1}\Bigl(a,
b_0=x_1,\,b_1=b_2=(x_2-x_1)/2\Bigl), \label{X1}\\
X_2 & \triangleq \beta_{2,2}^{-1}\Bigl(a,
b_0=(x_1+x_2)/2,\,b_1=a_1/2,\,b_2=a_2/2\Bigr), \label{X2}\\
X_3 & \triangleq \beta_{2,2}^{-1}\Bigl(a, b_0=a_1+a_2,\,
b_1=b_2=(x_1+x_2-a_1-a_2)/2\Bigl). \label{X3}
\end{align}
Then, $M_{(a, x)}$ has the factorization
\begin{equation}\label{generaldecomp}
M_{(a, x)} \overset{{\rm in \,law}}{=}
2^{-\bigl(2(x_1+x_2)-(a_1+a_2)\bigr)/a_1a_2}\, L\,X_1\,X_2\,X_3.
\end{equation}
In particular, $\log M_{(a, x)}$ is absolutely continuous and
infinitely divisible.
\end{theorem}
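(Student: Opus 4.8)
The plan is to establish the factorization \eqref{generaldecomp} directly at the level of Mellin transforms and then read off existence, absolute continuity, and infinite divisibility as consequences. Since $L$, $X_1$, $X_2$, $X_3$ are independent, the Mellin transform of the right-hand side of \eqref{generaldecomp} is the deterministic factor $2^{-q(2(x_1+x_2)-(a_1+a_2))/a_1a_2}$ times the product of the individual Mellin transforms, so it suffices to show this product equals \eqref{eqgeneral}. First I would record the three inverse Barnes beta transforms by substituting the parameters \eqref{X1}--\eqref{X3} into the formula for $\eta_{2,2}(q\,|\,\tau,b)$ from Section \ref{BarnesReview} with $q$ replaced by $-q$, together with the lognormal transform ${\bf E}[L^q]=2^{2q^2/a_1a_2}$ that follows from \eqref{Ldefin}.

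Next I would multiply the three $X_i$ transforms and cancel the common $\Gamma_2$ factors. Writing $c\triangleq(x_1+x_2)/2$, a direct bookkeeping of numerators and denominators (the shared factors at $c$, $c+(a_1+a_2)/2$ and their $q$-shifts cancelling in pairs) leaves
\begin{align}
{\bf E}\bigl[(X_1X_2X_3)^q\bigr] = & \frac{\Gamma_2(x_1-q\,|\,a)\,\Gamma_2(x_2-q\,|\,a)\,\Gamma_2(a_1+a_2-q\,|\,a)\,\Gamma_2(x_1+x_2-q\,|\,a)}{\Gamma_2(x_1\,|\,a)\,\Gamma_2(x_2\,|\,a)\,\Gamma_2(a_1+a_2\,|\,a)\,\Gamma_2(x_1+x_2\,|\,a)} \times \nonumber \\ & \times \prod_{p\in\{0,\,a_1/2,\,a_2/2,\,(a_1+a_2)/2\}}\frac{\Gamma_2(c+p\,|\,a)}{\Gamma_2(c-q+p\,|\,a)}.
\end{align}
The crux is the last product of four $\Gamma_2$ factors in numerator and denominator. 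Applying Barnes multiplication \eqref{multiplic} with $M=2$, $k=2$ (whose four terms $p_1,p_2\in\{0,1\}$ produce exactly the shifts $\{0,a_1/2,a_2/2,(a_1+a_2)/2\}$) collapses $\prod_p\Gamma_2(c+p\,|\,a)$ into $2^{B_{2,2}(x_1+x_2|a)/2}\,\Gamma_2(x_1+x_2\,|\,a)$ and $\prod_p\Gamma_2(c-q+p\,|\,a)$ into $2^{B_{2,2}(x_1+x_2-2q|a)/2}\,\Gamma_2(x_1+x_2-2q\,|\,a)$, using $2c=x_1+x_2$ and $2(c-q)=x_1+x_2-2q$. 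Substituting these and cancelling $\Gamma_2(x_1+x_2\,|\,a)$ yields precisely \eqref{eqgeneral} multiplied by $2^{(B_{2,2}(x_1+x_2|a)-B_{2,2}(x_1+x_2-2q|a))/2}$, whose exponent, via \eqref{B22}, reduces to $(2q(x_1+x_2)-2q^2-q(a_1+a_2))/a_1a_2$. I would then check that the $q^2$ term is exactly cancelled by the lognormal factor $2^{2q^2/a_1a_2}$ and the linear-in-$q$ term by the deterministic prefactor $2^{-q(2(x_1+x_2)-(a_1+a_2))/a_1a_2}$, so that the full Mellin transform of the right-hand side of \eqref{generaldecomp} collapses exactly to \eqref{eqgeneral}.

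It then remains to deduce the probabilistic statements. After verifying that the parameters in \eqref{X1}--\eqref{X3} are admissible (e.g. $b_0>0$ and $b_1,b_2\geq 0$, using the $x_1\leftrightarrow x_2$ symmetry of \eqref{eqgeneral} to reduce to $x_1\le x_2$), each $X_i$ is a genuine probability distribution on $(0,\infty)$ by Theorem \ref{main}, so the scaled product $2^{-(2(x_1+x_2)-(a_1+a_2))/a_1a_2}\,L\,X_1X_2X_3$ is a genuine positive random variable whose Mellin transform is \eqref{eqgeneral}; this establishes existence of $M_{(a,x)}$ together with the factorization \eqref{generaldecomp}. Infinite divisibility of $\log M_{(a,x)}$ follows because it is a constant plus the independent sum $\log L+\sum_i\log X_i$, each summand being infinitely divisible (Gaussian for $\log L$, and by the L\'evy--Khinchine decomposition \eqref{LKH} for $\log X_i$), and absolute continuity follows from the nondegenerate Gaussian component carried by $\log L$. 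I expect the main obstacle to be the exponent bookkeeping in the Barnes-multiplication step: one must track the $B_{2,2}$ contributions carefully so that the quadratic term matches the lognormal variance $4\log 2/a_1a_2$ and the linear term matches the prefactor, where sign errors and factors of two are easy to introduce.
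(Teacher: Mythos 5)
Your proposal is correct and follows essentially the same route as the paper's own proof: both multiply the Mellin transforms of $X_1,X_2,X_3$, collapse the four half-shifted $\Gamma_2$ factors at $(x_1+x_2)/2$ and $(x_1+x_2)/2-q$ via Barnes multiplication, Eq. \eqref{multiplic} with $M=k=2$, and then use Eq. \eqref{B22} to verify that the residual power of $2$ is absorbed by the lognormal factor and the deterministic prefactor, with the probabilistic properties read off from the normal and $\beta_{2,2}$ components. The differences (your product-over-shifts bookkeeping and the explicit parameter-admissibility remark) are purely presentational.
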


%%We will assume for simplicity that $\lambda_i\geq 0$ and $\tau>1.$
%%In what follows we write $\tau$ as an abbreviation of $(1,\tau)$ in the list of parameters of the double gamma and $\beta_{2,2}.$
\begin{theorem}[Selberg Integral Probability Distribution]\label{BSM}
Let $\mathfrak{M}(q\,|\,\tau,\lambda_1,\lambda_2)$ be as in Eq. \eqref{thefunctioninterval}
for $\Re(q)<\tau.$ Then, $\mathfrak{M}(q\,|\,\tau,\lambda_1,\lambda_2)$ is the Mellin transform of a
probability distribution $M_{(\tau, \lambda_1, \lambda_2)}$ on $(0,\infty),$
\begin{equation}\label{Mint}
\mathfrak{M}(q\,|\,\tau,\lambda_1,\lambda_2) = {\bf E}\bigl[M_{(\tau,\lambda_1,\lambda_2)}^q\bigr], \;\Re(q)<\tau,
\end{equation}
$\log M_{(\tau, \lambda_1, \lambda_2)}$ is absolutely continuous and
infinitely divisible. 
Let 
\begin{equation}\label{xidef}
x_i(\tau,\lambda_i) \triangleq 1+\tau(1+\lambda_i),\,i=1,2,\,a_1=1, \,a_2=\tau,
\end{equation}
and $L$ and $X_i$ be as in Theorem \ref{general}
corresponding to $a(\tau)=(1,\tau)$ and $x(\tau,\lambda)=\bigl(x_1(\tau,\lambda_1), x_2(\tau,\lambda_2)\bigr),$
\begin{align}
L(\tau) & \triangleq \exp\bigl(\mathcal{N}(0,\,4\log 2/\tau)\bigr), \\
X_1(\tau, \lambda) &\triangleq \beta_{2,2}^{-1}\Bigl(\tau,
b_0=1+\tau+\tau\lambda_1,\,b_1=\tau(\lambda_2-\lambda_1)/2, \,
b_2=\tau(\lambda_2-\lambda_1)/2\Bigr),\\
X_2(\tau, \lambda) & \triangleq \beta_{2,2}^{-1}\Bigl(\tau,
b_0=1+\tau+\tau(\lambda_1+\lambda_2)/2,\,b_1=1/2,\,b_2=\tau/2\Bigr),\\
X_3(\tau, \lambda) & \triangleq \beta_{2,2}^{-1}\Bigl(\tau, b_0=1+\tau,\,
b_1=\frac{1+\tau+\tau\lambda_1+\tau\lambda_2}{2}, \,
b_2=\frac{1+\tau+\tau\lambda_1+\tau\lambda_2}{2}\Bigr),
\end{align}
\emph{i.e.} $\log L$ is a zero-mean normal with variance $4\log
2/\tau$ and $X_1,\,X_2,\,X_3$ have the $\beta^{-1}_{2,
2}(\tau, b)$ distribution with the specified parameters.
Define also the distribution $Y$ to be a power of the exponential,
\begin{align}
Y(\tau) \triangleq &\tau\,y^{-1-\tau}\exp\bigl(-y^{-\tau}\bigr)\,dy,\; y>0.\label{Ydist}
\end{align}
%%Let  %%($\beta^{-1}_{2,2}(\tau, b)\equiv\beta^{-1}_{2,2}((1, \tau), b))$
Then,
\begin{equation}\label{Decomposition}
M_{(\tau, \lambda_1, \lambda_2)} \overset{{\rm in \,law}}{=} 2\pi\,
2^{-\bigl[3(1+\tau)+2\tau(\lambda_1+\lambda_2)\bigr]/\tau}\,\Gamma\bigl(1-1/\tau\bigr)^{-1}\,
L\,X_1\,X_2\,X_3\,Y.
\end{equation}
\end{theorem}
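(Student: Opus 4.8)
The plan is to deduce the factorization in Eq.~\eqref{Decomposition} directly from the ``master'' factorization of Theorem~\ref{general} by matching Mellin transforms. First I would substitute the parameters of Eq.~\eqref{xidef}, namely $a=(1,\tau)$ and $x_i=1+\tau(1+\lambda_i)$, into the master Mellin transform in Eq.~\eqref{eqgeneral}, so that $x_1+x_2=2+\tau(2+\lambda_1+\lambda_2)$ and $a_1+a_2=1+\tau$. Comparing the resulting four $\Gamma_2$-ratios term by term with the four ratios defining $\mathfrak{M}(q\,|\,\tau,\lambda_1,\lambda_2)$ in Eq.~\eqref{thefunctioninterval}, I expect three of them --- the two factors built from $x_1,x_2$ and the last factor $\Gamma_2(x_1+x_2-q\,|\,\tau)/\Gamma_2(x_1+x_2-2q\,|\,\tau)$ --- to coincide identically, leaving only the ``$b_0=\tau$'' factor to reconcile.

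The one genuine computation is the reconciliation of this third factor. The master transform supplies $\Gamma_2(1+\tau-q\,|\,\tau)/\Gamma_2(1+\tau\,|\,\tau)$ whereas Eq.~\eqref{thefunctioninterval} carries $\Gamma_2(\tau-q\,|\,\tau)/\Gamma_2(\tau\,|\,\tau)$. Applying the functional equation Eq.~\eqref{feq} in the form $\Gamma_2(z\,|\,\tau)/\Gamma_2(z+1\,|\,\tau)=\Gamma_1(z\,|\,\tau)$ together with Eq.~\eqref{gamma1} at $z=\tau-q$ and $z=\tau$, I would show
\begin{equation}
\frac{\Gamma_2(\tau-q\,|\,\tau)}{\Gamma_2(\tau\,|\,\tau)}=\tau^{-q/\tau}\,\Gamma\bigl(1-\tfrac{q}{\tau}\bigr)\,\frac{\Gamma_2(1+\tau-q\,|\,\tau)}{\Gamma_2(1+\tau\,|\,\tau)}.
\end{equation}
Next I would identify the emergent factor $\Gamma(1-q/\tau)$ as the Mellin transform of the distribution $Y$ in Eq.~\eqref{Ydist}: the substitution $u=y^{-\tau}$ gives ${\bf E}[Y^q]=\int_0^\infty u^{-q/\tau}e^{-u}\,du=\Gamma(1-q/\tau)$, convergent exactly for $\Re(q)<\tau$, which is also the binding constraint on the whole identity.

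It then remains to collect the deterministic prefactors. The stray $\tau^{-q/\tau}$ cancels the $\tau^{1/\tau}$ inside the prefactor $\bigl(2\pi\tau^{1/\tau}/\Gamma(1-1/\tau)\bigr)^q$ of Eq.~\eqref{thefunctioninterval}, leaving $\bigl(2\pi/\Gamma(1-1/\tau)\bigr)^q$, so that $\mathfrak{M}(q\,|\,\tau,\lambda_1,\lambda_2)=\bigl(2\pi/\Gamma(1-1/\tau)\bigr)^q\,\Gamma(1-q/\tau)\,{\bf E}\bigl[M_{(a,x)}^q\bigr]$. By multiplicativity of the Mellin transform over independent factors this is precisely the transform of $\tfrac{2\pi}{\Gamma(1-1/\tau)}\,Y\,M_{(a,x)}$ with $Y$ independent of $M_{(a,x)}$. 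Finally I would invoke the master factorization Eq.~\eqref{generaldecomp}, checking that the power of two there evaluates to $2^{-(3(1+\tau)+2\tau(\lambda_1+\lambda_2))/\tau}$ since $2(x_1+x_2)-(a_1+a_2)=3+3\tau+2\tau(\lambda_1+\lambda_2)$ and $a_1a_2=\tau$, and that the parameters $b_0,b_1,b_2$ of $X_1,X_2,X_3$ in Theorem~\ref{general} reduce to those listed in the statement. Substituting Eq.~\eqref{generaldecomp} yields Eq.~\eqref{Decomposition} exactly; absolute continuity and infinite divisibility of $\log M_{(\tau,\lambda_1,\lambda_2)}$ are inherited from Theorem~\ref{general}. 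The main obstacle is purely the bookkeeping of the scalar prefactors ($2\pi$, the powers of $\tau$ and of $2$, and $\Gamma(1-1/\tau)$); the only conceptual step is recognizing that the functional-equation shift of the $b_0=\tau$ factor is exactly what manufactures the independent Fr\'echet factor $Y$.
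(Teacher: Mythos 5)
Your proposal is correct and takes essentially the same route as the paper's own proof: both reduce Eq.~\eqref{Decomposition} to Theorem~\ref{general}, observe that the only discrepancy with Eq.~\eqref{thefunctioninterval} is the third double gamma ratio, reconcile it via the functional equation $\Gamma_2(z\,|\,\tau)=\Gamma_1(z\,|\,\tau)\,\Gamma_2(z+1\,|\,\tau)$, and recognize the emergent factor $\tau^{-q/\tau}\Gamma(1-q/\tau)$ as producing the independent Fr\'echet-type factor $Y$ after absorbing $\tau^{-q/\tau}$ into the prefactor. Your write-up merely makes explicit the constant-collecting step that the paper labels ``straightforward,'' and your verification of the power of $2$ and of the $X_i$ parameters agrees with the paper's statement.
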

\begin{corollary}\label{Uniqueness}
The Stieltjes moment problem for $M_{(\tau, \lambda_1, \lambda_2)}^{-1}$ is indeterminate.
Let $\widetilde{M}_{(\tau, \lambda_1, \lambda_2)}$ be a probability distribution on $(0,\,\infty)$ such that
\begin{equation}
\widetilde{M}_{(\tau, \lambda_1, \lambda_2)} \overset{{\rm in \,law}}{=} L\,N,
\end{equation}
\begin{equation}\label{NYdef}
L \triangleq \exp\bigl(\mathcal{N}(0,\,4\log 2/\tau)\bigr),
\end{equation}
\emph{i.e.} $\log L$ is a zero-mean normal with variance $4\log
2/\tau,$
and $N$ is some distribution that is independent of $L.$ If the negative moments of
$\widetilde{M}_{(\tau, \lambda_1, \lambda_2)}$ equal those of $M_{(\tau, \lambda_1, \lambda_2)}$ in Eq. \eqref{IntNegMoments},
then $\widetilde{M}_{(\tau, \lambda_1, \lambda_2)}\overset{{\rm in \,law}}{=}M_{(\tau, \lambda_1, \lambda_2)}.$
\end{corollary}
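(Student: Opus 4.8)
The plan is to exploit the factorization $M_{(\tau,\lambda_1,\lambda_2)}\overset{{\rm in \,law}}{=}L\,Z$ furnished by Theorem~\ref{BSM}, where $\log L$ is centered Gaussian with variance $4\log 2/\tau$ and $Z\triangleq c_\tau\,X_1X_2X_3Y$ gathers all the remaining mutually independent non-Gaussian factors ($c_\tau$ being the explicit positive constant in Eq.~\eqref{Decomposition}). Both assertions hinge on isolating the fact that the only source of superexponential moment growth in $M^{-1}$ is the lognormal factor $L^{-1}$: once $L$ is held fixed and divided out, the surviving law $Z^{-1}$ has a determinate moment problem.

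For the indeterminacy of $M_{(\tau,\lambda_1,\lambda_2)}^{-1}$, I would use that, by Theorem~\ref{IntExist}, the law carries a genuine Gaussian component $\sigma^2(\tau)=4\log 2/\tau>0$, so $M^{-1}\overset{{\rm in \,law}}{=}L^{-1}\,W$ with $L^{-1}$ lognormal and $W\triangleq Z^{-1}$ an independent factor all of whose positive moments are finite (it is a product of the bounded variables $X_i^{-1}=\beta_{2,2}\in(0,1]$ and of $Y^{-1}$, with ${\bf E}[(Y^{-1})^n]=\Gamma(1+n/\tau)$). A lognormal admits the classical one-parameter Stieltjes class $\ell_\varepsilon(x)=\ell(x)\bigl(1+\varepsilon\sin(2\pi\sigma^{-2}\log x)\bigr)$, $|\varepsilon|\le1$, of densities sharing all integer moments; forming the multiplicative convolutions $L_\varepsilon^{-1}\,W$ preserves equality of moments (they factor across the independent factors) and produces genuinely distinct laws, so the Stieltjes problem for $M^{-1}$ is indeterminate. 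At the level of moments this is consistent with ${\bf E}[M^{-n}]$ growing like $4^{n^2/\tau}$, a rate at which Carleman's series converges and hence cannot certify determinacy.

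For the uniqueness assertion I would first cancel the common lognormal. With $\widetilde M=L\,N$ and $M=L\,Z$ sharing the same $L$, and $L\perp N$, $L\perp Z$, the hypothesis ${\bf E}[\widetilde M^{-n}]={\bf E}[M^{-n}]$ together with ${\bf E}[L^{-n}]=4^{n^2/\tau}\neq0$ forces ${\bf E}[N^{-n}]={\bf E}[Z^{-n}]$ for every $n\in\mathbb{N}$, all finite. The key step is to show that $Z^{-1}$ has a \emph{determinate} Stieltjes moment problem. Since $\beta_{2,2}\in(0,1]$ (Theorem~\ref{main}) gives ${\bf E}[(X_i^{-1})^n]\le1$ and ${\bf E}[(Y^{-1})^n]=\Gamma(1+n/\tau)$, one obtains the bound ${\bf E}[(Z^{-1})^n]\le C^n\,\Gamma(1+n/\tau)$; Stirling then yields $\bigl({\bf E}[(Z^{-1})^n]\bigr)^{-1/2n}\ge c\,n^{-1/(2\tau)}$, so Carleman's series $\sum_n\bigl({\bf E}[(Z^{-1})^n]\bigr)^{-1/2n}$ diverges because $1/(2\tau)<1$ for $\tau>1$. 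Determinacy follows. As $N^{-1}$ and $Z^{-1}$ are supported on $(0,\infty)$ and share all finite integer moments, determinacy forces $N^{-1}\overset{{\rm in \,law}}{=}Z^{-1}$, i.e. $N\overset{{\rm in \,law}}{=}Z$; independence from the common $L$ then gives $\widetilde M=L\,N\overset{{\rm in \,law}}{=}L\,Z=M$.

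The main obstacle is precisely the determinacy step for $Z^{-1}$. One must confirm that removing the lognormal $L$ reduces the moment growth from the fatal $e^{cn^2}$ rate—at which Carleman fails and, by the first assertion, indeterminacy genuinely occurs—down to the benign $\Gamma(1+n/\tau)$ rate carried by the single Fr\'echet-type factor $Y$, the Barnes beta factors being harmless because they are supported in $(0,1]$. This threshold is exactly the classical $M=1$, $\tau\ge1/2$ determinacy criterion for $\beta_{1,0}$ recorded after Eq.~\eqref{condition}, and it is what distinguishes the indeterminate unconditional problem from the determinate conditional one.
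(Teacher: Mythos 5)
Your proposal is correct and follows essentially the same route as the paper, which deduces both claims from the factorization in Eq.~\eqref{Decomposition}: indeterminacy from the lognormal factor $L^{-1}$, and uniqueness from the determinacy of the non-lognormal part ($\beta_{2,2}$ factors compactly supported, $Y^{-1}$ via Carleman). Your write-up in fact supplies details the paper leaves to the cited reference—most usefully, bounding the moments of the whole product $Z^{-1}$ by $C^n\,\Gamma(1+n/\tau)$ before applying Carleman, which is the right way to get determinacy of the product rather than just of its individual factors.
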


The last result that we will consider in this section has to do with the scaling invariance
%%, confer Theorem \ref{barnesbetascaling}, 
in the context of the analytic extension of the Selberg integral in Theorem \ref{BSM}. Specifically, we are interested in the behavior of the decomposition in Eq. \eqref{Decomposition} under the involution $\tau\rightarrow 1/\tau.$ Let $a$ and $x$ be as in Eq. \eqref{xidef}.
%%Let $L(\tau)$ and $X_i(\tau, \lambda),$ $i=1,2,3$ be as in Theorem %%\ref{general} with $a=(1,\tau)$ and $x=(x_1, x_2)$ as in \eqref{xidef}
%%\begin{equation}
%%x_i(\tau, \lambda_i) \triangleq 1+\tau(1+\lambda_i), \;i=1,2.
%%\end{equation}
\begin{theorem}\label{scalinvar} 
%%Given $a(\tau)=(1, \tau)$ and %%$x(\tau,\lambda)=\bigl(x_1(\tau,\lambda),x_2(\tau,\lambda)\bigr),$ 
The distributions $L(\tau),$ $X_i(\tau,\lambda),$ and $M_{(a, x)}$ are involution invariant under $\tau\rightarrow 1/\tau$ and
$\lambda\rightarrow \tau\lambda.$
\begin{align}
L^{1/\tau}\bigl(\frac{1}{\tau}\bigr) &\overset{{\rm in \,law}}{=} L(\tau), \label{Linvar} \\
X^{1/\tau}_i\bigl(\frac{1}{\tau}, \tau\lambda\bigr) &\overset{{\rm in \,law}}{=}
X_i(\tau, \lambda), \;i=1,2,3, \label{Xinvar} \\
M^{1/\tau}_{\bigl(a(1/\tau), \,x(1/\tau,\tau\lambda)\bigr)} &\overset{{\rm in \,law}}{=} M_{\bigl(a(\tau), \,x(\tau,\lambda)\bigr)}. \label{Minvar}
\end{align}
\end{theorem}
\begin{corollary}\label{mycorollary}
The Mellin transform of the Selberg integral distribution satisfies the identity in Theorem \ref{Mtransforminvol}.
\end{corollary}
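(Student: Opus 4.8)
The plan is to reduce the Mellin-transform identity \eqref{involutionint} to the distributional involution invariance of the ``master'' distribution $M_{(a,x)}$ already recorded in \eqref{Minvar}, by first stripping off from $M_{(\tau,\lambda_1,\lambda_2)}$ the two factors that are \emph{not} involution invariant, namely the Fr\'echet factor $Y$ and the scalar prefactor. Comparing the decomposition of the Selberg distribution in \eqref{Decomposition} with the factorization of $M_{(a,x)}$ in \eqref{generaldecomp}, and using $a=(1,\tau)$, $x_i=1+\tau(1+\lambda_i)$ from \eqref{xidef}, the common lognormal and Barnes beta factors $L\,X_1\,X_2\,X_3$ together with the powers of $2$ cancel, leaving the clean identity
\begin{equation}
M_{(\tau,\lambda_1,\lambda_2)} \overset{{\rm in \,law}}{=} 2\pi\,\Gamma\bigl(1-1/\tau\bigr)^{-1}\,M_{(a,x)}\,Y,
\end{equation}
with $M_{(a,x)}$ and $Y$ independent. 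Taking Mellin transforms and using $\mathbf{E}[Y^q]=\Gamma(1-q/\tau)$ (immediate from the density \eqref{Ydist} via the substitution $u=y^{-\tau}$, or from $Y=\tau^{1/\tau}\beta^{-1}_{1,0}(\tau,\tau)$) yields the factorized Mellin transform
\begin{equation}\label{factoredMellin}
\mathfrak{M}(q\,|\,\tau,\lambda_1,\lambda_2) = (2\pi)^q\,\Gamma\bigl(1-1/\tau\bigr)^{-q}\,\mathbf{E}\bigl[M_{(a,x)}^q\bigr]\,\Gamma\bigl(1-q/\tau\bigr),\quad \Re(q)<\tau.
\end{equation}

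Next I would write this same factorization under the involution $\tau\mapsto 1/\tau$, $\lambda_i\mapsto\tau\lambda_i$ evaluated at moment order $q/\tau$. Since the parameters then become $a(1/\tau)=(1,1/\tau)$ and $x_i(1/\tau,\tau\lambda)=1+\tfrac1\tau+\lambda_i$, formula \eqref{factoredMellin} gives
\begin{equation}\label{factoredMellinInv}
\mathfrak{M}\bigl(\tfrac{q}{\tau}\,\big|\,\tfrac1\tau,\tau\lambda_1,\tau\lambda_2\bigr) = (2\pi)^{q/\tau}\,\Gamma(1-\tau)^{-q/\tau}\,\mathbf{E}\bigl[M_{(a(1/\tau),x(1/\tau,\tau\lambda))}^{q/\tau}\bigr]\,\Gamma(1-q),
\end{equation}
where the last factor is $\Gamma\bigl(1-(q/\tau)/(1/\tau)\bigr)=\Gamma(1-q)$. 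The crucial input is now \eqref{Minvar}, which asserts exactly $\mathbf{E}\bigl[M_{(a(1/\tau),x(1/\tau,\tau\lambda))}^{q/\tau}\bigr]=\mathbf{E}\bigl[M_{(a(\tau),x(\tau,\lambda))}^{q}\bigr]$, so the single nontrivial factor $\mathbf{E}[M_{(a,x)}^q]$ is common to both \eqref{factoredMellin} and \eqref{factoredMellinInv}.

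Finally I would eliminate that common factor. Solving each of \eqref{factoredMellin} and \eqref{factoredMellinInv} for $\mathbf{E}[M_{(a,x)}^q]$, equating, and multiplying through by $(2\pi)^{-q/\tau}\Gamma(1-\tau)^{q/\tau}\Gamma(1-q/\tau)$ produces precisely
\begin{equation}
\mathfrak{M}\bigl(\tfrac{q}{\tau}\,\big|\,\tfrac1\tau,\tau\lambda_1,\tau\lambda_2\bigr)(2\pi)^{-q/\tau}\Gamma^{q/\tau}(1-\tau)\Gamma(1-\tfrac{q}{\tau}) = \mathfrak{M}(q\,|\,\tau,\lambda_1,\lambda_2)(2\pi)^{-q}\Gamma^{q}(1-\tfrac1\tau)\Gamma(1-q),
\end{equation}
which is \eqref{involutionint}.

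The only genuine mathematical content is Theorem \ref{scalinvar}, which we assume; granting it, the corollary is bookkeeping. The step that needs care---and which explains the asymmetric $\Gamma$-factors in \eqref{involutionint}---is isolating the non-invariant pieces: the $Y$-factor and the scalar $\Gamma(1-1/\tau)^{-1}$ are \emph{not} involution invariant, and it is precisely their images ($\Gamma(1-q/\tau)$ versus $\Gamma(1-q)$, and $\Gamma(1-1/\tau)$ versus $\Gamma(1-\tau)$) that furnish the explicit factors flanking the two Mellin transforms. The only routine computation is verifying that the powers of $2$ in \eqref{Decomposition} and \eqref{generaldecomp} cancel, i.e. that $\bigl(2(x_1+x_2)-(a_1+a_2)\bigr)/a_1a_2$ equals $\bigl[3(1+\tau)+2\tau(\lambda_1+\lambda_2)\bigr]/\tau$ for $a=(1,\tau)$ and $x_i=1+\tau(1+\lambda_i)$; this is exactly what allows the symmetric ``core'' $M_{(a,x)}$ to be separated out cleanly.
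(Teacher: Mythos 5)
Your proof is correct and takes essentially the same route as the paper: both isolate the involution-invariant core $M_{(a,x)}$ via Eq. \eqref{Minvar} and attribute the asymmetric factors in Eq. \eqref{involutionint} to the non-invariant Fr\'echet factor $Y$ (via Eq. \eqref{YMellin}) and the scalar prefactor. The paper states this in two lines; your version merely makes explicit the bookkeeping it leaves implicit, notably the check that the powers of $2$ in Eqs. \eqref{Decomposition} and \eqref{generaldecomp} coincide.
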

%%\begin{remark}\label{myremark}
Theorem \ref{BSM} and Corollary \ref{Uniqueness} first appeared in \cite{MeIMRN}, where we discovered the decomposition in Eq. \eqref{Decomposition}, which led to the development of the theory of Barnes beta distributions in \cite{Me13}.
The probabilistic approach based on Theorem \ref{general} first appeared in \cite{Me14}, where we gave a new, purely probabilistic proof of  Eq. \eqref{Decomposition} and also established Theorem \ref{scalinvar}. 
%%The scaling behavior of the Mellin transform %%of $M^{\star}_{(\tau,\lambda_1,\lambda_2)}$ 
%%in Theorem \ref{Mtransforminvol} was first noted in \cite{FLDR} using a different technique and led to an interesting conjecture
%%on a freezing transition, cf. Section \ref{SomeApplications}. Our probabilistic approach to this problem was introduced in \cite{Me14}.

\begin{remark}
It is interesting to point out that the $L$ and $X_i,$ $i=1,2,3$ distributions
on the one hand and $Y$ on the other in the structure of $M_{(\tau,\lambda_1,\lambda_2)}$ are intrinsically different. This can be seen on three levels. First, the proof of Theorem \ref{BSM} indicates that $LX_1X_2X_3$
appears as one block from Theorem \ref{general}, while $Y$ is only needed to match the moments given by Selberg's formula. Second, Theorem \ref{scalinvar} shows that $L$ and $X_i,$ are involution invariant, whereas the proof of Corollary \ref{mycorollary} shows that
$Y$ is not. Finally, the law of the total mass of the Bacry-Muzy measure on the circle was conjectured in \cite{FyoBou} and verified in \cite{Remy} to be precisely
the same as $Y,$ while our conjecture for the law of the total mass of this measure on the unit interval is $LX_1X_2X_3$ times $Y$ so that it appears
that $Y$ comes from the circle and $LX_1X_2X_3$ is a superstructure that is needed to transform the law of the total mass from the circle to the unit interval.
\end{remark}

%%\subsection{Analytical Continuation of the Complex Selberg Integral}
%%\noindent 

\section{Proofs of Probabilistic Results}\label{ProbProofs}

\begin{proof}[Proof of Theorem \ref{theoremcircle}]
%%Recall the functional equation of the double gamma function, see \eqref{feq} with $M=2,$ and the definition of $\Gamma_1(w|a)$ in
%%\eqref{gamma1}. Using the functional equation repeatedly, we obtain for $a=(1,\tau)$ and $k\in\mathbb{N},$
%%\begin{equation}
%%\frac{\Gamma_2(z+1-k\,|\,\tau)}{\Gamma_2(z+1\,|\,\tau)} = \bigl(\frac{1}{2\pi\tau}\bigr)^{k/2} \tau^{\sum\limits_{j=0}^{k-1} (z-j)/\tau}\;
%%\prod\limits_{j=0}^{k-1} \Gamma\bigl(\frac{z}{\tau}-\frac{j}{\tau}\bigr).
%%\end{equation}
%%We now apply this equation to each of the four ratios of double gamma functions in \eqref{thefunction}  with $q=n,$ 
%%which results in \eqref{morris2}. 
The inverse Barnes beta distribution $\beta^{-1}_{2,2}(a, b)$ with parameters
$a=(1,\tau)$ and $b=(\tau, 1+\tau\lambda_1, 1+\tau\lambda_2)$ has the Mellin transform
\begin{align}
{\bf E}\bigl[\beta^{-q}_{2,2}(\tau, \tau, 1+\tau\lambda_1, 1+\tau\lambda_2)\bigr] = &\frac{\Gamma_2(-q+\tau\,|\,\tau)}{\Gamma_2(\tau\,|\,\tau)}
\frac{\Gamma_2(\tau(1+\lambda_1)+1\,|\,\tau)}{\Gamma_2(\tau(1+\lambda_1)+1-q\,|\,\tau)}\times \nonumber \\ & \times
\frac{\Gamma_2(\tau(1+\lambda_2)+1\,|\,\tau)}{\Gamma_2(\tau(1+\lambda_2)+1-q\,|\,\tau)}
\times \nonumber \\ & \times
\frac{\Gamma_2(\tau(\lambda_1+\lambda_2+1)+2-q\,|\,\tau)}{\Gamma_2(\tau(\lambda_1+\lambda_2+1)+2\,|\,\tau)}.
\end{align}
The difference between this expression and that in Eq. \eqref{thefunctioncopy} is in the last factor. Applying the functional
equation, we get
\begin{align}
\frac{\Gamma_2(\tau(\lambda_1+\lambda_2+1)+1-q\,|\,\tau)}{\Gamma_2(\tau(\lambda_1+\lambda_2+1)+1\,|\,\tau)}
=&\tau^{-\frac{q}{\tau}} 
\frac{\Gamma(\lambda_1+\lambda_2+1+\frac{1-q}{\tau})}{\Gamma(\lambda_1+\lambda_2+1+\frac{1}{\tau})}\times \nonumber \\ & \times
\frac{\Gamma_2(\tau(\lambda_1+\lambda_2+1)+2-q\,|\,\tau)}{\Gamma_2(\tau(\lambda_1+\lambda_2+1)+2\,|\,\tau)}.
\end{align}
Recalling the definition of the Mellin transform of the inverse Barnes beta  $\beta^{-1}_{1,0}(a, b)$ with parameters
$a=\tau$ and $b=1+\tau(1+\lambda_1+\lambda_2),$ 
\begin{equation}
{\bf E} \bigl[\beta^{-q}_{1,0}\bigl((\tau, 1+\tau(1+\lambda_1+\lambda_2)\bigr)\bigr] = 
 \tau^{-\frac{q}{\tau}} \frac{\Gamma(-\frac{q}{\tau}+1+\lambda_1+\lambda_2+\frac{1}{\tau})}{\Gamma(1+\lambda_1+\lambda_2+\frac{1}{\tau})},
\end{equation}
we see that the Mellin transform of the distribution $M_{(\tau, \lambda_1, \lambda_2)}$ in Eq. \eqref{thedecompcircle} coincides with the expression in Eq. \eqref{thefunctioncopy}.
The infinite divisibility of $\log M_{(\tau, \lambda_1, \lambda_2)}$ follows from that of $\log\beta^{-1}_{2,2}(a, b)$ and $\log\beta^{-1}_{1,0}(a, b).$
The determinacy of the Stieltjes moment problem for $M^{-1}_{(\tau, \lambda_1, \lambda_2)}$ follows from 
the fact $\beta_{2,2}(a, b)$ is compactly supported and so has a determinate moment problem and $\beta_{1,0}(a, b)$ is Fr\'echet and so
too has a determinate moment problem, cf. \cite{Char}, Sections 2.2 and 2.3. \qed
\end{proof}

\begin{proof}[Proof of Theorem \ref{general}.]
Recalling the Mellin transform of $\beta_{2,2}(a, b)$ and
definition of $X_1, X_2, X_3$ in Eqs. \eqref{X1}--\eqref{X3}, we
can write for ${\bf E}\bigl[X_1^q\bigr]{\bf E}\bigl[X_2^q\bigr]{\bf
E}\bigl[X_3^q\bigr]$ after some simplification
\begin{gather}
\frac{\Gamma_2(x_1-q\,|\,a)}{\Gamma_2(x_1\,|\,a)}
\frac{\Gamma_2(x_2-q\,|\,a)}{\Gamma_2(x_2\,|\,a)}
\frac{\Gamma_2(a_1+a_2-q\,|\,a)}{\Gamma_2(a_1+a_2\,|\,a)}
\frac{\Gamma_2(x_1+x_2-q\,|\,a)}{\Gamma_2(x_1+x_2\,|\,a)} \times
\nonumber \\
\times
\frac{\Gamma_2((x_1+x_2)/2\,|\,a)}{\Gamma_2((x_1+x_2)/2-q\,|\,a)}
\frac{\Gamma_2((x_1+x_2+a_1)/2\,|\,a)}{\Gamma_2((x_1+x_2+a_1)/2-q\,|\,a)}
\frac{\Gamma_2((x_1+x_2+a_2)/2\,|\,a)}{\Gamma_2((x_1+x_2+a_2)/2-q\,|\,a)}\times
\nonumber \\
\times
\frac{\Gamma_2((x_1+x_2+a_1+a_2)/2\,|\,a)}{\Gamma_2((x_1+x_2+a_1+a_2)/2-q\,|\,a)}
.
\end{gather}
Eq. \eqref{multiplic} gives us
\begin{gather}
\Gamma_2(x_1+x_2-2q\,|\,a) = 2^{-B_{2, 2}(x_1+x_2-2q\,|\,a)/2}
\Gamma_2((x_1+x_2)/2-q\,|\,a)
\times\nonumber \\ \times
\Gamma_2((x_1+x_2+a_1)/2-q\,|\,a)\times\Gamma_2((x_1+x_2+a_2)/2-q\,|\,a)
\times\nonumber \\ \times
\Gamma_2((x_1+x_2+a_1+a_2)/2-q\,|\,a).
\end{gather}
Hence, we can write for ${\bf E}\bigl[X_1^q\bigr]{\bf
E}\bigl[X_2^q\bigr]{\bf E}\bigl[X_3^q\bigr]$
\begin{align}
{\bf E}\bigl[X_1^q\bigr]{\bf
E}\bigl[X_2^q\bigr]{\bf E}\bigl[X_3^q\bigr] & =
\frac{2^{B_{2, 2}(x_1+x_2\,|\,a)/2}}{2^{B_{2,
2}(x_1+x_2-2q\,|\,a)/2}}
\frac{\Gamma_2(x_1-q\,|\,a)}{\Gamma_2(x_1\,|\,a)}
\frac{\Gamma_2(x_2-q\,|\,a)}{\Gamma_2(x_2\,|\,a)}\times \nonumber \\ & \times
\frac{\Gamma_2(a_1+a_2-q\,|\,a)}{\Gamma_2(a_1+a_2\,|\,a)}
\frac{\Gamma_2(x_1+x_2-q\,|\,a)}{\Gamma_2(x_1+x_2-2q\,|\,a)}.
\end{align}
Now, using the formula for $B_{2,2}(x\,|\,a)$ in Eq. \eqref{B22},
and the definition of $L$ in Eq. \eqref{Ldefin}, we can write
%%for ${\bf E}\bigl[L^q\bigr]{\bf E}\bigl[X_1^q\bigr]{\bf
%%E}\bigl[X_2^q\bigr]{\bf E}\bigl[X_3^q\bigr]$
\begin{align}
{\bf E}\bigl[L^q\bigr]{\bf E}\bigl[X_1^q\bigr]{\bf
E}\bigl[X_2^q\bigr]{\bf E}\bigl[X_3^q\bigr]= & 2^{\bigl(2(x_1+x_2)-(a_1+a_2)\bigr)q/a_1a_2}\frac{\Gamma_2(x_1-q\,|\,a)}{\Gamma_2(x_1\,|\,a)}
\frac{\Gamma_2(x_2-q\,|\,a)}{\Gamma_2(x_2\,|\,a)}
\times\nonumber \\ & \times
\frac{\Gamma_2(a_1+a_2-q\,|\,a)}{\Gamma_2(a_1+a_2\,|\,a)}
\frac{\Gamma_2(x_1+x_2-q\,|\,a)}{\Gamma_2(x_1+x_2-2q\,|\,a)}.
\end{align}
This proves that the expression on the right-hand side of
Eq. \eqref{eqgeneral} is in fact the Mellin transform of the probability
distribution on $(0,\,\infty)$ that is given by the right-hand side of Eq. \eqref{generaldecomp}.
Its properties follow from the known properties of the normal and $\beta_{2,2}(a, b)$ distributions.\qed
\end{proof}

We now proceed to give a probabilistic proof of the existence of the Selberg integral probability distribution that is based on Theorem \ref{general}. 

\begin{proof}[Proof of Theorem \ref{BSM} and Corollary \ref{Uniqueness}]
Recall the definition of $a=(a_1, a_2)$ and $x_i(\tau,\lambda)$ in Eq. \eqref{xidef},
%%\begin{equation}%%\label{xidef}
%%x_i(\tau,\lambda_i) \triangleq 1+\tau(1+\lambda_i),\,i=1,2,\,a_1=1, \,a_2=\tau,
%%\end{equation}
and let $L(\tau),$ $X_i(\tau,\lambda),$ and $Y(\tau)$ be as in the statement of Theorem \ref{BSM}.
%%corresponding to $a(\tau)=(1,\tau)$ and $x(\tau,\lambda)=\bigl(x_1(\tau,\lambda_1), x_2(\tau,\lambda_2)\bigr).$ 
%%In addition, define
%%\begin{equation}\label{Ydef}
%%Y(\tau) \triangleq \tau\,y^{-1-\tau}\exp\bigl(-y^{-\tau}\bigr)\,dy,\; y>0,
%%\end{equation}
%%\emph{i.e.} $Y(\tau)$ is a power of the exponential random variable (Fr\'echet distribution).
By the functional equation of $\Gamma_2$ in Eq. \eqref{feq}
and the definition of $Y(\tau)$ in Eq. \eqref{Ydist}, we have
\begin{align}
{\bf E}[Y(\tau)^q] & =\Gamma(1-q/\tau), \label{YMellin} \\
\Gamma_{2}(\tau-q\,|\,\tau) & =
\frac{\tau^{(\tau-q)/\tau-1/2}}{\sqrt{2\pi}}{\bf E}[Y(\tau)^q]\,\Gamma_2\bigl(1+\tau-q\,|\,\tau\bigr). \label{Ytransform}
\end{align}
Then, given Theorem \ref{general}, the difference between Eqs. \eqref{thefunctioninterval} and \eqref{eqgeneral} is in the third double gamma ratio. Define
\begin{equation}%%\label{Decomposition}
M_{(\tau, \lambda_1, \lambda_2)} \triangleq 2\pi\,
2^{-\bigl[3(1+\tau)+2\tau(\lambda_1+\lambda_2)\bigr]/\tau}\,
\frac{
L(\tau)\,X_1(\tau,\lambda)\,X_2(\tau,\lambda)\,X_3(\tau,\lambda)\,Y(\tau)}{\Gamma\bigl(1-1/\tau\bigr)}.
\end{equation}
It is now elementary to see that the Mellin transform of $M_{(\tau, \lambda_1, \lambda_2)}$ equals the expression in Eq. \eqref{thefunctioninterval}. The appearance of the $Y$ distribution in Eq. \eqref{Decomposition} is to account for this difference in the third gamma ratio.
%%Thus, the only statements that still
%%require proof are \eqref{Selbergmomentsverified} and \eqref{Selbergmomentsnegative}.
%%\emph{i.e.} that the positive integral moments of $M_{(\mu,\lambda_1,\lambda_2)}$ satisfy Selberg's product formula.
%%The proof of \eqref{Selbergmomentsverified} and \eqref{Selbergmomentsnegative} is now immediate from \eqref{feq}, which implies the identity
%%\begin{equation}
%%\frac{\Gamma_2(x-l\,|\,\tau)}{\Gamma_2(x\,|\,\tau)} =
%%\prod\limits_{i=0}^{l-1}
%%\Gamma_1\bigl(x-(i+1)\,|\,\tau\bigr),\;l\in\mathbb{N},
%%\end{equation}
%%where $\Gamma_1(x\,|\,\tau)$ is defined in \eqref{gamma1}. 
The
remaining computation, which determines the overall constant in Eq. \eqref{Decomposition}, is straightforward. 
%%and will be omitted. 
%%The proof of \eqref{Minfinprod} follows from \eqref{Decomposition} by applying the Shintani factorization, confer \eqref{eta22infinfactor},
%%to the Mellin transform of each $X_i(\tau,\lambda)$ and then simplifying by
%%the doubling formula for Euler's gamma function. 
The proof of Corollary \ref{Uniqueness} follows from Eq. \eqref{Decomposition} due to
the determinacy of the Stieltjes moment problems for $\beta_{2,2}$ (compactly supported) and $Y^{-1}$ (Carleman's criterion)
and its indeterminacy for $L^{-1}$ (lognormal), cf. \cite{Char}, Sections 2.2 and 2.3.
%%for $N^{-1},$ which is an easy corollary of the decomposition in Theorem \ref{B2}.%% as $\beta_{2,2}$ and $Y^{-1}$
\qed
\end{proof}

\begin{proof}[Proof of Theorem \ref{scalinvar}.]
We have by Eq. \eqref{Ldefin},
\begin{align}
{\bf E}\Bigl[L^{q/\tau} \bigl(\frac{1}{\tau}\bigr)\Bigr] & = {\bf E}\Bigl[e^{(q/\tau)\mathcal{N}(0,\,4\tau\log 2)}\Bigr], \nonumber \\
& = e^{4q^2\log 2/2\tau} \equiv {\bf E}\bigl[L^{q} (\tau)\bigr].
\end{align}
To prove Eq. \eqref{Xinvar}, observe the identity
\begin{equation}\label{xinvar}
x_i(\tau, \lambda_i)/\tau = x_i(1/\tau, \tau\lambda_i), \;i=1,2.
\end{equation}
Hence, by the definition of $X_i(\tau,\lambda)$ and
Theorem \ref{barnesbetascaling}, we have the identity
\begin{align}
X^{1/\tau}(1/\tau, \tau\lambda) & \overset{{\rm in \,law}}{=} \beta_{2,2}^{-1/\tau}\bigl(a/\tau, b/\tau\bigr), \nonumber \\
& \overset{{\rm in \,law}}{=} \beta_{2,2}^{-1}\bigl(a, b\bigr),
\end{align}
where $a=(1,\tau)$ and $b=(b_0, b_1, b_2)$ is given in terms of
$a$ and $x_i, \,i=1, 2$ in Eqs. \eqref{X1}--\eqref{X3}.
The proof of Eq. \eqref{Minvar} follows from Eqs. \eqref{Linvar} and \eqref{Xinvar}
(or can be seen directly from Eq. \eqref{eqgeneral} by Eqs. \eqref{scale} and \eqref{B22}). \qed
\end{proof}

\begin{proof}[Proof of Theorem \ref{Mtransforminvol} and Corollary \ref{mycorollary}.]
The $Y$ distribution in Eq. \eqref{Decomposition} is not involution invariant, confer Eq. \eqref{YMellin}. Instead,
we have by Eq. \eqref{YMellin} the identity
\begin{equation}
{\bf E}\bigl[Y^{q/\tau}(1/\tau)\bigr] = \frac{\Gamma(1-q)}{\Gamma(1-q/\tau)}
\,{\bf E}\bigl[Y^q(\tau)\bigr].  
\end{equation}
The proof now follows from Eq. \eqref{Minvar}. \qed
\end{proof}

%%\section{Conjectured Laws of Derivative Martingales}\label{DerivM}
\section{Critical Morris and Selberg Integral Distributions}\label{DerivM}
\noindent
In this section we will consider conjectured laws of the derivative martingales of the Bacry-Muzy GMC on the circle and interval.
These laws are obtained from the sub-critical laws that we studied in previous sections in the 
limit of $\tau\rightarrow 1,$ \emph{i.e.} in the limit of the so-called 
critical temperature and are of a particular interest in the theory of critical GMC chaos, confer \cite{barraletal} and \cite{dupluntieratal}.
%%The proofs of all results in this section are given in Section \ref{complexandderivproofs}.

\begin{definition}[Critical Morris Integral Distribution]
Let $M_{(\tau,\lambda_1,\lambda_2)}$ denote the Morris integral distribution as in Theorem \ref{theoremcircle}.
\begin{equation}
M_{(\tau=1,\lambda_1,\lambda_2)}\triangleq \lim\limits_{\tau\downarrow 1} \Gamma\bigl(1-1/\tau\bigr)\,M_{(\tau,\lambda_1,\lambda_2)}.
\end{equation} 
\end{definition}
This limit exists by Eq. \eqref{thedecompcircle}. We will refer to $M_{(\tau=1,\lambda_1,\lambda_2)}$ as the critical Morris integral distribution.

\begin{definition}[Critical Selberg Integral Distribution]
Let $M_{(\tau,\lambda_1,\lambda_2)}$ denote the Selberg integral distribution as in Theorem \ref{BSM}.
\begin{equation}\label{Selbergcrit}
M_{(\tau=1,\lambda_1,\lambda_2)}\triangleq \lim\limits_{\tau\downarrow 1} \Gamma\bigl(1-1/\tau\bigr)\,M_{(\tau,\lambda_1,\lambda_2)}.
\end{equation}
\end{definition}
This limit exists by Eq. \eqref{Decomposition}. We will refer to $M_{(\tau=1,\lambda_1,\lambda_2)}$ as the critical Selberg integral distribution.

Recall the definition of the Barnes $G(z)$ function in Eq. \eqref{Gdef}. Throughout this section we let $\Re(q)<1.$
\begin{theorem}[Critical Morris integral distribution]\label{criticalMorris}
\begin{gather}
{\bf E}\bigl[M_{(\tau=1,\lambda_1,\lambda_2)}^q\bigr] = 
\frac{G(2-q+\lambda_1)}{G(2+\lambda_1)}
\frac{G(2-q+\lambda_2)}{G(2+\lambda_2)} \frac{G(1)}{G(-q+1)}
\frac{G(2+\lambda_1+\lambda_2)}{G(2-q+\lambda_1+\lambda_2)}. \label{MGcrit}
\end{gather}
%%The function $\mathfrak{M}(q\,|\tau,\,\lambda_1,\,\lambda_2)$ has the infinite product factorization, 
\begin{align}
\mathfrak{M}(q\,|\tau=1,\,\lambda_1,\,\lambda_2) = & \Gamma(1-q) \prod\limits_{m=1}^\infty \Bigl[
\frac{\Gamma(1-q+m)}{\Gamma(1+m)}  \frac{\Gamma(1+\lambda_1+m)}{\Gamma(1-q+\lambda_1+m)} 
\frac{\Gamma(1+\lambda_2+m)}{\Gamma(1-q+\lambda_2+m)} \nonumber \times \\  & \times \frac{\Gamma(1-q+\lambda_1+\lambda_2+m)}{\Gamma(1+\lambda_1+\lambda_2+m)}\Bigr], \\
= & \frac{\Gamma(\lambda_1+\lambda_2+2-q)}{\Gamma(\lambda_1+\lambda_2+2)} \times \nonumber \\
& \times 
\prod\limits_{k=0}^\infty \Bigl[
\frac{1+k}{1+k-q} \frac{2+\lambda_1+k-q}
{2+\lambda_1+k} 
 \frac{2+\lambda_2+k-q}{2+\lambda_2+k}
\frac{3+\lambda_1+\lambda_2+k}{3+\lambda_1+\lambda_2+k-q} \Bigr]^{k+1}
.\label{Morrisspecial}
\end{align}
%%and satisfies the functional equations ....
The negative moments of the critical Morris integral distribution are
\begin{equation}
{\bf E}[M_{(\tau=1,\lambda_1,\lambda_2)}^{-n}]   =  \prod\limits_{j=0}^{n-1} \frac{\Gamma(2+\lambda_1+j) \,\Gamma(2+\lambda_2+j)}{\Gamma(2+\lambda_1+\lambda_2+j)\,\Gamma(1+j)}. \label{CirNegMomentscrit}
\end{equation}
The distribution is a product of two Barnes beta distributions,
\begin{align}
M_{(\tau=1, \lambda_1, \lambda_2)} = & \beta^{-1}_{22}(a=(1,1), b_0=1,\,b_1=1+ \lambda_1, \,b_2=1+\lambda_2) \times \nonumber \\ & \times
\beta_{1,0}^{-1}(a=1, b_0=\lambda_1+\lambda_2+2), \label{thedecompcirclecrit}
%%Y(\tau, a, b),
\end{align}
where $\beta^{-1}_{22}(a, b)$ is the inverse Barnes beta of type $(2,2)$ and $\beta^{-1}_{1,0}(a, b)$ is the independent inverse Barnes beta of type $(1,0).$ In particular, $\log M_{(\tau=1, \lambda_1, \lambda_2)}$ is infinitely divisible.
In the special case of $\lambda_1=\lambda_2=0,$ we have
\begin{equation}
M_{(\tau=1, \lambda_1, \lambda_2)} = \Gamma(1-q).
\end{equation}
\end{theorem}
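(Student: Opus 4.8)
The plan is to derive every assertion by passing to the limit $\tau\downarrow 1$ in the corresponding subcritical statement — the decomposition \eqref{thedecompcircle} of Theorem \ref{theoremcircle}, the infinite product of Theorem \ref{InfinFacCir}, and the negative moments \eqref{CirNegMoments} of Theorem \ref{CirMoments} — after inserting the normalizing factor $\Gamma(1-1/\tau)$ from the definition of $M_{(\tau=1,\lambda_1,\lambda_2)}$. The organizing observation is that this factor is designed to cancel exactly the singular quantities $\Gamma(1-1/\tau)^{-1}$ in \eqref{thedecompcircle} and $\Gamma^{-q}(1-1/\tau)$ in the Mellin representations \eqref{MG} and Theorem \ref{InfinFacCir}, so that each limit is finite and explicit and no new computation beyond collecting limits is required.

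First I would establish the factorization \eqref{thedecompcirclecrit}. Writing $\Gamma(1-1/\tau)\,M_{(\tau,\lambda_1,\lambda_2)}$ through \eqref{thedecompcircle} removes the $\Gamma(1-1/\tau)^{-1}$ prefactor and leaves $\tau^{1/\tau}\,\beta^{-1}_{22}(\tau,\tau,1+\tau\lambda_1,1+\tau\lambda_2)\,\beta^{-1}_{1,0}(\tau,\tau(\lambda_1+\lambda_2+1)+1)$. Since $\tau^{1/\tau}\to 1$ and the parameters of both Barnes beta factors converge to finite positive limits, the limit distribution is $\beta^{-1}_{22}((1,1),1,1+\lambda_1,1+\lambda_2)\,\beta^{-1}_{1,0}(1,\lambda_1+\lambda_2+2)$, which is \eqref{thedecompcirclecrit}; infinite divisibility is inherited from the two factors. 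The Mellin transform \eqref{MGcrit} then follows by multiplying \eqref{MG} by $\Gamma^{q}(1-1/\tau)$ to cancel the prefactor and letting $\tau\to 1$, using $G(z\,|\,\tau=1)=G(z)$ from \eqref{Gdef}, so that the four surviving ratios of $G(\cdot\,|\,\tau)$ converge termwise to the four ratios of \eqref{MGcrit}.

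The two product forms in \eqref{Morrisspecial} come from complementary routes. The first is simply the $\tau\to 1$ limit of Theorem \ref{InfinFacCir} after the same $\Gamma^q(1-1/\tau)$ cancellation. The second, $(k+1)$-power form I would read off directly from \eqref{thedecompcirclecrit}: the Fréchet factor $\beta^{-1}_{1,0}(1,\lambda_1+\lambda_2+2)$ contributes the prefactor $\Gamma(\lambda_1+\lambda_2+2-q)/\Gamma(\lambda_1+\lambda_2+2)$, while $\beta^{-1}_{22}((1,1),\dots)$ contributes the infinite product via the special Barnes factorization \eqref{specialfactorization} with $q\to -q$ and $b_0=1,\,b_1=1+\lambda_1,\,b_2=1+\lambda_2$. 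The negative moments \eqref{CirNegMomentscrit} follow from \eqref{CirNegMoments}: since ${\bf E}[M_{(\tau=1)}^{-n}]=\lim_{\tau\to1}\Gamma(1-1/\tau)^{-n}\,\mathfrak{M}(-n\,|\,\tau,\lambda_1,\lambda_2)$ and \eqref{CirNegMoments} carries exactly $n$ factors of $\Gamma(1-1/\tau)$, the powers cancel and the Hurwitz shifts $(j+1)/\tau,\,j/\tau$ tend to $j+1,\,j$. Finally, the case $\lambda_1=\lambda_2=0$ is a one-line reduction of \eqref{MGcrit} using $G(2)=G(1)$ and $G(2-q)=\Gamma(1-q)G(1-q)$, which collapses the four ratios to $\Gamma(1-q)$.

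The one genuine subtlety — the main obstacle — is justifying that the Mellin transform of the weak limit equals the limit of the subcritical Mellin transforms, that is, that $\lim_{\tau\downarrow1}$ may be interchanged with the expectation on the strip $\Re(q)<1$. I expect to handle this through the decomposition rather than the raw $G$-function formula: because $\beta_{22}$ is supported on $(0,1]$ (hence has all moments and a determinate problem) and $\beta_{1,0}$ is Fréchet with explicit Gamma Mellin transform, both families converge weakly as $\tau\downarrow1$ with locally uniform convergence of their Mellin transforms, so the product converges weakly and its Mellin transform is the pointwise limit on the common strip. The point requiring care is to verify that no pole of the limiting $G$-ratios — equivalently, no collision of the Barnes beta parameters — is encountered for $\Re(q)<1$; this is guaranteed by the standing hypotheses $\lambda_1,\lambda_2\ge 0$.
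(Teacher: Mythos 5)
Your proposal is correct and takes essentially the same route as the paper, whose proof is a one-line remark that Theorems \ref{criticalMorris}--\ref{Deriv} are straightforward $\tau\downarrow 1$ limits of the subcritical results, together with exactly your observation that the $(k+1)$-power product in Eq. \eqref{Morrisspecial} follows from Eq. \eqref{thedecompcirclecrit} via Eq. \eqref{specialfactorization}. Your additional care in justifying the interchange of the limit $\tau\downarrow 1$ with the expectation (via weak convergence of the compactly supported $\beta_{2,2}$ and Fr\'echet factors and locally uniform convergence of their Mellin transforms) is a detail the paper leaves implicit, but it does not alter the substance of the argument.
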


We will now describe the critical Selberg integral distribution. 
\begin{theorem}[Critical Selberg integral distribution]\label{criticalSelberg}
\begin{gather}
{\bf E}\bigl[M_{(\tau=1,\lambda_1,\lambda_2)}^q\bigr] = 
\frac{G(2+\lambda_1)}{G(2-q+\lambda_1)}
\frac{G(2+\lambda_2)}{G(2-q+\lambda_2)} \frac{G(1)}{G(-q+1)}
\frac{G(4-2q+\lambda_1+\lambda_2)}{G(4-q+\lambda_1+\lambda_2}. \label{MGIcrit}
\end{gather}
\begin{gather}
{\bf E}[M_{(\tau=1,\lambda_1,\lambda_2)}^q] = \Gamma\bigl(1-q\bigr)
\frac{\Gamma\bigl(3-2q+\lambda_1+\lambda_2\bigr)}{
\Gamma\bigl(3-q+\lambda_1+\lambda_2\bigr)}
\prod\limits_{m=1}^\infty m^{2q}
\frac{\Gamma\bigl(1-q+m\bigr)}{\Gamma\bigl(1+m\bigr)}
 \times \nonumber \\
\times
\frac{\Gamma\bigl(1-q+\lambda_1+m\bigr)}{\Gamma\bigl(1+\lambda_1+m\bigr)}
\frac{\Gamma\bigl(1-q+\lambda_2+m\bigr)}{\Gamma\bigl(1+\lambda_2+m\bigr)}
\frac{\Gamma\bigl(2-q+\lambda_1+\lambda_2+m\bigr)}{\Gamma\bigl(2-2q+\lambda_1+\lambda_2+m\bigr)}.\label{InfiniteSelbergcrit}
\end{gather}
\begin{equation}
{\bf E}[M_{(\tau=1,\lambda_1,\lambda_2)}^{-n}]   =  \prod_{k=0}^{n-1}
\frac{\Gamma\bigl(4+\lambda_1+\lambda_2+n+k\bigr)
 }{
\Gamma\bigl(2+\lambda_1+k\bigr)\Gamma\bigl(2+\lambda_2+k\bigr)
\Gamma\bigl(1+k\bigr) }. \label{IntNegMomentscrit}
\end{equation}
Define the distributions
\begin{align}
L \triangleq &\exp\bigl(\mathcal{N}(0,\,4\log 2)\bigr), \\ Y
\triangleq &\,y^{-2}\exp\bigl(-y^{-1}\bigr)\,dy,\; y>0,\label{Ydistcrit}
\end{align}
\emph{i.e.} $\log L$ is a zero-mean normal with variance $4\log
2$ and $Y$ is Fr\'echet. Let $X_1,\,X_2,\,X_3$ have the $\beta^{-1}_{2,
2}(a=(1,1), b)$ distribution with the parameters %%($\beta^{-1}_{2,2}(\tau, b)\equiv\beta^{-1}_{2,2}((1, \tau), b))$
\begin{align}
X_1 &\triangleq \beta_{2,2}^{-1}\Bigl(a=(1,1),
b_0=2+\lambda_1,\,b_1=(\lambda_2-\lambda_1)/2, \,
b_2=(\lambda_2-\lambda_1)/2\Bigr),\\
X_2 & \triangleq \beta_{2,2}^{-1}\Bigl(a=(1,1),
b_0=2+(\lambda_1+\lambda_2)/2,\,b_1=1/2,\,b_2=1/2\Bigr),\\
X_3 & \triangleq \beta_{2,2}^{-1}\Bigl(a=(1,1), b_0=2,\,
b_1=1+(\lambda_1+\lambda_2)/2, \,
b_2=1+(\lambda_1+\lambda_2)/2\Bigr).
\end{align}
Then,
\begin{equation}\label{Decompositioncrit}
M_{(\tau=1, \lambda_1, \lambda_2)} \overset{{\rm in \,law}}{=} 2\pi\,
2^{-\bigl[6+2(\lambda_1+\lambda_2)\bigr]}\,
L\,X_1\,X_2\,X_3\,Y.
\end{equation}
In particular, $M_{(\tau=1,\lambda_1,\lambda_2)}$ is infinitely divisible and absolutely continuous.
\end{theorem}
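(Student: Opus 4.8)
The plan is to obtain every assertion by passing to the limit $\tau\downarrow 1$ in the corresponding sub-critical result, exploiting the fact that the normalizing factor $\Gamma(1-1/\tau)$ built into the definition in Eq. \eqref{Selbergcrit} is designed precisely to cancel the divergent prefactor $\Gamma^{-q}(1-1/\tau)$ of Eq. \eqref{MGI} (equivalently the factor $\Gamma(1-1/\tau)^{-1}$ in the decomposition of Eq. \eqref{Decomposition}). Once this cancellation is made explicit, each remaining ingredient depends continuously on $\tau$ at $\tau=1,$ and the limit is read off by setting $\tau=1.$

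First I would establish the factorization in Eq. \eqref{Decompositioncrit}, since this simultaneously proves that the limit defining $M_{(\tau=1,\lambda_1,\lambda_2)}$ exists as a genuine probability law. Multiplying Eq. \eqref{Decomposition} by $\Gamma(1-1/\tau)$ removes the only singular factor and leaves
\[
\Gamma(1-1/\tau)\,M_{(\tau,\lambda_1,\lambda_2)}\overset{{\rm in \,law}}{=} 2\pi\,2^{-[3(1+\tau)+2\tau(\lambda_1+\lambda_2)]/\tau}\,L(\tau)\,X_1(\tau,\lambda)\,X_2(\tau,\lambda)\,X_3(\tau,\lambda)\,Y(\tau).
\]
I would then verify factor by factor that each piece converges in law as $\tau\downarrow 1$: the constant tends to $2^{-[6+2(\lambda_1+\lambda_2)]}$; the lognormal $L(\tau)$ of variance $4\log 2/\tau$ tends to $L$; the variable $Y(\tau)$ of Eq. \eqref{Ydist} tends to the Fr\'echet law $Y$ of Eq. \eqref{Ydistcrit}; and each inverse Barnes beta $X_i(\tau,\lambda)$ tends to the stated $X_i$ because its parameters $(b_0,b_1,b_2)$ are continuous in $\tau$ and the Mellin transform $\eta_{2,2}(q\,|\,\tau,b)$ is jointly continuous in its parameters, this being immediate from its expression as a finite product of ratios of $\Gamma_2$ factors. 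Independence of the factors then yields convergence in law of the product, hence Eq. \eqref{Decompositioncrit}.

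Next I would read off the three Mellin-transform representations directly from their sub-critical counterparts. For Eq. \eqref{MGIcrit} I would multiply Eq. \eqref{MGI} by $\Gamma^q(1-1/\tau),$ cancel the prefactor, and let $\tau\downarrow 1,$ using continuity of the Alexeiewsky-Barnes function in its second argument together with the identification $G(z\,|\,\tau=1)=G(z)$ from Eq. \eqref{Gdef} and the normalization $G(2)=G(1)=1$; this sends the four $G(\cdot\,|\,\tau)$ ratios to the four ratios of ordinary Barnes $G$ functions displayed. For Eq. \eqref{InfiniteSelbergcrit} I would apply the same cancellation to Eq. \eqref{InfiniteSelberg}: the prefactor reduces to $\Gamma(1-q)\,\Gamma(3-2q+\lambda_1+\lambda_2)/\Gamma(3-q+\lambda_1+\lambda_2),$ and the general factor $(m\tau)^{2q}$ together with the four $\Gamma$-ratios converges termwise to the factor shown. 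For the negative moments in Eq. \eqref{IntNegMomentscrit} I would specialize Eq. \eqref{IntNegMoments} to $q=-n$: the product there contains exactly $n$ factors $\Gamma(1-1/\tau),$ cancelled by $\Gamma(1-1/\tau)^{-n},$ and the remaining $\Gamma$ factors converge to their values at $\tau=1.$ Finally, infinite divisibility and absolute continuity are inherited from Eq. \eqref{Decompositioncrit}: $\log M_{(\tau=1,\lambda_1,\lambda_2)}$ is a sum of the independent infinitely divisible variables $\log L,\log X_1,\log X_2,\log X_3,\log Y$ (Theorems \ref{main} and \ref{mainsine} supply infinite divisibility of the $\log X_i$ and $\log Y$), and the nondegenerate Gaussian component $\log L$ forces absolute continuity.

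The main obstacle I expect is justifying the interchange of the limit $\tau\downarrow 1$ with the infinite product in Eq. \eqref{InfiniteSelberg}, since termwise convergence alone does not guarantee convergence of the product. I would control this by writing each factor as $1+O\bigl(q^2/(m\tau)^2\bigr)$ for large $m,$ uniformly for $\tau$ in a neighborhood of $1,$ so that the logarithm of the tail of the product converges uniformly and the limit may be taken inside; the compensating powers $(m\tau)^{2q}$ are exactly what renders the product convergent, and their behavior as $\tau\downarrow 1$ is the delicate bookkeeping step. Once this uniformity is secured, all the remaining limits are continuity statements that present no difficulty, and one may alternatively bypass the product entirely by computing the Mellin transform of the right-hand side of Eq. \eqref{Decompositioncrit} and checking it against Eq. \eqref{MGIcrit}.
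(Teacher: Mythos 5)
Your proposal is correct and takes essentially the same route as the paper, which treats Theorem \ref{criticalSelberg} as a straightforward corollary of Theorems \ref{InfinFacInt}, \ref{IntMoments}, and \ref{BSM}: let $\tau\downarrow 1$ after the normalizing factor $\Gamma(1-1/\tau)$ in Eq. \eqref{Selbergcrit} cancels the singular prefactors in Eqs. \eqref{MGI}, \eqref{InfiniteSelberg}, \eqref{IntNegMoments}, and \eqref{Decomposition}. Your extra care with the termwise limit of the infinite product (and the remark that it can be bypassed by computing the Mellin transform of the right-hand side of Eq. \eqref{Decompositioncrit}) simply fills in details the paper leaves implicit.
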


Unlike the critical Morris integral distribution, which becomes Fr\'echet in the special case of $\lambda_1=\lambda_2=0,$ 
the critical Selberg integral distribution remains non-trivial. 
\begin{theorem}\label{Deriv}
Let $\lambda_1=\lambda_2=0.$ Then, the critical Selberg integral distribution satisfies 
\begin{equation}\label{Mderiv}
{\bf E}[M_{(\tau=1,0,0)}^q] = 
\frac{G(4-2q)}{G(1-q)\, G^2(2-q)\,G(4-q)}
\end{equation}
for $\Re(q)<1.$ The Mellin transform satisfies the infinite product representation
\begin{equation}
{\bf E}\bigl[M_{(\tau=1,0,0)}^q\bigr] =
\frac{\Gamma\bigl(1-q\bigr)\Gamma\bigl(3-2q\bigr)}{
\Gamma\bigl(3-q\bigr)}
\prod\limits_{m=1}^\infty m^{2q}
\frac{\Gamma^3\bigl(1-q+m\bigr)}{\Gamma^3\bigl(1+m\bigr)}
\frac{\Gamma\bigl(2-q+m\bigr)}{\Gamma\bigl(2-2q+m\bigr)}.
\end{equation}
The negative moments for $l\in\mathbb{N}$ are
\begin{equation}
{\bf E}\bigl[M_{(\tau=1,0,0)}^{-l}\bigr] =
\prod_{k=0}^{l-1}
\frac{(3+l+k)!}{(k+1)!^2 \,k!}.
\end{equation}
$M_{(\tau=1,0,0)}$ has the factorization
\begin{equation}\label{McDecomposition}
M_{(\tau=1,0,0)} = \frac{\pi}{32}\,L\,\,X_2\,X_3\,Y,
\end{equation}
where
\begin{align}
L & = \exp\bigl(\mathcal{N}(0, 4\log 2)\bigr) \;(\text{Lognormal}), \\
X_2 & = \beta^{-1}_{2,2}\bigl(a=(1,1), \,b_0=2, \,b_1=b_2=1/2\bigr), \\
X_3 & = \frac{2}{y^3}\,dy, \;y>1 \;(\text{Pareto}),\\
Y & = \frac{1}{y^2}\,e^{-1/y}\,dy,\;y>0 \;(\text{Fr\'echet}).
\end{align}
\end{theorem}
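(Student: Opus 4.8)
The plan is to derive every assertion by specializing the general critical Selberg results of Theorem~\ref{criticalSelberg} to $\lambda_1=\lambda_2=0$, and then to isolate the single genuinely new point, the identification of the Pareto factor. First I would obtain the Mellin transform \eqref{Mderiv} from \eqref{MGIcrit}: setting $\lambda_1=\lambda_2=0$ leaves $\frac{G(2)}{G(2-q)}\frac{G(2)}{G(2-q)}\frac{G(1)}{G(1-q)}\frac{G(4-2q)}{G(4-q)}$, and since the Barnes $G$ function satisfies $G(z+1)=\Gamma(z)G(z)$ with $G(1)=1$, we have $G(2)=\Gamma(1)G(1)=1$, so the two $G(2)$ factors and the $G(1)$ factor disappear and we are left with exactly $G(4-2q)/\bigl(G(1-q)\,G^2(2-q)\,G(4-q)\bigr)$. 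The infinite product representation and the negative-moment formula follow by the same substitution into \eqref{InfiniteSelbergcrit} and \eqref{IntNegMomentscrit}; in the latter the ratio $\Gamma(4+n+k)/\bigl(\Gamma(2+k)^2\Gamma(1+k)\bigr)$ rewrites as $(3+n+k)!/\bigl((k+1)!^2\,k!\bigr)$, which is the asserted expression with $n=l$.

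For the factorization I would specialize \eqref{Decompositioncrit}. The prefactor becomes $2\pi\cdot 2^{-6}=\pi/32$, matching \eqref{McDecomposition}. The decisive simplification is that the first Barnes beta factor collapses: with $\lambda_1=\lambda_2=0$ its shape parameters are $b_1=b_2=(\lambda_2-\lambda_1)/2=0$, and substituting $b_1=b_2=0$ into the Mellin transform $\eta_{2,2}(q\,|\,\tau,b)$ makes the four $\Gamma_2$ ratios cancel in pairs, so $\eta_{2,2}\equiv 1$ and $X_1$ is the unit mass at $1$. Thus $X_1$ drops out of the product, while $X_2=\beta^{-1}_{2,2}(a=(1,1),b_0=2,b_1=b_2=1/2)$ and the Fr\'echet factor $Y$ (with ${\bf E}[Y^q]=\Gamma(1-q)$ by \eqref{YMellin} at $\tau=1$) retain their stated forms, leaving $M_{(\tau=1,0,0)}\overset{{\rm in \,law}}{=}\frac{\pi}{32}\,L\,X_2\,X_3\,Y$.

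It then remains to show that $X_3=\beta^{-1}_{2,2}(a=(1,1),b_0=2,b_1=b_2=1)$ is the Pareto law with density $2/y^3$ on $(1,\infty)$; this is the only step requiring a computation rather than pure substitution, and is where I expect the actual work to lie. Using the positive-moment formula for $\beta_{2,2}$ from Corollary~\ref{moments} (specialized to $a_1=1$, $\tau=1$), the $k$-th moment is $\prod_{l=0}^{k-1}\Gamma(l+3)^2/\bigl(\Gamma(l+2)\Gamma(l+4)\bigr)=\prod_{l=0}^{k-1}(l+2)/(l+3)$, which telescopes to $2/(k+2)$. The law on $(0,1)$ with density $2z$ has precisely these integer moments, and since $\beta_{2,2}(a,b)$ is supported on $(0,1]$ its moment problem is determinate by Corollary~\ref{momentproblembarnes}; hence $\beta_{2,2}(a=(1,1),b_0=2,b_1=b_2=1)$ has density $2z$ on $(0,1)$, so its reciprocal $X_3$ has density $2/y^3$ on $(1,\infty)$, as claimed. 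As an internal consistency check, the product of the Mellin transforms $2^{2q^2}$ of $L$, the Barnes beta transform of $X_2$, $2/(2-q)$ of $X_3$, and $\Gamma(1-q)$ of $Y$, times $\pi/32$, must reproduce \eqref{Mderiv}; this is automatic from Theorem~\ref{criticalSelberg} and serves to confirm the Pareto identification.
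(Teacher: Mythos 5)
Your proof is correct and takes essentially the same route as the paper: the paper obtains Theorem \ref{Deriv} as a straightforward specialization of Theorem \ref{criticalSelberg} (itself the $\tau\downarrow 1$ limit of Theorem \ref{BSM}) to $\lambda_1=\lambda_2=0$, exactly as you do, including the degeneration of $X_1$ (its parameters $b_1=b_2=0$ make $\eta_{2,2}\equiv 1$) and the prefactor $2\pi\cdot 2^{-6}=\pi/32$. Your identification of $X_3$ as the Pareto law via the telescoping moments $\prod_{l=0}^{k-1}\frac{l+2}{l+3}=\frac{2}{k+2}$ together with determinacy on compact support (Corollary \ref{momentproblembarnes}) validly supplies the one detail the paper leaves implicit; equivalently, the Barnes $G$ functional equation gives ${\bf E}\bigl[\beta_{2,2}^q\bigl(a=(1,1),b_0=2,b_1=b_2=1\bigr)\bigr]=\frac{2}{q+2}$ directly, which identifies the density $2z$ on $(0,1)$ without invoking the moment problem.
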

%%The proof follows directly from our results in Sections 3 -- 5.
%%corollary of Theorem \ref{BSM} and \eqref{Decomposition}.
This result first appeared in \cite{Me14}.

Theorems \ref{criticalMorris}--\ref{Deriv} are straightforward corollaries of the corresponding results for the Morris and Selberg 
integral distributions in Sections \ref{CirAnalytical}, \ref{IntAnalytical}, and \ref{Probabilistic}.
We note that Eq. \eqref{Morrisspecial} follows from Eq. \eqref{thedecompcirclecrit} by means of Eq. \eqref{specialfactorization}. \qed

\begin{remark}
It is clear from Theorem \ref{Deriv} that the most nontrivial component of
the critical Selberg distribution is $X_2.$ Consider more generally a family of $\beta_{2,2}$ distributions
that is parameterized by $\delta>0.$
\begin{equation}
\beta_{2,2}(\delta) \triangleq \beta_{2,2}\bigl(a=(1,1), \,b_0=\delta, \,b_1=b_2=1/2\bigr).
\end{equation}
Its Mellin transform satisfies, %% in terms of $G(z)$
\begin{align}
{\bf E}\bigl[\beta_{2,2}^q(\delta)\bigr] & = \frac{G(\delta)}{G(q+\delta)}
\frac{G^2(q+\delta+1/2)}{G^2(\delta+1/2)}
\frac{G(\delta+1)}{G(q+\delta+1)}, \\
& = \prod\limits_{k=0}^\infty \Bigl[
\frac{\delta+k}{q+\delta+k} \frac{(q+\delta+1/2+k)^2}
{(\delta+1/2+k)^2} \frac{\delta+1+k}
{q+\delta+1+k}\Bigr]^{k+1}
\end{align}
by Corollary \ref{BarnesFactorSpecial}.
Remarkably, this distribution is intrinsically related to the Riemann xi function, cf. Section 6 in \cite{Me14}, which suggests that 
the critical Selberg integral distribution is also related to the xi function.
\end{remark}

\section{Analytic Continuation of the Complex Selberg Integral}\label{AnalyticalComplexSelberg}
\noindent The complex Selberg Integral, also known as the Dotsenko-Fateev integral, was computed independently by Aomoto \cite{Aom} and 
Dotsenko and Fateev \cite{DF}.
It is a two-dimensional generalization of the classical Selberg integral. Unlike the Morris and Selberg integrals, it does not correspond
to the moments of total mass of a GMC measure and so its analytic continuation is not the Mellin transform of a probability distribution. It can however be
naturally interpreted as a rescaled limit of the moments of total mass of the two-dimensional GMC measure with the kernel
$-\log|\vec{r}_1-\vec{r}_2|$ in the limit of the
domain, over which it is defined, going to infinity, cf.  \cite{cao17} for details. Then, the analytic continuation of the
complex Selberg integral becomes the mod-Gaussian limit of the REM (random energy model) that is associated with the
GMC measure. This observation explains the interest in the complex Selberg integral, for its analytic continuation provides a 
glimpse at the behavior of the total mass in two-dimensions. Our goal in this section is to write down the analytic continuation and 
prove its involution invariance.  We note that the physicists' approach to this problem can be found in \cite{cao17},
which leads to interesting conjectures about the maximum of the underlying gaussian field. 
%%The proofs of all results in this section are given in Section \ref{complexandderivproofs}.

Let $\vec{u}$ be an arbitrary unit vector. Following \cite{ForresterWarnaar}, define the complex Selberg integral,
\begin{gather}
\int\limits_{(\mathbb{R}^2)^n} \prod_{i=1}^n |\vec{r}_i|^{2\lambda_1} |\vec{u}-\vec{r}_i|^{2\lambda_2}\, \prod\limits_{i<j}^n |\vec{r}_i-\vec{r}_j|^{-4/\tau} d\vec{r}_1\cdots d\vec{r}_n , \nonumber \\
= \frac{1}{n!} \Bigl[\prod_{k=0}^{n-1}\frac{\Gamma(1-(k+1)/\tau)
\Gamma(1+\lambda_1-k/\tau)\Gamma(1+\lambda_2-k/\tau)}
{\Gamma(1-1/\tau)\Gamma(2+\lambda_1+\lambda_2-(n+k-1)/\tau)}\Bigr]^2\times \nonumber \\
\times \prod_{k=0}^{n-1} \frac{\sin\pi((k+1)/\tau)
\sin\pi(1+\lambda_1-k/\tau)\sin\pi(1+\lambda_2-k/\tau)}
{\sin(\pi/\tau)\sin\pi(2+\lambda_1+\lambda_2-(n+k-1)/\tau)}.\label{ComplexSelberg}
\end{gather}

\begin{theorem}[Analytic continuation of the complex Selberg integral]\label{analcomplexS}
Let $\lambda_1,$ $\lambda_2,$ and $\tau$ satisfy 
\begin{gather}
\tau>1,\; \lambda_1,\lambda_2<0, \label{bound1} \\
1+\tau(1+\lambda_i) >0, \, i=1,2,\label{bound2}\\
1+\tau(\lambda_1+\lambda_2)< 0. \label{bound3}
\end{gather}
Let $q$ belong to the strip
\begin{equation}\label{qcomplexstrip}
\max\Big\{\frac{1}{2}\bigl(1+\tau(1+\lambda_1+\lambda_2)\bigr),\,      1+\tau(1+\lambda_1+\lambda_2)\Big\}<\Re(q)<\min\Big\{\tau, 1+\tau(1+\lambda_1), 1+\tau(1+\lambda_2)\Big\}.
\end{equation}
Let $\mathfrak{M}(q\,|\,\tau,\lambda_1,\lambda_2)$ denote the analytic continuation of the Selberg integral in Eq. \eqref{thefunctioninterval}
and $S_2(z\,|\,\tau)$ denote the double sine function as in Eq. \eqref{msinedef} ($M=2,\; a=(1,\tau)).$
Then, the function
\begin{align}
\mathfrak{CM}(q\,|\,\tau,\lambda_1,\lambda_2)=& \frac{ \mathfrak{M}(q\,|\,\tau,\lambda_1,\lambda_2)^2}{\Gamma(1+q) \, \bigl(4 \sin(\pi/\tau)\bigr)^q}  \, 
\frac{S_2(1-q+\tau(1+\lambda_1)\,|\,\tau)}{S_2(1+\tau(1+\lambda_1)\,|\,\tau)}
\frac{S_2(1-q+\tau(1+\lambda_2)\,|\,\tau)}{S_2(1+\tau(1+\lambda_2)\,|\,\tau)}\times
\nonumber \\  \times
& \frac{S_2(\tau-q\,|\,\tau)}{S_2(\tau\,|\,\tau)}
\frac{S_2(2-q+\tau(2+\lambda_1+\lambda_2)\,|\,\tau)}{S_2(2-2q+\tau(2+\lambda_1+\lambda_2)\,|\,\tau)} \label{complexS1}
\end{align}
recovers the expression in Eq. \eqref{ComplexSelberg} when $q=n\in\mathbb{N}$ satisfies Eq. \eqref{qcomplexstrip}. The function
$\mathfrak{CM}(q\,|\,\tau,\lambda_1,\lambda_2)$ satisfies
\begin{align}
\mathfrak{CM}(q\,|\,\tau,\lambda_1,\lambda_2) = & 
\Bigl(\frac{\Gamma(1/\tau)\,\pi\,\tau^{\frac{1}{\tau}}}{\Gamma\bigl(1-1/\tau\bigr)}\Bigr)^q\, \Gamma\bigl(1-\frac{q}{\tau}\bigr)\,
%\tau^{\frac{q}{\tau}} (2\pi)^{q}\,\Gamma^{-q}\bigl(1-1/\tau\bigr)
\frac{\Gamma_2(1-q+\tau(1+\lambda_1)\,|\,\tau)}{\Gamma_2(1+\tau(1+\lambda_1)\,|\,\tau)}
\frac{\Gamma_2(1-q+\tau(1+\lambda_2)\,|\,\tau)}{\Gamma_2(1+\tau(1+\lambda_2)\,|\,\tau)}\times
\nonumber \\ & \times
\frac{\Gamma_2(1-q+\tau\,|\,\tau)}{\Gamma_2(1+\tau\,|\,\tau)}
\frac{\Gamma_2(2-q+\tau(2+\lambda_1+\lambda_2)\,|\,\tau)}{\Gamma_2(2-2q+\tau(2+\lambda_1+\lambda_2)\,|\,\tau)}
\frac{\Gamma_2(q+1+\tau\,|\,\tau)}{\Gamma_2(1+\tau\,|\,\tau)}
\times
\nonumber \\ & \times
\frac{\Gamma_2(q-\tau\lambda_1\,|\,\tau)}{\Gamma_2(-\tau\lambda_1\,|\,\tau)}
\frac{\Gamma_2(q-\tau\lambda_2\,|\,\tau)}{\Gamma_2(-\tau\lambda_2\,|\,\tau)}
\frac{\Gamma_2(q-1-\tau(1+\lambda_1+\lambda_2)\,|\,\tau)}{\Gamma_2(2q-1-\tau(1+\lambda_1+\lambda_2)\,|\,\tau)}.\label{complexS2}
\end{align}
\end{theorem}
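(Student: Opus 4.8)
The plan is to establish the two assertions of the theorem separately: first that the right-hand side of \eqref{complexS1} reproduces the complex Selberg integral \eqref{ComplexSelberg} at the positive integers $q=n$ lying in the strip \eqref{qcomplexstrip}, and second that \eqref{complexS1} can be rewritten in the purely double-gamma form \eqref{complexS2}. The two main tools are the collapse of a product of sines into a ratio of double sine functions, Eq. \eqref{Srepeated}, together with the identification of $\mathfrak{M}$ at positive integers with the Selberg integral from Theorem \ref{IntMoments}; and, for the second assertion, the definition of the double sine function \eqref{msinedef} combined with the functional equations of $\Gamma_2$ (Eqs. \eqref{gamma1}, \eqref{feq}, \eqref{repeated}).

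For the first assertion I would evaluate \eqref{complexS1} at $q=n$ term by term. By Theorem \ref{IntMoments} the factor $\mathfrak{M}(n\,|\,\tau,\lambda_1,\lambda_2)^2$ equals the square of the Selberg bracket appearing in \eqref{ComplexSelberg}, and $1/\Gamma(1+n)=1/n!$ reproduces the combinatorial prefactor. Each of the four double sine ratios is then collapsed by \eqref{Srepeated}: choosing $w$ and $k=n$ appropriately, the ratios $S_2(1-n+\tau(1+\lambda_i)\,|\,\tau)/S_2(1+\tau(1+\lambda_i)\,|\,\tau)$ and $S_2(\tau-n\,|\,\tau)/S_2(\tau\,|\,\tau)$ become $\prod_{j=0}^{n-1} 2\sin\pi(1+\lambda_i-j/\tau)$ and $\prod_{j=0}^{n-1} 2\sin\pi((j+1)/\tau)$ (using $\sin\pi(1-x)=\sin\pi x$), while the fourth ratio, whose two arguments differ by $2n$, yields the reciprocal product $\prod_{j=0}^{n-1}(2\sin\pi(2+\lambda_1+\lambda_2-(n+j-1)/\tau))^{-1}$. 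Collecting the powers of $2$ produced — three factors $2^n$ in the numerator and one $2^{-n}$ — gives $2^{2n}$, which cancels against the $4^n$ in $(4\sin(\pi/\tau))^n$, leaving exactly the factor $\sin(\pi/\tau)^{-n}$ needed to match the denominator of \eqref{ComplexSelberg}.

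For the second assertion I would substitute $S_2(w\,|\,\tau)=\Gamma_2(1+\tau-w\,|\,\tau)/\Gamma_2(w\,|\,\tau)$ from \eqref{msinedef} into each of the four sine ratios of \eqref{complexS1} and write $\mathfrak{M}^2$ via \eqref{thefunctioninterval} as $P^{2q}R(q)^2$, where $P$ is the scalar prefactor and $R(q)$ the product of its four $\Gamma_2$ ratios. Pairing each factor of one copy of $R(q)$ against the corresponding converted $S_2$ ratio produces, after telescoping cancellation, the second block of four $\Gamma_2$ ratios in \eqref{complexS2}; in the pairing built from $S_2(\tau-q\,|\,\tau)/S_2(\tau\,|\,\tau)$ one is left with $\Gamma_2(1+q\,|\,\tau)/\Gamma_2(1\,|\,\tau)$, and the shift-by-$\tau$ functional equation turns this into $\Gamma(1+q)\,\Gamma_2(1+q+\tau\,|\,\tau)/\Gamma_2(1+\tau\,|\,\tau)$, whose $\Gamma(1+q)$ cancels the $1/\Gamma(1+q)$ in \eqref{complexS1}. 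It then remains to reconcile the surviving constants: converting the remaining factor $\Gamma_2(\tau-q\,|\,\tau)/\Gamma_2(\tau\,|\,\tau)$ of the second copy of $R(q)$ into the normalized form $\Gamma_2(1-q+\tau\,|\,\tau)/\Gamma_2(1+\tau\,|\,\tau)$ used in \eqref{complexS2} produces, via the shift-by-$1$ functional equation, precisely the extra factor $\tau^{-q/\tau}\Gamma(1-q/\tau)$; the leftover numerical constant $P^{2q}\tau^{-q/\tau}/(4\sin(\pi/\tau))^q$ is then matched to $(\Gamma(1/\tau)\pi\tau^{1/\tau}/\Gamma(1-1/\tau))^q$ using the reflection identity $\Gamma(1/\tau)\Gamma(1-1/\tau)=\pi/\sin(\pi/\tau)$.

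The manipulations throughout are legitimate because the constraints \eqref{bound1}–\eqref{bound3} on $\tau,\lambda_1,\lambda_2$ and the strip \eqref{qcomplexstrip} on $q$ keep every argument of every $\Gamma_2$ and $S_2$ away from its poles and zeros, so each functional-equation step is an identity between functions holomorphic on the relevant domain. The main obstacle is not conceptual but the careful bookkeeping of the two essentially independent families of constants: the powers of $2$ that must telescope against the $4^q$ in the first part, and the interplay between the shift-by-$1$ functional equation and the reflection formula that generates the solitary $\Gamma(1-q/\tau)$ factor in the second part. Recognizing that this $\Gamma(1-q/\tau)$ originates from rewriting the single ratio $\Gamma_2(\tau-q\,|\,\tau)/\Gamma_2(\tau\,|\,\tau)$ in the normalized form appearing in \eqref{complexS2} is the step most easily overlooked.
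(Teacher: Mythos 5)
Your proposal is correct and follows essentially the same route as the paper: the integer-$q$ identification rests on Eq. \eqref{Srepeated} (the double sine analogue of Eq. \eqref{repeated}) applied factor by factor, and Eq. \eqref{complexS2} is obtained by substituting $S_2(w\,|\,\tau)=\Gamma_2(1+\tau-w\,|\,\tau)/\Gamma_2(w\,|\,\tau)$, applying the shift functional equations of $\Gamma_2$ (which generate the $\Gamma(1+q)$ cancellation and the solitary $\tau^{-q/\tau}\Gamma(1-q/\tau)$ factor), and invoking the reflection formula $\Gamma(1/\tau)\Gamma(1-1/\tau)=\pi/\sin(\pi/\tau)$, exactly as in the paper's proof. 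The only place you are lighter than the paper is the well-definedness check: the paper explicitly verifies that all $\Gamma_2$ arguments satisfy $\Re(z)>0$ and all $S_2$ arguments satisfy $0<\Re(z)<1+\tau$, the latter requiring the additional inequalities $\Re(q)>\tau\lambda_i$ and $\Re(q)>-(1+\tau)$, which are deduced from Eq. \eqref{bound2} together with the lower bound in Eq. \eqref{qcomplexstrip}, whereas you assert this without carrying out the verification.
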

\begin{corollary}\label{MComplextransforminvol}
The function
$\mathfrak{CM}(q\,|\,\tau,\lambda_1,\lambda_2)$ satisfies the following involution invariance under
\begin{equation}\label{involutiondef}
\tau\rightarrow \frac{1}{\tau},\; q\rightarrow \frac{q}{\tau}, \; \lambda_i\rightarrow \tau\lambda_i.
\end{equation}
%%MUST CORRECT, NOT QUITE
\begin{align}
\mathfrak{CM}\bigl(\frac{q}{\tau}\,|\,\frac{1}{\tau},\tau\lambda_1,\tau\lambda_2\bigr) \,
\Bigl(\frac{\Gamma(1-\tau)}{\pi\,\Gamma(\tau)}\Bigr)^{\frac{q}{\tau}} \Gamma(1-\frac{q}{\tau}) = &
\mathfrak{CM}(q\,|\,\tau,\lambda_1,\lambda_2)  \Bigl(\frac{\Gamma(1-\frac{1}{\tau})}{\pi\,\Gamma(1/\tau)}\Bigr)^{q}
 \Gamma(1-q).\label{involutionintcomplex}
\end{align}
\end{corollary}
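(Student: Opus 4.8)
The plan is to work directly with the double gamma representation \eqref{complexS2} and to follow the method used in the proof of Theorem \ref{CicInvolution}, namely to apply the scaling property \eqref{scale} of the multiple gamma function to every $\Gamma_2$-factor. The transformation \eqref{involutiondef} corresponds to $\kappa=1/\tau$, and since $\frac{1}{\tau}(1,\tau)=(1,1/\tau)$ in the sense of the double gamma parameters, each factor $\Gamma_2(z\,|\,1,1/\tau)$ arising after the involution substitution can be rewritten, upon rescaling its argument by $\tau$, as $(1/\tau)^{-B_{2,2}(\tau z\,|\,1,\tau)/2}\,\Gamma_2(\tau z\,|\,1,\tau)$ via \eqref{scale} together with the explicit formula \eqref{B22}. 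I will prefer this route over deducing the identity from the $\mathfrak{M}^2$-structure in \eqref{complexS1} and the known invariance of $\mathfrak{M}$ (Theorem \ref{Mtransforminvol}), since the direct computation is more transparent and exactly parallels Theorem \ref{CicInvolution}.

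First I would substitute $(q,\tau,\lambda_i)\to(q/\tau,1/\tau,\tau\lambda_i)$ into \eqref{complexS2} and apply the above rescaling to each of the sixteen double gamma factors. The essential observation is that, after multiplying each argument by $\tau$, every reduced argument coincides exactly with the corresponding argument of $\mathfrak{CM}(q\,|\,\tau,\lambda_1,\lambda_2)$. For instance $1-q/\tau+(1/\tau)(1+\tau\lambda_1)$ rescales to $1-q+\tau(1+\lambda_1)$, while the arguments $q+1+\tau$, $q-\tau\lambda_i$, and $2q-1-\tau(1+\lambda_1+\lambda_2)$ are likewise reproduced. In contrast to the circle case of Theorem \ref{CicInvolution}, where a residual $\Gamma_2$-ratio survives, here the shifts by $a_1=1$ built into \eqref{complexS2} make all double gamma factors reduce cleanly, so the transformed double gamma product equals that of $\mathfrak{CM}(q\,|\,\tau,\lambda_1,\lambda_2)$ up to an accumulated power $(1/\tau)^{E}$ coming from the $B_{2,2}$-exponents.

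The decisive step is to collect $E=\frac{1}{2}\sum\bigl[B_{2,2}(A^{\mathrm{den}})-B_{2,2}(A^{\mathrm{num}})\bigr]$ summed over the eight ratios. Since $B_{2,2}(w\,|\,1,\tau)$ is quadratic in $w$ with a $w$-independent constant, the constant drops out of each difference and $E$ reduces to a sum of terms $\frac{1}{\tau}[(A^{\mathrm{den}})^2-(A^{\mathrm{num}})^2]-\frac{1+\tau}{\tau}[A^{\mathrm{den}}-A^{\mathrm{num}}]$. I expect the coefficient of $q^2$ in $E$ to vanish, by the same mechanism that forced the gaussian coefficient $C(t)$ to vanish (the $2^2-2-1-1=0$ cancellation) in the proof of Theorem \ref{IntExist}, and, crucially, the $\lambda_i$-dependent pieces to cancel as well, leaving $E=-q(1+1/\tau)$ and hence a clean factor $\tau^{q(1+1/\tau)}$. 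I would then transform the elementary prefactors: the power factor $(\Gamma(1/\tau)\pi\tau^{1/\tau}/\Gamma(1-1/\tau))^{q}$ becomes $(\Gamma(\tau)\pi\tau^{-\tau}/\Gamma(1-\tau))^{q/\tau}$ and the Euler factor $\Gamma(1-q/\tau)$ becomes $\Gamma(1-q)$. Multiplying by the stated correction factors on each side cancels the $\pi$- and $\Gamma$-prefactors, and the surviving powers of $\tau$ combine with $\tau^{q(1+1/\tau)}$ so that both sides reduce to the same residual power of $\tau$, which gives \eqref{involutionintcomplex}. The identity, first established on the strip \eqref{qcomplexstrip} where all factors are holomorphic, then extends by analytic continuation.

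The main obstacle I anticipate is the accurate bookkeeping of the sixteen $B_{2,2}$-exponents and the verification that both the quadratic-in-$q$ terms and the $\lambda_i$-dependent terms in $E$ cancel: this is the one genuinely structural point, and everything else is routine algebra matching elementary prefactors. Were the $\lambda_i$-dependent terms to survive, the substitution $\lambda_i\to\tau\lambda_i$ would spoil the invariance, so confirming this cancellation is the heart of the argument and the step I would verify most carefully.
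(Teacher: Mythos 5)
Your proposal is correct and follows essentially the same route as the paper: the paper's proof also starts from Eq. \eqref{complexS2}, applies the scaling property Eq. \eqref{scale} with $\kappa=1/\tau$ to every double gamma factor (all of whose arguments are invariant under the involution, exactly as you observe), and collects the residual power of $\kappa$, which the paper states is $\bigl(\frac{1}{\tau}\bigr)^{-(q+\frac{q}{\tau})}$ — identical to your $E=-q(1+1/\tau)$. Your anticipated cancellations (the quadratic-in-$q$ terms and the $\lambda_i$-dependent terms in the $B_{2,2}$-exponents) do indeed occur and constitute the "direct calculation" the paper leaves implicit.
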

\begin{corollary}
In the critical case $\tau \downarrow 1,$ the function $\mathfrak{CM}(q\,|\, \tau,\lambda_1,\lambda_2)$ has the limit,
\begin{align}
\lim\limits_{\tau\downarrow 1} \Bigl[\Gamma(1-1/\tau)^q\,\mathfrak{CM}(q\,|\,\tau,\lambda_1,\lambda_2)\Bigr] = & 
\pi^q\, \Gamma\bigl(1-q\bigr)\,
\frac{G(2+\lambda_1)}{G(2-q+\lambda_1)}
\frac{G(2+\lambda_2)}{G(2-q+\lambda_2)}\times
\nonumber \\ & \times
\frac{G(2)}{G(2-q)}
\frac{G(4-2q+\lambda_1+\lambda_2)}{G(4-q+\lambda_1+\lambda_2)}
\frac{G(2)}{G(q+2)}
\times
\nonumber \\ & \times
\frac{G(-\lambda_1)}{G(q-\lambda_1)}
\frac{G(-\lambda_2)}{G(q-\lambda_2)}
\frac{G(2q-2-(\lambda_1+\lambda_2))}{G(q-2-(\lambda_1+\lambda_2))}.\label{critcomplexS}
\end{align}
\end{corollary}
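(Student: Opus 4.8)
The plan is to start from the double-gamma representation of $\mathfrak{CM}(q\,|\,\tau,\lambda_1,\lambda_2)$ in Eq.~\eqref{complexS2}, convert every factor $\Gamma_2(\cdot\,|\,\tau)$ into the Alexeiewsky-Barnes function $G(\cdot\,|\,\tau)$ by means of Eq.~\eqref{GfromG2}, and then pass to the limit $\tau\downarrow 1$ using the continuity $G(z\,|\,\tau)\to G(z\,|\,1)=G(z)$ of Eq.~\eqref{Gdef}. This mirrors the way the critical Morris and Selberg distributions were obtained from their subcritical counterparts. Since the $G$-ratios converge to the stated ones by inspection, the whole argument reduces to showing that the $\tau$-dependent scalars --- the explicit prefactors together with the $\tau$- and $(2\pi)$-powers generated by the conversion --- collapse to exactly $\pi^q\,\Gamma(1-q)$ in the limit.

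First I would dispose of the explicit scalar prefactors. The factor $\Gamma(1-1/\tau)^q$ on the left-hand side cancels the $\Gamma(1-1/\tau)^{-q}$ sitting in the prefactor $\bigl(\Gamma(1/\tau)\,\pi\,\tau^{1/\tau}/\Gamma(1-1/\tau)\bigr)^q$ of Eq.~\eqref{complexS2}, leaving $\bigl(\Gamma(1/\tau)\,\pi\,\tau^{1/\tau}\bigr)^q$, which tends to $\pi^q$ as $\tau\downarrow 1$ since $\Gamma(1/\tau)\to\Gamma(1)=1$ and $\tau^{1/\tau}\to 1$. The remaining explicit factor $\Gamma(1-q/\tau)$ tends to $\Gamma(1-q)$. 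These two limits already account for the $\pi^q\,\Gamma(1-q)$ on the right-hand side of Eq.~\eqref{critcomplexS}.

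Next I would convert the eight ratios of $\Gamma_2$-factors. Writing $\Gamma_2(z\,|\,\tau)=(2\pi)^{z/2}\tau^{-(1+{}_2S_0(z\,|\,1,\tau))}G(z\,|\,\tau)^{-1}$ from Eq.~\eqref{GfromG2}, the quotient of the product of numerator $\Gamma_2$'s by the product of denominator $\Gamma_2$'s produces the ratio $\prod_{\mathrm{den}}G(w\,|\,\tau)/\prod_{\mathrm{num}}G(z\,|\,\tau)$ together with a factor $(2\pi)^{(\sum_{\mathrm{num}}z-\sum_{\mathrm{den}}w)/2}$ and a power of $\tau$. A direct count of the arguments gives $\sum_{\mathrm{num}}z=\sum_{\mathrm{den}}w=5+5\tau$, so the $(2\pi)$-exponent vanishes identically; likewise the eight ``$1$'' contributions cancel, as does the linear part of ${}_2S_0$ (the latter because $\sum_{\mathrm{num}}z=\sum_{\mathrm{den}}w$), leaving a $\tau$-exponent of the form $(\sum_{\mathrm{den}}w^2-\sum_{\mathrm{num}}z^2)/2\tau$, a finite polynomial in $q,\lambda_1,\lambda_2$ at $\tau=1$; hence $\tau^{(\cdot)}\to 1$. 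Finally, setting $\tau=1$ in the surviving arguments sends each $G(\cdot\,|\,\tau)\to G(\cdot)$ and reproduces exactly the eight $G$-ratios of Eq.~\eqref{critcomplexS}.

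The main obstacle is the bookkeeping in this last step: one must verify that the multiplicative $(2\pi)$- and $\tau$-powers introduced by the $\Gamma_2\leftrightarrow G$ conversion balance out rather than surviving in the limit. The $(2\pi)$-balance is exact and follows from the degree balance of the Selberg-type argument list, while the $\tau$-power is harmless because its exponent stays finite and $\tau^{\mathrm{finite}}\to 1$. One should also check that the limiting arguments avoid the zeros of $G$ (located at the nonpositive integers, with a zero of multiplicity $n+1$ at $z=-n$), so that each $G(z\,|\,\tau)$ converges to a finite nonzero value; this is guaranteed by the strip Eq.~\eqref{qcomplexstrip} together with the constraints Eqs.~\eqref{bound1}--\eqref{bound3}. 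As a consistency check one may note the functional identity $G(2)/G(2-q)=\Gamma(1-q)^{-1}\,G(1)/G(1-q)$, so that the $\Gamma(1-q)$ prefactor together with the first four $G$-ratios of Eq.~\eqref{critcomplexS} reproduces precisely the Mellin transform of the critical Selberg distribution in Eq.~\eqref{MGIcrit}, the extra four $G$-ratios and the $\pi^q$ being the genuinely two-dimensional contribution.
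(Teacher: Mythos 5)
Your proposal is correct and is essentially the argument the paper intends: the corollary is stated without a separate proof, being the direct $\tau\downarrow 1$ limit of Eq.~\eqref{complexS2}, which is precisely what you carry out --- cancelling $\Gamma(1-1/\tau)^{q}$ against the prefactor, checking that the $(2\pi)$- and $\tau$-powers from the $\Gamma_2\leftrightarrow G$ conversion balance (your count $\sum_{\mathrm{num}}z=\sum_{\mathrm{den}}w=5+5\tau$ is right), and passing to $G(\cdot)$ in the eight ratios. Your closing consistency check, that $\Gamma(1-q)$ times the first four $G$-ratios reproduces Eq.~\eqref{MGIcrit} via $G(2)/G(2-q)=\Gamma(1-q)^{-1}G(1)/G(1-q)$, also matches the paper's own remark decomposing $\mathfrak{CM}$ into a Selberg-integral-distribution factor times additional factors.
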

\begin{remark}
While the analytic continuation of the complex Selberg integral is not the Mellin transform of a random variable,
it is nonetheless possible to decompose it into the product of the Mellin transform of a random variable and an extra factor.
Let $M_{(a, x)}=L\,X_1\,X_2\,X_3$ denote the random variable that is described in Theorem \ref{general} and $Y$ be the Fr\'echet factor
as in Eq. \eqref{Ydist}. Let
\begin{align}
M_1 = & M_{(a, x)} \; \text{with}\; a=(1,\tau), \; x=\bigl(1+\tau(1+\lambda_1), \, 1+\tau(1+\lambda_2)\bigr), \\
M_2 = & M_{(a, x)} \; \text{with}\; a=(1,\tau), \; x=(-\tau\lambda_1, \,-\tau\lambda_2).
\end{align}
Then, up to a constant $C,$ Eq. \eqref{complexS2} 
is equivalent to the identity
\begin{align}
\mathfrak{CM}(q\,|\,\tau,\lambda_1,\lambda_2) = & e^{Cq}  {\bf E}[Y^q]\,
{\bf E}[M_1^q]\,{\bf E}[M_2^{-q}]\,\frac{\Gamma_2(q-1-\tau(1+\lambda_1+\lambda_2)\,|\,\tau)}{\Gamma_2(2q-1-\tau(1+\lambda_1+\lambda_2)\,|\,\tau)}\,\frac{\Gamma_2(2q-\tau(\lambda_1+\lambda_2)\,|\,\tau)}{\Gamma_2(q-\tau(\lambda_1+\lambda_2)\,|\,\tau)}, \nonumber \\ 
= &  e^{Cq}  {\bf E}[Y^q]\,
{\bf E}[M_1^q]\,{\bf E}[M_2^{-q}]\,\frac{\Gamma\bigl(\frac{q-1-\tau(1+\lambda_1+\lambda_2)}{\tau}\bigr)}{\Gamma\bigl(\frac{2q-1-\tau(1+\lambda_1+\lambda_2)}{\tau}\bigr)}\,\frac{\Gamma(q-\tau(1+\lambda_1+\lambda_2))}{\Gamma(2q-\tau(1+\lambda_1+\lambda_2))}.
\end{align}
Thus, the random variable is the product of an independent Selberg integral distribution $Y\,M_1,$ cf. Theorem \ref{BSM}, and $M_2^{-1}.$
%%The extra factor can be easily reduced to a product of ratios of Euler's gamma factors by Eq. \eqref{feq}.
\end{remark}

\begin{proof}[Proof of Theorem \ref{analcomplexS}.]
It is elementary to see that the bounds in Eqs. \eqref{bound1}--\eqref{bound3} guarantee that the strip in Eq. \eqref{qcomplexstrip} is non-empty. Now, we observe that the structure of the product of sines in Eq. \eqref{ComplexSelberg} is essentially the same as the product
of gamma factors in the Selberg integral. Moreover, the functional equation of the double sine function in Eq. \eqref{feqsine} is the same
as that of the double gamma function in Eq. \eqref{feq}, except $\Gamma_1$ is replaced with $S_1.$ It follows that
the analytic continuation in Eq. \eqref{complexS1} follows from Eq. \eqref{Srepeated} in the same way as Eq. \eqref{thefunctioninterval}
followed from Eq. \eqref{repeated}. It remains to verify that the expression is Eq. \eqref{complexS1} is well-defined under the
conditions in Eq. \eqref{qcomplexstrip}, \emph{i.e.} that the arguments of $\Gamma_2$ factors satisfy $\Re(z)>0$ and those of
$S_2$ factors satisfy $0<\Re(z)<1+\tau.$ For $\Re(z)>0$ this is true by the upper bound in Eq. \eqref{qcomplexstrip}.
To satisfy $\Re(z)<1+\tau, $ in addition to the lower bound in Eq. \eqref{qcomplexstrip} we also need $\Re(q)>\tau\lambda_i$ and $\Re(q)>-(1+\tau).$\footnote{$\Re(q)=-1$ is a removable singularity as both $S_2(\tau-q\,|\,\tau)$ and $\Gamma(1+q)$ have simple poles there, see Eq. \eqref{simplepole}.} However, these are automatically satisfied due to $\max\{\tau\lambda_i, -(1+\tau)\} < 1+\tau(1+\lambda_1
+\lambda_2)$ by Eq. \eqref{bound2}.

The proof of Eq. \eqref{complexS2} follows from the definition of the double sine function
in Eq. \eqref{msinedef} and the identity in Eq. \eqref{mydoublegammaidentity}. One obtains,
\begin{align}
\frac{1}{\Gamma(1+q)} \frac{S_2(\tau-q\,|\,\tau)}{S_2(\tau\,|\,\tau)} = & \frac{\Gamma_2(1+q+\tau\,|\,\tau)}{\Gamma_2(1-q+\tau\,|\,\tau)} \,
\frac{\tau^{q/\tau}}{\Gamma(1-q/\tau)}, \label{simplepole} \\
\tau^{q/\tau} \frac{\Gamma_2(\tau-q\,|\,\tau)}{\Gamma_2(\tau\,|\,\tau)} =& \Gamma(1-\frac{q}{\tau})\,\frac{\Gamma_2(1-q+\tau\,|\,\tau)}{\Gamma_2(1+\tau\,|\,\tau)}.
\end{align}
Finally, we need the identity
\begin{equation}
\Gamma(1-\frac{1}{\tau})\sin\pi/\tau = \frac{\pi}{\Gamma(1/\tau)}.
\end{equation}
The result now follows by substituting these identities into Eq. \eqref{complexS1} and using the definition of the double sine function.  \qed
\end{proof}
\begin{proof}[Proof of Corollary \ref{MComplextransforminvol}.]
The proof is very similar to that of Theorem \ref{CicInvolution}. One starts with Eq. \eqref{complexS2} and observes that
all the double gamma %%and double sine 
terms are invariant under the involution in Eq. \eqref{involutiondef} due to 
the scaling property of the double gamma function in Eq. \eqref{scale}. %% and the corresponding scaling property of the double sine function,
%%\begin{equation}
%%S_2(\kappa \,w\,|\,\kappa \, a) = S_2(w\,|\,a),
%%\end{equation}
%%for any $\kappa>0.$ 
In our case $\kappa=1/\tau.$ It remains to collect the power of $\kappa$ that comes from the pre-factor
in Eq. \eqref{scale}. A direct calculation shows that it is
\begin{equation}
\bigl(\frac{1}{\tau}\bigr)^{-(q+\frac{q}{\tau})}.
\end{equation}
The result follows. \qed
\end{proof}

\section{Applications}\label{SomeApplications}
\noindent In this section we will consider three conjectured applications of our results. 

\subsection{Maximum Distribution of the Gaussian Free Field on the Interval and Circle}
In this section we will formulate precise conjectures about the distribution of the maximum of the discrete 2D Gaussian
Free Field (GFF) with a non-random logarithmic potential restricted to the unit interval and circle.  We will not attempt to review the GFF here but rather refer the reader to \cite{FyoBou} for the circle case and to \cite{FLDR} and Section 3 of \cite{Menon} for the interval case. Suffice it to say that our approach to the 
GFF construction is essentially based on the construction of Bacry-Muzy, cf. \cite{MRW}, \cite{BM1}, \cite{BM}. Our results in this section first appeared in \cite{Me16}.

Let 
\begin{equation}
N=1/\varepsilon.
\end{equation}
Let the gaussian field $V_\varepsilon(x)$ be as in Eq. \eqref{covk}.
The limit $\lim\limits_{\varepsilon\rightarrow 0} V_{\varepsilon}(x)$ is
called the continuous GFF on the interval $x\in[0, \,1]$ and its discretized version $V_\varepsilon(x_i),$  
$x_i=i\varepsilon,$ $i=0\cdots N,$ is the discrete GFF on the interval. 
We note that in applications, see \cite{Menon} and subsection \ref{modG} below, for example, the GFF construction arises is a slightly more general form of
\begin{align}
%%{\bf{E}} \left[V_{\varepsilon}(u)\right] & =  -\frac{\mu}{2} \,
%%\left(\kappa-\log\varepsilon\right), \label{meank} \\
{\bf{Cov}}\left[V_{\varepsilon}(u), \,V_{\varepsilon}
(v)\right]  = & 
\begin{cases}\label{covkk}
 -
2 \, \log|u-v|, \, \varepsilon\ll|u-v|\leq 1,  \\
 2
\left(\kappa-\log\varepsilon\right),\, u=v,
\end{cases} \nonumber \\
& + O(\varepsilon),
\end{align}
where $\kappa\geq0$ is some fixed constant and the details of regularization are relegated to the $O(\varepsilon)$ term.
It is worth emphasizing that the choice of covariance regularization for $|u-v|\leq \varepsilon$ has no effect on
distribution of the maximum, so long as the variance behaves as in Eq. \eqref{covk},  due to Theorem 6 in \cite{BM1} as explained below.
%%  the law of the total mass  so that the  is not affected by this choice. 
The same remark applies to the GFF on the circle. %% so we give its definition in the more general form.
Let the gaussian field $V_\varepsilon(\psi)$ be as in Eq. \eqref{covkc} or more generally satisfy
\begin{align}
{\bf{Cov}}\left[V_{\varepsilon}(\psi), \,V_{\varepsilon}
(\xi)\right]  = &
\begin{cases}\label{covkcir}
 -
2 \, \log|e^{2\pi i\psi}-e^{2\pi i\xi}|, \,  |\xi-\psi|\gg \varepsilon,  \\
 2
\left(\kappa-\log\varepsilon\right), \psi=\xi,
\end{cases} \nonumber \\
& + O(\varepsilon),
\end{align}
where $\kappa\geq 0$ is some fixed constant. The limit $\lim\limits_{\varepsilon\rightarrow 0} V_{\varepsilon}(\psi)$ is
called the GFF on the circle $\psi\in[-\frac{1}{2}, \frac{1}{2})$ and its discretized version $V_\varepsilon(\psi_j),$  
$\psi_j=j\varepsilon,$ $j=-N/2\cdots N/2,$ is the discrete GFF on the circle. 
The existence of such objects follows from the general theory of \cite{RajRos} as shown in \cite{BM1} and \cite{BM}. 
\begin{definition}[Problem formulation for the interval]
Let $\lambda_1,\,\lambda_2\geq 0.$  
%%Let 
%%\begin{equation}
%%N=1/\varepsilon
%%\end{equation}
%%We can think of $V_\varepsilon(x)$ as being defined at $x_i=i\varepsilon,$ $i=0\cdots N.$
What is the distribution of 
\begin{equation}
V_N\triangleq\max\Big\{V_{\varepsilon}(x_i)+\lambda_1\,\log(x_i)+\lambda_2\,\log(1-x_i),\,i=1\cdots N\Big\}
\end{equation}
in the form of an asymptotic expansion in $N$ in the limit $N\rightarrow \infty?$
\end{definition}
%%The gaussian free field is a fascinating mathematical object that 
\begin{definition}[Problem formulation for the circle]
Let $\lambda\geq 0.$ 
%%Let
%%\begin{equation}
%%N=1/\varepsilon
%%\end{equation}
%%and think of $V_\varepsilon(\psi)$ as being defined at $\psi_j=j\varepsilon,$ $j=-N/2\cdots N/2.$
What is the distribution of 
\begin{equation}
V_N\triangleq\max\Big\{V_{\varepsilon}(\psi_j)+2\lambda\log|1+e^{2\pi i\psi_j}|,\,j=-N/2\cdots N/2\Big\}
\end{equation}
in the form of an asymptotic expansion in $N$ in the limit $N\rightarrow \infty?$
\end{definition}

We will consider the case of the GFF on the interval first. Recall the critical Selberg integral probability 
distribution $M_{(\tau=1,\lambda_1,\lambda_2)}$ that we defined in Section \ref{DerivM}. %% in Theorem \ref{}.
\begin{conjecture}[Maximum of the GFF on the Interval]\label{maxint}
The leading asymptotic term in the expansion of the Laplace transform of $V_N$ in $N$ is 
\begin{equation}
{\bf E}[e^{q\,V_N}] \approx e^{q(2\log N-(3/2)\log\log N+{\rm const})}\,{\bf E}\bigl[M^q_{(\tau=1,\lambda_1,\lambda_2)}
\bigr],\;N\rightarrow\infty.
\end{equation}
Probabilistically, let $X_i,$ $i=1,2,3$ and $Y$ be as in Theorem \ref{criticalSelberg}.
Then, as $N\rightarrow \infty,$
\begin{align}
V_N = & 2\log N-\frac{3}{2}\log\log N+{\rm const}+\mathcal{N}(0,\,4\log 2) + \log X_1+\log X_2+\log X_3+\nonumber \\ & +
\log Y+\log Y'+o(1),
\end{align}
where $Y'$ is an independent copy of $Y.$ %%$O(1)$ is non-random, and corrections are of order $o(1).$ 
\end{conjecture}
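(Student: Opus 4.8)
The plan is to deduce the statement from the general extreme-value theory of logarithmically correlated Gaussian fields, combined with the conjectural identification of the critical Bacry-Muzy mass from Section \ref{DerivM}. The organizing principle is the structural result, established rigorously in \cite{Madmax} and in the parallel literature on the two-dimensional GFF, that the recentered maximum of such a field converges in law to the sum of the logarithm of the total mass of the associated critical (derivative-martingale) GMC measure and an \emph{independent} standard Gumbel variable. Writing $Z'$ for the total mass of the critical derivative martingale of $e^{V_\varepsilon}\varphi$ on $[0,1]$ with $\varphi(x)=x^{\lambda_1}(1-x)^{\lambda_2}$, the target is therefore
\begin{equation}
V_N - m_N \overset{d}{\longrightarrow} \log Z' + \log Y' + \mathrm{const},
\end{equation}
where $Y'$ is the Fr\'echet variable of Eq. \eqref{Ydistcrit}, whose logarithm is standard Gumbel and whose Mellin transform is $\Gamma(1-q)$. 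The coefficient one in front of $\log Z'$ is the point at which the Bacry-Muzy normalization matters: the diagonal variance $2(\kappa-\log\varepsilon)$ places the freezing transition at $\beta_c=1$, equivalently $\tau=1$, so no rescaling of the derivative martingale intervenes.

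First I would fix the deterministic centering $m_N$. Since the variance of $V_\varepsilon$ on the diagonal is $2(\kappa-\log\varepsilon)=2\log N+O(1)$, a first-moment count $\mathbf{E}\,\#\{i:V_\varepsilon(x_i)>a\}\approx N\,e^{-a^2/(4\log N)}$ pins the leading term at $a=2\log N$, while the logarithmically correlated (rather than i.i.d.) structure supplies the Bramson-type second-order correction, giving $m_N=2\log N-\tfrac{3}{2}\log\log N+\mathrm{const}$. The non-random weight $\log\varphi$ is bounded on compact subsets of $(0,1)$ and alters only the $O(1)$ constant, not the two displayed leading terms; the exponent $\tfrac{3}{2}$ is the universal value for this one-dimensional geometry and is insensitive to $\lambda_1,\lambda_2$.

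Next I would identify the random shift. By Conjecture \ref{ourmainconjinterval} the sub-critical total mass $\int_0^1 s^{\lambda_1}(1-s)^{\lambda_2}M_\beta(ds)$ has the law $M_{(\tau,\lambda_1,\lambda_2)}$, and passing to criticality $\tau\downarrow 1$ along the limit defining Eq. \eqref{Selbergcrit} identifies $Z'$ with the critical Selberg integral distribution $M_{(\tau=1,\lambda_1,\lambda_2)}$ of Theorem \ref{criticalSelberg}. Inserting its explicit factorization Eq. \eqref{Decompositioncrit}, namely $M_{(\tau=1,\lambda_1,\lambda_2)}\overset{{\rm in\,law}}{=}\mathrm{const}\cdot L\,X_1X_2X_3\,Y$ with $\log L=\mathcal{N}(0,4\log 2)$, and convolving with the independent Gumbel $\log Y'$ from the extremal process, reproduces precisely the stated decomposition of $V_N$. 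The Laplace-transform form then follows by computing $\mathbf{E}[e^{qV_N}]$ from this product, the Mellin transform of $M_{(\tau=1,\lambda_1,\lambda_2)}$ of Eq. \eqref{MGIcrit} combining with the Gumbel factor.

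The hard part will be justifying the two conjectural inputs. First, the freezing identification of $Z'$ with $M_{(\tau=1,\lambda_1,\lambda_2)}$ rests on Conjecture \ref{ourmainconjinterval}, whose circle analogue was only recently proved in \cite{Remy} and whose interval version remains open precisely because the negative-moment Stieltjes problem is there indeterminate (Corollary \ref{Uniqueness}). Second, invoking the abstract convergence of \cite{Madmax} requires verifying that the Bacry-Muzy field dressed by the singular boundary weight $x^{\lambda_1}(1-x)^{\lambda_2}$ still lies in the universality class: one must rule out anomalously high values near the endpoints that could displace the extremal front, and confirm that the centering and the derivative-martingale limit survive the discretization $x_i=i\varepsilon$. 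It is these two points, rather than the moment bookkeeping, where the genuine difficulty resides, which is why the result is stated as a conjecture.
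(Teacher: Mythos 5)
Your derivation is sound as a heuristic and lands on the same statement, but it takes a genuinely different route from the paper. The paper's own derivation (Section \ref{SomeApplications}) never invokes the maximum-versus-critical-mass theorem of \cite{Madmax}: it works entirely inside the freezing formalism of \cite{FyoBou} and \cite{FLDR}, starting from the identity ${\bf P}(V_N<s)=\lim_{\beta\to\infty}{\bf E}[\exp(-e^{-\beta s}Z/C)]$, approximating the partition function for $0<\beta<1$ by the sub-critical Selberg law (Conjecture \ref{ourmainconjinterval}), rewriting via the identity in Eq. \eqref{generalidentity} so that the quantity ${\bf E}[X^{q/\beta}]\,\Gamma(1-q/\beta)$ appears, observing that this combination is self-dual under $\beta\to 1/\beta$ by the involution invariance of Theorem \ref{Mtransforminvol}, and then \emph{postulating} (freezing hypothesis) that it is constant in $\beta$ for $\beta>1$. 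In that route the independent Gumbel $\log Y'$ emerges algebraically as the frozen $\Gamma(1-q/\beta)\big|_{\beta=1}=\Gamma(1-q)$ factor, and the $-\tfrac{3}{2}\log\log N$ shift is traced to the $\Gamma(1-\beta^2)$ sitting inside the $\beta$-dependent constant $C$, following \cite{YO}, \cite{FLDR2}. You instead outsource the structure of the limit to the rigorous result of \cite{Madmax} (recentered maximum $=$ log of the critical derivative-martingale mass plus an independent Gumbel), fix the centering by a first-moment/Bramson argument, and then identify the critical mass law with $M_{(\tau=1,\lambda_1,\lambda_2)}$ via Conjecture \ref{ourmainconjinterval} and the renormalized limit in Eq. \eqref{Selbergcrit}. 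What your route buys is modularity and proximity to the rigorous literature: the conjectural content is isolated in the law of the total mass and in the identification of the $\tau\downarrow 1$ renormalized limit with the derivative martingale (which the paper itself only conjectures in Section \ref{DerivM}), plus the universality-class issue for the boundary weight that you correctly flag. What it gives up is the duality mechanism: the paper's approach \emph{explains} why the answer freezes precisely at $\tau=1$ through the self-duality in Eq. \eqref{selfdual}, and produces the full Mellin-transform formula, Gumbel factor included, from a single algebraic identity rather than from two separately imported ingredients. One small point in your favor: your Laplace transform carries the factor $\Gamma(1-q)$ coming from the Gumbel, which matches the paper's final derived formula and the probabilistic statement (the extra $\log Y'$), whereas the displayed transform inside the Conjecture omits it --- an internal inconsistency of the paper's statement, not of your argument.
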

%%This means that $V_N$ equals in law the sum of six independent random variables.
This conjecture at the level of the Mellin transform is due to \cite{FLDR} in the case of $\lambda_1=\lambda_2=0$ and \cite{FLD} for
general $\lambda_1$ and $\lambda_2.$ Our probabilistic re-formulation of their conjecture was first given in \cite{Me16}.

Similarly, to formulate our conjecture for the maximum of the GFF on the circle, we need to recall the critical Morris integral
probability distribution $M_{(\tau=1,\lambda_1,\lambda_2)}$ that we considered in Theorem \ref{criticalMorris}. 
\begin{conjecture}[Maximum of the GFF on the Circle]\label{maxintcircle}
The leading asymptotic term in the expansion of the Laplace transform of $V_N$ in $N$ is 
\begin{equation}
{\bf E}[e^{q\,V_N}] \approx e^{q(2\log N-(3/2)\log\log N+{\rm const})}\,{\bf E}\bigl[M^q_{(\tau=1,\lambda,\lambda)}
\bigr],\;N\rightarrow\infty.
\end{equation}
Probabilistically, let $X \triangleq \beta_{2,2}^{-1}\bigl(\tau=1, b_0=1,\,b_1=1+\lambda,\,b_2=1+\lambda\bigr)$ and
$Y \triangleq \beta_{1,0}^{-1}(\tau=1, b_0=2\lambda+2)$  be as in Theorem \ref{criticalMorris} 
and $Y' \triangleq \beta_{1,0}^{-1}\bigl(\tau=1, b_0=1\bigr).$
Then,
\begin{equation}
V_N = 2\log N-\frac{3}{2}\log\log N+{\rm const}+\log X+ \log Y+\log Y'+o(1).
\end{equation}
%%where $Y'$ is an independent copy of $Y.$ 
If $\lambda=0,$ 
\begin{equation}
V_N = 2\log N-\frac{3}{2}\log\log N+{\rm const}+ \log Y+\log Y'+o(1),
\end{equation}
where 
\begin{equation}
Y\overset{{\rm in \,law}}{=}Y'=\beta_{1,0}^{-1}\bigl(\tau=1, b_0=1\bigr).
\end{equation}
%%In both cases $O(1)$ is non-random and corrections are of order $o(1).$ 
\end{conjecture}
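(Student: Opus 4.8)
The plan is to derive Conjecture \ref{maxintcircle} from the freezing scenario of Fyodorov and Bouchaud \cite{FyoBou}, exactly paralleling the interval case of Conjecture \ref{maxint}, and then to unpack the resulting limiting law using the explicit structure of the critical Morris distribution supplied by Theorem \ref{criticalMorris}. The starting point is the heuristic underlying all freezing computations: the centred maximum of a logarithmically correlated field converges in law to the logarithm of the total mass of the associated \emph{critical} GMC measure, decorated by an independent universal extreme-value factor. Concretely, I would posit that $V_N-a_N$ converges, with $a_N=2\log N-\tfrac{3}{2}\log\log N+\mathrm{const}$, to $\log M_{(\tau=1,\lambda,\lambda)}+\log Y'$, where $M_{(\tau=1,\lambda,\lambda)}$ is the critical Morris distribution and $Y'=\beta_{1,0}^{-1}(\tau=1,b_0=1)$ is the standard Fréchet decoration with ${\bf E}[Y'^q]=\Gamma(1-q)$. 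At the level of Laplace transforms this reproduces the stated leading term ${\bf E}[e^{qV_N}]\approx e^{qa_N}\,{\bf E}[M^q_{(\tau=1,\lambda,\lambda)}]$, the extra $\Gamma(1-q)=\mathbf{E}[Y'^q]$ being precisely the decoration that materialises the factor $Y'$ in the probabilistic refinement. The centering $a_N$ is the standard scaling for log-correlated maxima, with the $\tfrac{3}{2}\log\log N$ correction the universal signature distinguishing them from i.i.d.\ extremes.

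The first key step is to justify that the relevant critical total mass is $M_{(\tau=1,\lambda,\lambda)}$. Here I would invoke Conjecture \ref{ourmainconjcircle}, which identifies the sub-critical total mass of the Bacry--Muzy measure on the circle with $M_{(\tau,\lambda,\lambda)}$, together with the definition of the critical distribution as the $\tau\downarrow 1$ limit preceding Theorem \ref{criticalMorris}; the rigorous backbone is the theorem of Madaule \cite{Madmax} that the maximum of the log-correlated field is governed by the total mass of the corresponding critical GMC. The second step is purely algebraic: by Theorem \ref{criticalMorris} the critical Morris law factors as $M_{(\tau=1,\lambda,\lambda)}\overset{\mathrm{law}}{=}X\,Y$, with $X=\beta_{2,2}^{-1}(\tau=1,b_0=1,b_1=1+\lambda,b_2=1+\lambda)$ and $Y=\beta_{1,0}^{-1}(\tau=1,b_0=2\lambda+2)$. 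Substituting this factorisation and appending the decoration $Y'$ yields $V_N=a_N+\log X+\log Y+\log Y'+o(1)$, the asserted probabilistic form.

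For the degenerate case $\lambda=0$ I would check, using the moment formulas of Corollary \ref{moments} and the Barnes identity $G(z+1)=\Gamma(z)G(z)$, that the product $X\,Y$ collapses to a single standard Fréchet variable. A short computation gives ${\bf E}[X^q]=1/(1-q)$ and ${\bf E}[Y^q]=(1-q)\Gamma(1-q)$, whence ${\bf E}[(XY)^q]=\Gamma(1-q)$ and therefore $XY\overset{\mathrm{law}}{=}\beta_{1,0}^{-1}(\tau=1,b_0=1)$. This reproduces the stated collapse into two independent standard Fréchet factors $Y\overset{\mathrm{law}}{=}Y'=\beta_{1,0}^{-1}(\tau=1,b_0=1)$ and is consistent with the Fyodorov--Bouchaud prediction, verified by Remy \cite{Remy}, that the sub-critical total mass on the circle at $\lambda=0$ reduces to the pure Fréchet factor $Y$.

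The hard part will be the freezing step itself, which is genuinely conjectural: neither the convergence of $V_N-a_N$ nor the precise appearance of the $\Gamma(1-q)$ decoration is established for the fields carrying the non-random logarithmic potential $\lambda$, which is exactly why Section \ref{SomeApplications} is flagged as non-rigorous. Madaule's theorem \cite{Madmax} supplies the qualitative link between the maximum and the critical total mass, but pinning down the exact centering constant, the universal decoration, and the full $\lambda$-dependence at the level of the limiting law would require a freezing-type analysis of the partition function $\int_{-1/2}^{1/2}|1+e^{2\pi i\psi}|^{2\lambda}\,e^{\beta V_\varepsilon(\psi)}\,d\psi$ that is not currently available; making this rigorous is the crux and is left open.
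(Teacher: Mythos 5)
Your proposal arrives at the stated conjecture correctly, and your explicit $\lambda=0$ check is sound: with $X=\beta_{2,2}^{-1}\bigl(a=(1,1),b_0=1,b_1=b_2=1\bigr)$ and $Y=\beta_{1,0}^{-1}(a=1,b_0=2)$ one indeed gets ${\bf E}[X^q]=1/(1-q)$ and ${\bf E}[Y^q]=\Gamma(2-q)=(1-q)\Gamma(1-q)$, so $XY$ is standard Fr\'echet, which is exactly the collapse asserted in the conjecture and implicit in Theorem \ref{criticalMorris}. However, your route differs from the paper's in one substantive way: you postulate the decorated-maximum ansatz directly (centred maximum $\to$ log of critical mass plus an independent Fr\'echet decoration, with Madaule's theorem \cite{Madmax} as qualitative backing), whereas the paper derives the whole structure from the freezing--duality mechanism. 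Specifically, the paper starts from the extreme-value identity ${\bf P}(V_N<s)=\lim_{\beta\to\infty}{\bf E}\bigl[\exp(-e^{-\beta s}Z_{\lambda,\varepsilon}(\beta)/C)\bigr]$, approximates the partition function by the sub-critical Morris law via Conjecture \ref{ourmainconjcircle}, and then uses the exact identity in Eq. \eqref{generalidentity} to produce the factor ${\bf E}[X^{q/\beta}]\,\Gamma(1-q/\beta)$; the involution invariance of Theorem \ref{CicInvolution} (Eq. \eqref{invcircle}) makes this quantity self-dual under $\beta\to 1/\beta$, which is precisely what motivates the freezing hypothesis that it is constant for $\beta\geq 1$. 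In this derivation the $Y'$ factor is not an imported ``universal decoration'' but is the frozen $\Gamma(1-q/\beta)\big|_{\beta=1}=\Gamma(1-q)$ coming from that identity, and the $-\tfrac{3}{2}\log\log N$ shift is traced to the $\beta$-dependence of the normalization constant $C$ through its $\Gamma(1-\beta^2)$ term (citing \cite{YO}, \cite{FLDR2}, \cite{Ding}), rather than asserted as generic log-correlated folklore. What your approach buys is brevity and alignment with the modern decorated-convergence formulation; what the paper's buys is an internal explanation of every ingredient — the origin of the Fr\'echet decoration, the reason freezing is plausible (self-duality), and the source of the $\log\log$ correction — which is the content that makes the conjectured $\lambda$-dependence on the circle (beyond the Fyodorov--Bouchaud case $\lambda=0$) credible. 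Since the statement is a conjecture and both arguments are heuristic, neither is a proof, but you should be aware that bypassing the involution invariance skips the step the paper regards as the core of the derivation.
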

This conjecture is due to \cite{FyoBou} in the case of $\lambda=0.$ The extension to general $\lambda$ was first given in \cite{Me16}.

In the rest of this section we will give a heuristic derivation of our conjectures. 
Our approach is based on the freezing hypothesis and calculations of \cite{FyoBou} and \cite{FLDR} as well as our Conjectures
\ref{ourmainconjcircle} and \ref{ourmainconjinterval} and the involution invariance of the Morris and Selberg integral distributions.

Let $0\leq \beta<1$ and  $\tau=1/\beta^2>1.$ 
%%$\int_0^1 s^{\lambda_1} (1-s)^{\lambda_2} \,M_\beta(ds),\; \tau=1/\beta^2>1.$ 
Following \cite{FLDR}, define the exponential functional
\begin{equation}\label{Zdef}
Z_{\lambda_1,\lambda_2,\varepsilon}(\beta) \triangleq \sum\limits_{i=1}^N  x_i^{\beta\lambda_1}(1-x_i)^{\beta\lambda_2} e^{\beta V_\varepsilon(x_i)}.
\end{equation}
Using the identity
\begin{equation}\label{keymax}
{\bf P} \bigl(V_N < s \bigr) = \lim\limits_{\beta\rightarrow \infty} {\bf E}\Bigl[
\exp\Bigl(-e^{-\beta s}\,Z_{\lambda_1,\lambda_2,\varepsilon}(\beta)/C\Bigr)\Bigr],
\end{equation}
which is applicable to any sequence of random variables and an arbitrary \emph{$\beta$-independent} constant $C,$
the distribution of the maximum is reduced to the Laplace transform of the exponential functional in the limit $\beta\rightarrow \infty.$
Now, by Eq. \eqref{chaosinterval}, it is known that for $0<\beta<1$ the exponential functional converges\footnote{
Theorem 6 in \cite{BM1} shows that the laws of the total mass of the continuous and discrete multiplicative chaos measures are the same
provided  the $\varepsilon$ parameter coincides with the discretization step.}  as 
$N\rightarrow \infty$ to the total mass of the Bacry-Muzy measure on the unit interval with a logarithmic potential,
which is conjectured to be given by the Selberg integral distribution, resulting %%, see Conjecture \ref{ourmainconj}.
in the approximation\footnote{The validity of approximating the finite $N$ quantity with the $N\rightarrow\infty$
limit is discussed in \cite{FyoBou}.}
\begin{equation}\label{keyapprox}
Z_{\lambda_1,\lambda_2,\varepsilon}(\beta) \approx N^{1+\beta^2}\, e^{\beta^2\kappa}\,M_{(\tau,\beta\lambda_1,\beta\lambda_2)},\;N\rightarrow\infty.
\end{equation}
%%where
%%\begin{equation}
%%%\tau = 1/\beta^2,\; 0<\beta<1.
%%\end{equation}
Next, we recall the involution invariance of the Mellin transform of the Selberg integral distribution, see Eq. \eqref{involutionint}.
%%As was first discovered in \cite{FLDR} and later verified in \cite{Me14} using this decomposition and the scaling invariance of Barnes beta distributions, 
%%see Theorem 4.2 in \cite{Me14}, the Mellin transform of $M_{\mu,\lambda_1,\lambda_2}$ satisfies a self-duality equation, \emph{i.e.}
%%is involution invariant under
%\begin{equation}
%\tau\rightarrow \frac{1}{\tau},\; q\rightarrow \frac{q}{\tau}, \; \lambda_i\rightarrow \tau\lambda_i.
%\end{equation}
Denoting the Mellin transform as in Theorem \ref{BSM} by $\mathfrak{M}(q\,|\,\tau,\lambda_1,\lambda_2)$
and introducing the function
\begin{equation}\label{Func}
F(q\,|\,\beta, \lambda_1,\lambda_2) \triangleq \mathfrak{M}\bigl(\frac{q}{\beta}\,|\,\frac{1}{\beta^2},\beta\lambda_1,\beta\lambda_2\bigr) (2\pi)^{-\frac{q}{\beta}}\,\Gamma^{\frac{q}{\beta}}(1-\beta^2) \Gamma(1-\frac{q}{\beta}),
\end{equation}
one observes that by the involution invariance in Eq. \eqref{involutionint} this function satisfies the identity
\begin{equation}\label{selfdual}
F(q\,|\,\beta, \lambda_1,\lambda_2) = F(q\,\big|\,\frac{1}{\beta}, \lambda_1,\lambda_2),
\end{equation}
first discovered in \cite{FLDR} in the special case of $\lambda_1=\lambda_2=0,$ then formulated in general in the form of Eq. \eqref{involutionint} in
\cite{Me14}, and in \cite{FLD} in this form. 
On the other hand, as shown in \cite{FLDR}, one has the general identity
\begin{equation}\label{generalidentity}
\int_\mathbb{R} e^{yq} \frac{d}{dy}\exp\Bigl(-e^{-\beta y}X\Bigr)  dy = X^{\frac{q}{\beta}} \,\Gamma(1-\frac{q}{\beta}),\;\Re(q)<0, \,X>0.
\end{equation}
Letting 
\begin{equation}\label{X}
X = Z_{\lambda_1,\lambda_2,\varepsilon}(\beta) \frac{e^{\kappa}\,\Gamma(1-\beta^2)}{2\pi}
\end{equation}
one sees by means of Eq. \eqref{keyapprox} that for $0<\beta<1,$
\begin{equation}
{\bf E}\bigl[X^{\frac{q}{\beta}}\bigr]\approx e^{q\kappa(\beta+\frac{1}{\beta})} N^{q(\beta+\frac{1}{\beta})}\,\mathfrak{M}\bigl(\frac{q}{\beta}\,|\,\frac{1}{\beta^2},\beta\lambda_1,\beta\lambda_2\bigr) (2\pi)^{-\frac{q}{\beta}}\,\Gamma^{\frac{q}{\beta}}(1-\beta^2),\; N\rightarrow\infty,
\end{equation}
so that
\begin{equation}\label{rhs}
{\bf E}\bigl[X^{\frac{q}{\beta}}\bigr]\, \Gamma(1-\frac{q}{\beta}) \approx e^{q\kappa(\beta+\frac{1}{\beta})} N^{q(\beta+\frac{1}{\beta})}\,F(q\,|\,\beta, \lambda_1,\lambda_2),\;
N\rightarrow\infty.
\end{equation}
On the other hand, by letting the constant $C$ in Eq. \eqref{keymax} to be taken to be
\begin{equation}
C = \frac{2\pi}{e^{\kappa}\Gamma(1-\beta^2)},
\end{equation}
and combining Eq. \eqref{keymax} with Eq. \eqref{generalidentity} , one obtains
\begin{equation}
{\bf E}[e^{q V_N}] = \lim_{\beta\rightarrow \infty} \Bigl[{\bf E}\bigl[X^{\frac{q}{\beta}}\bigr] \Gamma(1-\frac{q}{\beta})\Bigr].
\end{equation}
The right-hand side of this equation has only been determined for $0<\beta<1,$ see Eq. \eqref{rhs}.
Due to the self-duality of the right-hand side, one assumes that it gets frozen at $\beta=1,$ as first formulated in \cite{FLDR}.
\begin{conjecture}[The Freezing Hypothesis]
Let $\beta>1.$
\begin{equation}
{\bf E}\bigl[X^{\frac{q}{\beta}}\bigr] \Gamma(1-\frac{q}{\beta}) = 
{\bf E}\bigl[X^{\frac{q}{\beta}}\bigr] \Gamma(1-\frac{q}{\beta})\Big|_{\beta=1}.
\end{equation}
\end{conjecture}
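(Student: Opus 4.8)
The plan is to recognize at the outset that the Freezing Hypothesis is not an algebraic identity to be verified by manipulating the double gamma factors, but rather a statement about the low-temperature phase of the exponential functional $Z_{\lambda_1,\lambda_2,\varepsilon}(\beta)$; consequently any rigorous proof must go through the theory of the maximum of log-correlated Gaussian fields rather than through the formal computations of Section \ref{SomeApplications}. First I would emphasize what the self-duality in Eq. \eqref{selfdual} does and does \emph{not} give: together with the invariance of the prefactor $\beta+1/\beta$ under $\beta\rightarrow 1/\beta$, it shows only that the right-hand side of Eq. \eqref{rhs} is symmetric about $\beta=1$. This identifies $\beta=1$ as the unique self-dual (critical) point, but it does not force the observable to remain constant for $\beta>1$; the assertion that it freezes beyond criticality is the genuine physical input and is exactly what must be established.

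The route I would take is to bypass the $\beta\rightarrow\infty$ limit of the formal expression and instead attack $V_N$ directly. The key step is to invoke the rigorous convergence theory for the recentered maximum of the log-correlated field $V_\varepsilon$: by the results of Madaule \emph{et al.}, cf. \cite{Madmax} and \cite{Mad}, the variable $V_N-\bigl(2\log N-\tfrac{3}{2}\log\log N\bigr)$ converges in law to a randomly shifted Gumbel whose random shift is the logarithm of the total mass of the associated \emph{critical} Bacry-Muzy GMC measure, i.e.\ the derivative martingale limit. The second step is to identify this critical total mass with the critical Selberg / Morris integral distribution $M_{(\tau=1,\lambda_1,\lambda_2)}$ of Section \ref{DerivM}; granting this, the limiting Laplace transform of $V_N$ acquires precisely the factor ${\bf E}[M^q_{(\tau=1,\lambda_1,\lambda_2)}]$ together with the Gumbel factor $\Gamma(1-q)$ coming from the independent copy $Y'$, which is exactly the content of the frozen expression $F(q\,|\,1,\lambda_1,\lambda_2)$ obtained by reading Eq. \eqref{rhs} at $\beta=1$. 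Matching these two outputs would establish the Freezing Hypothesis in the stated form.

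The hard part will be the second step, the identification of the critical GMC total mass with $M_{(\tau=1,\lambda_1,\lambda_2)}$: this is conditional on Conjectures \ref{ourmainconjcircle} and \ref{ourmainconjinterval} in the critical limit $\tau\downarrow 1$, which are themselves open and known rigorously only for $\lambda=0$ on the circle. Even granting that identification, a direct proof of freezing, namely that the $\beta\rightarrow\infty$ limit of ${\bf E}[X^{q/\beta}]\Gamma(1-q/\beta)$ equals its $\beta=1$ value rather than continuing to depend on $\beta$ throughout the low-temperature phase, requires non-perturbative control of the frozen phase of the associated REM / directed-polymer free energy, which is the deepest and most delicate ingredient and is not supplied by the self-duality alone. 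I therefore expect that, short of these two inputs, the most one can extract rigorously is the symmetry about $\beta=1$ recorded in Eq. \eqref{selfdual}; the freezing itself remains genuinely conjectural, its proof being essentially equivalent to combining the rigorous theory of log-correlated maxima with the conjectured law of the critical GMC total mass.
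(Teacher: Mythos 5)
You have correctly identified the crux: the paper contains no proof of the Freezing Hypothesis, and none can be given with the tools it develops. The statement appears as an explicitly labeled conjecture, attributed to \cite{FLDR}, and the paper's entire supporting argument is precisely the observation you reject as insufficient, namely that the self-duality \eqref{selfdual} of $F(q\,|\,\beta,\lambda_1,\lambda_2)$ under $\beta\rightarrow 1/\beta$ singles out $\beta=1$ as the fixed point; the text then says that ``one assumes that it gets frozen at $\beta=1$'' and uses this assumption as input for Conjectures \ref{maxint} and \ref{maxintcircle}. Your assessment that self-duality yields only a symmetry about the critical temperature, and that constancy throughout the phase $\beta>1$ is an independent physical hypothesis, is exactly right and is all the paper itself claims.

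Where you genuinely differ from the paper is in the direction of the logic. The paper assumes freezing in order to derive the conjectured law of the maximum of the GFF; you propose running the argument backwards, deducing the frozen value from the rigorous convergence of the recentered maximum to a randomly shifted Gumbel (\cite{Madmax}, together with the glassy-phase results of \cite{Mad}) combined with the identification of the random shift with the critical distribution $M_{(\tau=1,\lambda_1,\lambda_2)}$. This is a sound conditional strategy, consistent with the paper's own remark in the introduction that \cite{Madmax} reduces the law of the maximum to the law of the critical GMC total mass, and it has the virtue of isolating the two missing ingredients: the critical-case identification, which rests on the open Conjectures \ref{ourmainconjcircle} and \ref{ourmainconjinterval}, and control of the observable for all $\beta>1$ rather than only in the $\beta\rightarrow\infty$ limit that the maximum detects through Eq. \eqref{keymax}. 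Your verdict that the statement remains genuinely conjectural is the correct one; what your route would buy, were its conditional inputs ever established, is an actual theorem, whereas the paper's self-duality heuristic can never be more than motivation.
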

%%The formula for $X$ in \eqref{X} means that the constant $C$ in \eqref{key} is taken to be
One must note however that $C$ is not \emph{$\beta$-independent}. It is argued in \cite{YO} and \cite{FLDR2}  
that the $\Gamma(1-\beta^2)$ term 
shifts the maximum by  $-(3/2) \log \log N,$ see also \cite{Ding}. %%I think that the $\beta^2 \kappa$ term only
%%affects the shift of the maximum distribution as it is $N$-independent and is not singular at $\beta=1.$ 
Overall, we then obtain by Eq. \eqref{rhs},
%%\begin{equation}
%%\approx  \lim_{\beta\rightarrow \infty} \Bigl[e^{q\kappa(\beta+\frac{1}{\beta})} N^{q(\beta+\frac{1}{\beta})}\,F(q\,|\,\beta, \lambda_1,\lambda_2)\Bigr],\;N\rightarrow\infty.
%%\end{equation}
\begin{equation}
{\bf E}[e^{q\,V_N}] \approx e^{q(2\log N-(3/2)\log\log N+\,{\rm const})}\,F(q\,|\,\beta=1, \lambda_1,\lambda_2),\;
N\rightarrow\infty
\end{equation}
for some constant.\footnote{As remarked in \cite{FyodSimm}, this procedure only determines the distribution of the maximum up
to a constant term.}
Finally, recalling definitions of the critical Selberg integral distribution in Eq. \eqref{Selbergcrit} and of $F(q\,|\,\beta, \lambda_1,\lambda_2)$
in Eq. \eqref{Func}, and appropriately adjusting the constant, 
\begin{equation}
{\bf E}[e^{q\,V_N}] \approx e^{q(2\log N-(3/2)\log\log N+\,{\rm const})}\,{\bf E}\Bigl[M^q_{(\tau=1,\lambda_1,\lambda_2)}\Bigr]\Gamma(1-q),
\end{equation}
so that $Y'$ comes from the $\Gamma(1-q)$ factor and has the same law as $Y.$

The argument for the GFF on the circle goes through verbatim so we will only point out the key steps and omit redundant details.
Define the exponential functional 
\begin{equation}\label{Zdefc}
Z_{\lambda,\varepsilon}(\beta) = \sum\limits_{j=-N/2}^{N/2}  |1+e^{2\pi i\psi_j}|^{2\lambda\,\beta} e^{\beta V_\varepsilon(\psi_j)}.
\end{equation}
To describe its limit as $N\rightarrow \infty$ we need to compute the distribution of the total mass of the Bacry-Muzy measure on the circle
with a logarithmic potential that was defined in Eq. \eqref{chaoscircle}. Assuming Conjecture \ref{ourmainconjcircle},
we have
%%\begin{equation}
%%M_{(\tau,\lambda,\lambda)} \overset{{\rm in \,law}}{=}   \int_{-\pi}^{\pi} |1+e^{i\psi}|^{2\lambda}\, dM_{\beta}(\psi),\; \tau=1/\beta^2>1.
%%\end{equation}
%%This conjecture for $\lambda_1=\lambda_2=0$ is due to \cite{FyoBou}, the general case is original to this paper.
%%It follows that the exponential functional in \eqref{Zdefc} can be approximated by
\begin{equation}\label{keyapproxc}
Z_{\lambda,\varepsilon}(\beta) \approx N^{1+\beta^2}\, e^{\beta^2\kappa}\,M_{(\tau,\beta\lambda,\beta\lambda)},\;N\rightarrow\infty.
\end{equation}
The rest of the argument is the same as for the GFF on the interval, with Eq. \eqref{invcircle} replacing Eq. \eqref{involutionint}.

\subsection{Inverse Participation Ratios of the Fyodorov-Bouchaud Model}\label{IPRsection}
In this section we will compute inverse participation ratios (IPR) of the Fyodorov-Bouchaud model based on 
Conjecture \ref{ourmainconjcircle}.

Let the gaussian field $V_\varepsilon(\psi)$ be as in Eq. \eqref{covkc}, $\beta\in(0,1)$ denote the inverse temperature, and
$\tau=1/\beta^2.$ 
Consider the associated partition function corresponding to the field with a non-random logarithmic
potential,
\begin{equation}\label{Zdefc}
Z_{\lambda,\varepsilon}(\beta) = \sum\limits_{j=-N/2}^{N/2}  |1+e^{2\pi i\psi_j}|^{2\lambda} e^{-\beta V_\varepsilon(\psi_j)}.
\end{equation}
When $\lambda=0,$ we will simply write
\begin{equation}\label{Zdef}
Z_{\varepsilon}(\beta) = \sum\limits_{j=-N/2}^{N/2}  e^{-\beta V_\varepsilon(\psi_j)}.
\end{equation}
The problem of computing the IPR of the Fyodorov-Bouchaud model
is that of computing
\begin{equation}
{\bf E}\Bigl[\frac{Z_{\varepsilon}(n\beta)}{Z^n_{\varepsilon}(\beta)}\Bigr],\;N\rightarrow\infty,
\end{equation}
for positive integer $n,$ see \cite{Fyo09}.
We will consider here a more general problem of computing the analytic
continuation in the form
\begin{equation}
{\bf E} \Bigl[Z_{\varepsilon}(q\beta) 
Z^s_{\varepsilon}(\beta)\Bigr],\;N\rightarrow\infty
\end{equation}
for real $q$ and generally complex $s.$ 

Our results are as follows. Let $\mathfrak{M}(q\,|\,\tau,\lambda)$  denote the Mellin transform of the Morris integral probability distribution
with $\lambda_1=\lambda_2=\lambda,$ see Eq. \eqref{thefunctioncircle}. Then,
\begin{equation}
{\bf E} \Bigl[Z_{\varepsilon}(q\beta) 
Z^s_{\varepsilon}(\beta)\Bigr] \approx N^{1+q^2\beta^2+(1+\beta^2)s} \;%%\left(\frac{N}{2\pi}\right)^{q^2\beta^2} 
%%\left(\frac{N}{2\pi}\right)^{(1+\beta^2)s} 
\mathfrak{M}(s\,|\,\tau, \lambda=-q\beta^2), \, N\rightarrow\infty.
\end{equation}
%%where $\mathfrak{M}(s\,|\alpha, \beta)$ is given in Eq. \eqref{thefunction}.
Explicitly, %%recalling that $\tau=1/\beta^2,$
\begin{align}\label{qsfunction}
{\bf E} \Bigl[Z_{\varepsilon}(q\beta) 
Z^s_{\varepsilon}(\beta)\Bigr] \approx N^{1+q^2\beta^2+(1+\beta^2)s} %%N \left(\frac{N}{2\pi}\right)^{q^2\beta^2} 
%%\left(\frac{N}{2\pi}\right)^{(1+\beta^2)s} 
&\frac{\tau^{\frac{s}{\tau}}}{\Gamma^s\bigl(1-\beta^2\bigr)}
\frac{\Gamma_2((-2q\beta^2+1)/\beta^2+1-s\,|\,\tau)}{\Gamma_2((-2q\beta^2+1)/\beta^2+1\,|\,\tau)}
\times \nonumber \\ & \times
\frac{\Gamma_2(-s+1/\beta^2\,|\,\tau)}{\Gamma_2(\tau\,|\,\tau)}
\frac{\Gamma_2((1-q\beta^2)/\beta^2+1\,|\,\tau)^2}{\Gamma_2((1-q\beta^2)/\beta^2+1-s\,|\,\tau)^2}.
%%\frac{\Gamma_2(\tau(1+\alpha)+1\,|\,\tau)}{\Gamma_2(\tau(1+\alpha)+1-s\,|\,\tau)}
\end{align}
In particular, when $s=-n$ and $n\in\mathbb{N},$ we can use Eq. \eqref{CirNegMoments} to simplify this expression to
\begin{equation}\label{qn}
{\bf E} \Bigl[Z_{\varepsilon}(q\beta) 
Z^{-n}_{\varepsilon}(\beta)\Bigr] \approx N^{1+q^2\beta^2-(1+\beta^2)n}
%%N \left(\frac{N}{2\pi}\right)^{q^2\beta^2} \bigl(\frac{N}{2\pi}\bigr)^{-(1+\beta^2)n} 
\prod\limits_{j=0}^{n-1} \frac{\Gamma(1-q\beta^2+(j+1)\beta^2)^2 \,\Gamma(1-\beta^2)}{\Gamma(1-2q\beta^2+(j+1)\beta^2)\,\Gamma(1+j\beta^2)}. 
\end{equation}
Finally, when $q=n,$ we obtain the expression for the IPR,
\begin{equation}\label{IPRformula}
{\bf E}\Bigl[\frac{Z_{\varepsilon}(n\beta)}{Z^n_{\varepsilon}(\beta)}\Bigr] 
\approx 
 N^{1+n^2\beta^2-(1+\beta^2)n}
%%N \left(\frac{N}{2\pi}\right)^{n^2\beta^2} \bigl(\frac{N}{2\pi}\bigr)^{-(1+\beta^2)n}   
\prod\limits_{j=0}^{n-1} \frac{\Gamma(1-n\beta^2+(j+1)\beta^2)^2 \,\Gamma(1-\beta^2)}{\Gamma(1-2n\beta^2+(j+1)\beta^2)\,\Gamma(1+j\beta^2)}.
\end{equation}
For example, when $n=2,$ we recover the result of \cite{Fyo09},
\begin{equation}
{\bf E}\Bigl[\frac{Z_{\varepsilon}(2\beta)}{Z^2_{\varepsilon}(\beta)}\Bigr] 
\approx N^{2\beta^2-1}%% (2\pi)^{-2\beta^2}  
 \frac{\Gamma(1-\beta^2)^4}{\Gamma(1-3\beta^2)
\Gamma(1-2\beta^2) \, \Gamma(1+\beta^2)}.
\end{equation}
The conditions of validity of our approximation are as follows. It is clear from Eq. \eqref{qsfunction} that 
the condition on $q$ is 
\begin{equation}
1-2q\beta^2+\beta^2 >0.
\end{equation}
Thus, the range of allowed values of $q$ is
\begin{equation}
q< \frac{1+\beta^2}{2\beta^2}.
\end{equation}
The range of $s$ is determined by 
\begin{equation}
\Re(s) < \min\Bigl(\frac{1}{\beta^2},  \frac{1}{\beta^2} + 1 - 2q, \frac{1}{\beta^2} + 1 - q\Bigr).
\end{equation}
In particular, for $q\in \mathbb{N},$ 
\begin{equation}\label{rescondition}
\Re(s) < \frac{1}{\beta^2} + 1 - 2q.
\end{equation}
For example, the expression in Eq. \eqref{qsfunction} holds for all $q<0$ and $\Re(s)<1/\beta^2$ and
the expression in Eq. \eqref{qn} holds for all $q<0$ and $n\in \mathbb{N}.$ In the specific case of IPR,
$s=-n,$ $q=n,$ so that Eq. \eqref{IPRformula} holds for the range
\begin{equation}
n < \frac{1+\beta^2}{2\beta^2}
\end{equation}
as the condition in Eq. \eqref{rescondition} is automatically satisfied.

We now proceed to explain our calculations. Following \cite{FyoBou}, we observe
\begin{equation}\label{keyapproxc}
Z_{\lambda,\varepsilon}(\beta) \approx N^{1+\beta^2}\,\int_{-\frac{1}{2}}^{\frac{1}{2}} |1+e^{2\pi i s}|^{2\lambda} \,M_\beta(ds),\;
%%M_{(\tau,\beta\lambda,\beta\lambda)},\;
N\rightarrow\infty,
\end{equation}
where $M_\beta(ds)$ denotes the Bacry-Muzy GMC on the circle. On the other hand,
we can use the following elementary application of the Girsanov theorem for gaussian fields.
Consider the change of measure
\begin{equation}
\frac{d\mathcal{Q}}{d\mathcal{P}} = e^{q^2\beta^2\log\varepsilon} e^{-q\beta V_\varepsilon(\phi)},
\end{equation}
where $\phi$ is some fixed angle. Then, we have the identity of gaussian processes in law
 viewed as functions of $\psi,$
\begin{equation}
V_\varepsilon(\psi)-q\beta {\bf Cov}(V_\varepsilon(\psi), V_\varepsilon(\phi))\big|_\mathcal{P} = 
V_\varepsilon(\psi)\big|_\mathcal{Q}.
\end{equation}
which is verified by a straightforward calculation of their characteristic functions. 
We now choose $\phi=1/2,$ and write by the rotational invariance of the field,\footnote{The idea
of using rotational invariance in this context is due to \cite{Fyo09}.}
\begin{align}
{\bf E} \Bigl[Z_{\varepsilon}(q\beta) 
Z^s_{\varepsilon}(\beta)\Bigr] = & N {\bf E} \Bigl[e^{-q\beta V_\varepsilon(1/2)}\,
Z^s_{\varepsilon}(\beta)\Bigr], \nonumber \\
= & N e^{-q^2\beta^2\log\varepsilon} {\bf E} \Bigl[\Bigl(  \sum_j e^{-\beta(V_\varepsilon(\psi_j)
+2q\beta \log |e^{2\pi i\psi_j}+1|)}\Bigr)^s\Bigr], \nonumber \\
= & N^{1+q^2\beta^2} {\bf E} \Bigl[Z^s_{\lambda=-q\beta^2, \varepsilon}(\beta)\Bigr],
\end{align}
and the result follows from Conjecture \ref{ourmainconjcircle}.

\subsection{Mod-Gaussian Limit Theorems}\label{modG}
\noindent 
The idea of reformulating the law of total mass of the Bacry-Muzy measure on the interval as a mod-Gaussian limit
was first introduced in \cite{Menon}. It is based on the observation that the smoothed indicator function of a linear statistic,
which converges to the $\mathcal{H}^{1/2}$ gaussian noise, converges to the \emph{centered} GFF on the interval, \emph{i.e.} 
the process $V_\varepsilon(x)-V_\varepsilon(0),$ where $V_\varepsilon(x)$ is as in Eq. \eqref{covkk}, also known
as Fractional Brownian motion with $H=0,$ cf. \cite{FKS}. There are many known examples of linear statistics
that converge to the $\mathcal{H}^{1/2}$ gaussian noise: counting statistics of Riemann zeroes \cite{BK}, \cite{Rodg},
the CLT of Soshnikov for the CUE ensemble \cite{Sosh} and the recent work on log-absolute
value of the characteristic polynomial of suitably scaled GUE matrices \cite{FKS}, see \cite{joint} for more examples. 
Hence, the results that are conjectured below are expected to be highly universal, \emph{i.e.} independent of the origin
of the linear statistic. For concreteness, we will assume in this section that the statistic comes from the Riemann zeroes following \cite{Menon}.

Consider the class of $\mathcal{H}^{1/2}$  test functions that was considered in \cite{BK}.
%%  is primarily defined by the condition $\langle f, \,f\rangle<\infty,$ where 
\begin{align}
\langle f,\,g\rangle \triangleq & \Re\int |w| \hat{f}(w)\overline{\hat{g}(w)}\,dw, \label{scalarf} \\
  = & -\frac{1}{2\pi^2} \int f'(x) g'(y)\log|x-y|dx\,dy \label{scalar}
\end{align}
plus some mild conditions on the growth of $f(x)$ and its Fourier transform 
$\hat{f}(w) \triangleq 1/2\pi \int f(x) e^{-iwx} \, dx$
at infinity. 
Assuming the Riemann hypothesis, we write non-trivial zeroes of the Riemann zeta function in the form $\{1/2+i\gamma\},$
$\gamma\in\mathbb{R}.$ Let $\lambda(t)$ be a function of $t>0$ that satisfies
the asymptotic condition
\begin{equation}\label{lt}
1\ll \lambda(t) \ll \log t
\end{equation}
in the limit $t\rightarrow \infty,$ where the number theoretic notation $a(t)\ll b(t)$ means  $a(t)=o\bigl(b(t)\bigr).$
Let $\omega$ denote a uniform random variable over $(1, 2),$ $\gamma(t) \triangleq \lambda(t) (\gamma-\omega t),$ 
and define the statistic
\begin{equation}\label{St}
S_t(f) \triangleq \sum\limits_{\gamma} f\bigl(\gamma(t)\bigr) -\frac{\log t}{2\pi \lambda(t)} \int f(u) du
\end{equation}
given a test function $f(x)$ in the $\mathcal{H}^{1/2}$ class. We note that $S_t(f)$ is centered
%%\footnote{Centered means that its expectation is zero. All expectations, covariances, etc in this section are with respect to the distribution of $\omega.$}
in the limit $t\rightarrow \infty$ as it is well known that the number of Riemann zeroes in the interval $[t,\,2t]$ is asymptotic to $t\log t/2\pi$ in this limit. The principal result of \cite{BK} and \cite{Rodg}
and the starting point of our construction is the following theorem, which is the number theoretic equivalent of Soshnikov's CLT for CUE.
\begin{theorem}[Convergence to a gaussian vector]\label{strong}
Given test function $f_1,$ $\cdots$ $f_k$ in $\mathcal{H}^{1/2},$
the random vector $\bigl(S_t(f_1),\cdots, S_t(f_k)\bigr)$ converges in law
in the limit $t\rightarrow\infty$ to
a centered gaussian vector $\bigl(S(f_1),\cdots S(f_k)\bigr)$ having the covariance 
\begin{equation}
{\bf Cov} \bigl(S(f_i), \,S(f_j)\bigr) = \langle f_i,\,f_j\rangle.
\end{equation}
\end{theorem}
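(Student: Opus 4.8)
The plan is to reduce the multivariate statement to a one-dimensional central limit theorem and then attack that via the explicit formula of prime number theory together with the method of moments, the source of randomness being the uniform variable $\omega$ on $(1,2)$.

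First I would invoke the Cram\'er--Wold device. Because $f \mapsto S_t(f)$ is linear and the pairing $\langle\cdot,\cdot\rangle$ in Eqs. \eqref{scalarf}--\eqref{scalar} is bilinear on the $\mathcal{H}^{1/2}$ class, for any real coefficients $c_1,\dots,c_k$ one has $\sum_j c_j S_t(f_j) = S_t(g)$ with $g = \sum_j c_j f_j \in \mathcal{H}^{1/2}$, and the candidate limiting variance is $\langle g, g\rangle = \sum_{i,j} c_i c_j \langle f_i, f_j\rangle$. Hence it suffices to prove that for a single real $f \in \mathcal{H}^{1/2}$ the scalar statistic $S_t(f)$ converges in law to $\mathcal{N}(0,\langle f,f\rangle)$; the joint convergence and the stated covariance then follow automatically.

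Second I would pass from the sum over zeros to a sum over primes. Writing $g_t(r) = f(\lambda(t)(r-\omega t))$, its Fourier transform is $\hat{g}_t(w) = \lambda(t)^{-1} e^{-i w \omega t}\hat f(w/\lambda(t))$, so the smoothing concentrates at frequencies of order $\lambda(t)$. Applying the Riemann--Weil explicit formula to $g_t$ converts $\sum_\gamma g_t(\gamma)$ into a deterministic archimedean term plus a sum over prime powers $n$ weighted by $\Lambda(n) n^{-1/2}\hat{g}_t(\tfrac{\log n}{2\pi})$. The archimedean term reproduces, to leading order, the normalizing subtraction $\frac{\log t}{2\pi\lambda(t)}\int f$ in Eq. \eqref{St}, leaving
\begin{equation}
S_t(f) = -\frac{1}{\lambda(t)}\sum_p \frac{\log p}{\sqrt p}\,\hat f\!\left(\frac{\log p}{2\pi\lambda(t)}\right) p^{-i\omega t} + \text{(c.c.)} + o(1),
\end{equation}
where the prime powers $n=p^k$ with $k\ge 2$ and the explicit-formula error terms are absorbed into the $o(1)$. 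The condition $1 \ll \lambda(t) \ll \log t$ in Eq. \eqref{lt} is exactly what places $S_t(f)$ in the mesoscopic regime: $\lambda(t)\gg 1$ keeps $\hat f$ evaluated near vanishing argument so that many primes contribute, while $\lambda(t)\ll\log t$ keeps the relevant primes small enough that the zero statistics have not yet felt the GUE pair-correlation corrections. Running the method of moments with respect to $\omega$, products of the phases $p^{\pm i\omega t}$ average over $\omega\in(1,2)$ to a nonzero value only when the accompanying product of primes equals one, and by unique factorization this forces a diagonal pairing. A direct second-moment computation then gives, via the prime number theorem and the substitution $w=\log p/(2\pi\lambda(t))$,
\begin{equation}
{\bf E}_\omega\bigl[S_t(f)^2\bigr] \longrightarrow \int |w|\,|\hat f(w)|^2\,dw = \langle f, f\rangle,
\end{equation}
while every cumulant of order $\ge 3$ is shown to vanish in the limit because the corresponding multiple prime sum has strictly fewer free summation variables than are needed to survive the averaging, exactly as in Soshnikov's combinatorial argument for the CUE.

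I expect the main obstacle to be the analytic control of this last step: justifying uniformly in $t$ that only the diagonal terms survive in each cumulant, and bounding the off-diagonal and prime-power contributions together with the explicit-formula error terms, all within the constrained window $1\ll\lambda(t)\ll\log t$. This is where the arithmetic genuinely enters, through the prime number theorem and the multiplicative independence of primes, and it is the step that requires the unconditional estimates of \cite{BK} and \cite{Rodg}; the reductions in the first two steps are essentially formal once the explicit formula is in hand.
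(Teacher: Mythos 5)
The paper does not prove this statement: Theorem \ref{strong} is imported verbatim as ``the principal result of \cite{BK} and \cite{Rodg}'' and serves only as the starting point for the mod-Gaussian constructions of Section \ref{modG}, so there is no internal proof to compare against; the comparison must be with the proofs in those references. Your outline reproduces their strategy faithfully: the Cram\'er--Wold reduction to a scalar statistic is legitimate (since $f\mapsto S_t(f)$ is linear and the $\mathcal{H}^{1/2}$ class of Eq. \eqref{scalarf} is a vector space), the Riemann--Weil explicit formula trades zeros for primes, the archimedean term cancels the normalization in Eq. \eqref{St} up to a discrepancy of order $\log\omega/(2\pi\lambda(t))\int f$ which is $o(1)$ because $\lambda(t)\to\infty$, and the moment computation in which averaging over $\omega\in(1,2)$ kills off-diagonal prime pairs is exactly where the upper bound $\lambda(t)\ll\log t$ of Eq. \eqref{lt} enters: the participating primes then have size $t^{o(1)}$, so for $p\neq q$ one has $\bigl|\int_1^2 (p/q)^{-i\omega t}\,d\omega\bigr| = O\bigl(1/(t|\log(p/q)|)\bigr) = O(t^{-1+o(1)})$, which is negligible. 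The defect, which you yourself flag, is that the proposal defers all of the actual substance: the uniform vanishing of cumulants of order three and higher, the contribution of prime powers, the explicit-formula error terms, and the tail of the prime sum for non-compactly-supported $\hat f$ are asserted by analogy with Soshnikov's CUE argument rather than carried out, and your closing appeal to ``the unconditional estimates of \cite{BK} and \cite{Rodg}'' is circular if the goal is an independent proof of precisely their theorem. As a roadmap to the known proofs, however, the sketch is accurate, including the correct identification of the limiting variance $\int |w|\,|\hat f(w)|^2\,dw = \langle f,f\rangle$ via the prime number theorem.
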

The significance of the condition $\lambda(t)\ll \log t$ is that the number of zeroes that are visited by $f$ as $t\rightarrow \infty$ goes to infinity, \emph{i.e.} Theorem \ref{strong} is a mesoscopic central limit theorem. 

We can now summarize our results. Let $0<u<1$ and $\chi_u(x)$ denote the indicator function of the interval $[0, \,u].$
Let $\phi(x)$ be a smooth bump function supported on $(-1/2, \,1/2),$ 
and denote
\begin{equation}\label{kappa}
\kappa\triangleq -\int
\phi(x)\phi(y)\log|x-y|\,dxdy. 
\end{equation}
Define the $\varepsilon$-rescaled bump function by $\phi_\varepsilon(x)\triangleq 1/\varepsilon\phi(x/\varepsilon),$
and let $f_{\varepsilon, u}(x)$ be the smoothed indicator function of 
the interval $[0, \,u]$ given by 
the convolution of $\chi_u(x)$ with $\phi_\varepsilon(x),$
\begin{equation}\label{fu}
f_{\varepsilon, u}(x) \triangleq (\chi_u\star\phi_\varepsilon)(x) = \frac{1}{\varepsilon}\int \chi_u(x-y)\phi(y/\varepsilon)\,dy.
\end{equation}
Theorem \ref{strong} applies to $f_{\varepsilon, u}(x)$ for all $u>\varepsilon>0.$ Fix $\varepsilon>0$ and define the statistic 
$S_t(\mu,u,\varepsilon),$
\begin{equation}\label{Su}
S_t(\mu, u, \varepsilon)\triangleq \pi\sqrt{2\mu}\Bigl[\sum\limits_{\gamma} f_{\varepsilon, u}\bigl(\gamma(t)\bigr) - \frac{\log t}{2\pi \lambda(t)} \int f_{\varepsilon, u}(x) dx\Bigr].
\end{equation}
Then, we have the following key result, cf. \cite{Menon}. By Theorem \ref{strong},
the process $u\rightarrow S_t(\mu, u, \varepsilon),$ $u\in (0, 1),$
converges in law in the limit $t\rightarrow\infty$  to the centered gaussian field having the asymptotic covariance
\begin{align}\label{limcov1}
\begin{cases}
& -\mu 
\bigl(\log\varepsilon - \kappa + \log|u-v| - \log|u| - \log|v| \bigr) + O(\varepsilon), \; \text{if $|u-v|\gg \varepsilon$}, 
 \\
 &  -2\mu 
\bigl(\log\varepsilon - \kappa - \log|u| \bigr) + O(\varepsilon),
 \; \text{if $u=v.$}
\end{cases}
\end{align}
Thus, recalling the process $V_\varepsilon(u)$ in Eq. \eqref{covkk}, we have shown that the smoothed counting statistic $S_t(\mu,u,\varepsilon)$ converges to
the \emph{centered} GFF on the interval,
\begin{equation}
S_t(\mu,u,\varepsilon) \rightarrow \sqrt{\mu/2}\,\bigl(V_\varepsilon(u)-V_\varepsilon(0)\bigr).
\end{equation}
Moreover, we also showed in \cite{Menon} that in the case of $\varepsilon$ varying with $t$ the 
$\mathcal{H}^{1/2}$ covariance in Theorem \ref{strong}
is preserved under the following natural slow decay condition,
\begin{equation}
\frac{\lambda(t)}{\log t}\ll\varepsilon(t) \ll 1, \label{vart}
\end{equation}
so that under this condition we have the approximation,
\begin{equation}
S_t\bigl(\mu,u,\varepsilon(t)\bigr) \approx \sqrt{\mu/2}\,\bigl(V_{\varepsilon(t)}(u)-V_{\varepsilon(t)}(0)\bigr), \; t\rightarrow \infty.
\end{equation}
We emphasize that these results are believed to be universal in that they only require $\mathcal{H}^{1/2}-$Gaussianity of the limiting statistic
and refer the reader to \cite{joint} for the corresponding CUE calculations.

We will now formulate some conjectured mod-Gaussian limit theorems for the centered GFF on the interval and the associated smoothed 
counting statistic $S_t(\mu,u,\varepsilon)$ in the limit $t\rightarrow \infty.$
\begin{conjecture}[Weak version]\label{modcentered}
Let $-(\tau+1)/2<\Re(q)<\tau,$ $\tau=2/\mu,$ $0<\mu<2,$ then
\begin{align}
& \lim\limits_{\varepsilon\rightarrow 0} e^{\mu(\log \varepsilon-\kappa)\frac{q(q+1)}{2}} \Big[\lim\limits_{t\rightarrow\infty}  
{\bf E} \Bigl[\Bigl(\int_0^1 e^{S_t(\mu,u,\varepsilon)} du\Bigr)^q\Bigr]\Bigr], \label{M1transf}\\
& = \lim\limits_{\varepsilon\rightarrow 0} e^{\mu(\log \varepsilon-\kappa)\frac{q(q+1)}{2}} \Big[
{\bf E} \Bigl[\Bigl(\int_0^1 e^{\sqrt{\mu/2} \bigl(V_{\varepsilon}(u)-V_{\varepsilon}(0)\bigr)
} du\Bigr)^q\Bigr]\Bigr], \label{M1transfV}\\
& =  \Bigl(\frac{2\pi\tau^{1/\tau}}{\Gamma\bigl(1-\frac{1}{\tau}\bigr)}\Bigr)^q
%%\tau^{\frac{q}{\tau}} (2\pi)^{q}\,\Gamma^{-q}\bigl(1-1/\tau\bigr)
\frac{\Gamma_2(1+q+\tau\,|\,\tau)}{\Gamma_2(1+2q+\tau\,|\,\tau)}
\frac{\Gamma_2(1-q+\tau\,|\,\tau)}{\Gamma_2(1+\tau\,|\,\tau)}
\frac{\Gamma_2(-q+\tau\,|\,\tau)}{\Gamma_2(\tau\,|\,\tau)}
\frac{\Gamma_2(2+q+2\tau\,|\,\tau)}{\Gamma_2(2+2\tau\,|\,\tau)}.
\label{M1}
\end{align}
\end{conjecture}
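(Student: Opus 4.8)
The statement is a conjectured mod-Gaussian limit, so the plan is a derivation that is rigorous for integer $q$ and then identifies the conjectural analytic continuation. I would treat the three members of the chain in turn: first reduce the counting-statistic moments \eqref{M1transf} to the $\varepsilon$-regularized moments \eqref{M1transfV}, then evaluate the latter explicitly for integer $q$, and finally recognize the answer as the double gamma expression \eqref{M1} and continue in $q$.

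The equality of \eqref{M1transf} and \eqref{M1transfV} is the probabilistic input. By Theorem \ref{strong} and the covariance \eqref{limcov1}, the process $u\mapsto S_t(\mu,u,\varepsilon)$ converges in law, as $t\to\infty$ with $\varepsilon$ fixed, to $\sqrt{\mu/2}\,\bigl(V_\varepsilon(u)-V_\varepsilon(0)\bigr)$. Since $\varepsilon>0$ is fixed the limiting field has bounded variance and a.s. continuous paths, so $\int_0^1 e^{S_t(\mu,u,\varepsilon)}\,du$ is a continuous positive functional whose law converges; I would promote this to convergence of the $q$-th moment by a uniform-integrability argument, using that the exponential moments of the regularized Gaussian field are finite and locally bounded in $t$. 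This reduces matters to \eqref{M1transfV}.

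For $q=n\in\mathbb{N}$ I would expand $\bigl(\int_0^1 e^{\sqrt{\mu/2}(V_\varepsilon(u)-V_\varepsilon(0))}\,du\bigr)^n$ as an $n$-fold integral and use Gaussianity, ${\bf E}[e^Z]=e^{\frac12\mathrm{Var}(Z)}$ with $Z=\sqrt{\mu/2}\sum_{i=1}^n\bigl(V_\varepsilon(u_i)-V_\varepsilon(0)\bigr)$. A direct variance computation from \eqref{covkk} gives
\begin{equation}
\tfrac12\mathrm{Var}(Z)=\tfrac{\mu}{2}\,n(n+1)(\kappa-\log\varepsilon)-\tfrac{\mu}{2}\sum_{i\neq j}\log|u_i-u_j|+n\mu\sum_{i=1}^n\log u_i+O(\varepsilon),
\end{equation}
so the moment factors as $e^{\frac{\mu}{2}n(n+1)(\kappa-\log\varepsilon)}$ times $\int_{[0,1]^n}\prod_i u_i^{2n/\tau}\prod_{i<j}|u_i-u_j|^{-2/\tau}\,du$, i.e. the Selberg integral \eqref{Selberg} with $\lambda_1=2n/\tau$, $\lambda_2=0$. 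The decisive point is that at $q=n$ the conjectural prefactor $e^{\mu(\log\varepsilon-\kappa)q(q+1)/2}$ cancels $e^{\frac{\mu}{2}n(n+1)(\kappa-\log\varepsilon)}$ exactly, so $\lim_{\varepsilon\to0}$ leaves precisely this Selberg integral and the $O(\varepsilon)$ regularization drops out. The $n$-dependence of $\lambda_1=2n/\tau$ is exactly the mod-Gaussian signature. I would then evaluate the integral by \eqref{Selberg} and convert each of its four $\Gamma$-products over $k=0,\dots,n-1$ into a ratio of double gamma functions using the repeated-product identity \eqref{repeated}, which is the same mechanism used for Theorems \ref{InfinFacInt} and \ref{IntMoments}. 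The four products yield, with $q=n$, the ratios $\Gamma_2(1+q+\tau\,|\,\tau)/\Gamma_2(1+2q+\tau\,|\,\tau)$, $\Gamma_2(1-q+\tau\,|\,\tau)/\Gamma_2(1+\tau\,|\,\tau)$, $\Gamma_2(-q+\tau\,|\,\tau)/\Gamma_2(\tau\,|\,\tau)$ and $\Gamma_2(2+q+2\tau\,|\,\tau)/\Gamma_2(2+2\tau\,|\,\tau)$, while the accumulated powers of $2\pi$, $\tau$ and $\Gamma(1-1/\tau)^{-1}$ in \eqref{repeated} collect, after routine bookkeeping, into $\bigl(2\pi\tau^{1/\tau}/\Gamma(1-1/\tau)\bigr)^n$. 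This establishes \eqref{M1} for every integer $q$ in the strip $-(\tau+1)/2<\Re(q)<\tau$, the upper bound $n<\tau$ being exactly the Selberg convergence condition.

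The hard part is the passage to non-integer $q$. The right-hand side of \eqref{M1} is meromorphic in $q$ and is pinned down on the integers of the strip by the previous step, and the conjecture asserts that this meromorphic function is the genuine $\varepsilon\to0$ limit of the fractional $q$-moments. This analytic continuation is where the statement becomes conjectural: for non-integer $q$ the $n$-fold Gaussian identity is unavailable, and, as emphasized in the introduction, the underlying moment problem is indeterminate, so agreement on the integers does not by itself select the continuation. Upgrading the statement to a theorem would additionally require the GMC total-mass input at the level of Conjecture \ref{ourmainconjinterval}, a rigorous justification of the interchanged limits $\lim_{\varepsilon}\lim_{t}$ and of the moment convergence in the first step, and control of mod-Gaussian convergence in the sense of Jacod \emph{et al.}, which is precisely the ingredient that would fix the continuation.
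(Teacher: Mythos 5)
Your derivation checks out in detail: the variance bookkeeping from \eqref{covkk} is right, at $q=n$ the normalization $e^{\mu(\log\varepsilon-\kappa)n(n+1)/2}$ cancels the Gaussian factor $e^{\frac{\mu}{2}n(n+1)(\kappa-\log\varepsilon)}$ exactly, the surviving integral is the Selberg integral \eqref{Selberg} with $\lambda_1=2n/\tau=\mu n$, $\lambda_2=0$, and the conversion through \eqref{repeated} does produce the four double gamma ratios and the prefactor of \eqref{M1}. But your route is not the paper's. The paper derives the conjecture by a Girsanov argument in the style of Subsection \ref{IPRsection}: the factor $e^{-q\sqrt{\mu/2}\,V_\varepsilon(0)}$ is absorbed as an exponential tilt of the measure, which shifts the field by $-q\sqrt{\mu/2}\,{\bf Cov}\bigl(V_\varepsilon(u),V_\varepsilon(0)\bigr)$ and hence inserts the potential $u^{\mu q}$ inside the integral; the Gaussian normalization of the tilt combined with the GMC renormalization produces precisely $e^{\frac{\mu}{2}q(q+1)(\kappa-\log\varepsilon)}$, and Conjecture \ref{ourmainconjinterval} then identifies the $\varepsilon\rightarrow 0$ limit as ${\bf E}\bigl[M_{(\tau,\mu q,0)}^q\bigr]=\mathfrak{M}(q\,|\,\tau,\mu q,0)$, which is \eqref{M1}; the details are in Theorem 4.5 and Lemma 4.6 of \cite{Menon}. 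The two computations coincide algebraically at integer $q$, but their scope differs in an essential way: the Girsanov tilt acts on the field before any moment is taken, so it applies verbatim to all $q$ in the strip, and the analytic continuation is then fixed by the conjectured law of the total mass with the $q$-dependent potential $\lambda_1=\mu q$ rather than by moment matching. Your route is rigorous at integers $n<\tau$ without ever invoking Conjecture \ref{ourmainconjinterval}, which is a genuine advantage, but — as you yourself correctly stress — by indeterminacy of the moment problem, agreement on the integers cannot select the continuation, so your formulation leaves strictly more to the conjectural part than the paper does. One small omission: you explain the upper edge $\Re(q)<\tau$ of the strip (Selberg convergence, equivalently the pole of $\Gamma_2(-q+\tau\,|\,\tau)$) but not the lower edge; $\Re(q)>-(\tau+1)/2$ is exactly the condition that the argument of the denominator factor $\Gamma_2(1+2q+\tau\,|\,\tau)$ in \eqref{M1} stays positive, i.e. it is the first singularity of $\Gamma_2(1+\tau(1+\lambda_1)\,|\,\tau)$ under the substitution $\lambda_1=\mu q$, which is immediate from the Girsanov form.
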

\begin{conjecture}[Strong version]
Let $-(\tau+1)/2<\Re(q)<\tau,$ $\tau=2/\mu,$ $0<\mu<2,$ and $\varepsilon(t)$ satisfy Eq. \eqref{vart}. Then
\begin{align}
\lim\limits_{t\rightarrow\infty}   e^{\mu(\log \varepsilon(t)-\kappa)\frac{q(q+1)}{2}} \Big[
{\bf E} \Bigl[\Bigl(\int_0^1 e^{S_t(\mu,u,\varepsilon(t))} du\Bigr)^q\Bigr]\Bigr] =
\lim\limits_{\varepsilon\rightarrow 0} e^{\mu(\log \varepsilon-\kappa)\frac{q(q+1)}{2}} \Big[\lim\limits_{t\rightarrow\infty}  
{\bf E} \Bigl[\Bigl(\int_0^1 e^{S_t(\mu,u,\varepsilon)} du\Bigr)^q\Bigr]\Bigr].
\end{align}
\end{conjecture}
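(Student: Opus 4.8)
The plan is to reduce the coupled limit on the left-hand side to the iterated limit on the right-hand side, the latter being exactly the content of the Weak version in Conjecture \ref{modcentered}, by exploiting the fact that the slow decay condition \eqref{vart} preserves the limiting covariance structure. First I would record that, as established in \cite{Menon}, whenever $\varepsilon(t)$ obeys $\lambda(t)/\log t\ll\varepsilon(t)\ll1$ the $\mathcal{H}^{1/2}$-Gaussianity of Theorem \ref{strong} survives the coupling, so that the rescaled statistic satisfies
\begin{equation}
S_t\bigl(\mu,u,\varepsilon(t)\bigr) \approx \sqrt{\mu/2}\,\bigl(V_{\varepsilon(t)}(u)-V_{\varepsilon(t)}(0)\bigr),\; t\rightarrow\infty,
\end{equation}
in the sense that its finite-dimensional covariances converge to \eqref{limcov1} with $\varepsilon$ replaced by $\varepsilon(t)$. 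Substituting this approximation into the left-hand moment rewrites the left-hand side of the Strong version as $\lim_{t\to\infty} g\bigl(\varepsilon(t)\bigr)$, where for fixed $\varepsilon$ I set
\begin{equation}
g(\varepsilon) \triangleq e^{\mu(\log\varepsilon-\kappa)\frac{q(q+1)}{2}}\,
{\bf E}\Bigl[\Bigl(\int_0^1 e^{\sqrt{\mu/2}\bigl(V_\varepsilon(u)-V_\varepsilon(0)\bigr)}\,du\Bigr)^q\Bigr].
\end{equation}

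The second step is purely analytic: the Weak version asserts precisely that $\lim_{\varepsilon\to0}g(\varepsilon)$ exists and equals the closed form in \eqref{M1}. Since the slow decay condition forces $\varepsilon(t)\to0$ as $t\to\infty$, the composition $g\bigl(\varepsilon(t)\bigr)$ inherits the same limit, and the identity of the Strong version follows. In this reduction the normalizing prefactor $e^{\mu(\log\varepsilon-\kappa)\frac{q(q+1)}{2}}$ plays the role of the mod-Gaussian renormalization: it is exactly the factor needed to cancel the logarithmically divergent variance of the centered field, as one checks from \eqref{covkk} by computing $\mathrm{Cov}\bigl[V_\varepsilon(u)-V_\varepsilon(0),V_\varepsilon(v)-V_\varepsilon(0)\bigr]$ and matching it, after multiplication by $\mu/2$, against both cases of \eqref{limcov1}.

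The genuine obstacle, and the reason the statement is only conjectural, lies entirely in the first step: the passage from the counting statistic $S_t$ to the Gaussian field must be quantitative and \emph{uniform} in $\varepsilon$ over the whole window $\lambda(t)/\log t\ll\varepsilon\ll1$, rather than for each fixed $\varepsilon$ separately. Concretely, one needs a rate of convergence in the mesoscopic central limit theorem of Theorem \ref{strong} that does not degenerate as $\varepsilon=\varepsilon(t)$ shrinks together with $t$, together with a uniform-integrability input guaranteeing that the $q$-th moment passes to the limit in the admissible strip $-(\tau+1)/2<\Re(q)<\tau$. For the Riemann-zero realization of $S_t$ this would require control of the relevant zero correlations through the explicit formula that is uniform in the test-function scale $\varepsilon(t)$, which is beyond current rigorous technology; the analogous CUE computation in \cite{joint} exhibits the same structural difficulty. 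I would therefore present this uniform mesoscopic estimate as a standing hypothesis and carry out the remaining reduction rigorously conditional on it.
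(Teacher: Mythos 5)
Your reduction is exactly the paper's own justification: the approximation $S_t\bigl(\mu,u,\varepsilon(t)\bigr)\approx\sqrt{\mu/2}\,\bigl(V_{\varepsilon(t)}(u)-V_{\varepsilon(t)}(0)\bigr)$ under the slow decay condition \eqref{vart} (the preservation of the $\mathcal{H}^{1/2}$ covariance established in \cite{Menon}), followed by an appeal to the Weak version to identify the resulting limit with \eqref{M1transfV} and hence with \eqref{M1transf}; this is precisely how the paper motivates the Strong version, which it states only as a conjecture supported by Girsanov-type calculations. Your explicit identification of the missing ingredient --- a mesoscopic CLT with a rate of convergence uniform in $\varepsilon$ over the window $\lambda(t)/\log t\ll\varepsilon\ll 1$, together with uniform integrability of the $q$-th moments in the strip $-(\tau+1)/2<\Re(q)<\tau$ --- correctly pinpoints why neither your argument nor the paper's can be closed unconditionally.
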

The calculations behind these conjectures are all based on applications of the Girsanov theorem similar to those in Subsection \ref{IPRsection} and
Conjecture \ref{ourmainconjinterval}. In particular, the expression in Eq. \eqref{M1} corresponds to the expression for $\mathfrak{M}(q\,|\,\tau,
\lambda_1, \lambda_2)$ in Eq. \eqref{thefunctioninterval} with $\lambda_1=\mu\,q$ and $\lambda_2=0.$ 
The reader can find details of these calculations in Theorem 4.5 and Lemma 4.6 of \cite{Menon}.

The interest in the strong version of the conjecture is that it contains information about the statistical distribution of the zeroes at large but finite $t,$
whereas the weak version only describes the distribution at $t=\infty.$ Moreover,
as the strong conjecture fits into the general framework of mod-Gaussian convergence, cf. Jacod \emph{et. al.} \cite{Jacod},
the results of \cite{Feray} and \cite{Meliot} and the explicit knowledge of the limiting function make it possible to quantify the normality zone, 
\emph{i.e.} the scale up to which the tails of our exponential functionals are normal,
and the breaking of symmetry near the edges of the normality zone thereby quantifying precise deviations at large $t.$
We refer the interested reader to \cite{joint} for some rigorous results for positive integer $q$ that partially verify
our conjectures when the underlying statistic comes from CUE.

The same type of result can be formulated for the GFF on the circle. 
\begin{conjecture}
Let $V_\varepsilon(\psi)$ be the GFF on the circle as in Eq. \eqref{covkcir}.
\begin{align}
\lim\limits_{\varepsilon\rightarrow 0} e^{\mu(\log \varepsilon-\kappa)\frac{q(q+1)}{2}} \Big[
{\bf E} \Bigl[\Bigl(\int_0^1 e^{\sqrt{\mu/2} \bigl(V_{\varepsilon}(\psi)-V_{\varepsilon}(1/2)\bigr)
} d\psi\Bigr)^q\Bigr]\Bigr] = &
\frac{\tau^{\frac{q}{\tau}}}{\Gamma^q\bigl(1-\frac{1}{\tau}\bigr)}
\frac{\Gamma^3_2(q+1+\tau\,|\,\tau)}{\Gamma^2_2(\tau+1\,|\,\tau)\Gamma_2(2q+1+\tau\,|\,\tau)} \times \nonumber \\  
& \times
\frac{\Gamma_2(-q+\tau\,|\,\tau)}{\Gamma_2(\tau\,|\,\tau)}.\label{M1cir}
\end{align}
\end{conjecture}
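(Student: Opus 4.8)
The plan is to reduce \eqref{M1cir} to a single moment of the Morris integral distribution by the same Girsanov-plus-chaos-convergence mechanism used for the inverse participation ratios in Subsection \ref{IPRsection}, and then to evaluate that moment through Conjecture \ref{ourmainconjcircle}. Since $\sqrt{\mu/2}=\beta$, the centered exponential functional is $\int_0^1 e^{\beta(V_\varepsilon(\psi)-V_\varepsilon(1/2))}\,d\psi$, and taking its $q$-th moment pulls out the factor $e^{-q\beta V_\varepsilon(1/2)}$ in front of $\bigl(\int_0^1 e^{\beta V_\varepsilon(\psi)}\,d\psi\bigr)^q$. This is precisely the structure already handled by the change of measure in Subsection \ref{IPRsection} with the fixed angle $\phi=1/2$, so the argument there transfers essentially verbatim.

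First I would apply the Girsanov theorem to the linear functional $\ell=-q\beta V_\varepsilon(1/2)$. Because the field is centered Gaussian with $\mathbf{Var}\,V_\varepsilon(1/2)=2(\kappa-\log\varepsilon)$ and $\mathbf{Cov}(V_\varepsilon(\psi),V_\varepsilon(1/2))=-2\log|1+e^{2\pi i\psi}|$ for $|\psi-1/2|\gg\varepsilon$ (using $e^{\pi i}=-1$), the change of measure contributes the deterministic factor $e^{q^2\beta^2(\kappa-\log\varepsilon)}$ and shifts the field by $+2q\beta\log|1+e^{2\pi i\psi}|$, so that
\[ \mathbf{E}\Bigl[\Bigl(\int_0^1 e^{\beta(V_\varepsilon(\psi)-V_\varepsilon(1/2))}\,d\psi\Bigr)^q\Bigr] = e^{q^2\beta^2(\kappa-\log\varepsilon)}\,\mathbf{E}\Bigl[\Bigl(\int_0^1 |1+e^{2\pi i\psi}|^{2q\beta^2} e^{\beta V_\varepsilon(\psi)}\,d\psi\Bigr)^q\Bigr]. \]

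Next I would invoke chaos convergence. With the $\kappa$-regularized variance the GMC normalization is $e^{\beta^2(\kappa-\log\varepsilon)}$, so as $\varepsilon\to 0$ the inner integral behaves like $e^{\beta^2(\kappa-\log\varepsilon)}\int_{-1/2}^{1/2}|1+e^{2\pi i\psi}|^{2\lambda}\,M_\beta(d\psi)$ with logarithmic potential $\lambda=q\beta^2$, contributing a further $e^{q\beta^2(\kappa-\log\varepsilon)}$ after raising to the $q$-th power. The two deterministic factors combine to $e^{q(q+1)\beta^2(\kappa-\log\varepsilon)}=e^{-\mu\frac{q(q+1)}{2}(\log\varepsilon-\kappa)}$, which is cancelled exactly by the mod-Gaussian prefactor $e^{\mu(\log\varepsilon-\kappa)\frac{q(q+1)}{2}}$. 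Passing to the limit and applying Conjecture \ref{ourmainconjcircle} identifies the right-hand side with $\mathfrak{M}(q\,|\,\tau,\lambda,\lambda)$ at $\lambda=q\beta^2=q/\tau$. The final step is purely algebraic: substituting $\lambda_1=\lambda_2=q/\tau$ into \eqref{thefunctioncircle} collapses $\tau(1+\lambda_i)+1-q\mapsto\tau+1$ and $\tau(1+\lambda_i)+1\mapsto\tau+q+1$, and likewise $\tau(\lambda_1+\lambda_2+1)+1-q\mapsto\tau+q+1$, $\tau(\lambda_1+\lambda_2+1)+1\mapsto 2q+\tau+1$, which produces the cube $\Gamma_2^3(q+1+\tau\,|\,\tau)$ and the exact ratio claimed in \eqref{M1cir}. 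The admissible range $-(\tau+1)/2<\Re(q)<\tau$ is what keeps all Barnes arguments admissible, the lower bound arising from $2q+\tau+1>0$.

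The main obstacle is that the argument is heuristic at two independent points, both explicitly flagged in this section: it rests on Conjecture \ref{ourmainconjcircle}, which is proved rigorously only for $\lambda=0$ in \cite{Remy}, and it requires interchanging $\varepsilon\to 0$ with the $q$-th moment together with the finite-$\varepsilon$ approximation of the chaos limit, whose validity is addressed only heuristically in \cite{FyoBou}. The genuinely subtle feature, as opposed to a routine technicality, is that the Girsanov shift forces the logarithmic potential $\lambda=q\beta^2$ to depend on the very moment order $q$ being computed; making moment convergence and the interchange of limits rigorous in the presence of this $q$-dependent and, for $q>0$, repulsive potential is where the real difficulty would lie.
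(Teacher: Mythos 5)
Your proposal is correct and follows essentially the same route as the paper: the paper derives \eqref{M1cir} by the Girsanov argument of Subsection \ref{IPRsection} (change of measure at the fixed angle $\phi=1/2$, producing the potential $|1+e^{2\pi i\psi}|^{2q\beta^2}$), chaos convergence, and Conjecture \ref{ourmainconjcircle}, identifying the limit as the Mellin transform $\mathfrak{M}(q\,|\,\tau,\lambda,\lambda)$ of Eq. \eqref{thefunctioncircle} with $\lambda=\mu q/2=q/\tau$. Your bookkeeping of the deterministic factors $e^{q^2\beta^2(\kappa-\log\varepsilon)}$ and $e^{q\beta^2(\kappa-\log\varepsilon)}$, their cancellation against the mod-Gaussian prefactor, and the substitution collapsing \eqref{thefunctioncircle} to the stated ratio with the cube $\Gamma_2^3(q+1+\tau\,|\,\tau)$ all match the paper's computation.
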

The limiting function in this case corresponds to the Mellin transform of the Morris integral distribution in Eq. \eqref{thefunctioncircle}
with $\lambda=\mu q/2.$

We end this section with another conjecture, which combines Conjectures \ref{maxint} and \ref{maxintcircle} with Conjecture \ref{modcentered}, Eq. \eqref{M1transfV}. It
is a mod-Gaussian statement about the maximum of the centered GFF on the circle and interval. For simplicity, we let $\kappa=0.$
\begin{conjecture}
Let the gaussian field $V_\varepsilon(u)$ be as in Eq. \eqref{covkk} and let $V_\varepsilon(\psi)$ be the corresponding field on the circle.
Let $N=1/\varepsilon$ and consider the discretizations as in Conjectures \ref{maxint} and \ref{maxintcircle}. Let $-1<\Re(q)<1.$
\begin{align}
\lim\limits_{N\rightarrow \infty}  N^{-q^2-2q} (\log N)^{3q/2} {\bf E} \Bigl[e^{q \max\big\{V_\varepsilon(x_j)-V_\varepsilon(0), \,j=1\cdots N\big\}}\Bigr] 
= &  e^{q\,\text{const}}\,\Gamma(1-q)
\frac{G(2+2q)}{G(2+q)} 
\frac{G(2)}{G(2-q)} \times\nonumber \\ & \times
\frac{G(1)}{G(1-q)}
\frac{G(4)}{G(4+q)}, \label{M1crit}
\\
\lim\limits_{N\rightarrow \infty}  N^{-q^2-2q} (\log N)^{3q/2} {\bf E} \Bigl[e^{q \max\big\{V_\varepsilon(\psi_j)-V_\varepsilon(1/2), \,j=-N/2\cdots N/2\big\}}\Bigr] = & e^{q\,\text{const}}\,\Gamma(1-q) 
\frac{G(2+2q)\,G^2(2)}{G^3(2+q)}
\times
%%e^{2q\log N-(3/2)q\log\log N+q\,\text{const}}  e^{q^2 (\kappa-\log\varepsilon)}  \star
\nonumber \\  & \times
\frac{G(1)}{G(1-q)}. \label{M1critcir}
%%\frac{}{} \Gamma(1-q).
\end{align}
\end{conjecture}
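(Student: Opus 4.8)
The plan is to treat the statement as a heuristic derivation in the spirit of Subsections \ref{IPRsection} and the maximum-distribution subsection, the only genuinely new ingredient being that the fields are \emph{centered}, $V_\varepsilon(u)-V_\varepsilon(0)$ on the interval and $V_\varepsilon(\psi)-V_\varepsilon(1/2)$ on the circle. The guiding observation is that centering at a single site is exactly a Girsanov tilt, so after tilting the problem reduces to the maximum of a \emph{non-centered} GFF carrying a $q$-dependent logarithmic potential, which is governed by Conjectures \ref{maxint}--\ref{maxintcircle} and the critical Selberg/Morris distributions of Theorems \ref{criticalSelberg}--\ref{criticalMorris}.

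For the interval I would first write the Laplace transform through the exponential functional as in Eqs. \eqref{keymax}--\eqref{generalidentity}, taking $Z(\beta)=\sum_j e^{\beta(V_\varepsilon(x_j)-V_\varepsilon(0))}=e^{-\beta V_\varepsilon(0)}\sum_j e^{\beta V_\varepsilon(x_j)}$, so that ${\bf E}[e^{qV_N}]=\lim_{\beta\to\infty}{\bf E}[X^{q/\beta}]\,\Gamma(1-q/\beta)$ with $X\propto Z(\beta)$. Raising to the power $q/\beta$ turns the centering factor into $e^{-qV_\varepsilon(0)}$, and I would apply the Girsanov step exactly as in Subsection \ref{IPRsection}: the tilt $e^{-qV_\varepsilon(0)}$ contributes the deterministic factor ${\bf E}[e^{-qV_\varepsilon(0)}]=N^{q^2}e^{q^2\kappa}$ (from $\mathrm{Var}\,V_\varepsilon(0)=2(\kappa-\log\varepsilon)$ with $\varepsilon=1/N$) and shifts the field by $+2q\log|x_j|$, i.e. dresses each summand with $x_j^{2q\beta}$. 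By Conjecture \ref{ourmainconjinterval} and Eq. \eqref{keyapprox} the tilted functional then converges to the Selberg integral distribution with potential $\lambda_1=2q$, $\lambda_2=0$, giving ${\bf E}[X^{q/\beta}]$ proportional, up to explicit powers of $N$, to $\mathfrak{M}(q/\beta\,|\,\tau,2q\beta,0)$. The extra $N^{q^2}$ accounts for the $N^{-q^2-2q}$ prefactor on the left of Eq. \eqref{M1crit}, while the $2\log N$ and $-\tfrac32\log\log N$ shifts arise precisely as in Conjecture \ref{maxint}. The circle case is verbatim with $V_\varepsilon(1/2)$ in place of $V_\varepsilon(0)$; since $\mathrm{Cov}(V_\varepsilon(\psi),V_\varepsilon(1/2))=-2\log|1+e^{2\pi i\psi}|$, the tilt dresses each summand with $|1+e^{2\pi i\psi_j}|^{2q\beta}$ and, via Conjecture \ref{ourmainconjcircle}, produces the Morris distribution with $\lambda_1=\lambda_2=q$.

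Next I would assemble the self-dual function $F$ of Eq. \eqref{Func} at the $\beta$-independent (but target-$q$-dependent) potential $\lambda_1=2q,\lambda_2=0$, invoke self-duality Eq. \eqref{selfdual} (resp. Eq. \eqref{invcircle} on the circle) together with the freezing hypothesis to evaluate at $\beta=1$, and then pass to the critical limit $\tau\downarrow1$. Recognizing through the critical-limit definition \eqref{Selbergcrit} that $\lim_{\tau\downarrow1}\Gamma(1-1/\tau)^{q}\,\mathfrak{M}(q\,|\,\tau,2q,0)$ is the critical Selberg Mellin transform \eqref{MGIcrit} with $\lambda_1=2q,\lambda_2=0$, the four $G$-factors collapse to $\tfrac{G(2+2q)}{G(2+q)}\tfrac{G(2)}{G(2-q)}\tfrac{G(1)}{G(1-q)}\tfrac{G(4)}{G(4+q)}$ as in Eq. \eqref{M1crit}, the divergent $\Gamma(1-1/\tau)^q$ being absorbed into the undetermined constant; the leftover factor $\Gamma(1-q/\beta)\big|_{\beta=1}=\Gamma(1-q)$ supplies the independent Fréchet copy $Y'$. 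The circle computation is identical, using the critical Morris Mellin transform \eqref{MGcrit} with $\lambda_1=\lambda_2=q$, which collapses the $G$-factors to $\tfrac{G(2+2q)G^2(2)}{G^3(2+q)}\tfrac{G(1)}{G(1-q)}$ of Eq. \eqref{M1critcir}.

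The main obstacle is that the whole argument is heuristic: it rests on the unproven freezing hypothesis and on Conjectures \ref{ourmainconjinterval}--\ref{ourmainconjcircle}, and it interchanges the $N\to\infty$ and $\beta\to\infty$ limits without justification. The genuinely delicate bookkeeping — and the place where the new content lives — is tracking the $q$-dependent potential $\lambda_1=2q$ (resp. $\lambda=q$) created by the Girsanov tilt through self-duality and the critical limit, in particular verifying that this potential is $\beta$-independent so that Eq. \eqref{selfdual} still applies at the frozen value $\beta=1$. Once that is granted, matching the resulting $G$-function products with Eqs. \eqref{M1crit}--\eqref{M1critcir} is a routine application of Theorems \ref{criticalSelberg}--\ref{criticalMorris}.
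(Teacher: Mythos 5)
Your proposal is correct and takes essentially the same route as the paper: the paper presents this conjecture precisely as the combination of Conjectures \ref{maxint}--\ref{maxintcircle} (the freezing/self-duality machinery yielding the critical Selberg/Morris laws times the extra $\Gamma(1-q)$ factor) with the Girsanov-centering argument behind Conjecture \ref{modcentered}, which is exactly your tilt producing the $\beta$-independent potential $\lambda_1=2q,\ \lambda_2=0$ on the interval (resp. $\lambda=q$ on the circle) together with the deterministic factor $N^{q^2}$ accounting for the $N^{-q^2-2q}$ normalization. Your evaluation of the critical Mellin transforms \eqref{MGIcrit} and \eqref{MGcrit} at these parameters, times $\Gamma(1-q)$, reproduces the $G$-factor products of Eqs. \eqref{M1crit} and \eqref{M1critcir}, which is exactly the paper's intended (heuristic) derivation.
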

It should be emphasized that the expressions on the right-hand side of Eqs. \eqref{M1}, \eqref{M1cir}, \eqref{M1crit}, and \eqref{M1critcir} are
not Mellin transforms of probability distributions. We refer the interested reader to \cite{cao17} and \cite{cao18} for deep results on the subtle nature of the distribution of the maximum of the centered GFF fields. In addition, as shown in \cite{cao17}, the distribution of the maximum of the two-dimensional gaussian field with the covariance $-\log|\vec{r}_1-\vec{r}_2|$ can be similarly quantified 
%%by means of a conjectured mod-gaussian limit theorem with the right-hand side given
in terms of the critical analytic continuation of the complex Selberg integral in Eq. \eqref{critcomplexS}.

%%\section{Proofs of Results on Complex Selberg Integral and Critical Distributions}\label{complexandderivproofs}
%%\noindent

\section{Conclusions}
\noindent We have reviewed conjectured laws of the Bacry-Muzy GMC measures on the circle and interval with logarithmic potentials.
We have described both the analytical and probabilistic approaches to the Morris and Selberg integral
probability distributions that are believed to give these laws. The building blocks of the Morris and Selberg integral distributions are
the so-called  Barnes beta distributions, whose theory we have reviewed in detail. We have also described critical Morris and Selberg integral
distributions, which are conjectured to be the distributions of derivative martingales of the Bacry-Muzy GMC measures on the circle and interval.

Our analytical methods are not limited to the Morris and Selberg integrals. We have given the analytic continuation of the complex 
Selberg integral and established its involution invariance property.

We have considered three applications of our conjectures. The first application is the calculations of the distribution of
the maximum of the discrete Gaussian Free Field restricted to the circle and interval in terms of the critical Morris and Selberg integral
distributions, respectively. The second application is the calculation of inverse participation ratios of the Fyodorov-Bouchaud model.
In the third application we have conjectured two kinds of mod-Gaussian limit theorems. The first kind relates linear statistics 
that converge to the $\mathcal{H}^{1/2}-$Gaussian noise to the Selberg integral distribution. The second kind relates
the distribution of the maximum of the centered Gaussian Free Field restricted to the circle and interval to the 
critical Morris and Selberg integral distributions.

\section*{Acknowledgments}
\noindent The author wishes to thank Y. V. Fyodorov for bringing refs. \cite{cao17} and \cite{Fyo09} to our attention.

\appendix
\section{Appendix: Proofs of Results on Barnes Beta Distributions}
\renewcommand{\theequation}{\Alph{section}.\arabic{equation}}
\setcounter{equation}{0}  % reset counter
\noindent 
In this section we will give proofs of the results in Section \ref{BarnesBeta}. 
The proofs rely on Eqs. \eqref{key} and \eqref{asym}, properties of infinitely divisible
distributions, and Lemma \ref{mylemma} below.
We note that Eqs. \eqref{key} and \eqref{asym} continue to hold for any $f(t)$
of the Ruijsenaars class, \emph{i.e.} analytic for
$\Re(t)>0$ and at $t=0$ and of at worst polynomial growth as
$t\rightarrow \infty,$ cf. Section 2 in \cite{Ruij}, provided
we define the corresponding generalized
Bernoulli polynomials by
\begin{equation}\label{Bdefa}
B^{(f)}_{m}(x) \triangleq \frac{d^m}{dt^m}\Big\vert_{t=0} \bigl[f(t)
e^{-xt}\bigr].
\end{equation}
The main
example that corresponds to the case of Barnes multiple gamma
functions is the function $f(t)$ that is defined in Eq. \eqref{fdef}.
Many of the proofs go through with a general $f(t)$ so we add the superscript
$(f)$ and drop $a$ from the list of arguments to indicate that the given formula does not require $f(t)$ to be as in Eq. \eqref{fdef}.  
\subsection{$M\leq N$}
\begin{lemma}[Main Lemma]\label{mylemma}
Let $f(t)$ be of the Ruijsenaars class.
Let $\lbrace b_k\rbrace,$
$k\in\mathbb{N},$ be a sequence of real
numbers, %% let the operator $\mathcal{S}_N$ be defined by \eqref{S}.
$n, r\in\mathbb{N},$ and $q\in\mathbb{C}.$ Define the function
$g(t)$ by
\begin{equation}\label{gfunction}
g(t) \triangleq f(t) e^{-qt} \frac{d^r}{dt^r}\bigl[e^{-b_0
t}\prod\limits_{j=1}^N (1-e^{-b_j t})\bigr].
\end{equation}
Then,
\begin{align}
g^{(n)}(0) & = \sum\limits_{m=0}^n \binom{n}{m} B^{(f)}_{n-m}(q)
\frac{d^{m+r}}{dt^{m+r}}\Big\vert_{t=0}\bigl[e^{-b_0 t}\prod\limits_{j=1}^N
(1-e^{-b_j t})\bigr], \label{line1} \\
& = (-1)^r \sum\limits_{p=0}^N (-1)^p
\sum\limits_{k_1<\cdots<k_p=1}^N \bigl(b_0+\sum b_{k_j}\bigr)^r \,
B^{(f)}_n\bigl(q+b_0+\sum b_{k_j}\bigr), \label{line2} \\
&= 0,\;{\rm if}\;r+n<N, \label{deriv1}\\
&= f(0)\,N! \prod\limits_{j=1}^N b_j,\;{\rm
if}\;r+n=N.\label{deriv2}
\end{align}
\end{lemma}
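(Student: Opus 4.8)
The plan is to establish the three displayed identities Eqs. \eqref{line1}, \eqref{line2}, and the pair Eqs. \eqref{deriv1}--\eqref{deriv2} in turn, all by elementary differentiation at $t=0$, using that $f$ is of the Ruijsenaars class and hence analytic at the origin so that every Taylor coefficient below is well defined. Throughout I would abbreviate
\[
h(t) \triangleq e^{-b_0 t}\prod_{j=1}^N (1-e^{-b_j t}),
\]
so that $g(t) = \bigl(f(t)e^{-qt}\bigr)\,h^{(r)}(t)$ as in Eq. \eqref{gfunction}.

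First I would prove Eq. \eqref{line1} by the Leibniz rule applied to this factorization of $g$. Since $\tfrac{d^{m}}{dt^{m}} h^{(r)}(t) = h^{(m+r)}(t)$ and, by the definition of the generalized Bernoulli polynomials in Eq. \eqref{Bdefa}, $\tfrac{d^{k}}{dt^{k}}\big\vert_{t=0}\bigl[f(t)e^{-qt}\bigr] = B^{(f)}_{k}(q)$, the Leibniz expansion of $g^{(n)}(0)$ reads
\[
g^{(n)}(0) = \sum_{m=0}^n \binom{n}{m}\, B^{(f)}_{n-m}(q)\, h^{(m+r)}(0),
\]
which is precisely Eq. \eqref{line1}.

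Next, for Eq. \eqref{line2} I would instead expand the finite product into an alternating sum of exponentials,
\[
\prod_{j=1}^N(1-e^{-b_j t}) = \sum_{p=0}^N (-1)^p \sum_{k_1<\cdots<k_p=1}^N e^{-(b_{k_1}+\cdots+b_{k_p})t},
\]
so that $h(t)$ is a finite combination of terms $e^{-(b_0+\sum b_{k_j})t}$. Differentiating $r$ times brings down the factor $(-1)^r\bigl(b_0+\sum b_{k_j}\bigr)^r$, and reading off the $n$-th derivative of $f(t)e^{-qt}h^{(r)}(t)$ at $t=0$ term by term --- again invoking the definition of $B^{(f)}_n$ --- yields Eq. \eqref{line2}.

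Finally, Eqs. \eqref{deriv1} and \eqref{deriv2} would follow from Eq. \eqref{line1} together with the order of vanishing of $h$ at the origin. Since $1-e^{-b_j t} = b_j t + O(t^2)$, the product vanishes to order exactly $N$, with $h^{(k)}(0)=0$ for $k<N$ and $h^{(N)}(0) = N!\prod_{j=1}^N b_j$. In Eq. \eqref{line1} the only possibly nonzero summands are those with $m+r\geq N$; as $m\leq n$, when $r+n<N$ every summand vanishes, giving Eq. \eqref{deriv1}, whereas when $r+n=N$ only $m=n$ survives and, using $B^{(f)}_0(q)=f(0)$, one gets $g^{(n)}(0) = f(0)\,h^{(N)}(0) = f(0)\,N!\prod_{j=1}^N b_j$, which is Eq. \eqref{deriv2}. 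I anticipate no genuine obstacle: every step is the Leibniz rule or a Taylor expansion, and the whole content is the bookkeeping of which derivative terms survive; the only points needing care are the sign $(-1)^r$ in Eq. \eqref{line2} and the identification $B^{(f)}_0(q)=f(0)$ used for Eq. \eqref{deriv2}.
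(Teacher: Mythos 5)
Your proof is correct and follows essentially the same route as the paper's: the Leibniz rule together with the definition of $B^{(f)}_m$ gives Eq.~\eqref{line1}, the alternating-exponential expansion of the product gives Eq.~\eqref{line2}, and the order-$N$ vanishing of $e^{-b_0t}\prod_j(1-e^{-b_jt})$ at $t=0$ yields Eqs.~\eqref{deriv1} and \eqref{deriv2}. The only cosmetic difference is that the paper reads Eq.~\eqref{deriv1} directly off the definition of $g(t)$ (since $g(t)=O(t^{N-r})$), while you deduce it from Eq.~\eqref{line1}; these are the same observation.
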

\begin{proof}
The expression in Eq. \eqref{line1} follows from Eq. \eqref{Bdefa}. Using
the identity
\begin{equation}
\prod\limits_{j=1}^N (1-e^{-b_j t}) = \sum\limits_{p=0}^N (-1)^p
\sum\limits_{k_1<\cdots<k_p=1}^N
\exp\bigl(-(b_{k_1}+\cdots+b_{k_p})t\bigr),
\end{equation}
we can write
\begin{equation}\label{exponID}
\frac{d^r}{dt^r}\bigl[e^{-b_0 t}\prod\limits_{j=1}^N (1-e^{-b_j
t})\bigr] = (-1)^r \sum\limits_{p=0}^N (-1)^p
\sum\limits_{k_1<\cdots<k_p=1}^N \bigl(b_0+\sum b_{k_j}\bigr)^r
\exp\bigl(-(b_0+\sum b_{k_j})t\bigr).
\end{equation}
Substituting this expression into Eq. \eqref{gfunction} and recalling
Eq. \eqref{Bdefa}, we obtain Eq. \eqref{line2}. Eq. \eqref{deriv1} is immediate
from the definition of $g(t)$ in Eq. \eqref{gfunction}, and
Eq. \eqref{deriv2} follows from Eq. \eqref{line1}.
\end{proof}
\begin{corollary}\label{mainauxID}
\begin{align}
\bigl(\mathcal{S}_N\,B^{(f)}_n\bigr)(q\,|\,b)& =0,\; n=0\cdots N-1,
\label{mainID} \\
\bigl(\mathcal{S}_N\,B^{(f)}_N\bigr)(q\,|\,b)&=f(0) \,N!
\prod\limits_{j=1}^N b_j, \label{mainIDN} \\
\bigl(\mathcal{S}_N \,x^n\bigr)(q\,|\,b)&=0,\;n=0\cdots N-1, \label{auxID}\\
\bigl(\mathcal{S}_N \,x^N\bigr)(q\,|\,b)&=(-1)^N N!\prod\limits_{j=1}^N
b_j. \label{auxIDN}
\end{align}
\end{corollary}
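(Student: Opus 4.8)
The plan is to read off all four identities as immediate specializations of the Main Lemma (Lemma \ref{mylemma}), which already contains the entire computation; the corollary is then pure bookkeeping.

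First I would establish the two Bernoulli-polynomial identities \eqref{mainID} and \eqref{mainIDN} by setting $r=0$ in Lemma \ref{mylemma}. With $r=0$ the function $g(t)$ of Eq. \eqref{gfunction} becomes $g(t)=f(t)\,e^{-qt}\,e^{-b_0 t}\prod_{j=1}^N(1-e^{-b_j t})$, and the weight $(b_0+\sum b_{k_j})^r$ appearing in Eq. \eqref{line2} collapses to $1$. Comparing the resulting expression with the definition of $\mathcal{S}_N$ in Eq. \eqref{S} shows that $g^{(n)}(0)=(\mathcal{S}_N B^{(f)}_n)(q\,|\,b)$ exactly. The vanishing statement \eqref{deriv1} with $r=0$ then forces $g^{(n)}(0)=0$ for every $n<N$, which is \eqref{mainID}, while \eqref{deriv2} with $r=0$, i.e.\ $n=N$, gives $g^{(N)}(0)=f(0)\,N!\prod_{j=1}^N b_j$, which is \eqref{mainIDN}.

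Next I would deduce the monomial identities \eqref{auxID} and \eqref{auxIDN} by specializing the class function to the constant $f\equiv 1$, which is trivially of Ruijsenaars class (it is bounded, hence of at worst polynomial growth). For this choice $f(0)=1$ and the associated generalized Bernoulli polynomial is $B^{(1)}_n(x)=\tfrac{d^n}{dt^n}\big|_{t=0}e^{-xt}=(-x)^n=(-1)^n x^n$, so that $x^n=(-1)^n B^{(1)}_n(x)$. Since $\mathcal{S}_N$ is linear in its argument, $(\mathcal{S}_N x^n)(q\,|\,b)=(-1)^n(\mathcal{S}_N B^{(1)}_n)(q\,|\,b)$. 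Substituting the already-proved \eqref{mainID} and \eqref{mainIDN} for the choice $f\equiv 1$ yields $(\mathcal{S}_N x^n)(q\,|\,b)=0$ for $n<N$, which is \eqref{auxID}, and $(\mathcal{S}_N x^N)(q\,|\,b)=(-1)^N N!\prod_{j=1}^N b_j$, which is \eqref{auxIDN}.

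I do not expect a genuine analytic obstacle: the content is entirely carried by Lemma \ref{mylemma}, and the corollary merely extracts it at $r=0$ together with the specialization $f\equiv 1$. The only point demanding a moment's attention is the sign bookkeeping in $B^{(1)}_n(x)=(-1)^n x^n$, which is precisely what produces the explicit factor $(-1)^N$ in \eqref{auxIDN} while leaving \eqref{auxID} sign-free; I would double-check this parity and confirm that the normalization $f(0)=1$ reproduces the weight $\prod_{j=1}^N b_j$ without extra constants.
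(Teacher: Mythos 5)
Your proof is correct and follows exactly the paper's own argument: Eqs. \eqref{mainID} and \eqref{mainIDN} are obtained from Lemma \ref{mylemma} with $r=0$ by matching Eq. \eqref{line2} against the definition of $\mathcal{S}_N$ in Eq. \eqref{S}, and Eqs. \eqref{auxID} and \eqref{auxIDN} then follow by specializing $f\equiv 1$, for which $B^{(f)}_n(x)=(-x)^n$. Your explicit sign bookkeeping $(-1)^n$ and the check that $f(0)=1$ are exactly the details the paper leaves implicit.
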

\begin{proof}
Eqs. \eqref{mainID} and \eqref{mainIDN} follow from Lemma \ref{mylemma}
by setting $r=0$ and recalling Eq. \eqref{S}. Eqs. \eqref{auxID} and
\eqref{auxIDN} follow from Eqs. \eqref{mainID} and \eqref{mainIDN} by
letting $f(t)=1$ in Lemma \ref{mylemma} so that the corresponding
Bernoulli polynomials are $B^{(f)}_n(x) = (-x)^n.$
\end{proof}
\begin{proof}[Proof of Theorem \ref{main}]
Let $M\leq N$ and $\Re(q)>-b_0.$ We start with the definition of 
$\eta_{M,N}(q\,|a,\,b)$ in Eq. \eqref{eta}, except we allow for
a general $L_M^{(f)},$ 
and substitute Eq. \eqref{key} for $\log\Gamma^{(f)}_M(w).$ By Eq. \eqref{mainID} in
Corollary \ref{mainauxID} and linearity of $\mathcal{S}_N,$ we
obtain
\begin{equation}\label{inter}
\eta_{M,N}(q\,|\,b) = \exp\Bigl(\int\limits_0^\infty
\Bigl[\bigl(\mathcal{S}_N \,\exp(-xt)\bigr)(q\,|\,b)-\bigl(\mathcal{S}_N
\,\exp(-xt)\bigr)(0\,|\,b)\Bigr]f(t) dt/t^{M+1}\Bigr).
\end{equation}
Letting $r=0$ in Eq. \eqref{exponID}, we have the identity
\begin{equation}
e^{-b_0 t} e^{-qt}\prod\limits_{j=1}^N (1-e^{-b_j t}) =
\bigl(\mathcal{S}_N \exp(-xt)\bigr)(q\,|\,b)
\end{equation}
so that Eq. \eqref{inter} can be simplified to
\begin{equation}
\eta_{M,N}(q\,|\,b) = \exp\Bigl(\int\limits_0^\infty
(e^{-qt}-1)e^{-b_0t}\prod\limits_{j=1}^N (1-e^{-b_j
t})f(t)\,dt/t^{M+1}\Bigr).
\end{equation}
\noindent This is the canonical representation of the Laplace
transform of an infinitely divisible distribution on $[0,\,\infty),$
confer Theorem 4.3 in Chapter 3 of \cite{SteVHar}.
\begin{align}
\eta_{M,N}(q\,|\,b) & = \exp\Bigl(-\int\limits_0^\infty
(1-e^{-tq})dK^{(f)}_{M,N}(t\,|\,b)/t\Bigr), \label{Keq}\\
dK^{(f)}_{M,N}(t\,|\,b) & \triangleq e^{-b_0t} \prod\limits_{j=1}^N
(1-e^{-b_j t}) f(t)\,dt/t^M. \label{Kdef}
\end{align}
It remains to note that $dK^{(f)}_{M,N}(t\,|\,b)$ satisfies the
required integrability condition
\begin{equation}
\int\limits_0^\infty e^{-st} dK^{(f)}_{M,N}(t\,|\,b) =
\int\limits_0^\infty e^{-st} e^{-b_0t}\prod\limits_{j=1}^N
(1-e^{-b_j t}) f(t)\,dt/t^M<\infty, \;s>0.
\end{equation}
Denote this non-negative distribution by $-\log\beta_{M,N}(b)$ so
that $\beta_{M,N}(b)\in(0, \,1]$ and
\begin{equation}
{\bf E}\Bigl[\exp\bigl(q\log\beta_{M,N}(b)\bigr)\Bigr] =
\eta_{M,N}(q\,|\,b), \;\Re(q)>-b_0.
\end{equation}
This is equivalent to Eq. \eqref{LKH}. 

Now, we note that
\begin{equation}
\int\limits_0^\infty dK^{(f)}_{M,N}(t\,|\,b)/t <
\infty\;\text{iff}\; M<N.
\end{equation}
It follows from Proposition 4.13 in Chapter 3 of \cite{SteVHar} that
$\log\beta_{M,N}(b)$ is absolutely continuous if $M=N.$ If $M<N,$
$-\log\beta_{M,N}(b)$ is compound Poisson by Theorem \ref{main} and
Proposition 4.4 in Chapter 3 of \cite{SteVHar}. In particular,
\begin{equation}\label{lambda}
{\bf P}\bigl[\log\beta_{M,N}(b)=0\bigr] =
\exp\Bigl(-\int\limits_0^\infty dK^{(f)}_{M,N}(t\,|\,b)/t\Bigr).
\end{equation}
The result follows from Eq. \eqref{Kdef}. \qed
\end{proof}
\begin{proof}[Proof of Corollary \ref{momentproblembarnes}]
It is sufficient to note that $\beta_{M,N}$ is compactly supported, confer Section 2.2 in \cite{Char}. \qed
\end{proof}
\begin{proof}[Proof of Theorem \ref{Asymptotics}]
The starting point of the proof is Eq. \eqref{asym}. Substituting
Eq. \eqref{asym} into Eq. \eqref{eta} and using linearity of
$\mathcal{S}_N,$ we can write in the limit of $q\rightarrow \infty,$
$|\arg(q)|<\pi,$
\begin{align}
\eta_{M,N}(q\,|\,b) & = \exp\Bigl(-\bigl(\mathcal{S}_N
\log\Gamma_M\bigr)(0\,|\,b)\Bigr)\exp\Bigl(-\frac{1}{M!}
\mathcal{S}_N\bigl(B^{(f)}_M(w)\,\log(w)\bigr)(q\,|\,b)+\nonumber\\
& + \sum\limits_{k=0}^M \frac{B^{(f)}_k(0)
\bigl(\mathcal{S}_N(-w)^{M-k}\bigr)(q\,|\,b)}{k!(M-k)!}\sum\limits_{l=1}^{M-k}
\frac{1}{l}+O(q^{-1})\Bigr).\label{sum2}
\end{align}
Now, to compute
$\mathcal{S}_N\bigl(B^{(f)}_M(w)\,\log(w)\bigr)(q\,|\,b),$ we expand
the logarithm in powers of $1/q,$ resulting in terms of the form
Eq. \eqref{line2} with $n=M.$ By Eq. \eqref{line1} in Lemma \ref{mylemma},
if $r+m>M,$ then such terms are of order $O(1/q).$ If $r+m\leq M$
and $M<N,$ they are all zero by Eq. \eqref{line1}. If $r+m\leq M$ and
$M=N,$ the only non-zero terms satisfy $r+m=N$ so that they have
degree zero in $q.$ Hence, we have the estimate
\begin{align}
\mathcal{S}_N\bigl(B^{(f)}_M(w)\,\log(w)\bigr)(q\,|\,b) & =
\log(q)\,\mathcal{S}_N\bigl(B^{(f)}_M(w)\bigr)(q\,|\,b) + O(q^{-1}),
\;
{\rm if}\; M<N, \label{estimate}\\
& = \log(q)\,\mathcal{S}_N\bigl(B^{(f)}_M(w)\bigr)(q\,|\,b) + O(1),
\; {\rm if}\; M=N.
\end{align}
If $M<N,$ the expression in Eq. \eqref{estimate} is zero by
Eq. \eqref{mainID} and the sum in Eq. \eqref{sum2} is zero by Eq. \eqref{auxID}
so that Eq. \eqref{ourasym} follows from Eq. \eqref{sum2}. If $M=N,$ the
result follows from Eqs. \eqref{mainIDN} and \eqref{auxIDN}. \qed
\end{proof}
From now on we restrict ourselves to $f(t)$ as in Eq. \eqref{fdef}.
\begin{proof}[Proof of Theorem \ref{FunctEquat}]
It is sufficient to substitute Eq. \eqref{feq}, written in the form
\begin{equation}
L_M\bigl(w+a_i\,|\,a\bigr) = L_{M}(w\,|\,a) -
L_{M-1}(w\,|\,\hat{a}_i),
\end{equation}
into Eq. \eqref{eta} and recall the definition of
$\eta_{M-1,N}(q\,|\,\hat{a}_i, b).$ \qed 
\end{proof}
\begin{proof}[Proof of Corollary \ref{FunctSymmetry}.]
To prove Eq. \eqref{fe2}, note that Eq. \eqref{S} implies
the identity
\begin{equation}
(\mathcal{S}_Nf)(q\,|\,b_0+x) = (\mathcal{S}_Nf)(q+x\,|\,b),
\end{equation}
and the result follows from definition of $\eta_{M,N}(q\,|a,\,b) $ in Eq. \eqref{eta}. Eq. \eqref{fe3} is
immediate from Eq. \eqref{eta}. Eq. \eqref{fe1eq} is equivalent to
Eq. \eqref{fe1} due to the special case of $q=0$ in Eq. \eqref{fe1},
\begin{equation}\label{auxidentity}
\eta_{M, N}(a_i\,|\,a, b) = \exp\bigl(-(\mathcal{S}_N
L_{M-1})(0\,|\,\hat{a}_i, b)\bigr).
\end{equation}
The proof of Eq. \eqref{fe4} follows from Eqs. \eqref{fe3}, \eqref{fe2}, and
\eqref{fe1eq}, in this order.
\begin{align}
\eta_{M, N}(q\,|\,a,\,b_j+a_i) & = \frac{\eta_{M, N-1}(q\,|\,a,\,\hat{b}_j)}{\eta_{M, N-1}(q\,|\,a,\,b_0+b_j+a_i, \hat{b}_j)}, \nonumber \\
& = \eta_{M, N-1}(q\,|\,a,\,\hat{b}_j)\frac{\eta_{M, N-1}(a_i\,|\,a,\,b_0+b_j, \hat{b}_j)}{\eta_{M, N-1}(q+a_i\,|\,a,\,b_0+b_j, \hat{b}_j)}, \nonumber \\
& =  \eta_{M-1, N-1}(q\,|\,\hat{a}_i,\,b_0+b_j,
\hat{b}_j)\frac{\eta_{M, N-1}(q\,|\,a,\,\hat{b}_j)}{\eta_{M,
N-1}(q\,|\,a,\,b_0+b_j, \hat{b}_j)}.
\end{align}
The result follows by yet another application of Eqs. \eqref{fe2} and
\eqref{fe3}. Finally, to verify Eq. \eqref{funceqsymmetry}, we combine
Eqs. \eqref{fe3} and \eqref{fe1eq} to obtain
\begin{align}
\eta_{M, N}(q+a_i\,|\,a,\,b) & = \eta_{M, N}(q\,|\,a,\,b) \,\eta_{M,
N}(a_i\,|\,a,\,b) \frac{\eta_{M-1, N-1}(q\,|\,\hat{a}_i,\,b_0+b_j,
\hat{b}_j)}{\eta_{M-1, N-1}(q\,|\,\hat{a}_i,\,\hat{b}_j)}, \nonumber
\\ & = \eta_{M, N}(q\,|\,a,\,b) \,\frac{\eta_{M,
N}(a_i\,|\,a,\,b)}{\eta_{M-1, N-1}(b_j\,|\,\hat{a}_i, \hat{b}_j)}
\frac{\eta_{M-1, N-1}(q+b_j\,|\,\hat{a}_i, \hat{b}_j)}{\eta_{M-1,
N-1}(q\,|\,\hat{a}_i,\,\hat{b}_j)}
\end{align}
by Eq. \eqref{fe2}. It remains to notice that Eqs. \eqref{fe3} and
\eqref{auxidentity} imply
\begin{equation}
\eta_{M, N}(a_i\,|\,a,\,b) = \eta_{M-1, N-1}(b_j\,|\,\hat{a}_i,
\hat{b}_j). \qed
\end{equation} 
\end{proof}
\begin{proof}[Proof of Corollary \ref{FactorizBarnes}.]
To prove Eq. \eqref{infinprod2} we need to recall the Shintani factorization of multiple gamma functions, cf. Eq. \eqref{generalfactorization}.
It is sufficient to note that Lemma \ref{mylemma} implies
\begin{equation}\label{basicid}
\bigl(\mathcal{S}_N x^n\bigr)(q\,|\,b) = \bigl(\mathcal{S}_N x^n\bigr)(0\,|\,b),\; n\leq N.
\end{equation}
and \(\Psi_{M}(w,y\,|\,a)\) and \(\phi_{M}(w,y\,|a, a_M)\) in Eq. \eqref{generalfactorization} are polynomials in $w$ of degree $M.$ Hence,
for $M\leq N$ we can write by Eq. \eqref{generalfactorization},
\begin{align}
\bigl(\mathcal{S}_N L_M\bigr)(q\,|\,a, b) - \bigl(\mathcal{S}_N L_M\bigr)(0\,|\,a, b) & = \sum\limits_{k=0}^\infty \Bigl[\bigl(\mathcal{S}_N L_{M-1}\bigr)(q+ka_M\,|\,a, b) - \nonumber \\
& -\bigl(\mathcal{S}_N L_{M-1}\bigr)(ka_M\,|\,a, b)\Bigr],
\end{align}
which is equivalent to Eq. \eqref{infinprod2}. Eq. \eqref{infinprod2} is equivalent to
\begin{equation}
\eta_{M,N}(q\,|\,a, b) = \prod\limits_{k=0}^\infty
\eta_{M-1,N}(q\,|\,\hat{a}_i, b_0+ka_i), \label{infinprod1}
\end{equation}
by Eq. \eqref{fe2} hence verifying Eq. \eqref{probshinfac}. Using Eq. \eqref{basicid} in the same way, Eq. \eqref{infbarnesfac} follows
from the Barnes factorization in Eq. \eqref{barnes}. Finally, 
to prove Eq. \eqref{probbarnesfac}, 
it is sufficient to note that Eq. \eqref{infbarnesfac} is equivalent to
\begin{equation}
\eta_{M,N}(q\,|\,a, b) = \prod\limits_{n_1,\cdots ,n_M=0}^\infty \eta_{0,N}(q\,|\, b_0+\Omega),
\end{equation}
which in turn is equivalent to Eq. \eqref{probbarnesfac}. 
Alternatively, Eq. \eqref{probbarnesfac} follows from Eq. \eqref{probshinfac} directly by induction.\qed
\end{proof}
\begin{proof}[Proof of Corollary \ref{moments}]
Repeated application of Eq. \eqref{fe1} gives the identity
\begin{equation}
\eta_{M,N}(q+k a_i\,|\,a, b) = \eta_{M,N}(q\,|\,a, b)
\exp\Bigl(-\sum_{l=0}^{k-1} \bigl(\mathcal{S}_N
L_{M-1}\bigr)(q+la_i\,|\,\hat{a}_i, b)\Bigr).
\end{equation}
Equations Eqs. \eqref{posmom} and \eqref{negmom} now follow by letting
$q=0$ and $q=-ka_i,$ respectively.
\end{proof}
\begin{proof}[Proof of Theorem \ref{barnesbetascaling}.]
By Eq. \eqref{scale} and the definition of the operator $\mathcal{S}_N$ in Eq. \eqref{S}, we have the identity
\begin{equation}
\bigl(\mathcal{S}_N L_M\bigr)(\kappa\,q\,|\,\kappa\,a,\kappa\,b) = \bigl(\mathcal{S}_N L_M\bigr)(q\,|\,a, b) - \frac{\log\kappa}{M!}
\bigl(\mathcal{S}_N B_{M,M}\bigr)(q\,|\,b)
\end{equation}
so that by Lemma \ref{mylemma} we obtain for $M\leq N$
\begin{align}
\log\eta_{M, N}(\kappa\,q\,|\,\kappa\,a,\kappa\,b) & = \bigl(\mathcal{S}_N L_M\bigr)(\kappa\,q\,|\,\kappa\,a,\kappa\,b) - \bigl(\mathcal{S}_N L_M\bigr)(0\,|\,\kappa\,a,\kappa\,b), \nonumber \\
& = \log\eta_{M, N}(q\,|\,a,\,b).
\end{align}
The result follows.\qed
\end{proof}

%%We will conclude this section with the special case of Theorem \ref{BarnesFactorization} that corresponds to $a_i=1$ for all $i=1\cdots M$
%%and appears in Section 6 below. 
\begin{proof}[Proof of Corollary \ref{BarnesFactorSpecial}.]
This follows from Eq. \eqref{infbarnesfac} using the formula for the number of compositions of $k$ into exactly $m$ parts, confer \cite{Bressoud}, \emph{i.e.} $(k\,|\,M)$ equals the number of non-negative integer solutions to $n_1+\cdots +n_M=k.$
\end{proof}

\subsection{$N=M-1$}
We begin with a lemma on the action of $\mathcal{S}_{M-1}$ on polynomials that complements Lemma \ref{mylemma} .
\begin{lemma}\label{MyLemma}
Let $f(t)$ be a fixed function of Ruijsenaars type.
\begin{align}
\bigl(\mathcal{S}_{M-1} B^{(f)}_{M}\bigr)(q\,|\,b)  - \bigl(\mathcal{S}_{M-1} B^{(f)}_{M}\bigr)(0\,|\,b) 
 = -qf(0)\,M! \prod_{j=1}^{M-1} b_j. \label{id3}
\end{align}     
\end{lemma}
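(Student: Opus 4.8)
The plan is to reduce the action of $\mathcal{S}_{M-1}$ on the degree-$M$ Bernoulli polynomial to the extraction of a single Taylor coefficient at $t=0$, exactly as in the proof of Lemma \ref{mylemma}, and then to exploit the fact that the relevant generating function already vanishes to high order. First I would set $N=M-1$ and use the definition of $\mathcal{S}_N$ in Eq. \eqref{S} together with the $r=0$ case of the identity \eqref{exponID}, namely $e^{-b_0 t}\prod_{j=1}^{M-1}(1-e^{-b_j t})=\sum_{p=0}^{M-1}(-1)^p\sum_{k_1<\cdots<k_p}e^{-(b_0+\sum b_{k_j})t}$, to rewrite
\begin{equation}
\bigl(\mathcal{S}_{M-1}B^{(f)}_M\bigr)(q\,|\,b) = \frac{d^M}{dt^M}\Big|_{t=0}\Bigl[f(t)\,e^{-qt}\,h(t)\Bigr],\qquad h(t)\triangleq e^{-b_0 t}\prod_{j=1}^{M-1}\bigl(1-e^{-b_j t}\bigr).
\end{equation}
This is simply line \eqref{line2} of Lemma \ref{mylemma} read with $n=M$. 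The only new feature relative to that lemma is that here $r+n=M=N+1$, one order beyond the cases \eqref{deriv1}--\eqref{deriv2} treated there, so the coefficient no longer vanishes and must be computed explicitly.

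The key observation is that $h(t)=O(t^{M-1})$ as $t\to 0$: each factor $1-e^{-b_j t}=b_j t+O(t^2)$ supplies one power of $t$, so that $h(t)=\bigl(\prod_{j=1}^{M-1}b_j\bigr)t^{M-1}+O(t^M)$. I would therefore not compute the full $t^M$ coefficient, but instead form the difference against the $q=0$ value and factor it as
\begin{equation}
\bigl(\mathcal{S}_{M-1}B^{(f)}_M\bigr)(q\,|\,b) - \bigl(\mathcal{S}_{M-1}B^{(f)}_M\bigr)(0\,|\,b) = \frac{d^M}{dt^M}\Big|_{t=0}\Bigl[f(t)\,\bigl(e^{-qt}-1\bigr)\,h(t)\Bigr].
\end{equation}
Since $e^{-qt}-1=-qt+O(t^2)$ and $h(t)$ begins at order $t^{M-1}$, the product $(e^{-qt}-1)h(t)$ begins at order $t^M$ with leading coefficient $-q\prod_{j=1}^{M-1}b_j$; multiplying by $f(t)=f(0)+O(t)$ preserves this $t^M$ coefficient as $-q\,f(0)\prod_{j=1}^{M-1}b_j$. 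Applying $d^M/dt^M|_{t=0}$ multiplies by $M!$ and yields the claimed identity.

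The point of differencing first is that it automatically cancels every $q$-independent contribution to the $t^M$ coefficient of $f e^{-qt}h$, namely the terms $f'(0)$ and $-\bigl(b_0+\tfrac12\sum_{j=1}^{M-1}b_j\bigr)f(0)$ arising from the subleading expansions of $f(t)e^{-qt}$ and of $h(t)$; none of these need be computed. I expect no real obstacle here beyond the bookkeeping of Taylor orders. The one point that requires a moment's care is verifying that $h(t)$ has leading order exactly $t^{M-1}$ with leading coefficient $\prod_{j=1}^{M-1}b_j$, which follows immediately from $1-e^{-b_j t}=b_j t+O(t^2)$ and $e^{-b_0 t}=1+O(t)$, so that the surviving contribution is genuinely linear in $q$ as asserted.
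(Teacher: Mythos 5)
Your proof is correct and takes essentially the same route as the paper: both arguments identify $\bigl(\mathcal{S}_{M-1}B^{(f)}_{M}\bigr)(q\,|\,b)$ with the $M$-th derivative at $t=0$ of the generating function $f(t)\,e^{-qt}\,e^{-b_0 t}\prod_{j=1}^{M-1}(1-e^{-b_j t})$ and then exploit the fact that the product of the $(1-e^{-b_j t})$ factors vanishes to order $M-1$. The only difference is organizational: you subtract the $q=0$ value before extracting the $t^M$ coefficient, so the $q$-independent contribution cancels automatically, whereas the paper first computes that coefficient by Leibniz, obtaining a $q$-independent term plus $M!\prod_{j=1}^{M-1} b_j\,B^{(f)}_{1}(q+b_0)$, and then invokes the identity $B^{(f)}_{1}(x)-B^{(f)}_{1}(x+y)=f(0)\,y$; both versions effect exactly the same cancellation.
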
       
\begin{proof}
Define the function
$g(t),$
\begin{equation}\label{gfunctionsine}
g(t) \triangleq f(t) e^{-qt} e^{-b_0
t}\prod\limits_{j=1}^N (1-e^{-b_j t}).
\end{equation}
Then, using the identity
\begin{align}
e^{-(q+b_0) t}\prod\limits_{j=1}^N (1-e^{-b_j t}) = & e^{-(q+b_0) t}\sum\limits_{p=0}^N (-1)^p
\sum\limits_{k_1<\cdots<k_p=1}^N
\exp\bigl(-(b_{k_1}+\cdots+b_{k_p})t\bigr), \nonumber \\  = & \bigl(\mathcal{S}_{N} e^{-xt}\bigr)(q|b), \label{theid}
\end{align}
one obtains
\begin{align}
g^{(k)}(0) & = %%\sum\limits_{m=0}^n \binom{n}{m} B^{(f)}_{n-m}(q)
%%\frac{d^{m+r}}{dt^{m+r}}|_{t=0}\bigl[e^{-b_0 t}\prod\limits_{j=1}^N
%%(1-e^{-b_j t})\bigr], \label{line1} \\
(\mathcal{S}_{N}
B^{(f)}_k)(q\,|\,b). \label{line2orig} 
\end{align}
%%Letting $f(t)=1,$ $N=M-1,$ we obtain  \eqref{id1} and\eqref{id2}, and letting $f(t)=f_M(t|a)$ as in \eqref{fdef},
%%we get \eqref{id4} and \eqref{id5}. 
To verify Eq. \eqref{id3}, letting $N=M-1,$ $k=M,$ one observes that Eq. \eqref{line2orig} implies the identity
\begin{equation}
(\mathcal{S}_{M-1}
B^{(f)}_M)(q\,|\,b) = f(0)\frac{d^M}{dt^M}\Big\vert_{t=0}\prod\limits_{j=1}^{M-1} (1-e^{-b_j t}) +
M! \prod\limits_{j=1}^{M-1} b_j \,B^{(f)}_{1}(q+b_0).
\end{equation}
Hence, %%the alternating sums that occurs in \eqref{id3} and \eqref{id6} can be written as
\begin{gather}
\bigl(\mathcal{S}_{M-1} B^{(f)}_{M}(x)\bigr)(q\,|\,b) - \bigl(\mathcal{S}_{M-1} B^{(f)}_{M}(x)\bigr)(0\,|\,b) 
%%\bigl(\mathcal{S}_{M-1} B^{(f)}_{M}(x)\bigr)(-q\,|\,\bar{b}) -\bigl(\mathcal{S}_{M-1} B^{(f)}_{M}(x)\bigr)(0\,|\,\bar{b}),\nonumber \\
= M! \prod\limits_{j=1}^{M-1} b_j \Bigl( B^{(f)}_{1}(q+b_0)- B^{(f)}_{1}(b_0)%%+ B^{(f)}_{1}(-q+\bar{b}_0)- B^{(f)}_{1}(\bar{b}_0)
\Bigr).
\label{alter}
\end{gather}
Finally, it remains to notice that $B^{(f)}_{1}(x)$ satisfies the identity
\begin{equation}
B^{(f)}_{1}(x) - B^{(f)}_{1}(x+y) = f(0)\,y,
\end{equation}
and Eq. \eqref{id3} follows. 
\qed
\end{proof}
\begin{proof}[Proof of Theorem \ref{mainsine}]
%%Let $\eta_{M}(q|a, b)$ be defined by \eqref{etaL}. 
We wish to establish the identity
\begin{equation}\label{keyid}
\eta_{M, M-1}(q|a, b) = \exp\Bigl(
\int\limits_0^\infty \frac{dt}{t}\Bigl[(e^{-tq}-1)e^{-b_0 t}\frac{\prod\limits_{j=1}^{M-1} (1-e^{-b_j t})}{\prod\limits_{i=1}^M (1-e^{-a_i t})} + 
q e^{-t} \frac{\prod\limits_{j=1}^{M-1} b_j}{\prod\limits_{i=1}^M a_i}\Bigr]
\Bigr).
\end{equation}
Once it is established, Eq. \eqref{LKHsine} follows immediately by adding and subtracting $qt$ in the integrand,
which allows us to split the integral in Eq. \eqref{keyid} into two individual integrals and thereby bring it 
to the required L\'evy-Khinchine form. To verify Eq. \eqref{keyid} we need to recall the Ruijsenaars formula in Eq. \eqref{key}.
We see from Lemma \ref{mylemma} that for any $k=0\cdots M-1$ we have the identity
\begin{align}
\bigl(\mathcal{S}_{M-1} B_{M, k}(x|a)\bigr)(q\,|\,b) & - \bigl(\mathcal{S}_{M-1} B_{M, k}(x|a)\bigr)(0\,|\,b) 
%%\bigl(\mathcal{S}_{M-1} B_{M, k}(x|a)\bigr)(-q\,|\,\bar{b}) - \nonumber \\&-\bigl(\mathcal{S}_{M-1} B_{M, k}(x|a)\bigr)(0\,|\,\bar{b})
 = 0.
\end{align}
It follows upon substituting Eq. \eqref{key} into Eq. \eqref{etaL} and using Eq. \eqref{id3} that the only non-vanishing terms are
\begin{align}
\eta_{M, M-1}(q|a, b) = \exp\Bigl( \int\limits_0^\infty \frac{dt}{t} \Bigl[\frac{
\bigl(\mathcal{S}_{M-1} e^{-xt}\bigr)(q\,|\,b)-\bigl(\mathcal{S}_{M-1} e^{-xt}\bigr)(0\,|\,b)}{\prod\limits_{i=1}^M (1-e^{-a_i t})}
%%\bigl(\mathcal{S}_{M-1} e^{-xt}\bigr)(-q\,|\,\bar{b}),\nonumber \\ &-\bigl(\mathcal{S}_{M-1} e^{-xt}\bigr)(0\,|\,\bar{b})
 + qe^{-t}  \frac{\prod\limits_{j=1}^{M-1} b_j}{\prod\limits_{i=1}^M a_i}
\Bigr]
\Bigl),
\end{align}
and the result follows from Eq. \eqref{theid}. Now, we have the obvious identities
\begin{gather}
\int\limits_0^\infty e^{-b_0 t} \frac{\prod\limits_{j=1}^{M-1} (1-e^{-b_j t})}{\prod\limits_{i=1}^M (1-e^{-a_i t})} \frac{dt}{t} = \infty, \\
\int\limits_0^\infty e^{-b_0 t} \frac{\prod\limits_{j=1}^{M-1} (1-e^{-b_j t})}{\prod\limits_{i=1}^M (1-e^{-a_i t})} dt = \infty,
\end{gather}
which imply that $\log\beta_{M, M-1}(a,b)$ is absolutely continuous and supported on $\mathbb{R}$ by
Theorem 4.23 and Proposition 8.2 in Chapter 4 of \cite{SteVHar}, respectively. 
The scaling invariance in Eq. \eqref{scalinvgen}
is a corollary of Eqs. \eqref{scale} and \eqref{id3}. \qed
\end{proof}
\begin{proof}[Proof of Theorem \ref{newetaasympt}]
The asymptotic expansion of $\log\Gamma_M(w\,|\, a)$ in Eq. \eqref{asym} consists of
the leading term $-B_{M, M}(w\,|\,a)\,\log(w)/M!$ plus a polynomial remainder. Lemmas \ref{mylemma}
and \ref{MyLemma} show that the remainder term contributes $O(q)$ to $\log\eta_{M, M-1}(q|a, b).$ It remains to
show that as $q\rightarrow \infty,$
\begin{equation}
\bigl(\mathcal{S}_{M-1} B_{M, M}(x\,|\,a)\,\log(x)\bigr)(q|b) %%-  \bigl(\mathcal{S}_{M-1} B_{M, M}(x)\,\log(x)\bigr)(0|b) = 
= - M!\bigl(\prod\limits_{j=1}^{M-1} b_j/\prod\limits_{i=1}^M a_i\bigr) q\log(q) +
O(q).
\end{equation}
%%Expanding the $\log$ in powers of $1/q,$ 
Slightly generalizing the calculation in Lemma \ref{MyLemma}, let
\begin{equation}\label{gfunctionasym}
g(t) \triangleq f(t) e^{-qt} \frac{d^r}{dt^r}\bigl[e^{-b_0
t}\prod\limits_{j=1}^{M-1} (1-e^{-b_j t})\bigr].
\end{equation}
Then,
\begin{align}
g^{(n)}(0) & = \sum\limits_{m=0}^n \binom{n}{m} B^{(f)}_{n-m}(q)
\frac{d^{m+r}}{dt^{m+r}}\Big\vert_{t=0}\bigl[e^{-b_0 t}\prod\limits_{j=1}^{M-1}
(1-e^{-b_j t})\bigr], \label{line1sine} \\
& = (-1)^r \sum\limits_{p=0}^{M-1} (-1)^p
\sum\limits_{k_1<\cdots<k_p=1}^{M-1} \bigl(b_0+\sum b_{k_j}\bigr)^r \,
B^{(f)}_n\bigl(q+b_0+\sum b_{k_j}\bigr). \label{line2sine} 
\end{align}
In our case $n=M,$ $f(t)$ as in Eq. \eqref{fdef}, and the expression in Eq. \eqref{line2sine} is the coefficient of
$1/q^r$ that one gets by expanding $\bigl(\mathcal{S}_{M-1} B_{M, M}(x\,|\,a)\,\log(x)\bigr)(q|b)$ in powers of $1/q.$ 
By Eq. \eqref{line1sine} we can restrict ourselves to $m+r\geq M-1.$  On the other hand, 
the overall power of $q$ is $M-m-r$ so that we are only interested in $M-m-r\geq 1,$
as the other terms contribute $O(1).$  Hence, $m=M-r-1,$ and the contribution of such terms
is $O(q),$
\begin{equation}
\bigl(\mathcal{S}_{M-1} B_{M, M}(x\,|\,a)\,\log(x)\bigr)(q|b) = \log(q)\bigl(\mathcal{S}_{M-1} B_{M, M}(x\,|\,a)\,\bigr)(q|b)+  O(q),
\end{equation}
and the result follows from Eq. \eqref{id3}. \qed
\end{proof}
\begin{proof}[Proof of Theorem \ref{FunctEquatSine}]
%%This is an immediate corollary of the fundamental functional equation \eqref{feq}. 
The factorizations in Eqs. \eqref{infinprod2L} and
\eqref{infinprod1L} are corollaries of Lemma \ref{mylemma} and Shintani and Barnes factorizations of the multiple gamma functions, see Eqs. \eqref{generalfactorization} and \eqref{barnes}, respectively. A direct proof can be given as follows. The Mellin transform of $\beta_{M, M-1}(a, b, \bar{b})$ is
\begin{equation}\label{keyidb}
\eta_{M, M-1}(q|a, b, \bar{b}) = \exp\Bigl(
\int\limits_0^\infty \frac{dt}{t}\Bigl((e^{-tq}-1)e^{-b_0 t}+(e^{tq}-1)e^{-\bar{b}_0 t}\Bigr)\frac{\prod\limits_{j=1}^{M-1} (1-e^{-b_j t})}{\prod\limits_{i=1}^M (1-e^{-a_i t})}\Bigr).
\end{equation}
Let $i=M$ without any loss
of generality. The formula in Eq. \eqref{keyidb} can be written in the form
\begin{align}
\eta_{M, M-1}(q|a, b, \bar{b})  = &e^{
\int\limits_0^\infty \frac{dt}{t}\sum\limits_{k=0}^\infty \Bigl[(e^{-tq}-1)e^{-(b_0+ka_M) t} + (e^{tq}-1) e^{-(\bar{b}_0+ka_M)t} \Bigr] \prod\limits_{j=1}^{M-1} \frac{(1-e^{-b_j t})}{(1-e^{-a_j t})}}, \nonumber \\
= & \prod\limits_{k=0}^\infty  \exp\Bigl(
\int\limits_0^\infty \frac{dt}{t} (e^{-tq}-1)e^{-(b_0+ka_M) t} \prod\limits_{j=1}^{M-1} \frac{(1-e^{-b_j t})}{(1-e^{-a_j t})}\Bigr)
\times \nonumber \\  &\times \exp\Bigl(
\int\limits_0^\infty \frac{dt}{t} (e^{tq}-1) e^{-(\bar{b}_0+ka_M)t} \prod\limits_{j=1}^{M-1} \frac{(1-e^{-b_j t})}{(1-e^{-a_j t})}\Bigr)
, \label{keyid2}
\end{align}
which is exactly the same as the expression in Eq. \eqref{infinprod2L} if one recalls Eq. \eqref{LKH} and identity in Eq. \eqref{fe2}.
The expression in Eq. \eqref{keyid2} is equivalent to Eq. \eqref{probshin}.
If we now apply Eq. \eqref{probbarnesfac} to each Barnes beta factor in Eq. \eqref{probshin}, we obtain Eq. \eqref{probbarnes},
which is equivalent to Eq. \eqref{infinprod1L}. The scaling invariance in Eq. \eqref{scalinvL} follows from Eq. \eqref{scalinvgen}.
\qed
\end{proof}

%%\bibitem{RV2} R. Allez, R. Rhodes, V. Vargas (2011), Lognormal scale invariant random
%%measures, arXiv:1102.1895 [math.PR].

\end{document}